\documentclass[a4paper,11pt]{article}

\usepackage{amsmath}
\usepackage{amssymb}
\usepackage{amsthm}
\usepackage{amscd}
\usepackage{epic, eepic}
\usepackage{url}
\usepackage[dvipsnames]{xcolor}
\usepackage[utf8]{inputenc}
\usepackage{comment}
\usepackage{enumerate}
\usepackage{graphicx}
\usepackage{capt-of}
\usepackage{epstopdf}
\usepackage{enumitem}
\usepackage{todonotes}
\usepackage{soul}
\usepackage{thmtools}
\usepackage{thm-restate}
\declaretheorem[name=Theorem,numberwithin=section]{theorem}

\usepackage[top=22mm, bottom=23mm, left=21mm, right=21mm]{geometry}
\usepackage[hidelinks]{hyperref}
\usepackage[capitalize]{cleveref}
\usepackage[normalem]{ulem}
\newcommand{\stkout}[1]{\ifmmode\text{\sout{\ensuremath{#1}}}\else\sout{#1}\fi}
\usepackage{tikz}
\usepackage[allcommands]{overarrows}
\usetikzlibrary{math,calc,intersections}
\usetikzlibrary{positioning,arrows,shapes,decorations.markings,decorations.pathreplacing, decorations.pathmorphing,matrix,patterns}
\tikzstyle{vertex}=[circle,draw=black,fill=black,inner sep=0,minimum size=5pt,text=white,font=\footnotesize]

\makeatletter
\renewenvironment{proof}[1][\proofname] {\par\pushQED{\qed}\normalfont\topsep6\p@\@plus6\p@\relax\trivlist\item[\hskip\labelsep\textit{#1}\@addpunct{.}]\ignorespaces}{\popQED\endtrivlist\@endpefalse}
\makeatother

\newtheorem{lemma}[theorem]{\bf Lemma}
\newtheorem{claim}[theorem]{\bf Claim}
\newtheorem{corollary}[theorem]{\bf Corollary}
\newtheorem{proposition}[theorem]{\bf Proposition}
\newtheorem{conjecture}[theorem]{\bf Conjecture}

\newcommand{\claimproofstart}[1][Proof]{\begin{proof}[#1]
\renewcommand{\qedsymbol}{$\boxdot$}}
\newcommand\claimproofend{
\end{proof}
\renewcommand{\qedsymbol}{$\square$}}
\theoremstyle{definition}

\newtheorem{definition}[theorem]{\bf Definition}

\newcounter{propcounter}

\def\eps{\varepsilon}
\def\ep{\varepsilon}

\def\cA{\mathcal{A}}
\def\cC{\mathcal{C}}

\def\cH{\mathcal{H}}
\def\cP{\mathcal{P}}

\def\oc{\mathrm{oc}}
\def\ec{\mathrm{ec}}

\def\de{\delta}
\newcommand{\N}{\mathbb{N}}

\title{\vspace{-0.9cm}Minimising the harmonic sum of cycle lengths}
\author{Aleksa Milojevi\'c\thanks{Department of Mathematics, ETH, Z\"urich, Switzerland. Research supported in part by SNSF grant 200021-228014. {Email}: {\tt \{aleksa.milojevic,benjamin.sudakov\}@math.ethz.ch}.
} \and  Richard Montgomery\thanks{Mathematics Institute, University of Warwick, Coventry, UK. Research supported by the European Research Council (ERC) under the European Union Horizon 2020 research and innovation programme (grant agreement No.\ 947978). Email: {\tt richard.montgomery@warwick.ac.uk}.}
 \and Alexey Pokrovskiy\thanks{Department of Mathematics, University College London, London, UK. {Email}: {\tt dralexeypokrovskiy@gmail.com.}}\and  Benny Sudakov\footnotemark[1]}
\date{}

\begin{document}

\maketitle

\begin{abstract}
A central theme in extremal graph theory is to understand the relationship between the density of a graph and the richness of its cycle length spectrum, which is the set of distinct cycle lengths occurring in the graph. In 1966, Erd\H{o}s and Hajnal suggested studying $s(G):=\sum_{\ell\in\cC(G)}1/\ell$ as a measure of the richness of the cycle length spectrum $\cC(G)$ of a graph $G$. 

Through a series of increasingly strong conjectures, Erd\H{o}s suggested that the complete bipartite graphs minimise $s(G)$ among all graphs $G$ with the same average degree. The sharpest such conjecture, from 1981, states that the graph $K_{k, n-k}$ minimises $s(G)$ among all $n$-vertex graphs with at least $k(n-k)$ edges (where $k\leq n/2$). We prove this conjecture for all sufficiently large $k$, by showing the stronger statement that any $n$-vertex graph $G$ with $e(G)>(k-1)(n-k+1)$ and $n\geq 2k$ satisfies
$s(G)\geq\sum_{\ell=2}^{k}1/(2\ell)$. Moreover, we show that the complete bipartite graph $K_{k,n-k}$ is the unique graph with at least $k(n-k)$ edges that achieves equality here.
\end{abstract}

\section{Introduction}\label{sec:intro}
The \emph{cycle length spectrum} of a graph $G$ is the set of its cycle lengths, denoted
\[
    \cC(G):=\{\ell\in\mathbb N:
    G\text{ contains a cycle of length }\ell\}.
\]
The study of the richness of the cycle length spectrum in relation to the density of a graph is a classical theme in extremal graph theory.
For example, given the average degree $d(G)$ of a graph $G$, must $\mathcal{C}(G)$ contain representatives of prescribed congruence classes~\cite{Bollobas77}, contain long sequences of consecutive even lengths~\cite{Verstraete00,GHS02,LM18}, or meet prescribed sparse sequences of integers~\cite{Verstraete05,SV08,liu2023solution}?
As $d(G)$ may be arbitrarily large yet $G$ contain only even cycles, corresponding questions have been asked for the \emph{odd} cycle lengths using a chromatic number condition. 
This type of question is very different from asking how many cycles are contained in a dense graph, since a graph may contain a very large number of cycles while realizing only a small collection of cycle lengths. Thus,  $\cC(G)$ captures the distribution and diversity of cycle lengths, rather than the sheer number of cycles. We refer the reader to the survey of Verstra\"ete~\cite{VerstraeteSurvey16} for a broader account of extremal questions concerning cycle lengths.

A natural quantitative measure of the richness of the cycle length spectrum
was proposed by Erd\H{o}s and Hajnal in 1966~\cite{EH66}. They considered the
harmonic sum of the cycle lengths, that is,
\begin{equation}\label{eq:harmonic-cycle-sum}
    s(G):=\sum_{\ell\in\cC(G)}\frac1\ell.
\end{equation}
The measure $s(G)$ treats different multiplicative scales similarly: the total harmonic mass of integers in the interval $[a, 2a]$ is approximately $\log 2$. So, for the sum $s(G)$ to be large, $G$ must contain many cycle lengths on many different scales, with shorter cycles receiving larger weight. In this sense, $s(G)$ measures not only how many distinct cycle lengths occur in $G$, but also how broadly they are distributed. The harmonic measure also appears naturally in other areas of combinatorics, for example in the Erd\H{o}s--Tur\'an conjecture on arithmetic progressions~\cite{ErdosTuran1936}.

Erd\H{o}s and Hajnal \cite{EH66} asked whether $s(G)$ must diverge as the chromatic number of $G$, $\chi(G)$, grows. By the discussion above, this is substantially stronger than simply finding many different cycle lengths: it asks to show that the graph contains \textit{many cycle lengths on many scales}. 
Erd\H{o}s~\cite{E75} later suggested that $s(G)$ diverges even if `chromatic number' is replaced by `average degree' in the assumption, and suggested moreover that any graph $G$ with average degree at least $d$ satisfies
\begin{equation}\label{eq:erdos-asymptotic-conjecture}
    s(G)\geq \left(\frac{1}{2}-o_d(1)\right)\log d.
\end{equation}
The complete balanced bipartite graph with $d\geq 2$ vertices on each side shows that the constant $\frac12$ would be optimal here, as it has cycles of every even length from $4$ up to $2d$, and has the corresponding harmonic sum $(\frac12+o_d(1))\log d$.

In the 1980s, Gy\'arf\'as, Koml\'os and Szemer\'edi~\cite{GKS84} confirmed the original conjecture of Erd\H{o}s and Hajnal under the average degree condition. More specifically, they showed that there is some $c>0$ such that every graph $G$ with average degree $d$ has $s(G)\geq c\log d$. This confirmed that graphs with average degree $d$ cannot have their cycle lengths concentrated in fewer than $\Omega(\log d)$ intervals $[a,2a]$, and thus their cycle spectra must be rich at many different scales.

This progress led to an even stronger conjecture, which Erd\H{o}s proposed in his paper \textit{``On the combinatorial problems which I would most like to see solved''}. This article, specially prepared for the inaugural issue of Combinatorica in 1981, aimed to present a selection of problems Erd\H{o}s believed were particularly important for the development of combinatorics and contained many highly influential problems. 

The observation behind strengthening the conjecture was the following: while the complete bipartite graph $K_{d,d}$ demonstrates that the constant $\frac{1}{2}$ in \eqref{eq:erdos-asymptotic-conjecture} would be tight, the average degree of $K_{d,d}$ can be increased without adding more cycle lengths by adding vertices to one of its classes. That is, when $n\geq 2d$, $K_{d,n-d}$ has the harmonic sum of cycle lengths $s(K_{d,n-d})=\sum_{\ell=2}^d\frac{1}{2\ell}\approx \frac{1}{2}\log d$ and average degree $d(K_{d,n-d})$ which tends to $2d$ when $d$ is fixed and $n$ grows. The natural conjecture is then that here $K_{k,n-k}$ minimises $s(G)$  among all the $n$-vertex graphs with at least $k(n-k)$ edges, as follows.

\begin{conjecture}[Erd\H{o}s~\cite{E81}]\label{conj:main}
Let $k,n\in\mathbb N$ satisfy $1\leq k\leq n/2$.  Among all graphs $G$ with
$n$ vertices and at least $k(n-k)$ edges, the complete bipartite graph
$K_{k,n-k}$ minimises $s(G)$.  Equivalently, every such graph satisfies
\begin{equation}\label{eq:erdos-exact-conj}
    \sum_{\ell\in\cC(G)}\frac1\ell
       \geq \sum_{\ell=2}^{k}\frac1{2\ell}.
\end{equation}
\end{conjecture}

Despite consistent attention to this problem over the intervening decades~\cite{GPSV85,SV08,Verstraete05}, it took more than 40 years for further progress. Finally, in 2023, Liu and
Montgomery~\cite{liu2023solution} confirmed the asymptotic prediction of Erd\H{o}s in \eqref{eq:erdos-asymptotic-conjecture}, in work which also solved the long-standing Erd\H{o}s--Hajnal odd-cycle
problem. To do so, they developed methods for constructing paths and cycles of prescribed lengths using sublinear expansion. This is a tool introduced in the 1990s by Koml\'os and Szemer\'edi~\cite{K-Sz-1,K-Sz-2} which in recent years has developed into a central framework in extremal graph theory (see the surveys by Letzter~\cite{Letzter2024SublinearExpanders} and Montgomery~\cite{Montgomery2026GraphTheoryExpansion} for more details). 

With the confirmation of the asymptotic lower bound in \eqref{eq:erdos-asymptotic-conjecture}, it is worth reflecting that the weighting given to the cycle lengths in $s(G)$ belies how close this is to Conjecture~\ref{conj:main}. Indeed, to nudge the lower bound on $s(G)$ up by even a constant effectively may require us to find a positive density of cycle lengths on a new scale. 

In this paper, we resolve Conjecture~\ref{conj:main} for all sufficiently large $k$.
In fact, we establish that $K_{k,n-k}$ minimises $s(G)$ even over graphs with a weaker edge condition than in Conjecture~\ref{conj:main}, and moreover show that $K_{k,n-k}$ is the only extremal example for the original conjecture, as follows.

\begin{theorem}\label{thm:main}
Let $k$ be a sufficiently large integer, and let $G$ be a graph on $n$ vertices,
where $n\geq2k$, with more than $(k-1)(n-k+1)$ edges.  Then
\begin{equation}\label{eq:suminmainthm}
    \sum_{\ell\in\cC(G)}\frac1\ell
       \geq \sum_{\ell=2}^{k}\frac1{2\ell}.
\end{equation}
Moreover, if $e(G)\geq k(n-k)$, then equality holds if and only if $G$ is a
complete bipartite graph with vertex classes of sizes $k$ and $n-k$.
\end{theorem}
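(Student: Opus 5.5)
We plan a stability-type argument with two regimes. Set $S_k:=\sum_{\ell=2}^{k}\frac{1}{2\ell}=\frac12(H_k-1)$. First reduce to a well-structured subgraph. Since $e(G)>(k-1)(n-k+1)$, repeatedly delete vertices of degree at most $k-1$. Each deletion removes at most $k-1$ edges, and the bound $(k-1)(n'-k+1)$ decreases by exactly $k-1$ when $n'$ drops by one. So the process cannot reach a graph on $2k-2$ vertices or fewer, since such a graph has at most $\binom{n'}{2}\le (k-1)(n'-k+1)$ edges in that range. We therefore obtain a nonempty subgraph $H$ with minimum degree at least $k$. Subgraphs only shrink $\cC$, so it suffices to work with $H$.

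Next we pass to a sublinear expander $H'\subseteq H$ via Koml\'os--Szemer\'edi, losing only a little in average degree, so that $d(H')\ge (2-o(1))k$ or at least $\delta$-type control survives. The aim is two alternatives:
\begin{enumerate}[label=(\roman*)]
\item either $H'$ contains cycles of all even lengths in $[4,(2+c)k]$ for some $c>0$, or a positive proportion of odd lengths on scales comparable to $k$, together with all even lengths up to $2k$;
\item or $H'$ is structurally close to $K_{k,m}$.
\end{enumerate}
In case (i), the gain from the extra lengths is a constant $\Omega(c)$, which beats the $o(1)$ losses. The tools here are the path-length control techniques of Liu and Montgomery: connect through expansion with adjustable lengths, using short even cycles for $\ell\le \epsilon k$ via dense-bipartite-subgraph arguments (Bondy--Simonovits style).

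The main obstacle is the near-extremal case, where the relevant count must be exact rather than asymptotic. Here the graph is essentially $K_{k,m}$: a set $A$ of about $k$ vertices of huge degree, with most other vertices adjacent to nearly all of $A$. We would argue as follows.
\begin{enumerate}[label=(\alph*)]
\item Exact degree counting forces all even lengths $4,\dots,2k$, with the $2k$-cycle requiring an essentially full alternating Hamiltonian structure on $A$. We would try to show this using $|A|\ge k$ and Hall-type matchings on the dense bipartite part.
\item If $|A|\ge k+1$ in the robust sense, we get length $2k+2$ and more, a gain of at least $1/(2k+2)$.
\item Otherwise the excess edges must lie inside $A$ or inside $B$, or come from vertices of degree at least $k$ attaching in non-bipartite ways. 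Any edge inside a part of a near-complete bipartite graph creates odd cycles of nearly all lengths from $3$ or $5$ up to about $2k$. These contribute $\Theta(\log k)$, which is far more than enough.
\end{enumerate}
The delicate sub-step is bounding $s$ below when the defect is tiny. One source of difficulty is vertices of $B$ missing a few neighbours in $A$. Another is that the edge count $(k-1)(n-k+1)$ rather than $k(n-k)$ allows $|A|=k$ with deficits. We would handle this by using the minimum degree $k$ property, which forces every $B$-vertex to see at least $k$ vertices. Hence either it sees all of $A$ (pure bipartite, giving exactly $K_{k,m}$ plus nothing) or it has a neighbour outside $A$, which produces an odd cycle or a longer even cycle.

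For uniqueness, suppose $e(G)\ge k(n-k)$ and $s(G)=S_k$. Then the above shows $H=K_{k,m}$ exactly and no extra lengths appear. Vertices removed in the deletion phase must each have had degree exactly $k$ into $A$ with no further edges, else they create a new length. Edge counting then forces $G=K_{k,n-k}$.
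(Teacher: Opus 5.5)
Your proposal has a genuine gap at its core. The dichotomy (i)/(ii) is asserted rather than proved. In particular, the claim that in the near-extremal case ``the graph is essentially $K_{k,m}$'', with one set $A$ of about $k$ high-degree vertices, is false globally. Two disjoint copies of $K_{k,m}$ with $m\ge k^2/2$, joined by one edge, have $2km+1>(k-1)(n-k+1)$ edges and $\cC(G)=\{4,6,\dots,2k\}$. So they attain equality without being close to a single $K_{k,m}$.

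In general, a near-extremal graph may be built from many almost-complete pairs that overlap little and are attached to one another. Passing to an expander subgraph $H'$ only exposes one such piece and tells you nothing about the rest of $G$. That piece may also have its small side of size only about $(1-\eps)k$. It then yields even lengths only up to about $2(1-\eps)k$, and the missing harmonic mass (roughly $\eps/2$) must be recovered from structure outside it. Your step (a) cannot give a $2k$-cycle in that situation, and in (c) a vertex with a neighbour outside $A$ need not create any new cycle length.

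Handling exactly this is the bulk of the paper. It works with a minimal counterexample, which has minimum degree $k$, is 2-connected, and has no separator of size below $k$ splitting off two large parts. In it, the paper grows almost-complete pairs into clusters and proves the following.
\begin{itemize}
\item Two clusters either merge or share at most $3$ core vertices.
\item There is no cluster cycle, since each would give a new interval of lengths.
\item A maximal edge-disjoint family of clusters covers all but $O(|\cH|/\eps)$ vertices.
\item A single cluster is impossible: use cherries extending $A$, or a dense remainder containing a cycle of length in $[1.5k,2k]$ that can be spliced through the pair.
\item Several clusters are impossible: K\H{o}nig matchings of size $k$ feed an auxiliary digraph whose antidirected cycle yields a cluster cycle.
\end{itemize}
Even extracting one almost-complete pair takes real work. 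The paper treats small expanders separately and, for large ones, modifies the Liu--Montgomery theorem to obtain alternative \textbf{C1}, then extracts the pair from it.

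The preliminary reductions are also flawed. The inequality $\binom{n'}{2}\le (k-1)(n'-k+1)$ is false for every $n'$ once $k\ge 3$, and the conclusion fails too. For example, $K_{k-1}\vee\overline{K_{n-k+1}}$ has more than $(k-1)(n-k+1)$ edges but is $(k-1)$-degenerate, so it has no subgraph of minimum degree $k$. The paper deletes low-degree vertices only inside a minimal counterexample while more than $2k$ vertices remain, and it handles $n=2k$ via Brandt's theorem.

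Even then, $H$ may have $O(k)$ vertices and average degree near $k$, so your claim $d(H')\ge(2-o(1))k$ is unjustified. The paper needs a separate lemma, that a minimal counterexample has $n>k^{1.02}$ (via Liu--Ma and Gould--Haxell--Scott), to reach average degree close to $2k$. Finally, your uniqueness argument speaks of deleted vertices having degree exactly $k$, whereas you only delete vertices of degree at most $k-1$. In the paper, uniqueness is carried through the minimal-counterexample induction as case \textbf{b)}.
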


The weaker edge condition in Theorem~\ref{thm:main} (compared to Conjecture~\ref{conj:main}) is best possible. That is, there are $n$-vertex graphs with $(k-1)(n-k+1)$ edges for which \eqref{eq:suminmainthm} does not hold. Namely, $K_{k-1,n-k+1}$ 
 has $(k-1)(n-k+1)$ edges and no $2k$-cycle, so that $\mathcal{C}(K_{k-1,n-k+1})$ is a strict subset of $\mathcal{C}(K_{k,n-k})=\{4,6,\dots,2k\}$ and, hence, $s(K_{k-1,n-k+1})<s(K_{k,n-k})$. 

The motivating idea behind Conjecture~\ref{conj:main} is that every substantial departure from a complete bipartite graph must be paid for by an increased harmonic weight of the cycle lengths, and the proof of Theorem~\ref{thm:main} confirms this intuition precisely. The proof shows that, for any $\eps>0$ there is a $\delta>0$ such that, for large $k$, any $n$-vertex graph $G$ with more than $(k-1)(n-k+1)$ edges which satisfies $s(G)<s(K_{k,n-k})+\delta$ must contain an almost-complete bipartite graph which has itself the cycle lengths $4,6,\dots,(1-\eps)2k$. In order to confirm \eqref{eq:erdos-exact-conj}, we then show that if $G$ is not close to $K_{k,n-k}$ there is either a scale at which $G$ has an abundance of odd cycles, or a new scale  (beyond $(1-\eps)2k$) at which $G$ has an abundance of cycles.

In this form, the proof falls under the conceptual framework of the `stability method' which has seen great success in the study of dense graphs~\cite{SimonovitsStability,FurediStability}. That is, many exact extremal results in graph theory have been shown by arguing that any graph close to meeting some extremal condition must be structurally close to a precise extremal example for the problem. This proximity allows us to obtain additional structural properties, which makes it easier to show the exact extremal condition. 

For sparse graphs, this framework arises much more rarely and inexactly. For example, graphs falling under Theorem~\ref{thm:main} with $s(G)\leq (1+\eps)s(K_{k,n-k})$ (for any fixed $\eps>0$) vary broadly in their composition and the structural properties they share are much weaker. If we consider such graphs with some basic properties (which we will show hold in a minimal counterexample to Theorem~\ref{thm:main}), then what they do share is suggested by the above discussions: they contain almost-complete bipartite graphs (which form the bedrock of the structures that we will call \emph{clusters}). This can be roughly recovered from the work of Liu and Montgomery~\cite{liu2023solution}, and with only a little modification (confirmed in Appendix~\ref{appendix}) we can use this work as a `black box'. However, much more work needs to be done to understand the structural properties of near-extremal examples and use these to find more cycle lengths. We discuss our strategy and the new ideas required to implement it in Section~\ref{sec:outline}.

\medskip\noindent
\textbf{Notation.} For a graph $G$, we write $|G|$ for the number of vertices in $G$, and $e(G)$ for the number of edges. For a vertex $v\in V(G)$, we write $N_G(v)$ ($d_G(v)$) for the set (number) of neighbours of $v$ in $G$, and $N_G(v, U)$ ($d_G(v,U)$) for the set (number) of neighbours of $v$ in some set $U\subseteq V(G)$. The average degree of a graph $G$ is denoted by $d(G)$. The neighbourhood of a set $U\subseteq V(G)$ is defined as $N(U)=\{v\in V(G)\backslash U:N(v)\cap U\neq \varnothing\}$. Finally, when stating our results, we use the standard parameter hierarchy notation, where $\eps\ll \delta\ll1$ should be read as: `the statement holds for every sufficiently small parameter $\delta$, and every $\eps$ which is sufficiently small as a function of $\delta$'. We typically do not specify how small we need the parameters to be, but the requirements could in principle be computed explicitly. We also omit integer parts where they are not essential to the arguments. 

\medskip\noindent
\textbf{Paper organisation.} In Section~\ref{sec:outline}, we outline our proof, in which we will take a minimal counterexample to Theorem~\ref{thm:main} and show it contains structures we call \emph{clusters}. In the same section, we recall the tools we will use in our proof. In Section~\ref{sec:properties_of_minimal_counterexample} we discuss the properties of a minimal counterexample, such as its minimum degree, number of vertices and connectivity. Properties of clusters will be shown in Section~\ref{sec:clusters}. Then, in Section~\ref{sec:main_prop_proof}, we will prove the important statement that most vertices of $G$ are contained in any maximal edge-disjoint collection of clusters. Finally, we prove Theorem~\ref{thm:main} in Section~\ref{sec:main_thm_proof}.


\section{Proof outline and preliminaries}\label{sec:outline}\label{sec:preliminaries}

\subsection{Sketch of the proof}

In order to prove Theorem~\ref{thm:main}, we assume to the contrary that it is false and study the properties of a minimal counterexample. That is, having chosen our large $k_0$ as in the statement of Theorem~\ref{thm:main}, we take the smallest possible $n$ for which there is some $k\geq k_0$ with $n\geq 2k$ for which there is some $n$-vertex graph $G$ with more than $(k-1)(n-k+1)$ edges for which either \textup{\textbf{a)}} $\sum_{\ell\in \cC(G)}\frac{1}{\ell}<\sum_{\ell=2}^k\frac{1}{2\ell}$ or \textup{\textbf{b)}} $\sum_{\ell\in \cC(G)}\frac{1}{\ell}= \sum_{\ell=2}^k\frac{1}{2\ell}$, $e(G)\geq k(n-k)$ and $G$ is not a copy of $K_{k,n-k}$. 
As we show in Section~\ref{sec:conmindeg}, $G$ must then have minimum degree at least $k$, be 2-connected and satisfy a further connectivity condition (see Lemma~\ref{lemma:minimum degree}). More importantly, however, is that we then show that $G$ is not too small, as, in fact, we will have $n>k^{1.02}$ (see Lemma~\ref{lem:few_vertices}). Indeed, if $n$ is smaller, then we can combine results of Liu and Ma (Theorem~\ref{thm:AP}) and Gould, Haxell and Scott (Theorem~\ref{thm:GHS}) to find enough cycles with length in $[4,2k]$ (with some different structure according to different cases) to reach a contradiction.

Assuming, then, that $n>k^{1.02}$, as $e(G)> (k-1)(n-k+1)$, we get that the average degree $d(G)$ of $G$ is at least $(1-o(1))2k$ (in contrast to when $n$ is small when it may be close to $k$). Therefore, we can find a subgraph $\Gamma\subset G$ with similarly good minimum degree, average degree at least only a little below $2k$ and which is a sublinear expander. Here, a \emph{sublinear expander} is a graph satisfying some weak expansion condition (see Section~\ref{sec:sublin}). The sublinear expander $\Gamma$ may be small, and in passing to it we may have lost the precise minimum degree condition needed for our argument for when $G$ is small, but the added sublinear expansion condition will allow us to show that $\Gamma$ will contain an \emph{almost-complete pair} $(A,B)$ (or enough different cycle lengths to yield a contradiction).

\begin{definition}\label{def:almost}
Let $\eps>0$. A pair $(A,B)$ of disjoint sets $A,B\subseteq V(G)$ is \textit{$(\eps,k)$-almost-complete (in $G$)} if $(1-\eps)k\leq |A|< k$, $|B|\geq 2k$ and $\delta(G[A, B])\geq (1-2\eps) k$.
\end{definition}

If $\Gamma$ still has many vertices ($|\Gamma|>k^{1.01})$ then work of Liu and Montgomery (in the slightly modified form of Theorem~\ref{mainthm-new}) applies; with only a little additional work (see Section~\ref{sec:largeexpandercase}), in this case we also get that $\Gamma$ must contain an almost-complete pair.

Overall, these elements combine to show that our minimal counterexample $G$ must contain an almost-complete pair $(A,B)$. Because of the large density of edges between $A$ and $B$, it is easy to find many cycles of different lengths between $A$ and $B$ (see Lemma~\ref{lemma:path lengths in almost complete pairs}), and indeed all even cycle lengths between $4$ and $2|A|\geq (1-\eps)2k$. In comparison to $K_{k,n-k}$, then, we will need to find some other cycle lengths whose harmonic sum is at least the harmonic sum of the even numbers from $2|A|+2$ to $2k$. Recalling the discussion from the introduction, it would suffice to either $\textbf{a)}$ find a new scale at which there is a positive density of even cycle lengths, \textbf{b)} find cycles with each even integer length from  $2|A|+2$ to $2k+2$, or \textbf{c)} find a scale at which there is a positive density of odd cycle lengths (where this scale may be small as we have not yet claimed any odd cycle lengths).

If one were able to find a path of length $\ell$, where $\eps k\ll \ell\leq O(k)$, with both endpoints in $A$ and with interior disjoint from $A\cup B$, then the argument for constructing cycles between almost-complete pairs is robust enough to find similar cycles with this path inserted into it, and thus find the cycle lengths as required by \textbf{a)} or  \textbf{b)} if these lengths are even or, if they are odd, enough for \textbf{c)}. Similarly, if  $G$ contained two edge-disjoint almost-complete pairs, $(A_1, B_1)$ and $(A_2, B_2)$ intersecting in at least four vertices and (say) at most $k/2$ vertices, then we can similarly find the lengths for \textbf{a)} or \textbf{b)}.

This now gives us a feeling for what $G$ might look like: $G$ may consist of a collection of almost-complete pairs which are essentially vertex disjoint with edges added between them (to ensure minimum degree at least $k$) so that there is no path $\eps k\ll \ell\leq O(k)$ with both endpoints in the smaller side of any one of the almost-complete pairs. Where this occurs, we might hope to find a cycle which passes between many different almost-complete pairs often enough, that by varying its length using the almost-complete pairs we can find many different cycle lengths at a new scale (depending on whether they are odd or even cycles then we might satisfy \textbf{a)} or \textbf{c)} above).

This brief sketch is essentially what we do, but in order to make it work we extend the almost-complete pairs to structures we call \emph{clusters} (see Definition~\ref{def:cluster} for a precise definition). At the core of a cluster will be an almost complete pair $(A,B)$, and the cluster can then be formed by iteratively adding vertices with degree at least 2 into the cluster vertices so far.

We will then take an edge-disjoint collection of clusters $\cH=\{H_1, \dots, H_t\}$ which maximises the number of covered edges of $G$. Moreover, subject to this constraint, we will choose $\cH$ which minimises the number of clusters $t$. 
It turns out that this collection of clusters will cover almost all of the vertices of $G$. In fact, it will cover all but fewer than $O(|\cH|)$ vertices (see Lemma~\ref{lem:few uncovered vertices}). In particular, this will show that $\cH$ contains at least one cluster. In Section~\ref{sec:main_thm_proof}, we will complete the proof of Theorem~\ref{thm:main} in two cases, when there is only one cluster (encapsulated in Lemma~\ref{lemma:t=1} and proved in Section~\ref{sec:one_cluster}) and when there are at least two clusters (encapsulated in Lemma~\ref{lem:not_many_clusters} and proved in Section~\ref{sec:case:t>1}). Before we prove each of these cases, we sketch our approach in more detail.

In the rest of this section, we define clusters precisely and introduce our notation for them, before recalling some results from the literature on cycles and then proving a simple property of cycles in almost-complete pairs. In Section~\ref{sec:properties_of_minimal_counterexample}, we prove some properties of a minimal counterexample to Theorem~\ref{thm:main}. In Section~\ref{sec:clusters}, we prove some properties of clusters in a minimal counterexample. In Section~\ref{sec:main_prop_proof}, we prove our key lemma, Lemma~\ref{lem:few uncovered vertices}, which shows that a maximal collection of clusters covers most of the vertices in the graph. Finally, in Section~\ref{sec:main_thm_proof}, we complete the proof of Theorem~\ref{thm:main} as outlined above.

\subsection{Definition and notation of clusters}
We use the following definition for \emph{clusters}, a key part of our proof as outlined in the above proof sketch.
\begin{definition}\label{def:cluster}
 A subgraph $H\subseteq G$ is a \textit{$(\eps,k)$-cluster} if there is some $\ell\geq 0$ for which there is a strictly increasing chain $X^{(0)}\subsetneq X^{(1)}\subsetneq \cdots \subsetneq X^{(\ell)}=V(H)$ satisfying the following two properties.
\begin{enumerate}[label = \textbf{\roman{enumi})}]
    \item  $X^{(0)}=A\cup B$, where $(A, B)$ is $(\eps,k)$-almost-complete in $H$.
    \item For every $i\in [\ell]$ and $v\in X^{(i)}\backslash X^{(i-1)}$, we have $|N_H(v)\cap X^{(i-1)}|\geq 2$, and $|N_H(v)\cap X^{(i-2)}|\leq 1$.
\end{enumerate}
\end{definition}

\noindent
\begin{minipage}[t]{0.58\textwidth}
\vspace{0pt}
\setlength{\parindent}{17pt}
\indent For this definition, we take $X^{(-1)}=\emptyset$ where relevant. Note that adding edges to a cluster maintains it being a cluster. Additionally, adding a vertex which has two edges into a cluster maintains it being a cluster.  Indeed, retain the same almost-complete pair $X^{(0)}$ and define the subsequent sets iteratively, at each step adding all remaining vertices having at least two neighbours in the current set. Every original vertex is eventually added, and so is the new vertex. Moreover, a vertex added at a given step had at most one neighbour two steps earlier, by the choice of the step at which it first enters.
\end{minipage}\hfill
\begin{minipage}[t]{0.37\textwidth}
\vspace{0pt}
\centering
\begin{tikzpicture}[
    x=1cm,
    y=1cm,
    layer/.style={
        draw=black,
        rounded corners=7pt,
        line width=0.6pt
    },
    core/.style={
        draw=black,
        fill=white,
        line width=0.6pt
    },
    point/.style={
        circle,
        fill=black,
        inner sep=1.7pt
    },
    edge/.style={
        draw=black,
        line width=0.65pt
    }
]
    \draw[layer,fill=black!3] (-2.8,-1.7) rectangle (1.8,1.8);

    \node[anchor=south west] at (-2.8,1.2) {$X^{(2)}$};
     \draw[layer,fill=white] (-1.9,-1.55) rectangle (1.4,1.3);
    \node[anchor=south west] at ($(-1.9,0.65)$) {$X^{(1)}$};


    \draw [black!50,fill=black!50] ($(-0.5,-0.3)+(0,0.6)$) -- ($(0.5,-0.3)+(0,1)$) -- ($(0.5,-0.3)+(0,-1)$) -- ($(-0.5,-0.3)-(0,0.6)$) -- cycle;


    \node[anchor=south west] at ($(-0.5,-0.3)+(0,0.6)+(-0.85,-0.1)$) {$X^{(0)}$};
    
    \draw[core] (-0.5,-0.3) ellipse [x radius=0.3, y radius=0.6];
    \draw[core] (0.5,-0.3) ellipse [x radius=0.3, y radius=1];
    \node at (-0.5,-0.6) {$A$};
    \node at (0.5,-0.6) {$B$};

    \node[point] (a1) at (-0.5, 0.15) {};
    \node[point] (b1) at (0.4, 0.3) {};
    \node[point] (b2) at (0.6, -0.1) {};

    \node[point] (u1) at (-0.25,1) {};
    \node[point] (u2) at (0.4,1.5) {};

    \draw[edge] (u1) -- (a1);
    \draw[edge] (u1) -- (b1);
    \draw[edge] (u2) -- (u1);
    \draw[edge] (u2) -- (b2);
\end{tikzpicture}
\captionof{figure}{A cluster of depth two, where $X^{(0)}=A\cup B$.}
\label{fig:cluster}
\end{minipage}

When it is not completely clear which cluster we consider, we will write $X^{(i)}(H)$, $A(H)$, $B(H)$ to denote the corresponding vertex subsets of the cluster $H$.

\subsection{Results on cycle lengths}
We will use the following result of Liu and Ma~\cite{LM18} on arithmetic progressions of cycle lengths in bipartite graphs.

\begin{theorem}\label{thm:AP}
Let $k\in \N$ and let $G$ be a bipartite graph with $\delta(G)\geq k$. Then, $\cC(G)$ contains an arithmetic progression of length $k-1$ with common difference two.
\end{theorem}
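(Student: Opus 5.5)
Theorem~\ref{thm:AP} is a result of Liu and Ma that the paper quotes rather than proves. Still, here is how I would prove it directly, by the longest-path method for bipartite graphs.

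Take a longest path $P=v_0v_1\dots v_m$ in $G$. By maximality, every neighbour of the endpoint $v_0$ lies on $P$. Because $G$ is bipartite, these neighbours are among $v_1,v_3,v_5,\dots$. Since $d(v_0)\geq k$, there are indices $1=i_1<i_2<\dots<i_k$, all odd, with $v_0v_{i_j}\in E(G)$. Each such edge closes a cycle $v_0v_1\dots v_{i_j}v_0$ of length $i_j+1$. This already gives $k-1$ distinct even cycle lengths $i_j+1$ for $j\geq2$, but they need not form an arithmetic progression. So the real task is to fill the gaps and obtain a consecutive run of even lengths.

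To fill the gaps I would use Pósa-type rotations together with bipartite parity. For each $j$, the edge $v_0v_{i_j}$ lets us rotate $P$ into a new longest path $v_{i_j-1}v_{i_j-2}\dots v_0v_{i_j}v_{i_j+1}\dots v_m$ with new endpoint $v_{i_j-1}$. This endpoint also has all of its at least $k$ neighbours on $P$, on the side of the bipartition opposite to it. I would consider the family of cycles formed by combining an edge from $v_0$ and an edge from $v_{i_j-1}$. Their lengths are sums and differences of the index gaps. The aim is to show that the set of achievable lengths contains $2t,2t+2,\dots,2t+2(k-2)$ for some $t$.

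The cleanest route I know is the one used by Liu and Ma via Bondy--Simonovits-type ``$\theta$-graph'' lemmas. Fix a vertex $x$ and a BFS or DFS tree rooted at $x$. Using $\delta\geq k$, find a subgraph consisting of a long path with many chords to one endpoint, arranged so that two overlapping chord families produce cycle lengths $\{i_a+1\}$ and $\{i_b-i_a+\text{const}\}$ interleaved. A counting argument then shows these lengths cannot all skip the same even value unless fewer than $k-1$ values occur.

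An alternative is induction on $k$. Delete a suitable set (a layer) to reduce the minimum degree to $k-1$ inside a subgraph, obtain a progression of length $k-2$ there, and extend it by one using a vertex of full degree $k$ whose extra edge creates a cycle two longer than the maximum.

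The main obstacle is exactly the gap-filling step. The chords from $v_0$ give $k$ lengths, but proving there is no gap in an interval of $k-1$ consecutive even numbers requires the delicate structural lemma. This is the content of Liu--Ma, and it is not something I expect to reproduce quickly. For the purposes of this paper I would simply cite~\cite{LM18}.
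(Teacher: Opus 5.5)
Citing \cite{LM18} is exactly what the paper does. Theorem~\ref{thm:AP} is quoted from Liu and Ma without proof and used only as a black box in the proof of Lemma~\ref{lem:few_vertices}, so your final plan is the right one and nothing more is needed.

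You should not, however, treat any of your sketches as a proof, and the inductive one fails as stated.

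\begin{itemize}
\item \textbf{Longest path.} This step is correct, but it gives only the $k-1$ distinct even lengths $i_2+1<\dots<i_k+1$, with no control on the gaps between them.
\item \textbf{Rotations.} This idea gives no mechanism forcing sums and differences of index gaps to fill a run of consecutive even numbers. The neighbours of the two endpoints can sit at essentially arbitrary positions of the right parity.
\item \textbf{$\theta$-graphs.} This paragraph only gestures at an argument.
\item \textbf{Induction.} Knowing that some subgraph has cycle lengths $2t,2t+2,\dots,2t+2(k-3)$ tells you nothing about how those cycles meet the remaining edges. So there is no reason an extra edge at a vertex of degree $k$ closes a cycle of length exactly $2t+2(k-2)$.
\end{itemize}

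The missing idea is a rooted strengthening. One proves the statement for paths between two fixed vertices $x,y$, in a $2$-connected setting, with the degree condition imposed only on vertices other than $x$ and $y$. Then an inductive step can shift a whole arithmetic progression of $x,y$-path lengths at once, and only has to produce one further path of the right length. The cycle version follows by taking $x,y$ to be the ends of an edge.

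The Liu--Ma proof is built on a lemma of this rooted type, and that lemma is where all the difficulty lies.
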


We will also use the following result of Gould, Haxell and Scott~\cite[Theorem~1]{GHS02}.

\begin{theorem}\label{thm:GHS}
There exists $C>0$ such that the following holds for each $c\in (0,1)$ with $K=C\cdot c^{-5}$. Let $G$ be a graph with $n\geq 45K/c^4$ vertices and minimum degree at least $cn$. Then $G$ contains a cycle of length $\ell$ for
every even integer $\ell\in[4, \ec(G)-K]$ and every odd integer $\ell\in [K, \oc(G)-K]$, where $\ec(G)$ and $\oc(G)$ are the lengths of the longest even and odd cycle in $G$, respectively.
\end{theorem}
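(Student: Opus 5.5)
This statement is quoted from Gould, Haxell and Scott~\cite{GHS02}, and in the paper it is used as a black box. If I were to prove it, I would use a ``cycle-shortening'' strategy. I would first get the short even lengths directly from density. Then I would show that any long cycle can be shortened, by the same parity step, through every length down to a bounded threshold. Throughout, $K=Cc^{-5}$ and $n\geq 45K/c^4$. These bounds are meant to let every pigeonhole and counting step below run with room to spare.

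\textbf{Step 1: short even cycles.} Take a maximum cut of $G$ and pass to a subgraph. This gives a bipartite subgraph $G'$ with $\delta(G')\geq cn/4$ (say). Theorem~\ref{thm:AP} then gives an arithmetic progression of even cycle lengths of length about $cn/4$. A direct argument is more convenient, because it fixes the starting point of the progression. By Kővári--Sós--Turán, $G'$ contains a dense bipartite piece with sides of size $\Omega(c^2 n)$. Inside it, all even lengths $4,6,\dots,\Omega(c^2n)$ are obtained by routing a path alternately between the two sides with greedy extension. I expect the argument to need $\Omega(c^2n)\geq 2K$ at this point, so that Step 1 overlaps with the range handled by Step 2.

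\textbf{Step 2: shortening lemma.} Let $C$ be a cycle of length $L$ with $L\geq K'$, where $K'$ is some threshold below $K$. The aim is to show that $G$ contains a cycle of length $L-2$. Iterating this, starting from a longest even cycle (length $\ec(G)$) and from a longest odd cycle (length $\oc(G)$), gives every even length in $[K',\ec(G)]$ and every odd length in $[K',\oc(G)]$, up to the losses of $K$ incurred below.

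To prove the lemma, cut $C$ into $L/m$ consecutive arcs of length $m=\Theta(c^{-2})$. Every vertex has at least $cn$ neighbours, and either many of them lie on $C$ or many lie off $C$. In the first case I would count chords by their ``offset'', the distance along $C$ between the endpoints. Pigeonhole over pairs of arcs then produces two crossing chords $x_1y_1$ and $x_2y_2$ whose endpoints are at distances $1$ or $2$ along $C$. Rerouting $C$ through these two chords gives a cycle of length exactly $L-2$. In the second case, vertices off $C$ with many neighbours on $C$ give common neighbours $w$ of two vertices at distance $3$ on $C$. Replacing the arc of length $3$ by the path through $w$ (two edges) shortens $C$ by exactly $1$. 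Applying such operations twice, or combining one with a chord swap, again gives $L-2$.

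\textbf{Main obstacle.} The hard part is guaranteeing that the rerouted closed walk is a genuine cycle of length exactly $L-2$ every time, and not merely a cycle whose length is only roughly controlled. A naive chord swap discards a large arc, and the parity bookkeeping between the two swapped chords is delicate. My first attempt would be a ``local absorber'': a $K_{2,2}$ or a short theta-graph whose two branches differ in length by exactly $2$ (or by $1$), located within $O(m)$ steps of two points of $C$. I would find it by counting: density $c$ forces $\Omega(c^4 n^4)$ copies of $C_4$. Inserting it locally changes the length by exactly the required amount and leaves the rest of $C$ intact. The losses at both ends of the range come from this step and account for the $-K$ terms: near the top, the absorber has to be found within a cycle that is already maximal; near the bottom, arcs of length $m$ no longer fit.
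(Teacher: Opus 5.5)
\textbf{There is a genuine gap, and it sits exactly at your ``main obstacle'': nothing in Step~2 produces a cycle of length exactly $L-2$.} The paper does not prove this statement. It is Theorem~1 of \cite{GHS02}, used as a black box in Lemmas~\ref{lem:few_vertices} and~\ref{lem:smalldensenoalmostcomplete}, so your sketch has to stand on its own, and it does not.

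Take chords $x_1y_1,x_2y_2$ with $x_1,x_2,y_1,y_2$ in this order on $C$. The rerouted cycle (chord $x_1y_1$, the arc back from $y_1$ to $x_2$, chord $x_2y_2$, the arc on from $y_2$ to $x_1$) has length $L-a-b+2$, where $a$ and $b$ are the lengths of the arcs from $x_1$ to $x_2$ and from $y_1$ to $y_2$. So $(a,b)=(1,1)$ gives $L$, $(1,2)$ gives $L-1$, and only $a+b=4$ gives $L-2$. Pigeonhole over arcs of length $m$ yields only $a,b\le m$, that is, a drop by an uncontrolled amount in $[0,2m-2]$.

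Density does not force $a+b=4$. Suppose $6\mid L$, write $C=v_0v_1\cdots v_{L-1}$, and set $h(v_{2t})=h(v_{2t+1})=t \bmod 3$. Take as chords all $v_iv_j$ with $h(v_i)+h(v_j)\equiv 0\pmod 3$, except those of offset at most $10$. Every vertex gets about $L/3$ chords. However, $h$ increases by exactly $1$ under a shift by $2$ or $3$ along $C$, and by $0$ or $1$ under a shift by $1$. Hence no two crossing chords have $(a,b)\in\{(2,2),(1,3),(3,1)\}$.

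The off-cycle case fails the same way. A vertex off $C$ whose neighbours on $C$ are the $v_i$ with $6\mid i$ has $L/6$ such neighbours, but it only changes the length by $-(6k-2)$. In a bipartite graph, an off-cycle vertex never sees two vertices at distance $3$ on $C$.

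The absorber does not repair this. A global count of $\Omega(c^4n^4)$ copies of $C_4$ says nothing about where they sit relative to $C$. What you need is a theta graph with one branch a subpath of $C$, the other branch avoiding $C$, and branch lengths differing by $2$. In its smallest forms this is an offset-$3$ chord, or an off-cycle vertex joined to two vertices at distance $4$ on $C$, which is the original problem.

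There is also an inconsistency pointing to the same gap. As stated, your lemma applies to \emph{every} cycle of length at least $K'$. Iterating it would give all lengths of each parity up to $\ec(G)$ and $\oc(G)$ with no loss at the top, which is strictly more than the theorem claims. Your explanation of the top loss (the absorber must be found in a cycle that is already maximal) cannot apply to a lemma that is fed non-maximal cycles.

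What is missing is a genuine mechanism for exact length control. This would be some gadget realising every length of one parity across a long window (compare the adjusters in the appendix). It would be found near a longest cycle of the right parity and spliced into a long arc of it at bounded cost. That bounded cost, rather than any property of maximal cycles, is the natural source of the $-K$ terms and of the lower threshold $K$ for odd lengths.

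Step~1 also needs repair, though this is secondary. K\H{o}v\'ari--S\'os--Tur\'an does not give complete bipartite subgraphs of linear size. Greedy extension also does not close a path into a cycle of prescribed length.
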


We will use the following result of Brandt~\cite{Brandt1997ShortCycles} on cycles in non-bipartite dense graphs.

\begin{theorem}\label{thm:brandt}
Any non-bipartite $n$-vertex graph $G$ with more than $\frac{1}{4}(n-1)^2+1$ edges contains cycles of all possible lengths from 3 up to the length of a longest cycle in $G$.
\end{theorem}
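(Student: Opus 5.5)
Since this result is quoted from Brandt~\cite{Brandt1997ShortCycles}, I only sketch how I would prove it. The plan is a \emph{downward} argument along cycle lengths. I would show that if $G$ satisfies the hypothesis and contains a cycle of length $\ell\ge 4$, then it also contains a cycle of length $\ell-1$. Starting from a longest cycle, this gives every length down to $3$.

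First I would dispose of the triangle case. There is a classical stability fact (in the spirit of Andr\'asfai--Erd\H{o}s--S\'os) which I take as given: a triangle-free non-bipartite $n$-vertex graph has at most $\frac14(n-1)^2+1$ edges. To prove it, take a shortest odd cycle $C_{2r+1}$ with $r\ge 2$. Each outside vertex has at most $2$ neighbours on it, since it is triangle-free and a shortest odd cycle. The rest $G-V(C)$ is triangle-free, so it has at most $\frac14(n-2r-1)^2$ edges by Mantel. Summing gives the bound. Hence our $G$ contains a triangle, and the goal reduces to the step $C_\ell\Rightarrow C_{\ell-1}$ for $4\le \ell\le c(G)$.

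For the step, fix a cycle $C=c_1\dots c_\ell$ and suppose there is no $C_{\ell-1}$. I would extract two local constraints.
\begin{enumerate}[label=\textbf{\roman{enumi})}]
\item $C$ has no chord $c_ic_{i+2}$. More generally, chords are restricted: a pair of crossing chords $c_ic_j$, $c_{i+1}c_{j+2}$ would close an $(\ell-1)$-cycle, and similar patterns are excluded.
\item An outside vertex $v$ cannot be adjacent to both $c_i$ and $c_{i+3}$. Two outside neighbours $u,w$ that are adjacent to each other cannot have neighbours $c_i$ and $c_{i+4}$ respectively, and so on.
\end{enumerate}
From ii) each outside vertex has at most about $\ell/2$ neighbours on $C$, and in fact at most about $\ell/3$ unless its neighbourhood is highly structured. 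From i), $e(G[V(C)])\le \ell^2/4$, with near-equality only when $G[V(C)]$ is close to bipartite.

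Combining these with a bound on $e(G-V(C))$ should give $e(G)\le \frac14(n-1)^2+1$, a contradiction. For $e(G-V(C))$ I would use induction on $n$ where possible, and otherwise the trivial $\frac14(n-\ell)^2$. Here it helps that $C$ may be taken to be a longest cycle, or chosen minimally among cycles of lengths $\ge \ell$.

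The main obstacle is the extremal near-bipartite configuration. There $G-V(C)$ is a dense bipartite graph, $C$ is even, and every outside vertex sees exactly half of $C$ in a parity-consistent way, so the naive count only gives $\approx n^2/4$ rather than $(n-1)^2/4+1$. To handle it, I would use the non-bipartiteness of $G$. An odd cycle, or a triangle, must interact with this near-bipartite structure. Rerouting $C$ through an edge inside a ``colour class'' changes the parity of the cycle length. This either produces the $C_{\ell-1}$ directly or forces so many missing edges that the count drops below the threshold.
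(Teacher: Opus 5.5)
The paper does not prove this statement: it is quoted from Brandt and used as a black box, so your sketch has to stand on its own, and it does not. Your first step is correct. A shortest odd cycle $C_{2r+1}$ with $r\ge 2$ is induced, and every outside vertex has at most two neighbours on it. Mantel's theorem on the remaining $n-2r-1$ vertices then gives $\frac14(n-1)^2+1-\bigl[(2r+1)+2(n-2r-1)+\frac14(n-2r-1)^2\bigr]=(r-2)(n-r-1)\ge 0$, so $G$ has a triangle.

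The gap is the step $C_\ell\Rightarrow C_{\ell-1}$, which is the whole content of the theorem and is not actually argued.
\begin{itemize}
\item The bound $e(G-V(C))\le\frac14(n-\ell)^2$ is not ``trivial''. It is Mantel's bound and needs $G-V(C)$ to be triangle-free. That fails in general: for $\ell<c(G)$ the cycle $C$ is not a longest cycle, and $G-V(C)$ can contain triangles and cycles longer than $\ell$. The trivial bound is $\binom{n-\ell}{2}$, which destroys the count.
\item Induction on $n$ gives information about cycle lengths, not edge counts. The natural conversion is Erd\H{o}s--Gallai when $c(G-V(C))\le\ell-2$, giving $e(G-V(C))\le\frac12(\ell-2)(n-\ell-1)$. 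For $\ell\approx n/2$ the total $\frac14\ell^2+\frac12\ell(n-\ell)+\frac12(\ell-2)(n-\ell-1)\approx\frac{5}{16}n^2$ is far above the threshold.
\item What must really be controlled is how paths \emph{inside} $G-V(C)$ attach to $C$. A path of length $p$ between outside neighbours of $c_i$ and $c_j$ closes cycles of length $p+2$ plus either arc. The ``and so on'' in ii) does not handle this.
\item Once $\ell<c(G)$ you cannot take $C$ longest. Choosing $C$ ``minimally among cycles of length $\ge\ell$'' just returns an arbitrary $\ell$-cycle, so constraints such as ``no outside vertex is adjacent to consecutive $c_i,c_{i+1}$'' are unavailable.
\end{itemize}

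Even granting all your bounds, $\frac14\ell^2+\frac12\ell(n-\ell)+\frac14(n-\ell)^2=\frac14n^2$, about $n/2$ above $\frac14(n-1)^2+1$. Closing exactly this gap is the theorem, and your last paragraph supplies no mechanism for it. ``Rerouting $C$ through an edge inside a colour class'' need not be available. The configurations the count must exclude include ones where the odd cycles sit in a small piece meeting the dense bipartite part at a single cut vertex, such as a triangle hung on $K_{a,a}$, so they never interact with $C$.

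What is missing is a global argument with three parts:
\begin{itemize}
\item a stability statement that $G$ is close to a balanced complete bipartite graph;
\item the observation that an edge inside a class, between two vertices of large degree into the other class, yields odd cycles of essentially all lengths up to about $n$;
\item an edge count showing that if the odd cycles avoid such vertices, $\Omega(n)$ edges are lost.
\end{itemize}
None of this is supplied. As written, the proposal is a plan rather than a proof, and the paper is right to handle the statement by citation.
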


Finally, we will use the following classical result of Erd\H{o}s and Gallai~\cite{ErdosGallai1959MaximalPathsCircuits}.

\begin{theorem}\label{thm:EG} Any graph $G$ with $d(G)\geq 2$ has a cycle with length at least $d(G)$.
\end{theorem}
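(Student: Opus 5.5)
We would prove the equivalent contrapositive form: if $G$ has circumference (length of a longest cycle) equal to $L\geq 3$, then $e(G)\leq L(n-1)/2$, where $n=|G|$. This suffices. If $d(G)\geq 2$ then $e(G)\geq n$, so $G$ is not a forest and $L\geq 3$ exists. Suppose $L<d(G)$. Then $d(G)=2e(G)/n\leq L(n-1)/n<L$, which is a contradiction.

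\textbf{Induction setup.} We prove $e(G)\leq \max\{n-1,\,L(n-1)/2\}$ for every graph $G$ on $n$ vertices all of whose cycles have length at most $L$, where $L\geq 3$ is fixed. The induction is on $n$. When $G$ is a forest the term $n-1$ covers it, and when $G$ has a cycle this maximum equals $L(n-1)/2$.

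\textbf{Reductions.} First, if $G$ is disconnected, apply induction to each component and add the bounds. Since $\sum_i (n_i-1)\leq n-1$, the bound holds for $G$.

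Second, if $G$ is connected but has a cut vertex, decompose $G$ into its blocks $B_1,\dots,B_m$ with $n_i$ vertices each. For a connected graph, $\sum_i (n_i-1)=n-1$. Each block is either a single edge, with $1\leq L(n_i-1)/2$, or is $2$-connected with circumference at most $L$. So the bound follows from the $2$-connected case applied to each block.

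Third, if some vertex $v$ has $d(v)\leq L/2$, delete it. Induction gives $e(G-v)\leq L(n-2)/2$, and adding $d(v)\leq L/2$ edges gives $e(G)\leq L(n-1)/2$.

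\textbf{Remaining case.} We may now assume $G$ is $2$-connected with $\delta(G)>L/2$, so $\delta(G)\geq (L+1)/2$.

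Here we invoke Dirac's lemma: a $2$-connected graph with minimum degree $\delta$ has a cycle of length at least $\min\{n,2\delta\}$. This gives a cycle of length at least $\min\{n,L+1\}$. A cycle of length $L+1$ is impossible, so the minimum must be $n$, which forces $n\leq L$. Then trivially
\[
e(G)\leq \binom n2=\frac{n(n-1)}{2}\leq \frac{L(n-1)}{2}.
\]

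\textbf{Main obstacle: proving Dirac's lemma.} We would use the standard longest-path argument. Take a longest path $P=x_0x_1\dots x_p$. By maximality, all neighbours of $x_0$ and of $x_p$ lie on $P$.

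If $x_0$ is adjacent to some $x_{i+1}$ and $x_p$ is adjacent to $x_i$, we obtain a cycle through all vertices of $P$. Such a cycle must be Hamiltonian: otherwise, by connectivity, some vertex outside it is adjacent to it, and this yields a longer path.

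Otherwise the index sets $\{i: x_0x_{i+1}\in E\}$ and $\{i: x_px_i\in E\}$ are disjoint. We then use $2$-connectivity to choose two "crossing" chords, namely a neighbour $x_a$ of $x_0$ and a neighbour $x_b$ of $x_p$ with $a>b$, together with the $\delta$ neighbours at each end, to close a cycle of length at least $2\delta$. The careful bookkeeping in this last step, with the first neighbour of $x_p$ placed against the last neighbour of $x_0$ and the case where no crossing exists handled through a second path supplied by $2$-connectivity, is where we expect the real work to lie. The rest of the argument is routine induction.
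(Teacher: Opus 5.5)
The paper gives no proof of this statement; it quotes Erd\H{o}s and Gallai. Your proposal reconstructs the classical argument, and its outer layer is correct. The induction is sound: split into components, then into blocks using $\sum_i(n_i-1)=n-1$, then delete a vertex of degree at most $L/2$. The term $n-1$ in your maximum is redundant, since $L\ge 3$. In the remaining case, Dirac's theorem (a $2$-connected graph has circumference at least $\min\{n,2\delta\}$) together with $2\delta\ge L+1$ forces $n\le L$. Hence $e(G)\le\binom n2\le L(n-1)/2$ and $d(G)<L$. If you cite Dirac's 1952 theorem as a black box, which is exactly as legitimate as the paper citing Erd\H{o}s--Gallai, the proof is complete.

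Your sketch of Dirac's theorem, however, does not prove it, because $2$-connectivity does not let you choose crossing chords; they need not exist.
\begin{itemize}
\item \textbf{A counterexample to the sketch.} Take two copies of $K_{\delta+1}$ joined by two disjoint edges, and a Hamilton path $P$ whose ends lie in different copies and avoid the joining edges. Every neighbour of $x_0$ precedes every neighbour of $x_p$ on $P$, yet the graph has a Hamilton cycle.
\item \textbf{The crossing case needs no $2$-connectivity.} Suppose chords $x_0x_a$, $x_px_b$ with $b<a$ exist, and take $a-b$ minimal (so $a\ge b+2$). Then no neighbour of $x_0$ or $x_p$ lies strictly between $x_b$ and $x_a$. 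So the disjoint sets $S=\{i:x_0x_{i+1}\in E\}$ and $T=\{i:x_px_i\in E\}$ lie in $\{0,\dots,b\}\cup\{a-1,\dots,p-1\}$. Hence the cycle $x_0x_1\dots x_bx_px_{p-1}\dots x_ax_0$ has length $(b+1)+(p-a+1)\ge d(x_0)+d(x_p)$.
\item \textbf{The non-crossing case is the real content, and you leave it open.} This is the case $a:=\max\{i:x_0x_i\in E\}\le b:=\min\{i:x_px_i\in E\}$. Here one extra path is not enough in general. A $P$-bridge (a path of length at least $1$ with only its ends on $P$) avoiding $x_a$ may land strictly between $x_a$ and $x_b$.
\end{itemize}

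One way to finish the non-crossing case is as follows.
\begin{itemize}
\item Every $x_c$ with $c<a$ is joined to $x_a$ inside $\{x_0,\dots,x_a\}$ by a path of length at least $d(x_0)$: walk $x_c\dots x_0$, jump to the first neighbour of $x_0$ after $x_c$, then walk on to $x_a$. Symmetrically, every $x_c$ with $c>b$ is joined to $x_b$ inside $\{x_b,\dots,x_p\}$ by a path of length at least $d(x_p)$.
\item Since no $x_j$ is a cut vertex, you can pick $P$-bridges from $x_{s_i}$ to $x_{t_i}$, with $t_i$ maximal among bridges leaving $\{x_0,\dots,x_{t_{i-1}-1}\}$, where $t_0=a$.
\item Maximality makes these bridges internally disjoint, with $t_{i-1}\le s_{i+1}<t_i$.
\item Stop at the first $t_r>b$. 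The bridges, the two long paths, and the disjoint segments $x_{t_{i-1}}\dots x_{s_{i+1}}$ and $x_{t_{r-1}}\dots x_b$ of $P$ then form a zig-zag cycle of length at least $d(x_0)+d(x_p)\ge 2\delta$.
\end{itemize}
Without this step or a citation of Dirac's theorem, your argument is not self-contained.
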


\subsection{Paths in almost-complete pairs}
Here we confirm the robust existence of paths in almost-complete pairs (see Definition~\ref{def:almost}).
\begin{lemma}\label{lemma:path lengths in almost complete pairs} Let $1/k\ll \eps\ll 1$. Let $G$ be a graph containing an $(\eps,k)$-almost-complete pair $(A,B)$.
Let $F\subset A\cup B$ satisfy $|F|\leq k/10$.
Then, for any distinct vertices $u, v\in A\backslash F$ and any integer $2\leq \ell\leq |A\backslash F|-1$, $G[A,B]-F$ contains a $u,v$-path of length $2\ell$.
\end{lemma}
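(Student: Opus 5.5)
We build the path greedily, alternating between $A$ and $B$, using the fact that every vertex of $A$ is adjacent to almost all of $B$. The key quantitative point is that any two vertices $a,a'\in A$ have many common neighbours in $B$ outside $F$. Indeed, each of $a,a'$ has at least $(1-2\eps)k$ neighbours in $B$. However, $B$ may be much larger than $k$, so two such neighbourhoods need not intersect.

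Handling this is the main obstacle. We first pass to a bounded portion of $B$ in which the neighbourhoods overlap. We count edges between $A\setminus F$ and $B\setminus F$: this is at least $|A\setminus F|\bigl((1-2\eps)k-|F|\bigr)$. Next, consider the set $B'$ of vertices $b\in B\setminus F$ with $d(b,A\setminus F)\geq |A\setminus F|/2+1$. It is not clear that $B'$ is nonempty or large, since the degree condition only constrains $A$-vertices. So we argue differently, using common neighbourhoods directly, without requiring $B'$.

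We want a $u,v$-path $u=a_0,b_1,a_1,b_2,\dots,b_\ell,a_\ell=v$ with distinct $a_i\in A\setminus F$ and distinct $b_i\in B\setminus F$. Since $|A\setminus F|\geq \ell+1$, we can choose the intermediate vertices $a_1,\dots,a_{\ell-1}$ as any distinct elements of $A\setminus(F\cup\{u,v\})$. It then suffices to find a system of distinct representatives $b_i\in N(a_{i-1})\cap N(a_i)\cap (B\setminus F)$ for $i\in[\ell]$.

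The difficulty is that $N(a)\cap N(a')$ may be empty when $|B|\gg k$. For example, if $A$ is split in two halves whose neighbourhoods in $B$ are disjoint, each of size $(1-2\eps)k$, this is consistent with the definition only if each vertex of $A$ has $(1-2\eps)k$ neighbours. But then no $u,v$ path exists at all when $u,v$ lie in different halves, so the lemma as stated would be false.

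Hence the intended reading must be that $\delta(G[A,B])\geq(1-2\eps)k$ also bounds $B$-degrees from below. That is, every $b\in B$ has at least $(1-2\eps)k$ neighbours in $A$, so $b$ misses at most $|A|-(1-2\eps)k\leq 2\eps k$ vertices of $A$. We adopt this reading.

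With it, the argument is straightforward. Set $A'=A\setminus(F\cup\{u,v\})$, of size at least $\ell-1$, and $B'=B\setminus F$, of size at least $2k-k/10$.

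1. Consider the vertices $b\in B'$ adjacent to both $u$ and $v$. At most $|F|+|B'\setminus N(u)|$ vertices fail to be adjacent to $u$. This set has size at least $(1-2\eps)k-k/10$, which is positive.

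2. To keep the path away from $F\cap A$, we build the path greedily with vertices of $A$ chosen as needed. Start at $u$ and pick $b_1\in N(u)\cap B'$. Then pick $a_1\in N(b_1)\cap A'$ unused. This is possible since $b_1$ misses at most $2\eps k$ vertices of $A$, while $|A'|-(\text{used})$ can be small near the end.

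3. The end of the path is the delicate part. We reserve the last steps: the final three vertices $a_{\ell-1},b_\ell,v$ are chosen by a counting argument at the end, and greedy choices are made for steps $1,\dots,\ell-2$ while using only vertices of $A'$ of high degree.

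4. Near the end, when few vertices of $A'$ remain unused, we use the following fact. Any remaining $a\in A'$ and $v$ have at least $2(1-2\eps)k-|B'|$ common neighbours, which may fail. Instead, we use the $B$-side degrees: the current endpoint $b$ is adjacent to all but $2\eps k$ vertices of $A$. We choose the set $S$ of $\ell-1$ interior $A$-vertices in advance and count the $b\in B'$ adjacent to all pairs needed. Specifically, we greedily extend a path alternating through $S\cup\{u,v\}$. Each step needs a $b$ adjacent to the current $a$ and to some unused target in $S$. All $b\in N(a)\cap B'$ (at least $0.8k$ of them, minus used) are adjacent to all but $2\eps k$ of the targets, so this is fine while more than $2\eps k$ targets remain.

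5. For the last $2\eps k$ steps, the finish is handled by a rotation (Pósa-type) swap. We exchange the remaining targets with earlier path vertices $a_j$, using that any $a$ and $a_j$ have many common $B$-neighbours among the roughly $0.8k$ unused $B'$ vertices. Thus we reinsert each remaining target between its neighbours on the path, or before $v$. This absorption at the end is where we expect the main technical effort.
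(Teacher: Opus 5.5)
There is a genuine gap, and it sits in the part you defer as ``the main technical effort''. A side remark first: your reading of the hypothesis is not an interpretation but the definition. $\delta(G[A,B])$ is a minimum over all of $A\cup B$, so every $b\in B$ misses fewer than $2\eps k$ vertices of $A$.

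The two facts you lean on in the endgame are not available. First, two vertices of $A$ need not have any common neighbour in $B$. Split $B=B_1\cup B_2$ with $|B_1|=|B_2|=k$, join $u$ only to $B_1$, $v$ only to $B_2$, and every other vertex of $A$ to all of $B$. This pair is $(\eps,k)$-almost-complete, but $N(u,B)\cap N(v,B)=\emptyset$. So your step~1, and the claim in step~5 that any $a$ and $a_j$ have many common $B$-neighbours, are false. What does hold is weaker: each $x\in A\setminus F$ has at least $k/2$ common $B$-neighbours with all but fewer than $k/10$ vertices of $A\setminus F$. This follows because there are fewer than $2\eps k|N(x,B)|$ non-edges between $N(x,B)$ and $A$.

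Second, your greedy step needs an \emph{unused} vertex of $N(a)\cap(B\setminus F)$, which you estimate as ``$0.8k$ minus used''. But the path consumes $\ell$ vertices of $B$, and $\ell$ can be as large as $|A\setminus F|-1$, which may exceed $|N(a)\cap(B\setminus F)|$. For instance, suppose one vertex $a^*\in A$ has exactly $\lceil(1-2\eps)k\rceil$ neighbours in $B$ and all others are complete to $B$. Take $F=\emptyset$ and $\ell=|A|-1$; then the path must contain $a^*$. Arbitrary greedy choices can exhaust $N(a^*,B)$ before $a^*$ is reached. Your step~3, which defers low-degree vertices to the end, makes this more likely. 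Your absorption step only draws on unused $B$-vertices, so it cannot recover. It also never addresses closing the path at $v$ with exactly the prescribed length.

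What is missing is the completion of the reformulation you wrote down and then abandoned: fix the sequence of $A$-vertices, then choose distinct representatives $b_i\in N(a_{i-1})\cap N(a_i)\cap(B\setminus F)$. The sequence cannot be arbitrary. The paper forms the graph $L$ on $A\setminus F$ joining pairs with at least $k/2$ common $B$-neighbours, and deduces $\delta(L)\geq 7|L|/8$ from the count above. It then takes a $u,v$-path with exactly $\ell$ edges in $L$. If $\ell\leq 3|L|/4$ this is done greedily. Otherwise it deletes $|L|-\ell-1$ vertices other than $u,v$ and uses that minimum degree at least $(n+1)/2$ makes a graph Hamilton-connected.

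The representatives are then chosen simultaneously via Hall's theorem, not one at a time. Each edge of this path has at least $k/2-|B\cap F|\geq 2k/5$ candidates in $B\setminus F$. Each $b\in B\setminus F$ is a non-candidate for at most $4\eps k$ edges of the path. Hence any set of more than $2k/5$ edges has all of $B\setminus F$ as candidates, and $|B\setminus F|\geq 1.9k>\ell$. This global matching step is exactly what resolves the depletion problem that your greedy-plus-absorption scheme leaves open.
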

\begin{proof}
Set $a:=|A\setminus F|\ge (9/10-\eps)k$. 
Create an auxiliary graph $L$ with vertex set $A\setminus F$ and, for each distinct $x,y\in V(L)$, an edge $xy\in E(L)$ when $|N_G(x,B)\cap N_G(y,B)|\geq k/2$. Then, using the definition of an $(\eps,k)$-almost-complete pair, for each $x\in V(L)$ there are at most $2\eps k \cdot |N_G(x,B)|$ non-edges between $N_G(x,B)$ and $A\setminus F$ in $G$. Thus, at most $k/10$ vertices in $A\setminus F$ can have more than $20\eps |N_G(x,B)|$ non-neighbours in $N_G(x,B)$. As $|N_G(x,B)|\geq (1-2\eps)k$, we therefore have $d_L(x)\geq |A\setminus (F\cup \{x\})|-k/10\ge a-1-k/10\geq 7a/8$. Thus, $\delta(L)\geq 7|V(L)|/8$.

Now, let $u, v\in A\backslash F$ be distinct and let $\ell\in\N$ satisfy $2\leq \ell\leq |A\backslash F|-1$. We now find a path, $P$ say, in $L$ with length $\ell$. If $\ell\leq 3a/4$, then this can be done greedily from the minimum degree condition, building a path of length $\ell-2$ from $u$ and then finding a common neighbour to attach it to $v$. If $\ell\geq 3a/4$, then this can be done by removing $|A\setminus F|-\ell-1\leq a/4$ vertices of $V(L)\setminus \{u,v\}$ from $L$ and noting that the resulting graph, $L'$ say, has minimum degree at least $(|L'|+1)/2$ and hence has a Hamilton path from $u$ to $v$ by a standard variant of Dirac's theorem (see~\cite{Dirac1952AbstractGraphs,Ore1963HamiltonConnected}).

Then, create a bipartite auxiliary graph $L^*$ with vertex classes $E(P)$ and $B\setminus F$ where, for each $xy\in E(P)$ and $b\in B\setminus F$ we put an edge $(xy)b$ in $L^*$ if $b\in N_G(x)\cap N_G(y)$. As $P\subset L$, each $xy\in E(P)$ satisfies $d_{L^*}(xy)\ge k/2-|B\cap F|\geq 2k/5$. Furthermore, for each $b\in B\setminus F$, we have that $eb\in E(L^*)$ for all but at most $4\eps k$ edges $e\in E(P)$ as $b$ has at most $2\eps k$ non-neighbours in $A\setminus F$.
We claim that 
  Hall's matching condition holds from $E(P)$ into $B\setminus F$ in $L^*$. Indeed, consider some $X\subseteq E(P)$. If $|X|\le 2k/5$, then picking any $xy\in X$ we have $|N(X)|\ge d_{L^*}(xy)\ge 2k/5$. If $|X|>2k/5$, then for every   $b\in B\setminus F$, the non-neighbourhood of $b$ is too small to contain $X$ --- and so $|N_{L^*}(X)|=|B\setminus F|\ge 2k-k/10\ge  |X|$.
So by Hall's Theorem,  we can find distinct vertices $b_e\in B\setminus F$, $e\in E(P)$, such that $eb_e\in E(L^*)$ for each $e\in E(P)$. Replacing each edge $xy\in E(P)$ on the path $P$ by the edges $xb_{xy}$ and $b_{xy}y$ (which are in $G$ as $(xy)b_{xy}\in E(L^*)$), we get a $u,v$-path of length $2\ell$ in $G$, as required.
\end{proof}
For convenience, we will note the following corollary on the cycle lengths in almost complete pairs.

\begin{corollary}\label{cor:cycles in almost complete pairs} Let $1/k\ll \eps\ll 1$. Let $G$ be a graph containing an $(\eps,k)$-almost-complete pair $(A,B)$. Then, $G[A,B]$  contains a cycle of any even length between $4$ and $2|A|$.
\end{corollary}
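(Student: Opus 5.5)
The plan is to derive Corollary~\ref{cor:cycles in almost complete pairs} directly from Lemma~\ref{lemma:path lengths in almost complete pairs}, applied with a small forbidden set $F$. Fix an even length $2m$ with $2\le m\le |A|$. A cycle of length $2m$ in $G[A,B]$ will be obtained by choosing a vertex $b\in B$ together with two of its neighbours $u,v\in A$, and then finding a $u,v$-path of length $2m-2$ in $G[A,B]$ avoiding $b$. Closing this path up with the edges $ub$ and $bv$ gives the required cycle.

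Concretely, choose any $b\in B$. By Definition~\ref{def:almost}, $d_G(b,A)$ is not directly controlled, but we can still find a suitable $b$ by counting. Since $\delta(G[A,B])\ge(1-2\eps)k$, the number of edges between $A$ and $B$ is at least $|A|(1-2\eps)k$, and this is positive. So some $b\in B$ has at least two neighbours in $A$: more strongly, averaging over $B$ gives such a $b$, and in fact every $b$ does, though we do not need that. Fix such a $b$ and two of its neighbours $u,v\in A$, and set $F:=\{b\}$, so that $|F|\le k/10$ and $u,v\in A\setminus F$. For $m\ge 3$, put $\ell:=m-1$, so that $2\le \ell\le |A|-1=|A\setminus F|-1$. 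Lemma~\ref{lemma:path lengths in almost complete pairs} then provides a $u,v$-path of length $2\ell=2m-2$ in $G[A,B]-b$, and adding $b$ closes it into a cycle of length $2m$. The path avoids $b$, so the cycle is genuine. The upper end of the range works out exactly: $m=|A|$ gives $\ell=|A|-1=|A\setminus F|-1$, which the lemma allows.

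The remaining case is $m=2$, i.e.\ a $4$-cycle, which the lemma does not cover since it would need $\ell=1$. For this, take two vertices $x,y\in A$. Each has at least $(1-2\eps)k$ neighbours in $B$, and there are few non-edges overall, so they share at least two common neighbours in $B$. This is exactly the counting already done in the proof of the lemma, where the auxiliary graph $L$ has minimum degree close to $|V(L)|$ and its edges correspond to pairs with at least $k/2\ge2$ common neighbours. Taking any edge $xy$ of $L$ therefore gives a $4$-cycle. The only mild obstacle is this boundary case $m=2$ together with checking the index range at $m=|A|$; both are handled above, and everything else is an immediate application of the lemma.
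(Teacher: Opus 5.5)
Your argument is correct and is essentially the paper's: for $m\ge 3$ you take a vertex $b\in B$ adjacent to $u,v\in A$, get a $u,v$-path of length $2m-2$ in $G[A,B]-b$ from Lemma~\ref{lemma:path lengths in almost complete pairs}, and close it through $b$, treating the $4$-cycle separately. Two small corrections: $\delta(G[A,B])\ge(1-2\eps)k$ does bound $d_G(b,A)$ for every $b\in B$ (your averaging remark would not suffice on its own, since $|B|$ may be huge), and the paper gets the $4$-cycle more simply because any two vertices of $B$ share at least $(1-4\eps)k\ge 2$ neighbours in $A$ as $|A|<k$, whereas two arbitrary vertices of $A$ need not share any neighbour in $B$ (only $L$-adjacent pairs do, which is what you end up using).
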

\begin{proof}
Firstly, note that any two vertices of $B$ have at least $(1-4\eps)k\geq 2$ common neighbours in $A$, and therefore $G$ contains a cycle of length $4$ as $|B|\geq 2k\geq 2$. Then, let $\ell$ satisfy $2\leq \ell\leq |A|-1$. Pick $b\in B$ and distinct vertices $u, v\in N(b)\cap A$. By Lemma~\ref{lemma:path lengths in almost complete pairs} there exists a $u,v$-path $P_{uv}$ in $G[A,B]-b$ with length $2\ell$. Using the edges $ub$ and $bv$ to close $P_{uv}$ into a cycle now gives a cycle of length $2\ell+2$. Thus, $G[A,B]$  contains a cycle of any even length between $4$ and $2|A|$, as required.
\end{proof}



\section{Properties of a minimal counterexample}\label{sec:properties_of_minimal_counterexample}

In this section, we prove some useful properties of a minimum counterexample to Theorem~\ref{thm:main}. In Section~\ref{sec:conmindeg} we study its connectivity and its minimum degree, while in Section~\ref{sec:counterxamplenotsmall} we show that it is quite large, that is, $n>k^{1.02}$. 

\subsection{Connectedness and minimum degree of a minimum counterexample}\label{sec:conmindeg}

The properties we show here for a minimum counterexample (i.e., those in Lemma~\ref{lemma:minimum degree}) will follow rather straight-forwardly as long as we can show that the number of vertices in the counterexample, $n$, is larger than $2k$. If $n=2k$, then, as $e(G)\geq k^2$ with equality only when $G$ is a copy of $K_{k,k}$, we have that $G$ is not bipartite. This allows us to combine a result of Brandt (Theorem~\ref{thm:brandt}) with the classical Erd\H{o}s-Gallai theorem (Theorem~\ref{thm:EG}) to show that $G$ contains cycles of all lengths between 3 and $k$, and thus harmonic sum in excess of $\sum_{\ell=2}^k\frac{1}{2\ell}$, as shown below, a contradiction.

\begin{lemma}\label{lemma:minimum degree} Let $1/k_0\ll 1$.
Let $n$ be the least integer for which there is an integer $k_0\leq k\leq n/2$ and an $n$-vertex graph $G$ with more than $(k-1)(n-k+1)$ edges for which either \textup{\textbf{a)}} $\sum_{\ell\in \cC(G)}\frac{1}{\ell}<\sum_{\ell=2}^k\frac{1}{2\ell}$ or \textup{\textbf{b)}} $\sum_{\ell\in \cC(G)}\frac{1}{\ell}= \sum_{\ell=2}^k\frac{1}{2\ell}$, $e(G)\geq k(n-k)$ and $G$ is not a copy of $K_{k,n-k}$.

Then, $n\geq 2k+1$ and any such graph $G$ satisfies the following. \textup{\textbf{i)}} $\delta(G)\geq k$. \textup{\textbf{ii)}} $G$ is 2-connected. \textup{\textbf{iii)}} There is no partition $V(G)=A\cup S\cup B$ with $|S|\leq k-1$ and $|A\cup S|, |B\cup S|\geq 2k+2$ in which $S$ separates $A$ and $B$.
\end{lemma}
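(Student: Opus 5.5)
The plan is to establish $n\geq 2k+1$ first, and then derive \textbf{i)}--\textbf{iii)} by deleting a vertex or splitting along a separator, each time producing a smaller graph that contradicts the minimality of $n$.

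\textbf{The case $n=2k$.} Here $e(G)>(k-1)(k+1)=k^2-1$, so $e(G)\geq k^2$. A bipartite graph on $2k$ vertices with at least $k^2$ edges must be $K_{k,k}$. But $K_{k,k}$ satisfies neither \textbf{a)} nor \textbf{b)}, so $G$ is not bipartite. Since $k^2>\frac14(2k-1)^2+1$, Theorem~\ref{thm:brandt} gives cycles of every length from $3$ up to the circumference of $G$. As $d(G)\geq k$, Theorem~\ref{thm:EG} shows that this circumference is at least $k$. Hence
\[
s(G)\geq\sum_{\ell=3}^k\frac1\ell>\sum_{\ell=2}^k\frac1{2\ell}
\]
for large $k$, a contradiction. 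Therefore $n\geq 2k+1$.

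\textbf{Minimum degree.} Suppose $d(v)\leq k-1$ and set $G'=G-v$, a graph on $n-1\geq 2k$ vertices. It satisfies $e(G')>(k-1)(n-k+1)-(k-1)=(k-1)((n-1)-k+1)$ and $\cC(G')\subseteq\cC(G)$, so $s(G')\leq s(G)$. In case \textbf{a)}, $G'$ is a smaller counterexample. In case \textbf{b)}, we have $e(G')\geq k(n-k)-(k-1)>k(n-1-k)$. Then either $s(G')<s(G)$, or $G'$ has equality with strictly more edges than $K_{k,n-1-k}$ and so is not a copy of it. Either way $G'$ is a smaller counterexample, contradicting minimality.

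\textbf{Separators (part iii)).} Let $A\cup S\cup B$ be such a partition, and set $G_1=G[A\cup S]$ and $G_2=G[B\cup S]$, with $n_i=|G_i|\geq 2k+2$. Then $n_1+n_2=n+|S|$ and $e(G)\leq e(G_1)+e(G_2)$. If $e(G_i)\leq (k-1)(n_i-k+1)$ held for both $i$, we would get
\[
e(G)\leq (k-1)(n+|S|-2k+2)\leq (k-1)(n-k+1),
\]
using $|S|\leq k-1$, which is impossible. So some $G_i$ exceeds the threshold. By minimality of $n$, $s(G_i)\geq\sum_{\ell=2}^k\frac1{2\ell}$, and since $\cC(G_i)\subseteq \cC(G)$ this already rules out case \textbf{a)}.

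\textbf{Main obstacle: the equality case \textbf{b)}.} There we must find a cycle of $G$ whose length lies outside $\cC(G_i)$. If $G_i$ is itself in the equality regime and, by minimality, is $K_{k,n_i-k}$, its cycle lengths are exactly $\{4,\dots,2k\}$. I would then use $\delta(G)\geq k$ on the other side $G_j$ together with the connections through $S$ to produce either an odd cycle or a cycle of length greater than $2k$. If instead $e(G_i)<k(n_i-k)$, I would compare edge counts more carefully to show that the other side $G_j$ must be dense enough to add a new cycle length.

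\textbf{2-connectivity (part ii)).} This cannot be read off from \textbf{iii)} directly, because the sides of a cut vertex need not have $2k+2$ vertices. Take a leaf block $L$ with cut vertex $v$. Every vertex of $L-v$ has all its $\geq k$ neighbours inside $L$, so $L$ has minimum degree at least $k$ apart from $v$.
\begin{itemize}
\item If $|L|\geq 2k+2$ and the other side also has at least $2k+2$ vertices, we are in the setting of \textbf{iii)}.
\item If $|L|<2k+2$, then $L$ is a 2-connected graph with minimum degree close to $|L|/2$. If it is non-bipartite, a Brandt/Bondy-type pancyclicity argument (Theorem~\ref{thm:brandt}, after checking the edge count) gives all cycle lengths from $3$ up to about $k$, which is too many. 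If it is bipartite, the degree condition forces it to be nearly $K_{k,k}$, which I expect to handle by the same comparison as in the equality case above.
\end{itemize}
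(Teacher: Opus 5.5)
Your arguments for $n\ge 2k+1$, for \textbf{i)}, and for case \textbf{a)} of \textbf{iii)} are correct and match the paper. The genuine gap is case \textbf{b)} of \textbf{iii)}. You flag it as the main obstacle but offer only an unexecuted plan (building an odd cycle or a cycle longer than $2k$ through $S$). No structural argument is needed. The missing idea is to run your edge count against the thresholds $k(n_i-k)$ rather than $(k-1)(n_i-k+1)$. Since $|S|\le k-1$,
\[
k(n_1-k)+k(n_2-k)=k(n-k)+k(|S|-k)<k(n-k)\le e(G)\le e(G_1)+e(G_2),
\]
so some side has $e(G_i)>k(n_i-k)=\big((k+1)-1\big)\big(n_i-(k+1)+1\big)$. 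As $2(k+1)\le n_i<n$, apply minimality of $n$ with $k+1$ in place of $k$; this is where the paper uses the hypothesis $|A\cup S|,|B\cup S|\ge 2k+2$. It yields $s(G_i)\ge\sum_{\ell=2}^{k+1}\frac1{2\ell}>\sum_{\ell=2}^{k}\frac1{2\ell}$, contradicting \textbf{b)}. Applying minimality with $k$ itself also works: $G_i$ has more than $k(n_i-k)$ edges, so it is not $K_{k,n_i-k}$, and the equality clause forces $s(G_i)>\sum_{\ell=2}^k\frac1{2\ell}$. In particular, your subcase ``$G_i\cong K_{k,n_i-k}$'' never needs to be handled.

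Part \textbf{ii)} is also not closed. The leaf-block reduction does not cover a disconnected $G$. In the case $|L|<2k+2$, the degree condition does not give what you claim. For $|L|=2k+1$ it only yields $e(L)\ge k^2+1=\frac14(|L|-1)^2+1$, not the strict inequality Brandt's theorem needs. Indeed, $K_{k,k}$ minus an edge $xy$, with $v$ joined to $x$ and $y$, is non-bipartite and triangle-free. For $|L|\in\{2k,2k+1\}$ non-bipartiteness is not forced either, and your bipartite subcase is deferred to the unresolved equality argument.

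The paper avoids this by counting edges across the separator $U$, where $|U|\le 1$ and the sides $G_1,G_2$ satisfy $|G_1|+|G_2|\le n+1$. Some side satisfies $e(G_1)\ge (k-1)(|G_1|-k/2)$ in case \textbf{a)}, or $e(G_1)\ge k\big(|G_1|-\frac{k+1}{2}\big)$ in case \textbf{b)}. Minimality of $n$ then forces $|G_1|<2k$. With $|G_1|\le 2k-1$ and every vertex of $G_1-U$ having at least $k$ neighbours in $G_1$, any two such vertices share a neighbour. Hence $G_1$ is non-bipartite and $e(G_1)>\frac14(|G_1|-1)^2+1$. As $d(G_1)>k-1$, Theorems~\ref{thm:EG} and~\ref{thm:brandt} give every cycle length in $[3,k]$, a contradiction.
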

\begin{proof} Let $G$ be a graph demonstrating the minimality of $n$. We will first show that $n\geq 2k+1$. Since by assumption we have $n\ge 2k$, suppose to the contrary that $n=2k$. Then, we have $e(G)\geq (k-1)(k+1)+1=k^2$. As neither \textbf{a)} nor \textbf{b)} holds when $G$ is a copy of $K_{k,k}$ (the only $2k$-vertex bipartite graph with $k^2$ edges), $G$ is thus not bipartite. Therefore, by Theorem~\ref{thm:brandt}, as $e(G)\geq k^2>\frac{1}{4}(2k-1)^2+1$, $\mathcal{C}(G)$ contains each integer $\ell$ with $3\leq \ell\leq \max \mathcal{C}(G)$. As $d(G)\geq 2e(G)/(2k)\geq k$, by Theorem~\ref{thm:EG}, we have that $\max (\mathcal{C}(G))\geq k$, and hence $\mathcal{C}(G)$ contains each integer $\ell$ with $3\leq \ell \leq k$. Thus,
\begin{equation}\label{eq:sumtocontradict}
\sum_{\ell\in \cC(G)}\frac{1}{\ell}-\sum_{\ell=2}^k\frac{1}{2\ell}\geq \sum_{\ell=3}^{k}\frac1{\ell}-\sum_{\ell=2}^k\frac{1}{2\ell}=\Big(\frac{1}{3}+\frac{1}{4}-\frac{1}{4}-\frac{1}{6}-\frac{1}{8}\Big)+\sum_{\ell=5}^k\Big(\frac{1}{\ell}-\frac{1}{2\ell}\Big)>0.
\end{equation}
This contradicts $\sum_{\ell\in \cC(G)}\frac{1}{\ell}\leq \sum_{\ell=2}^k\frac{1}{2\ell}$, and thus $n\geq 2k+1$.

Using this, we now show \textbf{i)} that $\delta(G)\geq k$. Indeed, if some vertex $v\in V(G)$ has degree at most $k-1$, then $G':=G-v$ has $n-1$ vertices and $e(G')\geq e(G)-(k-1)>(k-1)(n-k)$. As $n-1\geq 2k$, by the minimality of $n$, we must then have that neither \textbf{a)} nor \textbf{b)} holds with $(G',n-1)$ in place of $(G,n)$. As $\sum_{\ell\in \cC(G')}\frac{1}{\ell}\leq \sum_{\ell\in \cC(G)}\frac{1}{\ell}$, this is only possible if \textbf{b)} holds for $(G,n)$ but \textbf{b)} does not hold with $(G',n-1)$ in place of $(G,n)$. Thus, we have $e(G)\geq k(n-k)$, $\sum_{\ell\in \cC(G)}\frac{1}{\ell}\leq \sum_{\ell=2}^k\frac{1}{2\ell}$ and that $G$ is not a copy of $K_{k,n-k}$. As $e(G')\geq e(G)-(k-1)>k(n-1-k)$, and $\sum_{\ell\in \cC(G')}\frac{1}{\ell}\leq \sum_{\ell\in \cC(G)}\frac{1}{\ell}\leq \sum_{\ell=2}^k\frac{1}{2\ell}$, we must have that $G'$ is a copy of $K_{k,n-1-k}$, which contradicts $e(G')>k(n-1-k)$. Thus, we have $\delta(G)\geq k$.

Next we show \textbf{ii)} that $G$ is $2$-connected. Suppose to the contrary that there is some minimal $U\subset V(G)$ with $|U|\leq 1$ for which $G-U$ is disconnected. Let $V(G-U)=A_1\cup A_2$ be a partition into non-empty sets so that there are no edges between $A_1$ and $A_2$ in $G$. For each $i\in [2]$, let $G_i=G[A_i\cup U]$. Then, $|G_1|+|G_2|-1\leq |G|$ and $e(G)=e(G_1)+e(G_2)$.

As $\delta(G)\geq k$, each vertex in $G_1-U$ has degree at least $k$ in $G_1$. Thus, $|G_1|\geq k+1$, and if $U=\emptyset$, then $d(G_1)\geq \delta(G_1)\geq k$. If $U\neq \emptyset$, then there is at least one edge from $U$ to $V(G_1)\setminus U$, whereupon $2e(G_1)\geq 1+(|G_1|-1)k$, so that $d(G_1)>\frac{(|G_1|-1)k}{|G_1|}\geq k-1$. Thus, in either case, we have $d(G_1)>k-1$. Similarly, we have $d(G_2)>k-1$.

We will show that we can assume (by relabelling $G_1$ and $G_2$ if necessary) that $|G_1|<2k$ and $e(G_1)>\frac{1}{4}(|G_1|-1)^2+1$.
First, suppose that \textbf{b)} holds in the statement of the lemma for $G$. Then, as
\[
k(|G_1|-(k+1)/2)+k(|G_2|-(k+1)/2)\leq k(n-k)\leq e(G)
\]
without loss of generality, we can assume that $e(G_1)\geq k(|G_1|-(k+1)/2)$. As $G_1$ is therefore not a copy of $K_{k,|G_1|-k}$, to avoid a contradiction we must have that $|G_1|<2k$. Thus, as $|G_1|\geq k+1$ and $|G_1|<2k$, we have
\[
e(G_1)\geq k\left(|G_1|-\frac{k+1}{2}\right)\geq \left(\frac{|G_1|-1}{2}\right)\left(\frac{|G_1|+1}{2}\right)> \frac14(|G_1|-1)^2+1.
\]
Thus, if  \textbf{b)} holds in the statement of the lemma for $G$, $|G_1|<2k$ and $e(G_1)>\frac{1}{4}(|G_1|-1)^2+1$.

Suppose, then, that \textbf{a)} holds in the statement of the lemma for $G$.
Note that, if $e(G_i)< (k-1)(|G_i|-k/2)$ for each $i\in [2]$, then
\[
e(G)< (k-1)(|G_1|+|G_2|-k)\leq (k-1)(n-k+1),
\]
a contradiction. Therefore, without loss of generality, we can assume that $e(G_1)\geq (k-1)(|G_1|-k/2)$.
As $\sum_{\ell\in \cC(G_1)}\frac{1}{\ell}\leq \sum_{\ell\in \cC(G)}\frac{1}{\ell}<\sum_{\ell=2}^k\frac{1}{2\ell}$ and $e(G_1)\geq (k-1)(|G_1|-k/2)>(k-1)(|G_1|-k+1)$ (using $1/k\ll 1$), by the minimality of $n$ we must have that $|G_1|<2k$.

Let $n_1=|G_1|$, so that $n_1\leq 2k-1$ and $e(G_1)\geq (k-1)(n_1-k/2)$. Now, as $n_1<2k$ (and hence $\frac{1}2(n_1-1)\leq k-1$), we have
\begin{equation}
e(G_1)\geq (k-1)(n_1-k/2)\geq \frac{1}{4}(n_1-1)(2n_1-k)\geq \frac{1}{4}(n_1-1)\cdot n_1>\frac{1}{4}(n_1-1)^2+1,
\end{equation}
where we have used that $n_1\geq k$ and $1/k\ll 1$.

Thus, whichever of \textbf{a)} or \textbf{b)} holds for $G$, we can assume that $|G_1|<2k$ and $e(G_1)>\frac{1}{4}(|G_1|-1)^2+1$. Furthermore, then, as each vertex in $G_1-U$ has degree at least $k$ in $G_1$, and $|G_1|<2k$, any two vertices in $G_1-U$ have a common neighbour in $G_1$. Therefore, as no such pair can be opposite parts of a bipartition of $G_1$ (and $\delta(G_1-U)>0$), $G_1$ must be non-bipartite.

As $d(G_1)>k-1$, by Theorem~\ref{thm:EG}, $G_1$ has a cycle with length at least $k$. Thus, by Theorem~\ref{thm:brandt}, $\mathcal{C}(G_1)$ contains each integer $\ell$ with $3\leq \ell \leq k$. Similarly to \eqref{eq:sumtocontradict},
this implies that $\sum_{\ell\in \cC(G)}{1}/{\ell}\geq \sum_{\ell\in \cC(G_1)}{1}/{\ell}>\sum_{\ell=2}^k{1}/({2\ell})$, contradicting $\sum_{\ell\in \cC(G)}\frac{1}{\ell}\leq \sum_{\ell=2}^k\frac{1}{2\ell}$.
Therefore, we must have that $G$ is 2-connected.

Finally we show that  \textbf{iii)} holds. Suppose, for contradiction, that there does exist such a partition $V(G)=A\cup S\cup B$, and denote the cardinalities of these sets by $a, s$ and $b$ respectively. Then, we have $a+b+s=n$ and $2k+2\leq |A\cup S|, |B\cup S|< n$.

Suppose that the condition \textup{\textbf{a)}} holds. If $e(G[A\cup S])\leq (k-1)(a+s-k+1)$ and $e(G[B\cup S])\leq (k-1)(b+s-k+1)$, then
 \begin{align*}
    e(G)\leq e(G[A\cup S])\!+\!e(G[B\cup S])&\leq (k\!-\!1)(a\!+\!s\!-\!k\!+\!1)+(k\!-\!1)(b\!+\!s\!-\!k\!+\!1)\\
    &=(k\!-\!1)(a\!+\!b\!+\!s\!-\!k\!+\!1)+(k\!-\!1)(s\!-\!k\!+\!1)\le (k\!-\!1)(n\!-\!k\!+\!1),
\end{align*} contradicting the assumption on $e(G)$.
Thus, either $G[A\cup S]$ has more than $(k-1)(a+s-k+1)$ edges or $G[B\cup S]$ has more than $(k-1)(b+s-k+1)$ edges. Suppose, by relabelling if necessary, that  $e(G[A\cup S])> (k-1)(a+s-k+1)$. Then, by minimality of $n$ (using that $2k+2\leq a+s<n$), we have $\sum_{\ell\in \cC(G[A\cup S])}\frac{1}{\ell}\geq \sum_{\ell=2}^{k}\frac{1}{2\ell}$, so that $\sum_{\ell\in \cC(G)}\frac{1}{\ell}\geq \sum_{\ell\in \cC(G[A\cup S])}\frac{1}{\ell}\geq \sum_{\ell=2}^{k}\frac{1}{2\ell}$, which contradicts \textup{\textbf{a)}}.

Suppose, then, that the condition \textup{\textbf{b)}} holds. If $e(G[A\cup S])\leq k(a+s-k)$ and $e(G[B\cup S])\leq k(b+s-k)$, then
 \[e(G)\leq e(G[A\cup S])+e(G[B\cup S])\leq k(a+s-k)+k(b+s-k)=k(a+b+s-k)+k(s-k)<k(n-k),\]
 which contradicts the assumption \textup{\textbf{b)}}. Therefore, by relabelling if necessary, we can assume that $e(G[A\cup S])>k(a+s-k)$.
By the minimality of $n$, as $2(k+1)\leq a+s<n$, we know that there is no graph $G'$ with $e(G')>k(a+s-k)$ and $\sum_{\ell\in \cC(G')}\frac{1}{\ell}< \sum_{\ell=2}^{k+1}\frac{1}{2\ell}$. Therefore, we must have $\sum_{\ell\in \cC(G[A\cup S])}\frac{1}{\ell}\geq \sum_{\ell=2}^{k+1}\frac{1}{2\ell}>\sum_{\ell=2}^{k}\frac{1}{2\ell}$, and hence $\sum_{\ell\in \cC(G)}\frac{1}{\ell}\geq \sum_{\ell\in \cC(G[A\cup S])}\frac{1}{\ell}> \sum_{\ell=2}^{k}\frac{1}{2\ell}$, contradicting the assumption \textup{\textbf{b)}}. This achieves a contradiction in all cases, and thus \textup{\textbf{iii)}} holds.
\end{proof}


\subsection{A minimal counterexample is quite large}\label{sec:counterxamplenotsmall}
We now confirm that the minimal counterexample is quite large (with more than $k^{1.02}$ vertices). The main tools we use for this are the results of Liu and Ma (Theorem~\ref{thm:AP}), Gould, Haxell and Scott (Theorem~\ref{thm:GHS}) and Erd\H{o}s and Gallai (Theorem~\ref{thm:EG}), which otherwise we can combine to find enough cycles with length in $[4,2k]$ to reach a contradiction.


\begin{lemma}\label{lem:few_vertices}
Let $1/k_0\ll 1$.
Let $n$ be the least integer for which there is an integer $k_0\leq k\leq n/2$ and an $n$-vertex graph $G$ with more than $(k-1)(n-k+1)$ edges for which either \textup{\textbf{a)}} $\sum_{\ell\in \cC(G)}\frac{1}{\ell}<\sum_{\ell=2}^k\frac{1}{2\ell}$ or \textup{\textbf{b)}} $\sum_{\ell\in \cC(G)}\frac{1}{\ell}= \sum_{\ell=2}^k\frac{1}{2\ell}$, $e(G)\geq k(n-k)$ and $G$ is not a copy of $K_{k,n-k}$. Then, $n>k^{1.02}$.
\end{lemma}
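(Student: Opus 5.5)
Suppose for contradiction that $n\le k^{1.02}$ and let $G$ be as in the statement. By Lemma~\ref{lemma:minimum degree}, $n\ge 2k+1$, $\delta(G)\ge k$ and $G$ is 2-connected. Since $\delta(G)\ge k\ge n^{1/1.02}$, $G$ is fairly dense, but not linearly dense. So I would not apply Theorem~\ref{thm:GHS} to $G$ directly. Instead I would first pass to a dense piece: taking $c$ to be a small absolute constant (say $c=1/100$), either $G$ contains a subgraph $G'$ with $\delta(G')\ge c|G'|$ and $\delta(G')\ge (1-\eta)k$, or the bound $n\le k^{1.02}$ fails. To find $G'$, I would run a standard density-increment / deletion argument. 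Consider a random or greedy partition of $V(G)$ into parts of size $\Theta(k)$. Because the density is $k/n\ge n^{-0.02}$, only $O(1)$ rounds of halving are needed before the minimum degree becomes a constant fraction of the order. Each round loses at most a factor $1-o(1)$ in degree, since $0.02\log n$ rounds cost little. This step is where the exponent $1.02$ is used.

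With $G'$ in hand, I would split into cases according to the structure of $G'$.

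\textbf{Case 1: $G'$ is far from bipartite.} Here $G'$ contains odd cycles, and Theorem~\ref{thm:GHS} gives every odd length in $[K,\oc(G')-K]$ and every even length in $[4,\ec(G')-K]$. By Theorem~\ref{thm:EG} (or Dirac-type arguments using $\delta(G')\ge (1-\eta)k$), $G'$ has cycles of length at least about $\delta(G')\ge(1-\eta)k$. I would also need odd cycles of length near $k$; these come from the non-bipartiteness together with 2-connectivity of the dense piece. The odd lengths in $[K,(1-2\eta)k]$ then contribute about $\tfrac12\log k$, while the even ones contribute about $\tfrac12\log k$ as well. The total is roughly $\log k$, far exceeding $\sum_{\ell=2}^k 1/(2\ell)\approx \tfrac12\log k$.

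\textbf{Case 2: $G'$ is bipartite, or close to it.} Then Theorem~\ref{thm:AP}, applied to a bipartite subgraph with minimum degree about $\delta(G')$, gives even cycle lengths forming a progression of $(1-\eta)k$ terms with difference two. These are not necessarily starting at $4$, but Theorem~\ref{thm:GHS} fills in $[4,\ec-K]$. This only yields roughly $\{4,\dots,2(1-\eta)k\}$, which falls short by about $\eta$ in harmonic weight. The deficit must be recovered by exploiting the precise edge count $e(G)>(k-1)(n-k+1)$ together with $n\le k^{1.02}$. If $G$ is nearly bipartite with small side of size less than $k$, the edge count forces many vertices to have large degree into the small side. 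Then either the small side has size at least $k$, giving even cycles up to $2k$ (and equality analysis via uniqueness of $K_{k,n-k}$), or the long cycle guaranteed by Theorem~\ref{thm:EG} for average degree $d(G)\ge 2(k-1)(n-k+1)/n$ has length exceeding $2k$. Gould–Haxell–Scott would then fill in all even lengths up to about $2k$ plus a positive-density chunk beyond it.

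I expect the main obstacle to be this exact-accounting step: converting the approximate conclusions from Theorems~\ref{thm:AP} and~\ref{thm:GHS}, which lose additive constants $K$ and multiplicative $(1-\eta)$ factors, into the exact inequality \eqref{eq:suminmainthm}. I would handle it by choosing $c$ depending only on absolute constants so that $K=O(1)$. I would then show that any missing lengths near $2k$ are compensated either by odd cycles at some scale, which contribute $\Omega(1)$, or by even cycles slightly above $2k$. The case $n$ close to $2k$, where the average degree is only about $k$, needs Brandt's Theorem~\ref{thm:brandt} as in Lemma~\ref{lemma:minimum degree}.
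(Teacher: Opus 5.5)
There is a genuine gap, and it starts with your first step. The proposed passage to a subgraph $G'$ with $\delta(G')\ge c|G'|$ for an absolute constant $c$ and $\delta(G')\ge (1-\eta)k$ is false in general. A random graph on $n=k^{1.02}$ vertices with all degrees close to $2k$ satisfies everything you have used about $G$. Yet every set of at least $k$ of its vertices has edge density $\Theta(k^{-0.02})$, so it has no subgraph of order $O(k)$ with minimum degree near $k$. Restricting to one part of a partition scales degrees down proportionally; it does not cost only a $1-o(1)$ factor.

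The step is also unnecessary. Theorem~\ref{thm:GHS} holds for every $c\in(0,1)$ with the explicit loss $K=Cc^{-5}$. The paper therefore applies it to $G$ itself with $c=k^{-0.02}/2$, so that $\delta(G)\ge k\ge cn$, $K\le k^{0.2}$ and $n\ge k\ge 45K/c^4$. This is exactly where $n\le k^{1.02}$ is used. A polynomial loss $K\le k^{0.2}$ is harmless, since the odd lengths in $[k^{0.2},k/7]$ alone carry harmonic weight about $0.4\log k$. Insisting on $K=O(1)$ is what pushed you into the false density increment.

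The second gap is the exact accounting, which you leave open, and your suggested fix does not work. The edge count only gives $d(G)>2(k-1)(n-k+1)/n$, which is below $2k$, so Theorem~\ref{thm:EG} never produces a cycle longer than $2k$. The paper avoids any $(1-\eta)$ deficit by showing $G$ is exactly bipartite, not merely close to bipartite. Suppose the longest odd cycle $S$ had $|S|\le k/3$. Then $\delta(G-V(S))\ge 2k/3$, so Theorem~\ref{thm:EG} gives a cycle $S'$ of length at least $2k/3$ in $G-V(S)$. Two disjoint $V(S)$--$V(S')$ paths, which exist by 2-connectivity, then give an odd cycle of length at least $k/3$. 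Together with $\ec(G)\ge k/2$, the two ranges from Theorem~\ref{thm:GHS} give $s(G)>\frac12\log k\ge\sum_{\ell=2}^k\frac1{2\ell}$.

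Once $G$ is bipartite, Theorem~\ref{thm:AP} applied to $G$ itself gives a progression of exactly $k-1$ even lengths. Combined with all even lengths in $[4,\ec(G)-k^{0.2}]$, this yields every even length from $4$ to $2k$.

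Finally, case b) needs a uniqueness argument that you do not supply. With all of $\{4,\dots,2k\}$ present there is no cycle of length $\ge 2k+2$. Hence, by Theorem~\ref{thm:AP}, there is no subgraph of minimum degree $k+1$. Repeatedly deleting vertices of degree $\le k$ preserves $e\ge k(|\cdot|-k)$ and ends at a bipartite $2k$-vertex graph with at least $k^2$ edges, i.e.\ $K_{k,k}$. 2-connectivity and the absence of a $(2k+2)$-cycle then force $G\cong K_{k,n-k}$.
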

\begin{proof} Let $G$ be a graph demonstrating the minimality of $n$. Assume, for contradiction, that $n\leq k^{1.02}$. By Lemma~\ref{lemma:minimum degree}, we have that $\delta(G)\geq k$ and $G$ is 2-connected.
Using Theorem~\ref{thm:GHS}, let $C$ be such that the property in that theorem holds.
Let $c=k^{-0.02}/2$, so that $\delta(G)\geq k\geq cn$.  Then, setting $K=C\cdot c^{-5}$, and using that, as $1/k\ll 1$,  $K\leq k^{0.2}$ and $n\geq k\geq 45K/c^4$, by the property of $C$ applied with $G$, $\cC(G)$ contains all even cycle lengths between $4$ and $\ec(G)-K$ and all odd cycle lengths between $K$ and $\oc(G)-K$.

We now show that $G$ is bipartite. Suppose otherwise, for contradiction. Let $S$ be a longest odd cycle of $G$, and suppose that $|S|\leq k/3$. Then, $\delta(G-S)\geq 2k/3$ so that, by Theorem~\ref{thm:EG}, $G-S$ contains a cycle, $S'$ say, with length at least $2k/3$. As $G$ is 2-connected, by Menger's theorem, $G$ contains two vertex-disjoint paths from $V(S)$ to $V(S')$. Observe that these two paths, in conjunction with $S$ and $S'$, contain an odd cycle with length at least $|S'|/2+3\geq k/3$. Thus, $\oc(G)\geq k/3$.

As $G$ contains a bipartite subgraph with average degree at least $d(G)/2\geq k/2$, by Theorem~\ref{thm:EG} again we have $\ec(G)\geq d(G)/2\geq k/2$. Therefore, we have
\begin{align*}
\sum_{\ell\in \mathcal{C}(G)}\frac{1}{\ell}&= \sum_{\ell\in \mathcal{C}(G):\ell=0\,\mathrm{mod}\,2}\frac{1}{\ell}+\sum_{\ell\in \mathcal{C}(G):\ell=1\,\mathrm{mod}\,2}\frac{1}{\ell}\geq \sum_{\ell=2}^{\lfloor k/4-K \rfloor}\frac{1}{2\ell}+\sum_{\ell=\lceil K\rceil }^{\lfloor k/6-K\rfloor}\frac{1}{2\ell-1
}\\ &\geq
\frac12\sum_{\ell=2}^{\lfloor k/5\rfloor}\frac1\ell
+
\frac12\sum_{\ell=\lceil k^{0.2}\rceil}^{\lfloor k/7\rfloor}
\frac1\ell \geq
\frac12\left(\log\frac{k}{5}-1\right)
+
\frac12\log\frac{k}{14k^{0.2}} >\frac12 \log k\ge \sum_{\ell=2}^k\frac{1}{2\ell},
\end{align*}
a contradiction. Thus, $G$ is bipartite.

Therefore, by Theorem~\ref{thm:AP}, $\cC(G)$ contains an arithmetic progression of length $k-1$ with common difference two. As $\ec(G)\geq k$, and $\cC(G)$ contains every even number between $4$ and $\ec(G)-k^{0.2}$, we therefore have that $\cC(G)$ contains all even cycle lengths between $4$ and $2k$ (either $\ec(G)\geq 2k+k^{0.2}$ in which case ``between $4$ and $\ec(G)-k^{0.2}$'' gives what we want, or else $\ec(G)< 2k+k^{0.2}$ in which case the progression  starts at some   $a\le 2k+k^{0.2}-2(k-2)<k-k^{0.2}\le \ec(G)-k^{0.2}$ and ends at $a+2(k-2)\ge 4+2(k-2)=2k$. In the latter case we have all even numbers from $4$ to $\ec(G)-k^{0.2}$ and from $a$ to $a+2(k-2)$ which together contain all even numbers from $4$ to $2k$). 
Thus, $\sum_{\ell\in \mathcal{C}(G)}\frac{1}{\ell}\geq \sum_{\ell=2}^k\frac{1}{2\ell}$ and, furthermore, there must be no cycle in $G$ with length at least $2k+2$ as otherwise we have $\sum_{\ell\in \mathcal{C}(G)}\frac{1}{\ell}> \sum_{\ell=2}^k\frac{1}{2\ell}$.

Thus, we have $\sum_{\ell\in \mathcal{C}(G)}\frac{1}{\ell}\geq \sum_{\ell=2}^k\frac{1}{2\ell}$ and $G$ has no cycle with length at least $2k+2$.
 As \textbf{a)} in the statement of the lemma does not hold, we have that $e(G)\geq k(n-k)$ and $G$ is not a copy of $K_{k,n-k}$. If $G$ contains a subgraph with minimum degree $k+1$, then, by Theorem~\ref{thm:AP} applied to such a subgraph, $\cC(G)$ contains an arithmetic progression of length $k$ of common difference two, and hence a cycle with length at least $2k+2$, a contradiction. Therefore, we can iteratively remove vertices from $G$ until we reach a subgraph $G'\subset G$ with $2k$ vertices and at least $k(|G'|-k)=k^2$ edges, which therefore is a balanced, complete bipartite graph.

If $G-V(G')$ contains an edge $uv$, then, as $G$ is 2-connected, there is (through $uv$) a path, $P$ say, from $V(G')$ to $V(G')$ with length at least 3. Joining this to a path in $V(G')$ between its endpoints of length $\ge 2k-2$ we get a cycle of length at least $(2k-2)+3=2k+1$. Since this is an even cycle it has length $\ge 2k+2$, a contradiction. 
Letting $V(G')=A\cup B$ be the bipartition of $G'$ (so that $|A|=|B|=k$), as $G$ has no cycle of length $2k+2$, there is no pair of vertices $a,b\in V(G)\setminus V(G')$ with $d_G(a,B)\geq 2$ and $d_G(b,A)\geq 2$. Therefore, as $\delta(G)\geq k$, we must have that $G$ is a copy of $K_{k,n-k}$, a contradiction.
\end{proof}


\section{Properties of clusters in a minimal counterexample}\label{sec:clusters}

In this section, we will establish several properties of clusters, which we gather into Lemma~\ref{lem:cluster properties}. We will use the following definition. 

\begin{definition}\label{def:cluster cycle} A \textit{cluster cycle} is a collection of $s\geq 2$ edge-disjoint clusters $H_{1}, \dots, H_{s}$ and $s$ vertex-disjoint paths $P_1, \dots, P_s$, each of length at most $4$, such that, for each $j\in [s]$, $P_j$ starts in $H_{j}$ and ends in $H_{j+1}$, where the indices are considered modulo $s$ (i.e., $H_{s+1}=H_1$).
\end{definition}

\begin{lemma}\label{lem:cluster properties} Let $1/k_0\ll \eps \ll 1$.
Let $n$ be the least integer for which there is an integer $k_0\leq k\leq n/2$ and an $n$-vertex graph $G$ with more than $(k-1)(n-k+1)$ edges for which either \textup{\textbf{a)}} $\sum_{\ell\in \cC(G)}\frac{1}{\ell}<\sum_{\ell=2}^k\frac{1}{2\ell}$ or \textup{\textbf{b)}} $\sum_{\ell\in \cC(G)}\frac{1}{\ell}= \sum_{\ell=2}^k\frac{1}{2\ell}$, $e(G)\geq k(n-k)$ and $G$ is not a copy of $K_{k,n-k}$.
Let $G$ be such a graph.

Then, the following properties hold.
\begin{enumerate}[label = \textbf{\textup{\alph{enumi})}}]
\item If $H_1$ and $H_2$ are $(\eps,k)$-clusters in $G$, then we have that either $H_1\cup H_2$ is an $(\eps,k)$-cluster,
or $|(A(H_1)\cup B(H_1))\cap (A(H_2)\cup B(H_2))|\leq 3$.\label{prop:cluster:vxdisjoint}
\item For every $(\eps,k)$-cluster $H$ in $G$ and every distinct $u,v\in V(H)$, there are vertex-disjoint paths from $u$ to $A(H)\cup B(H)$ and from $v$ to $A(H)\cup B(H)$ of length at most $3\eps k$ each. In particular, the diameter of $H$ is at most $6\eps k+4$. \label{prop:cluster:depth}
\item $G$ does not contain a cluster cycle of edge-disjoint $(\eps,k)$-clusters in which no union of two clusters is itself a $(\eps,k)$-cluster.\label{prop:cluster:nocycle}
\end{enumerate}
\end{lemma}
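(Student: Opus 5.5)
We treat the three parts separately, but with a single engine. By Corollary~\ref{cor:cycles in almost complete pairs}, any $(\eps,k)$-almost-complete pair already gives all even cycle lengths from $4$ to $2|A|\ge 2(1-\eps)k$. So the deficit of $G$ compared to $K_{k,n-k}$ is at most $\sum_{(1-\eps)k<\ell\le k}\frac{1}{2\ell}\le \eps$ (roughly $\tfrac12\log\frac{1}{1-\eps}$). To get a contradiction with \textbf{a)} or \textbf{b)} it therefore suffices to find either one of the following:
\begin{itemize}
\item an interval of even lengths above $2(1-\eps)k$ whose harmonic mass beats this deficit (this also covers \textbf{b)}, since any new even length above $2k$ makes the sum strictly larger);
\item a positive proportion of odd lengths on some scale, which is worth a constant.
\end{itemize}
Each part will produce such lengths by taking a fixed path $Q$ outside a core $(A,B)$ with endpoints $u,v\in A$ (or in $B$, after one extra step into $A$), and closing it with the $u,v$-paths of every length $2\ell$, $2\le \ell\le |A\setminus F|-1$, given by Lemma~\ref{lemma:path lengths in almost complete pairs}. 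Here $F$ is the set of interior vertices of $Q$ that meet the core, which we keep below $k/10$. This yields the cycle lengths $|Q|+2\ell$ for all such $\ell$.

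\textbf{Part \ref{prop:cluster:vxdisjoint}.} Let $C_i=A(H_i)\cup B(H_i)$ and suppose $|C_1\cap C_2|\ge 4$. First we show the union is a cluster whenever $C_1\cap C_2$ contains at least $2\eps k+2$ vertices of $A(H_2)$. Indeed, each vertex of $B(H_2)$ misses at most $|A(H_2)|-(1-2\eps)k<2\eps k$ vertices of $A(H_2)$, so it has two neighbours in $V(H_1)$. Then every vertex of $A(H_2)$ has two neighbours in $B(H_2)$. After that, the remaining vertices of $H_2$ join in their original order. Since adding a vertex with two neighbours preserves being a cluster (as remarked after Definition~\ref{def:cluster}), $H_1\cup H_2$ is a cluster. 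We argue symmetrically with the roles of $H_1,H_2$ swapped.

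Otherwise the overlap is small in both $A$-sides. Among four shared vertices we pick two, $x$ and $y$. We take an $x,y$-path inside $G[A(H_2),B(H_2)]$ avoiding $C_1$ except at its endpoints; this is possible by Lemma~\ref{lemma:path lengths in almost complete pairs} with $F=C_1\cap C_2$ minus the endpoints, after one extra step to reach $A(H_2)$ if $x$ or $y$ lies in $B(H_2)$. We choose this path of length about $k$, and close it through $H_1$ using paths of all even lengths. Edge-disjointness of the clusters ensures the two pieces are genuinely different. This gives all lengths of one parity on the scale $[k,3k]$, which beats the deficit. We use four shared vertices, not two, to fix parities: pigeonholing on the sides $A(H_i)$ versus $B(H_i)$ guarantees a pair $x,y$ for which the resulting lengths include even values above $2k$ (or odd ones).

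\textbf{Part \ref{prop:cluster:depth}.} Condition ii) forces every $w\in X^{(i)}\setminus X^{(i-1)}$ to have at least one neighbour in $X^{(i-1)}\setminus X^{(i-2)}$. So there are descending paths that drop exactly one level per step, and $w$ has two distinct lower neighbours. Induction on level, via a Menger/augmenting argument in the layered structure, gives two vertex-disjoint descending paths from any $u,v$ into $X^{(0)}$. For the length bound, suppose some vertex sits at level $L>3\eps k$. Its two disjoint descending paths reach the core, and we join them there by the variable-length paths from Lemma~\ref{lemma:path lengths in almost complete pairs}. This gives all lengths $2L+c+2\ell$ of a fixed parity. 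If these are odd, they have positive density on a scale, a contradiction. If even, they reach roughly $2(1-\eps)k+2L\ge 2(1+2\eps)k$, adding mass about $\tfrac12\log(1+3\eps)>\eps$ beyond the deficit, again a contradiction. The diameter bound $6\eps k+4$ then follows, using that the core has diameter at most $4$.

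\textbf{Part \ref{prop:cluster:nocycle}.} Given a cluster cycle, we route each $P_j$ from its end in $H_j$ to the core of $H_j$ via part \ref{prop:cluster:depth}, using the two disjoint paths for the entering and leaving ends. Inside each core we use Lemma~\ref{lemma:path lengths in almost complete pairs}, with $F$ the at most $3$ core vertices shared with other cores (by part \ref{prop:cluster:vxdisjoint}) plus the $O(\eps k)$ vertices used by the routing. Taking the maximal inner length in all clusters but one, and varying the length in that one, gives a cycle of length at least $s(2(1-\eps)k-O(\eps k))\ge (4-O(\eps))k$ together with about $(1-\eps)k$ consecutive lengths of one parity. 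Either these are odd, or they are even and lie well beyond $2k$, and in both cases we contradict minimality. The main obstacle is keeping all these pieces vertex-disjoint, because clusters share non-core vertices and the depth paths may wander into other clusters. We handle this by shortening (rerouting to the first hit of another cluster's core and re-choosing the cluster cycle) and by putting all used vertices into $F$, which stays below $k/10$ since every routing path has length $O(\eps k)$.
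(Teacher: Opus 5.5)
The decisive gap is in part \textbf{c)}. You fix the inner paths at maximal length in all clusters but one and vary only that one. This produces about $(1-\eps)k$ lengths of one parity in a window of width about $2k$ starting near $2(s-1)k$, so their harmonic mass is only about $1/(2s)$. The deficit you must beat, $\sum_{|A|<\ell\le k}\frac{1}{2\ell}$, can be about $\eps/2$, and nothing bounds $s$: a minimal cluster cycle can be arbitrarily long. So for $s\gtrsim 1/\eps$ neither your odd nor your even alternative gives a contradiction, because a window of width $2k$ at height $2sk$ is not a constant-ratio scale.

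The missing idea is to vary all $s$ inner lengths at once. Let $r$ be the total routing length and choose $4\le \ell_1,\dots,\ell_s\le \lceil 3k/4\rceil$ with $\lceil r/2\rceil+\ell_1+\dots+\ell_s=\ell$. This gives, for every $sk/2\le \ell\le 3sk/4$, a cycle of length $2\ell-1$ or $2\ell$, with harmonic mass at least $\frac16-o(1)$ independently of $s$.

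Relatedly, the total routing length is $O(s\eps k)$, so keeping each $F$ below $k/10$ requires each core to meet the other pieces in only $O(1)$ vertices. In the paper this comes from minimality of the cluster cycle (for $s\ge 3$: non-adjacent clusters are vertex-disjoint, clusters share at most one vertex, and connecting paths meet only their two clusters). Your sketch asserts this rather than proving it.

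In part \textbf{b)}, Menger gives two disjoint depth-decreasing paths but no lower bound on their lengths, so the lengths $2L+c+2\ell$ are unjustified. Condition ii) lets a vertex of depth $L$ have its second lower neighbour in $X^{(0)}$, and lets its depth-$(L-1)$ neighbour also have a neighbour in $X^{(0)}$. So a Menger pair can give a core-to-core path of length $3$ through a deep vertex. At best the total is about $L$, and even that needs an argument such as the paper's induction on $i+j$: from $v$ at depth $j\ge i+2$, step to a neighbour of depth exactly $j-1$; otherwise step to a lower neighbour other than $u$.

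Take a vertex at depth exactly $\lceil 3\eps k\rceil$, so the total length satisfies $t\in[3\eps k,6\eps k+4]$. Then your mass comparison fails, since the new lengths reach only about $(2+\eps)k$. Instead use $t+4<2|A|$: the lengths $t+2\ell$ interlock with the core's, giving a cycle of length $2\ell-1$ or $2\ell$ for every $2\le\ell\le k+1$.

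Finally, in part \textbf{a)} your ``otherwise'' case does not control $B(H_1)\cap B(H_2)$. This can be huge (e.g.\ $B(H_1)=B(H_2)$), making $F$ far too large for Lemma~\ref{lemma:path lengths in almost complete pairs}. You need the extra observation that two shared vertices of $B(H_2)$ have at least $(1-4\eps)k$ common neighbours in $A(H_2)$; these can be absorbed into $H_1$, reducing to your first case. With that fix, your route in \textbf{a)} (cycles through both cores via any two shared core vertices) is a valid alternative to the paper's maximal-subcluster argument. The parity pigeonholing there is unnecessary, since either parity wins when the lengths span roughly $[k,3k]$.
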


We will now prove the statements of Lemma~\ref{lem:cluster properties}\,\ref{prop:cluster:vxdisjoint}--\ref{prop:cluster:nocycle} in Sections~\ref{sec:cluster1}--\ref{sec:cluster3}, respectively.


\subsection{Vertex-intersecting clusters: Proof of Lemma~\ref{lem:cluster properties}\,\ref{prop:cluster:vxdisjoint}}\label{sec:cluster1}

We will now prove Lemma~\ref{lem:cluster properties}\,\ref{prop:cluster:vxdisjoint} by showing that any two clusters whose union is not a cluster cannot overlap significantly. We do this by contradiction, showing in several different cases that vertex overlaps of edge-disjoint clusters lead to enough additional cycles to violate the conditions of a minimal counterexample.

\begin{proof}[Proof of Lemma~\ref{lem:cluster properties}\,\ref{prop:cluster:vxdisjoint}.]
Suppose $H_1$ and $H_2$ are $(\eps,k)$-clusters in $G$ such that $H_1\cup H_2$ is not an $(\eps,k)$-cluster. Let $A_i=A(H_i)$ and $B_i=B(H_i)$ for each $i\in [2]$.  We will show that $|V(H_1)\cap B_2|\leq 1$, $|V(H_2)\cap B_1|\leq 1$ and $|A_1\cap A_2|\leq 1$, and hence the required bound on $|(A_1\cup B_1)\cap (A_2\cup B_2)|$ will hold.
Let $H$ be an $(\eps,k)$-cluster of $G$ with $E(H_1)\subset E(H)\subset E(H_1\cup H_2)$ which maximises $e(H)$.  Note that, by this maximality, there is no vertex $v\in V(H_2)\setminus V(H)$ with at least two neighbours in $V(H)$ in $H_2$, for otherwise we could add $v$ and its neighbouring edges into $V(H)$  in $H_2$ to $H$ and get a contradiction.

Now, suppose that $|V(H)\cap B_2|\geq 2$, so that we may take distinct vertices $v_1,v_2\in V(H)\cap B_2$.
By the definition of the $(\eps,k)$-cluster $H_2$, we have $|N_{H_2}(v_i,A_2)|\geq (1-2\eps)k$ for each $i\in [2]$, and $|A_2|<k$. Then, for all but at most $4\eps k$ vertices $a\in A_2$ we have $v_1,v_2\in N_{H_2}(a)$. For each such $a\in A_2$, by the maximality of $H$ we have $a\in V(H)$. Therefore, as each  $b\in B_2$ has degree at least $(1-2\eps)k$ in $A_2$ in $H_2$, each $b\in B_2$ has at least $(1-2\eps)k-(k-4\eps k)\geq 2$ neighbours in $H_2$ in $A_2\cap V(H)$. Thus, by the maximality of $H$, we have $B_2\subset V(H)$. Then, every vertex $a\in A_2$ (i.e., not just those $a$ with $a\in N_{H_2}(v_1)\cap N_{H_2}(v_2)$) has at least 2 neighbours in $H_2$ in $V(H)$, so that, by the maximality again, $A_2\subset V(H)$.

Using the definition of an $(\eps,k)$-cluster in $G$, we can order the vertices in $V(H_2)\setminus (A_2\cup B_2)$ so that each vertex in this sequence has at least two neighbours in $A_2\cup B_2$ or among the previous vertices in the sequence. Therefore, by the maximality, as $A_2\cup B_2\subset V(H)$, there is no earliest vertex in the sequence which is not in $V(H)$, and, hence, $V(H_2)\subset V(H)$. As adding any set of edges within an $(\eps,k)$-cluster to it cannot spoil the $(\eps,k)$-cluster property, we thus have that $H=H_1\cup H_2$, contradicting that $H_1\cup H_2$ is not an $(\eps,k)$-cluster.

Therefore, we have that $|V(H)\cap B_2|\leq 1$, and, thus, $|V(H_1)\cap B_2|\leq 1$. By a symmetric argument, we also must have that $|V(H_2)\cap B_1|\leq 1$.
Suppose, then, $|A_1\cap A_2|\geq 2$, and take distinct vertices $a_1,a_2\in A_1\cap A_2$. If $|A_1\cup A_2|\geq k+1$, then for each $\ell_1,\ell_2\geq k/3$ with $\ell_1+\ell_2\leq k-1$, we can take disjoint sets $A_1^-,A_2^-\subset (A_1\cup A_2)\setminus \{a_1,a_2\}$ with $A_1^-\subset A_1$, $A_2^-\subset A_2$, $|A_1^-|=\ell_1$ and $|A_2^-|=\ell_2$.
Note that, for each $i\in [2]$, $H_i[A_i^-\cup \{a_1,a_2\},B_i\setminus V(H_{3-i})]$ is $(4\eps,\ell_i+3)$-almost-complete using that $|B_i\cap V(H_{3-i})|\leq 1$, and, thus by Lemma~\ref{lemma:path lengths in almost complete pairs}, there is an $a_1,a_2$-path with length $2|A_i^-|+2$. Combining these two paths gives a cycle of length $2(\ell_1+\ell_2+2)$. Thus, $\mathcal{C}(G)$ contains a cycle of every even length from $4\lceil k/3\rceil+4$ to $2(k+1)$. As, by Corollary~\ref{cor:cycles in almost complete pairs}, $H_1$ contains a cycle of every even length between $4$ and $2(1-\eps)k$, we thus have $\{4,6, \dots, 2k+2\}\subseteq \cC(G)$. Then, $\sum_{\ell\in \cC(G)}\frac{1}{\ell}\geq \sum_{\ell=2}^{k+1}\frac{1}{2\ell}>\sum_{\ell=2}^{k}\frac{1}{2\ell}$, contradicting the properties of $G$.

Thus, we must have $|A_1\cup A_2|\leq k$. Then, for each $v\in B_2$, as $v$ has at least $(1-2\eps)k$ neighbours in $H_2$ in $A_2$ and $|A_1|\geq (1-\eps)k$, $v$ has at least $(1-3\eps)k\geq 2$ neighbours in $H_2$ in $A_1$. Thus, as $A_1\subset V(H)$, by the maximality of $H$ we have $B_2\subset V(H)$. This contradicts $|V(H)\cap B_2|\leq 1$, and thus we must have $|A_1\cap A_2|\leq 1$.
\end{proof}


\subsection{Depth of clusters: Proof of Lemma~\ref{lem:cluster properties}\,\ref{prop:cluster:depth}}
To prove Lemma~\ref{lem:cluster properties}\,\ref{prop:cluster:depth}, and only in its proof, we will use the following definition of \emph{cluster depth} and \emph{depth-decreasing paths}.

\begin{definition} Given a cluster $H$ with strictly increasing chain $X^{(0)}\subsetneq \dots\subsetneq X^{(r)}=V(H)$, and $v\in V(H)$, we say the \textit{depth of the vertex $v$} in the cluster $H$ is the smallest $0\leq i\leq r$ for which $v\in X^{(i)}$.

We say that a path $P$ is a \textit{depth-decreasing path from $u$} if $u$ is an endpoint of $P$ and, if the edges of $P$ are oriented to be a directed path from $u$, then for each resulting  edge $\vec{xy}$ the depth of $y$ is smaller than the depth of $x$.
\end{definition}

We now prove Lemma~\ref{lem:cluster properties}\,\ref{prop:cluster:depth}. Again, we work by contradiction, showing that if the paths required do not exist then we have enough additional cycles to violate the conditions of a minimal counterexample.

\begin{proof}[Proof of Lemma~\ref{lem:cluster properties}\,\ref{prop:cluster:depth}.] Let $H$ be a cluster in $G$, and set $A=A(H)$ and $B=B(H)$.
We start with the following claim.

\begin{claim}\label{claim:decreasingpaths} Let $u,v\in V(H)$ and $0\leq i\leq j\leq r$ be such that $u$ has depth $i$ and $v$ has depth $j$ in $H$, where we may have $u=v$. Then, there are depth-decreasing paths $P_u$ and $P_v$ in $H$, from $u$ and $v$ respectively, of total length at least $j$, which both end in $A\cup B$ and are vertex-disjoint except possibly at $u=v$.
\end{claim}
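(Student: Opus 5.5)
We argue by induction on $j$, the depth of the deeper vertex $v$. The key structural fact comes from Definition~\ref{def:cluster}\,\textbf{ii)}: a vertex $w$ of depth $d\geq 1$ has at least two neighbours in $X^{(d-1)}$, and at most one of them lies in $X^{(d-2)}$. Hence $w$ has at least two neighbours of depth strictly less than $d$, and at least one of these has depth exactly $d-1$.

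\emph{Base case.} If $j=0$, then both $u,v\in A\cup B$, and the trivial paths $P_u=\{u\}$, $P_v=\{v\}$ work.

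\emph{Case $u=v$ with $j\geq 1$.} Let $x$ be a neighbour of $v$ of depth exactly $j-1$, and let $y$ be a second neighbour of $v$ of depth less than $j$ (so $y\neq x$). Apply the induction hypothesis to the pair $x\neq y$, whose larger depth is $j-1$. This gives disjoint depth-decreasing paths from $x$ and $y$, ending in $A\cup B$, of total length at least $j-1$. Neither path contains $v$, since every vertex on them has depth at most $j-1$. Prepend the edges $vx$ and $vy$. The resulting paths from $v$ are depth-decreasing, share only $v$, and have total length at least $j+1\geq j$.

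\emph{Case $u\neq v$ with $j\geq 1$.} Pick a neighbour $x$ of $v$ of depth $j-1$. If $x\neq u$, apply induction to the pair $(u,x)$, whose depths are at most $j-1$ and which may coincide. If $x=u$, use instead a second neighbour $y$ of $v$ of lower depth, which exists and can be chosen different from $u$. Then apply induction to $(u,y)$, or simply extend using the structure at $u$. This yields paths $Q_u$ from $u$ and $Q_x$ from $x$ with total length at least $j-1$, disjoint except possibly where $u=x$.

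\begin{itemize}
\item If $u\neq x$, set $P_v=vQ_x$ and $P_u=Q_u$. These are disjoint because $v$ has depth $j$, which exceeds every depth on $Q_u\cup Q_x$ (note $u$ has depth $i\leq j$ with $u\neq v$, and if $i=j$ then $u\notin Q_x$ since all depths there are below $j$). The total length is at least $j$.
\item If the induction returned paths meeting at $u=x$, this is the situation $x=u$, which we avoid by the choice of $y$ above.
\end{itemize}

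The main obstacle is the subcase where the only available depth-$(j-1)$ neighbour of $v$ is $u$ itself. Then we must route $v$ through its other lower neighbour $y$, which may have much smaller depth, so the total length may drop. To recover it, we use the path from $u$ instead. Since $u$ has depth $j-1$, apply induction to $(u,y)$ when $y\neq u$. This gives total length at least $\mathrm{depth}(u)=j-1$, and adding the edge $vy$ gives at least $j$. The paths are disjoint because $v$ is the unique vertex of depth $j$ among them. We therefore arrange the induction so that the $+1$ is always gained from the edge at $v$ and the remaining $j-1$ from the induction on the pair whose larger depth is $j-1$. This works in every case because $\max(\mathrm{depth}(u),\mathrm{depth}(x))\geq j-1$ whichever choice we make.
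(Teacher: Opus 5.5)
\textbf{Gap in the case $u\neq v$ with $\mathrm{depth}(u)=j$.} Your strategy is essentially the paper's: prepend $v$ to a path from a lower neighbour, and use the second lower neighbour of $v$ to avoid a collision with $u$. The cases $u=v$, and $u\neq v$ with $\mathrm{depth}(u)\le j-1$ (including the subcase $x=u$), are handled correctly. The problem is the case $u\neq v$ with $\mathrm{depth}(u)=i=j$. The claim allows this case, and your parenthetical remark shows you intend to cover it.

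There you apply the induction hypothesis to the pair $(u,x)$ and assert that its depths are at most $j-1$. But $u$ has depth $j$, so the deeper vertex of this pair still has depth $j$. Since you induct on $j$ (the depth of the deeper vertex), the hypothesis gives you nothing for this pair.

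\textbf{Two ways to fix it.}
\begin{itemize}
\item Induct on $i+j$, as the paper does. Then $(x,u)$, with depths $j-1$ and $j$, is a smaller instance. The hypothesis gives disjoint depth-decreasing paths $Q_x,Q_u$ of total length at least $j$. Prepending $v$ to $Q_x$ works, since $v\notin Q_u$: the only depth-$j$ vertex on $Q_u$ is $u\neq v$.
\item Keep your induction on $j$ and treat this case directly. Choose a neighbour $x_v$ of $v$ of depth $j-1$, and a neighbour $x_u\neq x_v$ of $u$ of depth less than $j$; this is possible because $u$ has two such neighbours. Apply the hypothesis to $(x_u,x_v)$, whose larger depth is $j-1$, and prepend $u$ and $v$ respectively. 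The resulting paths are disjoint, because $u\neq v$ are the only depth-$j$ vertices involved. Their total length is at least $j+1$.
\end{itemize}
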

\claimproofstart
We prove this statement by induction on $i+j$. If $i+j=0$, then $u, v\in A\cup B$, and we can take $P_u$ and $P_v$ to be the single-vertex paths containing $u$ and $v$, respectively.
Suppose, then, $d>0$ and that the claim holds whenever $i+j<d$. Let $u,v\in V(H)$ and $0\leq i\leq j\leq r$ such that $i+j=d$, $u$ has depth $i$ and $v$ has depth $j$ in $H$.
We now proceed differently according to whether $j\leq i+1$ or $j\geq i+2$.
In the first case, by the definition of the cluster, we have $|N(v)\cap X^{(j-1)}|\geq 2$. Hence, $v$ has a neighbour $v'\in X^{(j-1)}$ distinct from $u$. Since $v'$ is at depth $j'<j$, we can apply the inductive hypothesis to the pair of vertices $v', u$ at depths $j'\leq  i$, and thus we find depth-decreasing paths $P_{v'}, P_u$ connecting $v', u$ to $A\cup B$. Since $P_{v'}$ is depth-decreasing, it does not contain $v$, and therefore $v$ can be added to $P_{v'}$ to obtain a path $P_v$ connecting $v$ to $A\cup B$. The total length of the paths $P_u, P_{v'}$ is at least $i$, and therefore  the total length of $P_u, P_v$ is at least $i+1\geq j$, as needed.

On the other hand, if $j\geq i+2$, let $v'\in N(v)\cap (X^{(j-1)}\backslash X^{(j-2)})$ be a neighbour of $v$ at depth $j-1$, which exists since $|N(v)\cap X^{(j-1)}|\geq 2$ and $|N(v)\cap X^{(j-2)}|\leq 1$. Applying the inductive hypothesis to the vertices $u, v'$ gives two depth-decreasing paths $P_u, P_{v'}$ of total length at least $j-1$, which connect $u, v'$ to $A\cup B$. Appending $v$ to $P_{v'}$ in the same way as before gives a pair of paths which connect $u, v$ to $A\cup B$, of total length at least $j$, as needed.
\claimproofend

We now show that $r\leq 3\eps k$. Suppose otherwise, and let $j=\lceil 3\eps k\rceil$. Let $v$ be a vertex of depth $j$ in $H$. By Claim~\ref{claim:decreasingpaths}, there exist depth-decreasing paths $P_1,P_2$ starting at $v$ and ending in $A\cup B$, of total length at least $3\eps k$, which are vertex-disjoint except for at $v$.
As $P_1$ and $P_2$ are depth-decreasing paths from $v$, and $v$ has depth $j$ in $H$, both $P_1$ and $P_2$ have length at most $j$. Thus, letting $P= P_1\cup P_2$, $P$ is a path from $A\cup B$ to $A\cup B$ in $G$ with length between $3\eps k$ and $6\eps k+2$. Using that $A,B$ is $(\eps,k)$-almost-complete, extend $P$ by up to one edge at each end as required to get a path, $P'$ say, with length between $3\eps k$ and $6\eps k+4$, endvertices in $A$, no internal vertices in $A$ and at most 2 vertices in~$B$.

Let $t$ be the length of $P'$, let $x$ and $y$ be its endvertices, and let $F=B\cap V(P)$ so that $|F|\leq 2$. Let $2\leq \ell\leq |A|- 1$. Then, by Lemma~\ref{lemma:path lengths in almost complete pairs}, there exists an $x,y$-path with length $2\ell$ in $G[A,B]-F$. Therefore, combining this with $P'$, $G$ contains a cycle with length $t+2\ell$. As this is true for some $t$ with $3\eps k\leq t\leq 6\eps k+4$ and any $\ell$ with $2\leq \ell\leq |A|-1$, and $|A|\geq (1-\eps)k$, we have that, for each $(1-\eps)k\leq \ell\leq k+1$, $G$ contains a cycle of length $2\ell-1$ or $2\ell$. Furthermore, by Corollary~\ref{cor:cycles in almost complete pairs}, $G$ has a cycle of length $2\ell$ for each $2\leq \ell\leq (1-\eps)k$. Thus, for any $2\leq\ell\leq k+1$, $G$ contains a cycle of length $2\ell-1$ or $2\ell$.
Hence, in total, we have
\[
\sum_{\ell\in \mathcal{C}(G)} \frac{1}{\ell} \geq \sum_{\ell=2}^{k+1}\frac{1}{2\ell}> \sum_{\ell=2}^{k}\frac{1}{2\ell},
\]
a contradiction.
Thus, $r\leq 3\eps k$ and hence, applying Claim~\ref{claim:decreasingpaths}, the property in the lemma holds as, for each $u,v\in V(H)$ and $0\leq i\leq j\leq r$  such that $u$ has depth $i$ and $v$ has depth $j$ in $H$, the paths $P_u$ and $P_v$ given have lengths $\ell(P_u)\leq i\leq r\leq 3\eps k$ and $\ell(P_v)\leq j\leq r\leq 3\eps k$.

Since $H[A \cup B]$ has diameter at most $4$, $u$ and $v$ can be connected by a path of length at most $6\eps k+4$, which is obtained by concatenating the paths $P_u, P_v$ and the path of length $\le 4$ connecting the endpoints of $P_u, P_v$.
\end{proof}


\subsection{No cluster cycle: Proof of Lemma~\ref{lem:cluster properties}\,\ref{prop:cluster:nocycle}}\label{sec:cluster3}
Finally in this section, we prove Lemma~\ref{lem:cluster properties}\,\ref{prop:cluster:nocycle}. We work again by contradiction: where such a cluster cycle exists, we will take a certain minimal cluster cycle and then find cycles passing through these cluster cycles, varying the length by changing how we pass through the almost-complete bipartite graphs in each cluster. In total, this will show we have enough additional cycles to violate the conditions of a minimal counterexample.

\begin{proof}[Proof of Lemma~\ref{lem:cluster properties}\,\ref{prop:cluster:nocycle}.] Suppose, for contradiction, that $G$ contains a cluster cycle of edge-disjoint $(\eps,k)$-clusters, where the union of no two clusters is itself a $(\eps,k)$-cluster. Let $s$ be the minimal length of such a cluster cycle, and let $H_1, \dots, H_s$ be edge-disjoint clusters which form a cluster cycle in that order with connecting paths $P_1, \dots, P_s$, which moreover minimise $\sum_{i\in [s]}|P_i|$. Throughout this proof, we will work with indices $\mathrm{mod}\;s$ so that, for example, $P_{s+1}=P_1$.

For each $i\in [s]$, let $A_i=A(H_i)$ and $B_i=B(H_i)$. Label vertices so that, for each $i\in [s]$, $P_i$ is a $y_{i},x_{i+1}$-path with $y_i\in V(H_i)$ and $x_{i+1}\in V(H_{i+1})$. We start with the following claim.

\begin{claim} If $s\geq 3$, then, for each $i\in [s]$ the following hold. \textup{\textbf{a)}} For each $j\in [s]$, if $i\notin \{j-1,j,j+1\}$, $V(H_i)\cap V(H_j)=\emptyset$. \textup{\textbf{b)}} If $P_i$ is a single-vertex path, then $|V(H_i)\cap V(H_{i+1})|=1$.
\textup{\textbf{c)}}  $P_i$ contains no vertices in $H_j$ for each $j\in [s]$ with $j\neq i,i+1$.
\label{claim:s>2}
\textup{\textbf{d)}} If $|P_i|\geq 2$, then $V(H_i)\cap V(H_{i+1})=\emptyset$. 
\textup{\textbf{e)}} $|P_i\cap H_j|\le 1$ for all $i,j$.
\end{claim}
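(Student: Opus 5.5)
All five parts will follow from the two minimality choices: first $s$ is minimal, then $\sum_{i}|P_i|$ is minimal. The tool throughout is that two clusters sharing a vertex form a cluster cycle of length $2$, with trivial single-vertex connecting paths. Also, $H_i\cup H_j$ is never a cluster, by hypothesis. I will treat $|P_i|$ as the number of vertices, which matches the wording of \textbf{b)} and \textbf{d)}.

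For \textbf{a)}, suppose $i\notin\{j-1,j,j+1\}$ and $w\in V(H_i)\cap V(H_j)$. Then $w$ splits the cyclic order into two shorter cluster cycles. One is $H_i,H_{i+1},\dots,H_j$, using $P_i,\dots,P_{j-1}$ and closing with the single-vertex path $\{w\}$. The other is $H_j,\dots,H_i$, also closing with $\{w\}$. Each has at least $2$ and fewer than $s$ clusters, since $i,j$ are non-adjacent and $s\ge 3$. The paths remain vertex-disjoint provided $w$ is not on the $P$'s used. If $w$ does lie on some $P_m$, I instead cut $P_m$ at $w$, which gives paths no longer than before. Either way this contradicts the minimality of $s$.

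Part \textbf{c)} is analogous. Suppose $P_i$ meets $H_j$ at a vertex $w$ with $j\ne i,i+1$. The initial segment of $P_i$ from $y_i$ to $w$ can replace the route from $H_i$ round to $H_j$. This gives the cluster cycle $H_i\to H_j\to\cdots\to H_i$ (or the other orientation), with strictly fewer clusters unless it is all of them. If it is all of them, the replacement path is strictly shorter, contradicting the minimality of $\sum|P_i|$. The subsegments are still vertex-disjoint from the other $P_m$, and each has length at most $4$.

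For \textbf{e)}, if $P_i$ meets $H_j$ in two vertices, I shortcut $P_i$ between its first and last vertices in $H_j$. By \textbf{c)}, $j\in\{i,i+1\}$, so this simply moves the endpoint $y_i$ or $x_{i+1}$ and shrinks $\sum|P_i|$. This needs care: the new endpoint must still lie in $H_j$ and disjointness must be preserved, both of which hold because we take a subpath.

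For \textbf{d)}, if $|P_i|\ge2$ and $w\in V(H_i)\cap V(H_{i+1})$, we replace $P_i$ by $\{w\}$. This strictly decreases the sum, but $w$ must avoid the other paths $P_m$. By \textbf{c)} and \textbf{e)}, $P_m$ with $m\ne i$ can meet $H_i\cap H_{i+1}$ only if $m=i\pm1$, and then only in its endpoint. That endpoint is $x_i$ or $y_{i+1}$, which would be a shared vertex lying on $P_{i-1}$ or $P_{i+1}$. In that case I re-route and use $w$ as the connector, shortening a path instead.

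For \textbf{b)}, $P_i=\{w\}$ gives $w\in V(H_i)\cap V(H_{i+1})$. If there were a second common vertex, we would get two vertices in the intersection. Lemma~\ref{lem:cluster properties}\,\ref{prop:cluster:vxdisjoint} only bounds the cores, so the bound has to come from elsewhere. My plan is to use the two common vertices $w,w'$ with part~\ref{prop:cluster:depth} of the lemma. This gives disjoint short paths from $w,w'$ to the cores of $H_i$ and of $H_{i+1}$, so each cluster carries a $w$--$w'$ path through its almost-complete pair. Varying the lengths via Lemma~\ref{lemma:path lengths in almost complete pairs} then yields all even (or all odd) lengths from roughly $4k/3$ up to $2k+2$, as in the proof of part~\ref{prop:cluster:vxdisjoint}, contradicting minimality of $G$. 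This cycle-length argument for \textbf{b)}, together with the bookkeeping of path disjointness in \textbf{d)}, is the main obstacle. I expect \textbf{b)} to need the careful choice of $\ell_1,\ell_2$ and to need $\eps$ small.
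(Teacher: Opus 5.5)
\textbf{The gap is in part b).} You leave it as a plan, but the missing idea is one line.

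Your opening ``tool'' is misstated. A single shared vertex does not give a cluster cycle, because a cluster cycle on two clusters needs two vertex-disjoint connecting paths. With two shared vertices, however, it gives exactly what you need. If $w\neq w'$ both lie in $V(H_i)\cap V(H_{i+1})$, then $H_i,H_{i+1}$ with the length-$0$ paths $\{w\}$ and $\{w'\}$ form a cluster cycle of length $2$. The clusters are edge-disjoint and no union of two of them is a cluster. This contradicts the minimality of $s\geq 3$. So $|V(H_i)\cap V(H_j)|\leq 1$ for all $i\neq j$. Then \textbf{b)} follows, since $P_i=\{w\}$ forces $w=y_i=x_{i+1}$ to be a common vertex.

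You are right that the bound does not come from Lemma~\ref{lem:cluster properties}\,\ref{prop:cluster:vxdisjoint}. It comes from the minimality of $s$, not from counting cycle lengths.

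The cycle-length route you sketch is also not routine. The paths from $w,w'$ to the cores given by Lemma~\ref{lem:cluster properties}\,\ref{prop:cluster:depth} in $H_i$ and in $H_{i+1}$ may intersect one another. The clusters are only edge-disjoint, and in this scenario they may share many vertices. You would need the cutting argument from the $s=2$ case of Claim~\ref{clm:betterpaths}, followed by Claim~\ref{claim:nowcyclelengths}. In other words, you would be re-proving inside the claim that a cluster cycle of length two is impossible, which the enclosing proof does anyway.

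The other parts follow the paper's argument, with some unnecessary detours:
\begin{itemize}
\item In \textbf{a)} no cutting is needed. The $P_m$ are vertex-disjoint, so $w$ lies on at most one of them, and one of your two arcs avoids it.
\item In \textbf{c)} the ``all of them'' case cannot occur, since the shortcut always skips $H_{i+1}$.
\item In \textbf{d)} you take less from \textbf{c)} than it gives. $P_m$ meets only $H_m$ and $H_{m+1}$, so meeting $H_i\cap H_{i+1}$ would force $\{i,i+1\}\subseteq\{m,m+1\}$, that is, $m=i$ when $s\geq 3$. Your re-routing case is therefore vacuous, and $w$ can be used directly.
\end{itemize}
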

\claimproofstart[Proof of Claim~\ref{claim:s>2}]  Suppose, for contradiction, that there are distinct non-adjacent clusters $H_i, H_j$ and some vertex $v\in V(H_i)\cap V(H_j)$. As the connecting paths of a cluster cycle are vertex-disjoint, either $v\notin V(P_i)\cup V(P_{i+1})\cup\dots \cup V(P_{j-1})$ or $v\notin V(P_j)\cup V(P_{j+1})\cup \dots\cup V(P_{i-1})$. By relabelling (and reversing the direction of the cluster cycle) if necessary, we can assume that $v\notin V(P_i)\cup V(P_{i+1})\cup\dots \cup V(P_{j-1})$. Let $P_v$ be the path of length 0 with vertex set $\{v\}$. Then, $H_i, H_{i+1}, \dots, H_j$ is a $(\eps,k)$-cluster cycle with connecting paths $P_i, \dots, P_{j-1}, P_v$. As $H_i, H_j$ are non-adjacent clusters (and thus $j\neq i-1$), this cluster cycle has length less than $s$, a contradiction. Thus, for all distinct non-adjacent clusters $H_i, H_j$, we have $V(H_i)\cap V(H_j)=\emptyset$, so that \textbf{a)} holds.

Now, as $s> 2$, there is no cluster cycle of length two and thus $|V(H_i)\cap V(H_j)|\leq 1$ for every $i,j\in [s]$ with $i\neq j$. Thus, for each $i\in [s]$, if $P_i$ is a single vertex, then $y_{i}=x_{i+1}$ and so $|V(H_i)\cap V(H_{i+1})|= 1$. Hence, \textbf{b)} holds. 

Suppose that there is some $i\in [s]$ for which there is some $j\in [s]$ with $i\notin \{j,j+1\}$ and $V(P_j)\cap V(H_i)\neq \emptyset$. Then, $P_j$ contains a path, $P_j'$ say, of length at most 4 from $V(H_j)$ to $V(H_i)$. Then, $H_i,H_{i+1},\dots,H_j$ is a $(\eps,k)$-cluster cycle with connecting paths $P_i,P_{i+1},\dots,P_{j-1},P_j'$, contradicting the choice of $s$. Thus, \textbf{c)} holds.

Suppose that for some $i\in [s]$, we have a vertex  $v\in V(H_i)\cap V(H_{i+1})$ and $|P_i|\ge 2$. 
Note that we can't have $v\in P_j$ for any $j\ne i$. Indeed if this happened then we'd have $P_j\cap H_i\ne \emptyset$ and $P_j\cap H_{i+1}\ne \emptyset$, contradicting \textbf{c)}.
Thus, we can replace $P_i$ with the single-vertex path $v$, contradicting the minimality of $\sum_{i\in [s]}|P_i|$. Therefore, \textbf{d)} holds.

Note that \textbf {e)} just needs to be proved for $j\in \{i, i+1\}$, since it is otherwise implied by \textbf {c)}. For $j\in \{i, i+1\}$, if $|P_i\cap H_j|\ge 2$, then we can shorten $P_i$ to get another path from $H_{i}$ to $H_{i+1}$, contradicting the minimality of $\sum_{i\in[s]}|P_i|$.
\claimproofend

We now find paths between the $(\eps,k)$-almost-complete pairs $(A(H_i),B(H_i))$, $i\in [s]$, in a cycle, as in the following claim.

\begin{claim}\label{clm:betterpaths}
There are distinct vertices $u_i,v_i\in A_{i}$, $i\in [s]$, and vertex-disjoint paths $Q_i$, $i\in [s]$, such that the following hold. 
\stepcounter{propcounter}
  \begin{enumerate}[label = \textup{\textbf{\Alph{propcounter}\arabic{enumi}}}]
\item For each $i\in [s]$, $Q_i$ is a $v_{i},u_{i+1}$-path with length at most $10\eps k$.\label{prop:path:short}
\item For each $i\in [s]$, $|V(\cup_{j\in [s]}Q_j)\cap (A_i\cup B_i)|\leq 8$.\label{prop:path:intersection}
\end{enumerate}
\end{claim}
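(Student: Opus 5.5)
The plan is to build each $Q_i$ by concatenating three pieces: a short path inside $H_i$ from $y_i$ down to $A_i$, the connecting path $P_i$, and a short path inside $H_{i+1}$ from $x_{i+1}$ down to $A_{i+1}$.

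\textbf{Descents inside each cluster.} Fix $i\in[s]$. The vertices $x_i$ and $y_i$ are distinct, since $P_{i-1}$ and $P_i$ are vertex-disjoint. By Lemma~\ref{lem:cluster properties}\,\ref{prop:cluster:depth}, there are vertex-disjoint paths $R_i^x$ from $x_i$ and $R_i^y$ from $y_i$ to $A_i\cup B_i$ in $H_i$, each of length at most $3\eps k$. I would take these to be the depth-decreasing paths of Claim~\ref{claim:decreasingpaths}, so that each meets $A_i\cup B_i$ only at its final vertex. If a final vertex lies in $B_i$, I extend the path by one edge to a neighbour in $A_i$. Since $B_i$-vertices have at least $(1-2\eps)k$ neighbours in $A_i$, the two new endpoints $u_i$ (the end of $R_i^x$) and $v_i$ (the end of $R_i^y$) can be chosen distinct and off the other path. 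We then set $Q_i:=R_i^y\cup P_i\cup R_{i+1}^x$, which runs from $v_i$ to $u_{i+1}$. Its length is at most $(3\eps k+1)+4+(3\eps k+1)\le 10\eps k$, which gives \ref{prop:path:short}.

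\textbf{Disjointness for $s\ge 3$.} I would use Claim~\ref{claim:s>2}.
\begin{itemize}
\item Paths inside $H_i$ and paths inside $H_j$ are disjoint for non-adjacent $i,j$ by part a).
\item $P_i$ meets no cluster other than $H_i$ and $H_{i+1}$ by part c), and meets each of these in only its endpoint by part e).
\item For adjacent clusters, $V(H_i)\cap V(H_{i+1})$ is empty if $|P_i|\ge 2$, by part d).
\item Otherwise the intersection is the single vertex $y_i=x_{i+1}$, by part b), and this vertex is the common start of $R_i^y$ and $R_{i+1}^x$, both of which lie in $Q_i$. The other paths in these two clusters, $R_i^x$ and $R_{i+1}^y$, avoid this vertex, since they are disjoint from $R_i^y$ and $R_{i+1}^x$ respectively. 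So the $Q_j$ are pairwise vertex-disjoint.
\end{itemize}

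\textbf{Intersection count \ref{prop:path:intersection}.} The paths $R_i^x$ and $R_i^y$ contribute at most $2$ vertices of $A_i\cup B_i$ each. The paths $P_{i-1}$ and $P_i$ contribute at most one vertex each, by part e). The paths $R_{i\pm1}^{\cdot}$ lie in $H_{i\pm1}$, which meets $H_i$ in at most one vertex each. This gives a total of at most $8$.

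\textbf{The case $s=2$, which I expect to be the main obstacle.} Here $H_1$ and $H_2$ may share several vertices, only at most $3$ of them in $(A_1\cup B_1)\cap(A_2\cup B_2)$ by Lemma~\ref{lem:cluster properties}\,\ref{prop:cluster:vxdisjoint}, and a descent path in $H_1$ could wander through $V(H_2)$. I would first use the minimality of $|P_1|+|P_2|$: if the two clusters share a vertex, replace one connecting path by that single vertex. This reduces to configurations where the shared vertices are endpoints of the $P_j$. If a descent path $R_1^{\cdot}$ still hits $V(H_2)$ at some earlier vertex $w$, I would truncate it at $w$ and use $w$ itself as a new single-vertex connection, again contradicting minimality unless $w$ is forced to lie on the other path. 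Failing that, I would redo the descents in $H_2$ avoiding the at most three shared vertices of $A_2\cup B_2$, using the robustness of Lemma~\ref{lemma:path lengths in almost complete pairs} with the forbidden set $F$. In either approach the counting bound of $8$ should still follow as above.
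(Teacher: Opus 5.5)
Your argument for $s\ge 3$ is the paper's. Your $R_i^x,R_i^y$ are its $R_i^-,R_i^+$, and disjointness and the bound of $8$ follow from Claim~\ref{claim:s>2} exactly as you check them.

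The gap is the case $s=2$, which you leave as a list of tentative repairs, none of which works as stated.

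\textbf{Minimality gives no contradiction.} If a descent meets $V(H_2)$ at some vertex $w$ while $P_1,P_2$ are already single vertices, minimality of $|P_1|+|P_2|$ is not violated. Single-vertex $P_1,P_2$ are forced whenever $|V(H_1)\cap V(H_2)|\ge 2$.

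\textbf{The overlap is uncontrolled.} In that situation nothing you have bounds $V(H_1)\cap V(H_2)$. Lemma~\ref{lem:cluster properties}\,\ref{prop:cluster:vxdisjoint} controls only the cores, and shared vertices can lie in the outer layers $X^{(i)}$, $i\ge 1$, of both clusters. So it is also false that the shared vertices reduce to endpoints of the $P_j$.

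\textbf{Consequence.} A descent from $y_1$ in $H_1$ and a descent from $y_2$ in $H_2$ may pass through a common vertex. Then $Q_1,Q_2$ built as for $s\ge 3$ need not be disjoint, or even be paths.

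\textbf{Your fallback misses the problem.} Avoiding the at most three vertices of $(A_1\cup B_1)\cap(A_2\cup B_2)$ via Lemma~\ref{lemma:path lengths in almost complete pairs} does not address this. That lemma only routes paths inside $G[A,B]$, not descents through the cluster layers.

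\textbf{The missing idea} is to make the two sides interact only at single entry points.
\begin{enumerate}
\item Concatenate the two disjoint $H_1$-descents from $y_1$ and $x_1$ with $P_1$ and $P_2$. By minimality, $P_1,P_2$ have no internal vertices in $V(H_1)\cup V(H_2)$. This gives two disjoint paths from $A_1\cup B_1$ to $V(H_2)$.
\item Travelling from the $A_1\cup B_1$ end, cut each path at its \emph{first} vertex in $V(H_2)$. The resulting segments avoid $V(H_2)$ except at their last vertices, which are distinct.
\item Apply Lemma~\ref{lem:cluster properties}\,\ref{prop:cluster:depth} in $H_2$ to these two entry points. This yields disjoint descents to $A_2\cup B_2$ that cannot meet the $H_1$-side segments.
\item Pass to minimal $(A_1\cup B_1)$--$(A_2\cup B_2)$ subpaths, and extend by one edge into $A_i$ wherever an endpoint lies in $B_i$.
\end{enumerate}
This gives two disjoint paths of length at most $6\eps k+6\le 10\eps k$, each with at most four vertices in each $A_i\cup B_i$. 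No bound on $|V(H_1)\cap V(H_2)|$ is needed.

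Your truncation idea points in this direction. However, you use it to seek a contradiction rather than to re-route, and you never handle the descents on the $H_2$ side.
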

\claimproofstart[Proof of Claim~\ref{clm:betterpaths}]
 We will do this in two cases, \textbf{a)} $s=2$ and \textbf{b)} $s\geq 3$.
\smallskip

Case \textbf{a)}: $s=2$.
 By the minimality of
$|P_1|+|P_2|$, we may assume that each $P_i$ has no internal
vertex in $V(H_1)\cup V(H_2)$. Apply Lemma~\ref{lem:cluster properties}\textup{(b)} in $H_1$ to
$x_1,y_1$ to get vertex-disjoint paths from these vertices to $A_1\cup B_1$. Concatenate these paths
with $P_2,P_1$ to get two vertex-disjoint paths from  $A_1\cup B_1$ to $H_2$. Cutting each path at its first
vertex in $H_2$, we obtain two vertex-disjoint $A_1\cup B_1,H_2$-paths of
length at most $3\eps k+4$, whose internal vertices lie outside
$H_2$. In particular, their endpoints in $H_2$ are distinct. 
Applying Lemma~\ref{lem:cluster properties}\textup{(b)} in $H_2$
to these endpoints and concatenating, we obtain two vertex-disjoint
$A_1\cup B_1, A_2\cup B_2$-paths, each of length at most $6\eps k+4$. Replace each
by a minimal $A_1\cup B_1,A_2\cup B_2$-subpath. Thus, each of the two paths contains
at most one vertex of each $A_i\cup B_i$.

If an endpoint in $A_i\cup B_i$ lies in $B_i$, extend the path at that
endpoint by one edge to get paths ending in $A_i$. These extensions
can be chosen to preserve vertex-disjointness, since every vertex
of $B_i$ has at least $(1-2\eps)k$ neighbours in $A_i$. Relabelling the resulting
paths as $Q_1,Q_2$, we obtain the
required paths. Indeed, each has length at most
$6\eps k+6\leq 10\eps k$, 
and only the first/last two vertices in each path can be in 
$A_1\cup B_1$/$A_2\cup B_2$.
\smallskip

Case \textbf{b)}: $s\geq 3$. For each $i\in [s]$, by Lemma~\ref{lem:cluster properties}\,\ref{prop:cluster:depth} (and appending a vertex in $A_i$ if necessary), there are vertex-disjoint paths $R_i^-$ and $R_i^+$ in $H_i$ with length at most $3\eps k+1$ each, which connect $x_i$ and $y_i$ respectively to $A_i$ and have $|(V(R_i^-)\cup V(R_i^+))\cap B_i|\leq 2$.
For each $i\in [s]$, let $Q_i=R_{i}^+\cup P_i\cup R_{i+1}^-$ and let $u_i$ and $v_i$ be the endvertex of $R_i^-$ and $R_i^+$ in $A_i$, respectively. 
Then, we have that the vertices $u_i,v_i\in A_{i}$, $i\in [s]$, and paths $Q_i$, $i\in [s]$, satisfy \ref{prop:path:short}.

That the paths $Q_i$, $i\in [s]$, are vertex disjoint and $|V(\cup_{j\in [s]}Q_j)\cap (A_i\cup B_i)|\leq 8$ for each $i\in [s]$, follows from Claim~\ref{claim:s>2}. Finally, the length of $Q_i$ is at most $(3\eps k+1)+4+(3\eps k+1)\leq 10\eps k$, as required.
\claimproofend

Using Claim~\ref{clm:betterpaths}, let $u_i,v_i\in A_i$, $i\in [s]$, and $Q_i$, $i\in [s]$, be such that the properties in that claim hold. We now show that we can connect these paths in $G$ to find many different cycle lengths.

\begin{claim} For each $\ell\in \N$ with $sk/2\leq \ell\leq 3ks/4$, $\mathcal{C}(G)$ contains a cycle of length $2\ell-1$ or $2\ell$.\label{claim:nowcyclelengths}
\end{claim}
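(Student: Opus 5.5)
We build the cycles by closing up the paths $Q_1,\dots,Q_s$ from Claim~\ref{clm:betterpaths} with paths of prescribed even lengths inside the almost-complete pairs $(A_i,B_i)$. For each $i\in[s]$ let $F_i:=V(\bigcup_{j\in[s]}Q_j)\cap(A_i\cup B_i)$. By \ref{prop:path:intersection} we have $|F_i\setminus\{u_i,v_i\}|\le 8\le k/10$. Also $u_i,v_i\in A_i\setminus(F_i\setminus\{u_i,v_i\})$ are distinct.

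Lemma~\ref{lemma:path lengths in almost complete pairs}, applied in $G[A_i,B_i]$ with forbidden set $F_i\setminus\{u_i,v_i\}$, gives for every integer $2\le m_i\le |A_i|-9$ a $u_i,v_i$-path $R_i$ of length $2m_i$. This path avoids every other vertex of $\bigcup_j Q_j$ that lies in $A_i\cup B_i$. Note that $|A_i|-9\ge(1-\eps)k-9\ge (1-2\eps)k$.

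We next check that concatenating $R_1Q_1R_2Q_2\cdots R_sQ_s$ gives a cycle. The paths $Q_j$ are pairwise vertex-disjoint, and each $R_i$ meets $\bigcup_j Q_j$ only in $u_i,v_i$. It remains to see that $R_i$ and $R_j$ are disjoint for $i\neq j$.
\begin{itemize}
\item For $s\ge 3$ and $i,j$ non-adjacent in the cluster cycle, this follows from Claim~\ref{claim:s>2}\,a).
\item For adjacent clusters, $V(H_i)\cap V(H_{i+1})$ has at most one vertex. This holds by Claim~\ref{claim:s>2}, and for $s=2$ by Lemma~\ref{lem:cluster properties}\,\ref{prop:cluster:vxdisjoint}, which gives at most $3$ common vertices of $A_1\cup B_1$ and $A_2\cup B_2$.
\item We therefore enlarge each forbidden set by $(A_i\cup B_i)\cap\bigcup_{j\ne i}(A_j\cup B_j)$. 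For $s\ge3$ this adds at most $2$ vertices, and for $s=2$ at most $3$, so the forbidden set stays of size $\le 13\le k/10$.
\item If a shared vertex is some $u_i$ or $v_i$, it already lies on a $Q$-path and is excluded from the other pair's forbidden-free part automatically.
\end{itemize}
This is the bookkeeping step I expect to be the fiddliest, but it only costs an additive constant in the admissible range of $m_i$.

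The resulting cycle has length $q+2\sum_{i=1}^s m_i$, where $q:=\sum_i|Q_i|\le 10\eps k s$ is fixed. Each $m_i$ ranges independently over $[2,(1-2\eps)k]$, so $M:=\sum_i m_i$ takes every integer value in $[2s,(1-2\eps)ks]$. Hence $\mathcal{C}(G)$ contains $q+2M$ for all such $M$.

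Given $\ell$ with $sk/2\le\ell\le 3ks/4$, we choose $M=\lfloor(2\ell-q)/2\rfloor$. Then $q+2M\in\{2\ell-1,2\ell\}$, according to the parity of $q$. We need $2s\le M\le(1-2\eps)ks$, which holds because
\[
M\ge \ell-q/2-1\ge sk/2-5\eps ks-1\ge 2s
\quad\text{and}\quad
M\le \ell\le 3ks/4\le(1-2\eps)ks,
\]
using $\eps\ll1$ and $1/k\ll\eps$. This gives a cycle of length $2\ell-1$ or $2\ell$, proving the claim.

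The genuine work is in the disjointness check of the second paragraph. The length arithmetic in the last two paragraphs is routine, since the window $[sk/2,3ks/4]$ sits comfortably inside the attainable range.
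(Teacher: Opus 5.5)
Your proof is correct and is essentially the paper's argument. The paper likewise applies Lemma~\ref{lemma:path lengths in almost complete pairs} in each $(A_i,B_i)$ with the fixed forbidden set $(V(\bigcup_j Q_j)\cup A_{i-1}\cup B_{i-1}\cup A_{i+1}\cup B_{i+1})\setminus\{u_i,v_i\}$, which stays of constant size by \ref{prop:path:intersection} and Lemma~\ref{lem:cluster properties}\,\ref{prop:cluster:vxdisjoint}; it then chooses the lengths $2\ell_i$ so that $\lceil r/2\rceil+\sum_i\ell_i=\ell$ and concatenates the resulting paths with the $Q_i$, and your extra disjointness bookkeeping is what the paper's ``$\setminus\{u_i,v_i\}$'' handles implicitly.
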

\claimproofstart[Proof of Claim~\ref{claim:nowcyclelengths}] Let $r$ be the total length of $Q_1,\ldots,Q_s$ so that, by Claim~\ref{clm:betterpaths}, we have $r\leq 10\eps sk$. Let $sk/2\leq \ell\leq 3ks/4$. Then, we can pick integers $4\leq \ell_1,\dots,\ell_s\leq \lceil 3k/4\rceil$ such that $\lceil r/2\rceil+\ell_1+\dots+\ell_s=\ell$. By Lemma~\ref{lem:cluster properties}\,\ref{prop:cluster:vxdisjoint} (using that the union of any pair of clusters in the cycle is not a cluster), \ref{prop:path:intersection},  and Lemma~\ref{lemma:path lengths in almost complete pairs}, for each $i\in [s]$,  there is a $u_i,v_i$-path $R_i$ in $G[A_i,B_i]-((V(\cup_{j\in [s]}Q_j)\cup A_{i-1}\cup B_{i-1}\cup A_{i+1}\cup B_{i+1})\setminus \{u_i,v_i\})$ with length $2\ell_i$. Then, the concatenation of $R_1,Q_1,R_2,Q_2,\dots,R_s,Q_s$ is a cycle of length $r+2\ell_1+\dots+2\ell_s\in \{2\ell-1,2\ell\}$.
\claimproofend

By Corollary~\ref{cor:cycles in almost complete pairs}, $G[A_1,B_1]$ has a cycle of length $2\ell$ for each $2\leq \ell\leq (1-\eps)k$.
Hence, using Claim~\ref{claim:nowcyclelengths}, we have
\begin{align*}
\sum_{\ell\in \mathcal{C}(G)} \frac{1}{\ell} &\geq\sum_{\ell=2}^{(1-\eps)k}\frac{1}{2\ell}+ \sum_{\ell=sk/2}^{3ks/4}\frac{1}{2\ell}
\geq\sum_{\ell=2}^{(1-\eps)k}\frac{1}{2\ell}+\left(\frac{ks}{4}-1\right)\cdot \left(2\cdot \frac{3ks}{4}\right)^{-1}\\
&=\sum_{\ell=2}^{(1-\eps)k}\frac{1}{2\ell}+\frac{1}{6}-\frac{2}{3sk}>\sum_{\ell=2}^{(1-\eps)k}\frac{1}{2\ell}+(\eps k+1)\cdot \frac{1}{(1-\eps)2k}
>\sum_{\ell=2}^{k}\frac{1}{2\ell},
\end{align*}
a contradiction to the properties of $G$.\end{proof}


\section{Clusters cover most of a counterexample}\label{sec:main_prop_proof}

We will now prove the following lemma, Lemma~\ref{lem:few uncovered vertices}, which shows that, in a minimal counterexample to Theorem~\ref{thm:main},  a suitably maximal edge-disjoint collection of clusters covers all but few vertices in the graph (i.e., that \eqref{eq:mostly covers V(G)} holds).

\begin{lemma}\label{lem:few uncovered vertices} Let $1/k_0\ll \eps \ll 1$.  Let $n$ be the least integer for which there is an integer $k_0\leq k\leq n/2$ and an $n$-vertex graph $G$ with more than $(k-1)(n-k+1)$ edges for which either \textup{\textbf{a)}} $\sum_{\ell\in \cC(G)}\frac{1}{\ell}<\sum_{\ell=2}^k\frac{1}{2\ell}$ or \textup{\textbf{b)}} $\sum_{\ell\in \cC(G)}\frac{1}{\ell}= \sum_{\ell=2}^k\frac{1}{2\ell}$, $e(G)\geq k(n-k)$ and $G$ is not a copy of $K_{k,n-k}$.
Let $G$ be such a graph.

Let $\mathcal{H}$ be a collection of edge-disjoint $(\eps,k)$-clusters in $G$ which maximises $\sum_{H\in \cH}e(H)$ and, subject to this, minimises $|\cH|$. Then,
\begin{equation}\label{eq:mostly covers V(G)}
\bigg|V(G)\setminus V\bigg(\bigcup_{H\in \cH} H\bigg)\bigg|\leq \frac{2000|\mathcal{H}|}{\eps}.
\end{equation}
\end{lemma}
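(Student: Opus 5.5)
The plan is to argue by contradiction. Let $U:=V(G)\setminus V(\bigcup_{H\in\cH}H)$ and suppose $|U|>2000|\cH|/\eps$. The aim is to locate, inside $G[U]$ together with its attachments to the clusters, either a new $(\eps,k)$-cluster edge-disjoint from $\cH$ (contradicting the maximality of $\sum_{H\in\cH}e(H)$), or one of the forbidden structures: a long path between two vertices of some $A(H)$, a cluster cycle, or directly enough cycle lengths.

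\textbf{Step 1: bound edges from $U$ into clusters.} Each $u\in U$ has at most one neighbour in $V(H)$ for each $H\in\cH$. Otherwise $u$ could be added to $H$ together with two edges, and $H$ would remain a cluster, as remarked after Definition~\ref{def:cluster}. The added edges are not in any other cluster, since $u$ is uncovered, so this increases $\sum e(H)$. Moreover, if $u$ has neighbours in two clusters $H_i,H_j$, then paths of length at most $4$ through $U$ create adjacency relations between clusters. Lemma~\ref{lem:cluster properties}\,\ref{prop:cluster:nocycle} forbids cluster cycles, so the auxiliary graph on $\cH$ is a forest, where $H_i\sim H_j$ when they are joined by a short path through $U$ or share a vertex. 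This forest structure should limit how many vertices of $U$ can send edges to two or more clusters in a way that creates new short connections. The target is a bound of roughly $O(|\cH|)$ on the vertices of $U$ that are "short-connecting", for example by charging each to an edge of the forest, after showing that each forest edge can absorb only $O(1/\eps)$ such vertices before some vertex-disjoint path family builds a cycle.

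\textbf{Step 2: find a cluster in the remaining part.} Let $U'\subseteq U$ consist of the vertices having at most one neighbour outside $U$, together with the remaining vertices, handled via the forest argument. Since $\delta(G)\ge k$ by Lemma~\ref{lemma:minimum degree}, most of the degree of each $u\in U$ stays inside $U$ after accounting for the at most $|\cH|$ cluster-neighbours. So if $|U|$ is large, $G[U]$ has a dense part. The harder issue is getting \emph{average degree close to $2k$}, which is what the expander machinery needs to produce an almost-complete pair.

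For this I will use the edge count. Since $e(G)>(k-1)(n-k+1)$ and the clusters are edge-disjoint, I will bound $e(H)$ for each cluster via its vertex count; I expect something like $e(H)\le (k-1)(|H|-k+1)+O(k)$, which follows from the minimality of $n$ applied to $H$. Then a counting argument shows that either $G[U]$ (plus incident edges) carries more than $(k-1)(|U|-O(|\cH|k/k))$ edges, or the deficit is absorbed by the overlap bound $|(A\cup B)\cap(A'\cup B')|\le 3$ from Lemma~\ref{lem:cluster properties}\,\ref{prop:cluster:vxdisjoint}. In the former case, apply the paper's expander and almost-complete-pair machinery (Liu–Montgomery, Theorem~\ref{mainthm-new}, or the small-expander route in the outline) to a suitable subgraph of $G[U]$ to find an almost-complete pair in $U$. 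That pair is a cluster edge-disjoint from $\cH$, contradicting maximality. I would run the subgraph argument directly: delete vertices of $G[U]$ of low degree iteratively, and use minimality of $n$ (as in Lemma~\ref{lemma:minimum degree}\,iii)) to see that the remaining piece either has few cycle lengths and is a counterexample (contradicting minimality), or contains the needed pair.

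\textbf{Main obstacle.} I expect the hard step to be the edge accounting in Step 2: turning "$|U|\gg|\cH|/\eps$" into a subgraph of $U$ with average degree at least $(1-o(1))2k$. This requires a sharp upper bound on the edges spanned by clusters and by cluster–$U$ attachments. I would first try to prove $e(H)\le k|V(H)|-(1-\eps)k^2$ from the structure of clusters; the almost-complete core gives roughly $k|B|$ edges, and each depth vertex is charged about $k$ edges. The constant $2000/\eps$ is meant to absorb the $O(1/\eps)$ loss per cluster incurred there.
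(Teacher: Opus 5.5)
There is a genuine gap, and it sits in Step~1. Your target bound, that only $O(|\cH|)$ uncovered vertices are ``short-connecting'', is false. Take arbitrarily many uncovered vertices $u$, all adjacent to the same $x\in V(H_1)$ and the same $y\in V(H_2)$. Each has at most one neighbour per cluster, and no cluster cycle arises, because all the paths $xuy$ share $x$ and $y$, while cluster cycles need \emph{vertex-disjoint} connecting paths. For the same reason your auxiliary graph (with $H_i\sim H_j$ whenever some short path joins them) need not be a forest: a single uncovered vertex adjacent to three clusters already gives a triangle.

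The missing idea is to bound the number of \emph{covered} vertices responsible for short connections, not the number of uncovered ones:
\begin{enumerate}
\item Take a maximal family $\cP$ of vertex-disjoint paths of length at most $2$, each joining two distinct clusters through uncovered interior vertices.
\item Acyclicity (Lemma~\ref{lem:cluster properties}\,\ref{prop:cluster:nocycle}) genuinely applies to such a family, and gives $|\cP|<|\cH|$.
\item Move $V(\cP)$ to the uncovered side: set $X=\bigcup_{H\in\cH}(V(H)\setminus V(\cP))$ and $Y=V(G)\setminus X$.
\item By maximality of $\cH$ (at most one neighbour per cluster) and of $\cP$ (neighbours in $X$ for at most one cluster), every $y\in Y\setminus V(\cP)$ has at most one neighbour in $X$. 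Hence $e(G[X,Y\setminus V(\cP)])\le |Y\setminus V(\cP)|$.
\end{enumerate}
Only $|V(\cP)\cap Y|\le 3|\cH|$ vertices lose this property, and that is what the constant $2000/\eps$ absorbs.

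The edge accounting in Step~2 should not go cluster by cluster. A structural bound like $e(H)\le k|V(H)|-(1-\eps)k^2$ does not follow from the cluster definition, since adding edges preserves being a cluster; only minimality of $n$ controls $e(G[V(H)])$. Summing per-cluster bounds would also leave the edges between different clusters to be bounded separately, which your sketch does not do.

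Instead, apply minimality of $n$ once to $G[X\cup V(\cP)]$. This graph has between $2k$ and $n-1$ vertices and its cycle spectrum lies inside $\cC(G)$. So $e(G[X\cup V(\cP)])\le (k-1)(|X\cup V(\cP)|-k+1)$, or $\le k(|X\cup V(\cP)|-k)$ in case \textbf{b)}. Subtracting this and the attachment bound from $e(G)$ gives $e(\Gamma)\ge (k-2)|Y\setminus V(\cP)|$ for $\Gamma=G[Y]-\bigcup_{H\in\cH}E(H)$, hence $d(\Gamma)\ge(1-0.01\eps)2k$. When $\cH=\emptyset$, take $\Gamma=G$ and use $n>k^{1.02}$.

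Your final step is right in spirit, but the conclusion is a direct contradiction, not a smaller counterexample. Pass to an expander $\Gamma'\subseteq\Gamma$ via Theorem~\ref{thm-expander}, with $\delta(\Gamma')\ge(1-0.02\eps)k$. By maximality of $\sum_{H}e(H)$, $\Gamma'$ contains no $(\eps,k)$-almost-complete pair. Then Lemma~\ref{lem:smalldensenoalmostcomplete} or Lemma~\ref{lemma:dense patches in expanders} gives $\sum_{\ell\in\cC(\Gamma')}\frac1\ell>\sum_{\ell=2}^k\frac{1}{2\ell}$, which contradicts the assumption on $G$ outright.
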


 We split this proof into three lemmas. The first two of these show that any $n$-vertex graph without a certain almost-complete pair but with a good average and minimum degree condition has strictly larger harmonic sum of cycle lengths than $K_{k,n-k}$, dealing respectively with the case $n\leq k^{1.01}$ (Lemma~\ref{lem:smalldensenoalmostcomplete}) and $n>k^{1.01}$ (Lemma~\ref{lemma:dense patches in expanders}). The last of these three lemmas (Lemma~\ref{lemma:many edges outside clusters}) then shows that in our minimal counterexample a maximal edge-disjoint collection of clusters, which does not cover some vertices, does not cover some dense subgraph.

\begin{lemma}\label{lem:smalldensenoalmostcomplete}
Let $1/k\ll \eps,\eps_1 \ll 1$ and $n\leq k^{1.01}$. Let $\Gamma$ be an $n$-vertex $(\eps_1,k/2)$-expander with no $(\eps,k)$-almost-complete pair in $\Gamma$ and such that $d(\Gamma)\geq (1-0.1\eps)2k$ and $\delta(\Gamma)\geq (1-0.1\eps)k$. Then, $\sum_{\ell\in \cC(\Gamma)} \frac{1}{\ell}> \sum_{\ell=2}^k\frac{1}{2\ell}$.
\end{lemma}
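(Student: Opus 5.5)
We follow the strategy of Lemma~\ref{lem:few_vertices}, now using the density of $\Gamma$ in place of the exact minimum degree. As $n\leq k^{1.01}$ and $\delta(\Gamma)\geq (1-0.1\eps)k$, we apply Theorem~\ref{thm:GHS} with $c=k^{-0.01}/3$, so that $K=Cc^{-5}\leq k^{0.1}$ and $n\geq 45K/c^4$. Then $\cC(\Gamma)$ contains every even length in $[4,\ec(\Gamma)-K]$ and every odd length in $[K,\oc(\Gamma)-K]$. The proof then splits according to how long the longest even and odd cycles are.

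\textbf{Step 1: long even cycles.} Suppose $\ec(\Gamma)\geq (2+\eta)k$ for a small $\eta=\eta(\eps)>0$. Then $\Gamma$ has all even lengths from $4$ to $(2+\eta)k-K$, and
\[
\sum_{\ell=2}^{(1+\eta/3)k}\frac{1}{2\ell}>\sum_{\ell=2}^k\frac{1}{2\ell},
\]
so we are done. The same holds if $\Gamma$ is non-bipartite with $\oc(\Gamma)\geq \eta k$ and $\ec(\Gamma)\geq (1-\eta)2k$. In that case the odd lengths in $[K,\eta k-K]$ contribute about $\tfrac12\log(\eta k^{0.9})$, which is far more than the deficit of at most about $\tfrac12\log\frac{1}{1-\eta}$ among the even lengths. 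For the even part we use Theorem~\ref{thm:EG}: $\Gamma$ has a bipartite subgraph of average degree at least $(1-0.1\eps)k$, and hence an even cycle of length about $k$. This gives $\ec(\Gamma)\gtrsim k$ for free, but not $(1-\eta)2k$. As in Lemma~\ref{lem:few_vertices}, any odd cycle of length $\geq k/10$ combined with even lengths up to $k/2$ already gives a harmonic sum $\geq \tfrac12\log(k/5)+\tfrac12\log(k^{0.9}/20)>\tfrac12\log k$. So the odd-cycle case is easy whenever $\oc(\Gamma)\geq k/10$, and the 2-connectivity argument of Lemma~\ref{lem:few_vertices} (with the expansion of $\Gamma$ used to supply two disjoint connecting paths) shows that if $\Gamma$ is non-bipartite then indeed $\oc(\Gamma)\geq k/10$. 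We may therefore assume $\Gamma$ is bipartite and $\ec(\Gamma)<(2+\eta)k$.

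\textbf{Step 2: structure in the bipartite case.} This is the main obstacle: we must show that a bipartite $\Gamma$ with $d(\Gamma)\geq(1-0.1\eps)2k$ and no cycle of length $\geq (2+\eta)k$ contains an $(\eps,k)$-almost-complete pair, contradicting the hypothesis. We first use Theorem~\ref{thm:AP} on a subgraph of minimum degree about $(1-0.1\eps)k$ to obtain an arithmetic progression of about $k$ even lengths. Combined with the bounded circumference, this forces $\ec(\Gamma)\geq (2-O(\eps))k$, so $\Gamma$ has all even lengths up to nearly $2k$ and the harmonic deficit is only $O(\eps)$. The real work is the structural step. A bipartite graph of circumference at most $(2+\eta)k$ and average degree close to $2k$ is near the extremal case of the Erd\H{o}s--Gallai/Woodall circumference theorem for bipartite graphs, and we use its stability version. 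Concretely, we take a longest cycle $C$ of length about $2k$ and a vertex set $U$ outside $C$ of high degree. Each $u\in U$ has nearly $k$ neighbours on $C$, and the standard rotation argument (neighbours of $u$ can't be followed on $C$ by neighbours of another such $u'$) forces these neighbourhoods to concentrate on a common set $A\subseteq V(C)$ of size about $k$ from one colour class. Taking $B$ to be the vertices (on or off $C$) with at least $(1-2\eps)k$ neighbours in $A$, the edge count together with $d(\Gamma)\geq (1-0.1\eps)2k$ shows $|B|\geq 2k$ and $(1-\eps)k\leq|A|<k$. This yields an almost-complete pair, a contradiction.

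\textbf{Step 3: using expansion.} The expander property, with $\eps_1$ and scale $k/2$, is used in two places. It supplies the connecting paths in Step 1, replacing the 2-connectivity that Lemma~\ref{lemma:minimum degree} gave for $G$. It also ensures in Step 2 that the high-degree vertices do not split into several separate near-complete pieces each of which is too small or too sparse. If they did, a small set would separate two large parts, contradicting expansion at size about $k/2$. We expect the stability analysis in Step 2, particularly pinning $|A|<k$ and extracting the $(1-2\eps)k$ minimum degree with constants compatible with $0.1\eps$, to be the delicate part.
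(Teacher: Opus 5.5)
Your Step 1 is sound and is a legitimate variant of the paper's argument. The paper first disposes of $\oc(\Gamma)\geq(1-0.1\eps)2k$ and then proves bipartiteness by growing a path of length $\sqrt{k}$ from a shortest odd cycle and closing it up with Lemma~\ref{lem-diameter}. Your longest-odd-cycle argument with two disjoint connecting paths also works: use Menger, since deleting one vertex cannot separate two sets of size $\geq k/2$ in the expander. The AP step in Step 2 is correct but unnecessary. In the bipartite case Theorem~\ref{thm:EG} gives $\ec(\Gamma)\geq d(\Gamma)\geq(1-0.1\eps)2k$ directly. Theorem~\ref{thm:GHS}, together with the assumed bound on the harmonic sum, gives the sharper bound $\ec(\Gamma)\leq 2k+k^{0.1}$, which you need later to get $|A'|<k+k^{0.1}$.

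The gap is in the structural part of Step 2. You assert that every high-degree vertex outside a longest cycle $C$ has nearly $k$ neighbours on $C$, but this is exactly what needs proof. Neither the unspecified ``stability version'' of the bipartite circumference theorem nor your proposed use of expansion supplies it. Ruling out a split into several near-complete pieces is the wrong target, since such pieces are harmless: any one of them already gives an almost-complete pair. The real danger is that most edges of $\Gamma$ sit in a dense part of $\Gamma-V(C)$ that your argument never sees.

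The missing idea is Claim~\ref{claim:longest_cycle}: every subgraph of $\Gamma-V(C)$ has average degree at most $0.01\eps k$. Suppose not. Then Theorem~\ref{thm:EG} gives a cycle $S'$ of length at least $0.01\eps k$ in $\Gamma-V(C)$. Repeatedly apply Lemma~\ref{lem-diameter} to $C$ and $S'$, enlarged by their neighbourhoods and deleting the paths already found. This yields $k^{0.1}$ vertex-disjoint short $V(C)$--$V(S')$ paths. Two of them end within distance $|C|/k^{0.1}\leq 0.005\eps k$ of each other on $C$. Rerouting through these two paths and the long arc of $S'$ gives a cycle longer than $C$, a contradiction.

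Only with this sparsity does the rest go through.
\begin{itemize}
\item The rotation argument must be applied across colour classes. A vertex with $\geq 4k/5$ neighbours in $A'$ and another with $\geq 4k/5$ neighbours in $B'$ give consecutive $v_1v_2v_3v_4$ on $C$, and hence a cycle of length $|C|+2$. For two vertices of the same class, the condition you state holds automatically in a bipartite graph, so it carries no information.
\item So, say, every vertex of $A''$ has $\geq k/10$ neighbours in $B''$. Sparsity then forces $|A''|\leq 0.1\eps|B''|$.
\item The edge count with $d(\Gamma)\geq(1-0.1\eps)2k$ then gives $|B''|\geq 8k$.
\item Sparsity again shows that all but $|A''|$ vertices of $B''$ have $\geq(1-0.2\eps)k$ neighbours in $A'$.
\item Concentration onto a common $A$ of size $<k$ then follows from $|A'|<k+k^{0.1}$ by a non-edge count, not from rotation.
\end{itemize}
Without the sparsity claim, neither the bound on $|A''|$ nor $|B|\geq 2k$ follows.
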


\begin{lemma}\label{lemma:dense patches in expanders}
Let $1/k\ll \eps,\eps_1 \ll 1$ and $n> k^{1.01}$. Let $\Gamma$ be an $n$-vertex $(\eps_1,k/2)$-expander with no $(\eps,k)$-almost-complete pair in $\Gamma$ and such that $d(\Gamma)\geq (1-0.1\eps)2k$ and $\delta(\Gamma)\geq (1-0.1\eps)k$. Then, $\sum_{\ell\in \cC(\Gamma)} \frac{1}{\ell}> \sum_{\ell=2}^k\frac{1}{2\ell}$.

\end{lemma}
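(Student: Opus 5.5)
The plan is to use the modified Liu--Montgomery theorem (Theorem~\ref{mainthm-new}, quoted in the outline and confirmed in Appendix~\ref{appendix}) as a black box, and then do a small amount of extra work to convert its output into a strict inequality. I expect Theorem~\ref{mainthm-new} to say roughly the following. If $\Gamma$ is an $(\eps_1,k/2)$-expander with $|\Gamma|>k^{1.01}$ and $d(\Gamma)\geq(1-0.1\eps)2k$, then one of two things happens. Either $\Gamma$ contains cycles of all even lengths in an interval $[L,CL]$ for some $L$ well above $2k$ (more generally, a positive density of lengths on a new multiplicative scale, together with all even lengths up to roughly $(1-o(1))2k$). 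Or $\Gamma$ contains a dense bipartite structure with one side of size at most about $k$, which is essentially $K_{k,m}$-like. So the argument splits according to which outcome holds.

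\textbf{Case 1: $\Gamma$ has long cycles on many scales.} In the first outcome, I would assemble the harmonic sum in two parts.
\begin{itemize}
\item The even lengths $4,6,\dots,(1-\delta)2k$ contribute $\sum_{\ell=2}^{(1-\delta)k}\frac1{2\ell}$.
\item The new scale $[L,CL]$ contributes a constant $c>0$.
\end{itemize}
The only loss from the lengths between $(1-\delta)2k$ and $2k$ is at most $\sum_{(1-\delta)k}^{k}\frac1{2\ell}\approx\delta/2$, which is smaller than $c$. This gives the strict inequality, exactly as in the final computation in the proof of Lemma~\ref{lem:cluster properties}\,\ref{prop:cluster:nocycle}.

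\textbf{Case 2: $\Gamma$ has a dense bipartite piece.} In the second outcome, there are sets $A,B$ with $|A|\le(1+\eps')k$, $|B|\geq 2k$, and $G[A,B]$ nearly complete from $B$'s side. I would first clean this up to meet Definition~\ref{def:almost}, using the degree conditions to pass to subsets.
\begin{itemize}
\item If after cleaning $|A|<k$ and $(1-\eps)k\le|A|$ with min degree $(1-2\eps)k$, this is an $(\eps,k)$-almost-complete pair, which is forbidden by hypothesis.
\item If $|A|\geq k$ in a robust sense, then Lemma~\ref{lemma:path lengths in almost complete pairs} (applied with $k$ slightly larger) gives all even lengths up to $2|A|\geq 2k+2$. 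That already beats $\sum_{\ell=2}^k\frac1{2\ell}$.
\item If $|A|$ is much smaller than $k$, the degree condition $\delta(\Gamma)\geq(1-0.1\eps)k$ forces $B$-vertices to send many edges outside $A$. Expansion then lets me route a path of length in $[\eps k, O(k)]$ between two vertices of $A$ that avoids $A\cup B$. Inserting it as in the proof of Lemma~\ref{lem:cluster properties}\,\ref{prop:cluster:depth} gives a length $2\ell-1$ or $2\ell$ for each $\ell\le k+1$, which again suffices.
\end{itemize}

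The main obstacle is the intermediate regime of Case 2: $A$ is dense to $B$ but $|A|$ is around $(1-\eps)k$ to $(1-\eps/10)k$, with minimum degrees just below the almost-complete thresholds. There the pair is neither forbidden nor large enough. My first attempt would be a counting argument. I would use $d(\Gamma)\geq(1-0.1\eps)2k$ together with $|\Gamma|>k^{1.01}$: most edges then lie outside $G[A,B]$, so removing $A\cup B$ leaves a subgraph with nearly the same average degree. The expansion of $\Gamma$ would then let me reapply the theorem to find a second dense structure or a long connecting path, and hence new cycle lengths at scale above $2k$. I would also rely on the slack $0.1\eps$ versus $\eps$ in the hypotheses to absorb the cleaning losses.
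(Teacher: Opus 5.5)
Your top-level plan (apply Theorem~\ref{mainthm-new}, win directly in one outcome, extract a forbidden almost-complete pair in the other) matches the paper's. The genuine gap is in your Case 2, which rests on a misreading of that theorem. Apply it with $d=(1-0.1\eps)k$ and $\eps_2=\eps/60$. Its outcome \textbf{C1} is not a $K_{k,m}$-like piece with one side of size about $k$. It gives disjoint $U,V$ with $|U|\le\log^{20}n$, $|V|\ge\log^{80}n$ and $d_\Gamma(v,U)\ge(1-2\eps_2)d\ge(1-0.2\eps)k$ for every $v\in V$. Since nothing bounds $n$ from above, $|U|$ may be far larger than $k$, and different vertices of $V$ may see very different parts of $U$. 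So your case split on $|A|$ has no set $A$ to start from; producing a set of size at most $k$ seen by many vertices of $V$ is the whole content of this case. Your fallback tools do not supply it. Lemma~\ref{lem-diameter} only bounds connecting path lengths from above by $O(\eps_1^{-1}\log^3 n)$, so it gives no path with length in $[\eps k,O(k)]$. There is also no almost-complete pair into which such a path could be inserted, and finding a second dense structure after deleting $A\cup B$ yields no new cycle lengths. Separately, with $\eps_2=\eps/60$ the degree $(1-0.2\eps)k$ into $U$ is well above $(1-2\eps)k$, so the ``intermediate regime'' you flag never arises; the slack absorbs all cleaning losses.

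The missing idea is a localisation argument. Choose pairwise disjoint sets $V_{xy}\subseteq V\cap N_\Gamma(x)\cap N_\Gamma(y)$ with $|V_{xy}|\le 1$, one for each $\{x,y\}\in U^{(2)}$, maximising $\sum|V_{xy}|$. Let $V'=V\setminus\bigcup V_{xy}$, so $|V'|\ge|V|/2$. Suppose $v,v'\in V'$ share two neighbours in $U$ and $|N_\Gamma(v,U)\cup N_\Gamma(v',U)|\ge k+1$. Then for every $3\le\ell\le k+1$ there is a cyclic sequence $b_1,\dots,b_\ell$ in this union whose consecutive pairs lie in $N_\Gamma(v)$ or in $N_\Gamma(v')$. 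Maximality forces each $V_{b_ib_{i+1}}$ to be nonempty (otherwise add $v$ or $v'$ to it), and these distinct representatives close up a $2\ell$-cycle. Together with the $4$-cycle through $v,v'$, this gives all even lengths $4,\dots,2k+2$, a contradiction. Hence such unions have size at most $k$.

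Pigeonholing over pairs in $U$ then gives $B\subseteq V'$, all adjacent to the same $u,u'\in U$, with $|B|\ge 2|V'|/|U|^2\ge\log^{40}n\ge 7k$ (as $\log^{20}n\ge|U|\ge k/2$). Fix $b\in B$ and set $A'=N_\Gamma(b,U)$. Then $|A'|\le k$ and $d_\Gamma(b',A')\ge(1-0.4\eps)k$ for all $b'\in B$. Delete the at most $0.5\eps k$ vertices of $A'$ with fewer than $k$ neighbours in $B$, and one more vertex if needed so that $|A|<k$. This yields an $(\eps,k)$-almost-complete pair, contradicting the hypothesis.

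Your Case 1 also counts the even lengths $4,\dots,(1-\delta)2k$, which \textbf{C2} does not supply. The correct count is this. Apply \textbf{C2} to an edge $xy$ to get a cycle of length $2\ell-1$ or $2\ell$ for every $\ell\in[\log^7 n+1,\,n/(4\log^{12}n)+1]$. Because $n>k^{1.01}$, this range spans a factor of at least $k^{1.005}$. These lengths alone therefore contribute at least $\frac12\log k^{1.005}>\sum_{\ell=2}^k\frac1{2\ell}$, and this is exactly where the hypothesis $n>k^{1.01}$ is used.
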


\begin{lemma}\label{lemma:many edges outside clusters}
Let $1/k_0\ll \eps \ll 1$. Let $n$ be the least integer for which there is an integer $k_0\leq k\leq n/2$ and an $n$-vertex graph $G$ with more than $(k-1)(n-k+1)$ edges for which either \textup{\textbf{a)}} $\sum_{\ell\in \cC(G)}\frac{1}{\ell}<\sum_{\ell=2}^k\frac{1}{2\ell}$ or \textup{\textbf{b)}} $\sum_{\ell\in \cC(G)}\frac{1}{\ell}= \sum_{\ell=2}^k\frac{1}{2\ell}$, $e(G)\geq k(n-k)$ and $G$ is not a copy of $K_{k,n-k}$.
Let $G$ be such a graph.

Let $\mathcal{H}$ be a collection of edge-disjoint $(\eps,k)$-clusters in $G$ which maximises $\sum_{H\in \cH}e(H)$.
Suppose
\begin{equation}\label{eq:doesnt mostly cover V(G)}
\bigg|V(G)\setminus V\bigg(\bigcup_{H\in \cH} H\bigg)\bigg|>\frac{2000|\mathcal{H}|}{\eps}.
\end{equation}
 Then, there is a subgraph $\Gamma\subset G-\bigcup_{H\in \cH}E(H)$ with $d(\Gamma)\geq (1-0.01\eps)2k$.
\end{lemma}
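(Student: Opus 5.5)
We prove Lemma~\ref{lemma:many edges outside clusters} by counting edges. Let $U:=V(G)\setminus V(\bigcup_{H\in\cH}H)$, $W:=V(G)\setminus U$, $u:=|U|$ and $t:=|\cH|$, and let $G'$ be the graph $G-\bigcup_{H\in\cH}E(H)$ on vertex set $V(G)$. We will show that $e(G')\geq (1-0.005\eps)k\,|G'|$, after possibly removing some vertices of $W$. Then $d(\Gamma)\geq(1-0.01\eps)2k$ holds with $\Gamma$ taken to be a suitable induced subgraph of $G'$, for example $G'[U\cup W']$ for some $W'\subseteq W$.

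\textbf{Step 1: edges inside clusters.} For a single cluster $H$, we bound $e(H)$ from above by roughly $k|V(H)|$. The core $G[A,B]$ has at most $|A||B|<k|B|$ edges. Each vertex added in a later layer $X^{(i)}$ has at most one neighbour in $X^{(i-2)}$, so its edges into earlier layers are limited. For edges among the added vertices, the maximality of $\cH$ is useful: if $H$ contained many edges that are not needed for the cluster structure, we could move them to other clusters, although this is not obviously possible. A cleaner approach is to note that if $e(H)>k(|V(H)|-k)+C$ for some slack $C$, then $G[V(H)]$ is a graph on fewer vertices with many edges. If $|V(H)|<n$, the minimality of $n$ then gives a contradiction, as in the proof of Lemma~\ref{lemma:minimum degree}\,\textbf{iii)}. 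The case $|V(H)|=n$ cannot occur, since then $U=\emptyset$. So we expect $e(H)\leq (k-1)(|V(H)|-k+1)$, or a similar bound. To avoid double counting, we use the fact that the clusters pairwise share at most $3$ core vertices (Lemma~\ref{lem:cluster properties}\,\ref{prop:cluster:vxdisjoint}) together with the no-cluster-cycle property (Lemma~\ref{lem:cluster properties}\,\ref{prop:cluster:nocycle}). The latter says that the cluster intersection structure is forest-like, so
\[\sum_{H\in\cH}|V(H)|\leq |W|+O(t)\quad\text{(up to overlaps controlled by the forest structure)}.\]
Summing over $H\in\cH$ then gives $\sum_H e(H)\leq k|W|+O(kt)-(k-1)^2t$, which is at most $k|W|$ provided $t\ge1$. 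If $t=0$ there is nothing to subtract.

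\textbf{Step 2: conclude.} Since $e(G)>(k-1)(n-k+1)$ and $n=u+|W|$, we get
\[e(G')>(k-1)(u+|W|-k+1)-\sum_H e(H)\geq (k-1)u-O(k)-|W|\cdot 1.\]
This is not yet enough, because the loss of about $|W|$ must be absorbed. The fix is to take $\Gamma=G'[U\cup N_{G'}(U)]$ instead, or to count only the edges of $G'$ that are incident to $U$. Every vertex of $U$ has degree at least $k$ in $G$ (Lemma~\ref{lemma:minimum degree}), and all its edges lie in $G'$. Hence $e_{G'}(U)+e_{G'}(U,W)\geq ku/2$, which only gives average degree about $k$. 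Obtaining $2k$ therefore really requires the global count: the clusters must be shown to be edge-efficient, with each cluster using at most about $k$ edges per vertex minus $\Omega(k^2)$. This deficit, of order $k^2$ per cluster, is what the hypothesis $u>2000t/\eps$ is designed to exploit. Indeed, the deficit of $G'$ relative to $2k$ average degree on $U$ is $O(kt)$, which is at most $0.005\eps ku$, so $G'[U\cup W_0]$ has the required density, where $W_0$ consists of the vertices of $W$ carrying edges of $G'$.

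\textbf{Main obstacle.} The delicate step is Step 1, namely establishing that the edges of $G$ that are not in $G'$ number at most $k|W|+O(kt)$, and that $G'[W]$ contributes little enough that restricting to a subgraph still yields average degree $(1-0.01\eps)2k$. I would first try applying the minimality of $n$ to $G[W]$ as a whole (if $|W|\geq 2k$): it has at most $(k-1)(|W|-k+1)$ edges unless it already forces $s(G)$ large enough. Then $e(G)-e(G[W])>(k-1)u$ counts edges touching $U$, and all of these lie in $G'$. Taking $\Gamma$ to be $G'$ restricted to $U$ together with its neighbours, and deleting low-degree vertices of $W$ iteratively, should give average degree close to $2k$. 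The $2000t/\eps$ slack would then handle boundary losses of order $kt$ caused by cluster overlaps.
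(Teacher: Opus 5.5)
There is a genuine gap. Your final idea, applying the minimality of $n$ to the covered part $G[W]$, is the right global count; the paper does this with $W$ slightly enlarged. But it only shows that more than $(k-1)u$ edges of $G'$ touch $U$. It says nothing about how these edges split between $G[U]$ and $G[U,W]$, and that split is the whole content of the lemma. For instance, suppose a typical vertex of $U$ sent most of its edges to distinct vertices of $W$. Then $G'[U\cup N_{G'}(U)]$ would have average degree far below $2k$, possibly close to $2$, and iteratively deleting low-degree vertices of $W$ cannot repair this. Your assertion that the deficit is $O(kt)$, so that $G'[U\cup W_0]$ is dense enough, is exactly the statement that needs proof. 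Step~1 does not supply it: as you note yourself, subtracting $\sum_H e(H)$ from $e(G)$ leaves a loss of order $|W|$, which may be much larger than $ku$. The claimed bound on $\sum_H e(H)$ in the presence of overlaps is also not justified.

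The missing ingredient is a bound on the number of $U$--$W$ edges, which comes from the maximality of $\cH$ together with Lemma~\ref{lem:cluster properties}\,\ref{prop:cluster:nocycle}.
\begin{itemize}
\item Each $y\in U$ has at most one neighbour in $V(H)$ for every $H\in\cH$. Otherwise, adding $y$ and two of its edges to $H$ gives a cluster; these edges are uncovered, so this increases $\sum_H e(H)$.
\item First merge any clusters whose union is a cluster, so that the no-cluster-cycle lemma applies. Then take a maximal family $\cP$ of vertex-disjoint paths of length at most $2$ joining distinct clusters, with interiors in $U$. The multigraph these paths induce on $\cH$ is acyclic, since a cycle would give a cluster cycle. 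Hence $|\cP|<|\cH|$.
\item By the maximality of $\cP$, every $y\in U\setminus V(\cP)$ has at most one neighbour in $X:=W\setminus V(\cP)$.
\item Apply the minimality of $n$ to $G[X\cup V(\cP)]$ rather than to $G[W]$. This avoids the uncontrolled edges at the midpoints of the paths in $\cP$. It shows that $\Gamma:=G[V(G)\setminus X]-\bigcup_H E(H)$ has at least $(k-2)|U\setminus V(\cP)|\geq (k-2)(|V(\Gamma)|-3|\cH|)$ edges.
\item Finally, $u>2000|\cH|/\eps$ gives $d(\Gamma)\geq (1-0.01\eps)2k$.
\end{itemize}

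The case $\cH=\emptyset$ is not free either. There, $\Gamma=G$ works only because $n>k^{1.02}$ by Lemma~\ref{lem:few_vertices}, and your outline never invokes this.
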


Lemma~\ref{lem:few uncovered vertices} follows shortly from these lemmas, as follows.

\begin{proof}[Proof of Lemma~\ref{lem:few uncovered vertices} from Lemmas~\ref{lem:smalldensenoalmostcomplete},~\ref{lemma:dense patches in expanders} and~\ref{lemma:many edges outside clusters}.] Let
$\eps_1$ be such that $1/k\ll \eps_1\ll \eps$.
Suppose that \eqref{eq:mostly covers V(G)} does not hold. Then, as \eqref{eq:doesnt mostly cover V(G)} holds,
by Lemma~\ref{lemma:many edges outside clusters} there is a subgraph $\Gamma\subset G-\cup_{H\in \cH}E(H)$ with $d(\Gamma)\geq (1-0.01\eps)2k$. By Theorem~\ref{thm-expander} with $C=61, \eps_2=1/3$, there is a subgraph $\Gamma'\subset \Gamma$ with $d(\Gamma')\geq (1-C\eps_1)(1-0.01\eps)2k\ge (1-0.02\eps)2k$ and $\delta(\Gamma')\geq d(\Gamma')/2\ge (1-0.02\eps)k$ which is an $(\eps_1,\eps_2 d(\Gamma))$-expander. As $\eps_2 d(\Gamma)\geq k/2$ and $\delta(\Gamma')\geq (1-0.02\eps)k$, we have that $\Gamma'$ is an $(\eps_1,k/2)$-expander (expansion for sets of order $x\le \eps_2 d/2$ follows from minimum degree, while for larger sets it follows from $(\eps_1,\eps_2 d(\Gamma))$-expansion). Furthermore, by the maximality of $\sum_{H\in \cH}e(H)$, as $\Gamma'\subset \Gamma$ is edge-disjoint from each $H\in \cH$, $\Gamma'$ contains no $(\eps,k)$-cluster and, hence, no $(\eps,k)$-almost-complete pair.
 If $|\Gamma'|\leq k^{1.01}$, then by Lemma~\ref{lem:smalldensenoalmostcomplete} we get a contradiction. If $|\Gamma'|> k^{1.01}$, then by Lemma~\ref{lemma:dense patches in expanders} we get a contradiction.
\end{proof}

In the remainder of this section, we will prove Lemmas~\ref{lem:smalldensenoalmostcomplete},~\ref{lemma:dense patches in expanders} and~\ref{lemma:many edges outside clusters}, in their respective subsections.


\subsection{Sublinear expansion}\label{sec:sublin}

Following Koml\'os and Szemer\'edi~\cite{K-Sz-1,K-Sz-2}, we use the following definition of a (sublinear) expander. For more on sublinear expansion, see the recent survey of Letzter~\cite{Letzter2024SublinearExpanders}, as well as the less detailed overview by Montgomery~\cite{Montgomery2026GraphTheoryExpansion}.

\begin{definition}
For each $\eps_1>0$ and $k>0$, a graph $G$ is an \emph{$(\eps_1,k)$-expander} if
$$|N(X)|\geq \rho(|X|,\eps_1,k)\cdot |X|$$
for all $X\subseteq V(G)$ with $k/2\leq |X|\leq |G|/2$, where
\begin{eqnarray}\label{epsilon}
\rho(x,\eps_1,k):=\left\{\begin{tabular}{ l l }
$0$ & $\mbox{ if } x<k/5$, \\
$\eps_1/\log^2(15x/k)$ & $\mbox{ if } x\ge k/5$. \\
\end{tabular}
\right.
\end{eqnarray}
\end{definition}

As Koml\'os and Szemer\'edi~\cite{K-Sz-2} showed, every graph $G$ contains an expander with comparable average degree to $G$. We will use the following version of this, by Haslegrave, Kim and Liu (see \cite[Lemma~3.2]{HaslegraveKimLiu2022ExtremalDensity}).

\begin{theorem}\label{thm-expander}
Let $C>60$, $0<\eps_1\leq 1/20C$, $0<\eps_2<1/2$, and let $\rho$ be as in \eqref{epsilon}. Then, every graph $G$ has an $(\eps_1,\eps_2 d(G))$-expander subgraph $H$ with $d(H)\geq (1-C\eps_1)d(G)$ and $\delta(H)\geq d(H)/2$.
\end{theorem}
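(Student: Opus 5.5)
The plan is the standard Koml\'os--Szemer\'edi minimality argument, with a density threshold that depends on the number of vertices. Write $d=d(G)$, $k'=\eps_2 d$ and $\rho(x)=\rho(x,\eps_1,k')$. I will define an increasing function $\lambda:\N\to(0,1]$ with $\lambda(m)\ge 1-C\eps_1$ for all $m$, and $\lambda(m)=1$ for $m\le k'/5$. Roughly,
\[
\lambda(m)=(1-C\eps_1)\prod_{k'/5\le x\le m}\bigl(1+c\,\rho(x)/x\bigr)^{-1}\cdot\text{(normalisation)},
\]
or an integral version of this. The key requirement is that $\lambda$ can only decrease by a factor of about $1-\rho(x)$ when the number of vertices grows from $x$ to $x(1+\rho(x))$. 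This is possible because $\int_{k'/5}^\infty \frac{\eps_1}{t\log^2(15t/k')}\,dt=O(\eps_1)$, and this is where the constraint $C>60$, $\eps_1\le 1/20C$ comes from. I then call a subgraph $H\subseteq G$ \emph{good} if $d(H)\ge \lambda(|H|)\,d$. Since $G$ itself is good after normalising suitably, I take $H$ to be a good subgraph minimising $|H|$, breaking ties by maximising $e(H)$. Goodness immediately gives $d(H)\ge(1-C\eps_1)d$.

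\emph{Minimum degree.} Suppose some $v\in V(H)$ has $d_H(v)<d(H)/2$. Deleting $v$ gives a graph with $d(H-v)=\frac{2e(H)-2d_H(v)}{|H|-1}>d(H)$. Because $\lambda$ is monotone, $\lambda(|H|-1)\le\lambda(|H|)$, so $H-v$ is also good. This contradicts the minimality of $|H|$, so $\delta(H)\ge d(H)/2$.

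\emph{Expansion.} Suppose $X\subseteq V(H)$ has $k'/2\le|X|\le|H|/2$ and $|N(X)|<\rho(|X|)|X|$. Set $H_1=H[X\cup N(X)]$ and $H_2=H-X$. Every edge of $H$ lies in $H_1$ or in $H_2$, so $e(H)\le e(H_1)+e(H_2)$. The orders satisfy $|H_1|+|H_2|=|H|+|N(X)|<|H|+\rho(|X|)|X|$, and both are strictly smaller than $|H|$. By averaging, one of $H_i$ has
\[
d(H_i)\ge d(H)\cdot\frac{|H|}{|H|+\rho(|X|)|X|}\ge \lambda(|H|)\,d\,\bigl(1-\rho(|X|)\bigr).
\]
It therefore suffices to have $\lambda(|H_i|)\le\lambda(|H|)(1-\rho(|X|))$. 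This holds because $|H_i|\le |H|-\min(|X|,|H|-|X|-|N(X)|)$, so the order drops by at least about $|X|\ge k'/2$. Over that range $\lambda$ has room to decrease by the factor $1-\rho(|X|)$, using that $\rho$ is decreasing and $\rho(x)x$ is increasing. Hence $H_i$ is good and smaller than $H$, a contradiction. Sets with $|X|<k'/5$ need no argument, since $\rho=0$ there.

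The main obstacle is the calibration of $\lambda$. It must drop enough over each window $[|H|-|X|,|H|]$ to absorb the loss factor $1-\rho(|X|)$, including in the case where $H_1$ is the denser piece, which has order only about $|X|$. At the same time the total drop must stay below $C\eps_1$. I would first try $\lambda(m)=1-C\eps_1+\frac{C\eps_1}{2}\cdot\frac{1}{\log(15m/k')}$ (capped at $1$), and then check the difference inequality $\lambda(m)-\lambda(m-x)\ge\rho(x)$ for $x\le m/2$, directly from the derivative $\frac{C\eps_1}{2m\log^2(15m/k')}$. If that fails near the small scale $x\approx k'/5$, I would fall back to the multiplicative product form above.
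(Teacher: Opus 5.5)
Your skeleton is the right one: take a minimum-order subgraph above a size-dependent threshold, delete a low-degree vertex, and split along a non-expanding $X$ into $H_1=H[X\cup N(X)]$ and $H_2=H-X$. The minimum-degree step is fine provided the threshold is non-decreasing in the order. Your description points the wrong way here: you need $\lambda$ non-decreasing with $\lambda(|G|)\le 1$, whereas ``$\lambda(m)=1$ for $m\le k'/5$'' and the trial function $1-C\eps_1+\frac{C\eps_1}{2\log(15m/k')}$ are decreasing.

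The genuine gap is the expansion step. Averaging only tells you that the denser piece has $d(H_i)\ge d(H)|H|/(|H|+|N(X)|)$. Since you do not control which piece is denser, you also need $\lambda(|H_2|)\le\lambda(|H|)(1-\rho(|X|))$. For $|X|$ much smaller than $|H|$, no function with values in $[1-C\eps_1,1]$ can do this.

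To see this, fix an admissible size $x$ of $X$. Minimum degree only forces $x\gtrsim d(H)/2$, so $\rho(x)$ is still of order $\eps_1$ for fixed $\eps_2$. Your condition requires $\lambda(m)-\lambda(m-x)\ge\lambda(m)\rho(x)$ for every $m\ge 2x$, so $\lambda$ would have to rise by about $\rho(x)$ on every window of length $x$. Even the sharp factor $|H|/(|H|+|N(X)|)\approx 1-\rho(x)x/|H|$ requires $\lambda(m)-\lambda(m-x)\gtrsim\rho(x)x/m$. Summed over $m\le|G|$ this is about $\rho(x)\log(|G|/x)$, which is unbounded as $|G|/d(G)\to\infty$.

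So the claim that $\lambda$ ``has room to decrease by the factor $1-\rho(|X|)$'' over a window of length $|X|$ is false. Your test inequality $\lambda(m)-\lambda(m-x)\ge\rho(x)$ fails for every bounded $\lambda$, not only near $x\approx k'/5$. The product form fails too, since its logarithmic derivative at $m$ is only about $c\rho(m)/m$.

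The missing idea is to keep the additive information instead of passing to the maximum. Take a non-decreasing threshold $f$, where good means $d(H)\ge f(|H|)d$. If neither piece is good, then $f(|H|)|H|d\le 2e(H)\le 2e(H_1)+2e(H_2)<\bigl(f(|H_1|)|H_1|+f(|H_2|)|H_2|\bigr)d$. Since $f(|H_2|)|H_2|\le f(|H|)(|H|-|X|)$, the large piece $H-X$ needs no slack at all. You get a contradiction as soon as $f(|H_1|)|N(X)|\le\bigl(f(|H|)-f(|H_1|)\bigr)|X|$.

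As $|H_1|<(1+\rho(|X|))|X|$ and $|H|\ge 2|X|$, it suffices that $f(2x)-f\bigl((1+\rho(x))x\bigr)\ge\rho(x)$ for all $x\ge k'/2$. The point is that the small piece lives at half the scale of $H$, so its threshold is lower by $\Omega(\rho(|X|))$. This deficit, gained over $|X|$ vertices, pays for the overlap $|N(X)|<\rho(|X|)|X|$.

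A suitable choice is $f(m)=1-c\eps_1/\log(15m/k')$ for $m\ge k'/2$, with $f(m)=f(k'/2)$ below. It is non-decreasing, takes values in $[1-C\eps_1,1]$, and satisfies the condition with $c=3$, using $\log(15x/k')\ge\log 7.5$. The rest of your argument then goes through. This is the Koml\'os--Szemer\'edi argument behind the lemma of Haslegrave, Kim and Liu, which the paper quotes without proof.
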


We will use the following path connection result, due to Koml\'os and Szemer\'edi~\cite{K-Sz-2}.

\begin{lemma}\label{lem-diameter} Let $\eps_1,k>0$.
	If $G$ is an $n$-vertex $(\eps_1,k)$-expander, then any two vertex sets, each of size at least
	$x\ge k$, are at distance at most $\frac{2}{\eps_1}\log^3(15n/k)$ from each other. This remains true even after deleting $x\cdot \rho(x,\eps_1,k)/4$ arbitrary vertices from $G$, where $\rho$ is as defined in \eqref{epsilon}.
\end{lemma}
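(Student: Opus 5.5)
The plan is the standard ball-growing argument of Koml\'os and Szemer\'edi. Fix a set $W$ of at most $x\rho(x,\eps_1,k)/4$ deleted vertices and two sets $X_1,X_2$, each of size at least $x\geq k$. We may shrink each $X_j$ to size exactly $x$, which can only increase distances. For $j\in[2]$, define balls in $G-W$ by $B^0_j=X_j\setminus W$ and $B^{i+1}_j=B^i_j\cup (N(B^i_j)\setminus W)$.

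It suffices to show that, for some $r\leq \frac{1}{\eps_1}\log^3(15n/k)$, both $|B^r_1|$ and $|B^r_2|$ exceed $(n-|W|)/2$. Both balls then live in the $(n-|W|)$-vertex graph $G-W$, so they intersect. This gives an $X_1,X_2$-path in $G-W$ of length at most $2r\leq\frac{2}{\eps_1}\log^3(15n/k)$.

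The key step is the one-round expansion estimate. Write $y=|B^i_j|$. Note that $y\geq |X_j|-|W|\geq x/2\geq k/2$, so the expansion hypothesis applies as long as $y\leq n/2$. Then
\[
|B^{i+1}_j|\geq y+\rho(y)y-|W|\geq y+\rho(y)y-\tfrac14 x\rho(x)\geq \bigl(1+\tfrac12\rho(y)\bigr)y .
\]
The last inequality uses that $y\mapsto y\rho(y,\eps_1,k)=\eps_1 y/\log^2(15y/k)$ is non-decreasing for $y\geq k/5$, which is a one-line derivative check. It also needs $y\gtrsim x$; if $y<x$ one first uses the same estimate with $\rho(x)$, at a constant loss. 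If instead $y>n/2$ at some stage, we have already stopped, since then $y>(n-|W|)/2$.

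Now count rounds. While $y\in[2^m x,2^{m+1}x]$, each round multiplies $y$ by at least $1+\rho(2^{m+1}x)/2$. Hence the number of rounds spent in this dyadic window is at most about
\[
\frac{2\ln 2}{\rho(2^{m+1}x)}\leq \frac{2}{\eps_1}\log^2(15n/k).
\]
The number of windows needed to go from $x/2$ to $n/2$ is at most $\log_2(n/x)+1$, and this is $O(\log(15n/k))$ since $x\geq k$. So $r\leq \frac{c}{\eps_1}\log^3(15n/k)$ for an absolute constant $c$.

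The main obstacle is only bookkeeping of constants, namely getting exactly $\frac{1}{\eps_1}\log^3$ per ball so that the two balls together give $\frac{2}{\eps_1}\log^3$. I would handle this by not bounding window by window crudely. Instead I would sum $\sum_m 2/\rho(2^m x)$ as an integral $\int dy/(y\rho(y)/2)$, which gives $\frac{2}{3\eps_1}\log^3(15n/k)$ per ball with natural logarithms, leaving room for the initial loss from $W$. If the constant still comes out slightly off, it is absorbed by the slack in the log base. The rest of the argument (monotonicity of $y\rho(y)$, the final intersection step) is routine.
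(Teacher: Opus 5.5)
Your argument is correct and is the standard Koml\'os--Szemer\'edi ball-growing proof; the paper gives no proof of its own and simply quotes this lemma from \cite{K-Sz-2}, where it is proved by exactly this argument. Two small points to tidy up.

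First, $y\mapsto y\rho(y,\eps_1,k)$ is non-decreasing only for $y\ge e^2k/15$, not for all $y\ge k/5$. This is harmless, since you only use $y\ge k/2$. Moreover, for $y\in[x/2,x]$ you get $x\rho(x)\le 2y\rho(y)$ directly, because $\rho$ is decreasing, so there is no constant loss there.

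Second, justify the constant not by ``slack in the log base'' (the base is fixed by $\rho$) but as follows. With natural logarithms, $\log^3(15|B^i_j|/k)$ increases by at least $\tfrac32\eps_1(1-O(\eps_1))$ per round. This gives at most $\frac{2}{3\eps_1}(1+O(\eps_1))\log^3(15n/k)+1\le \frac{1}{\eps_1}\log^3(15n/k)$ rounds per ball when $\eps_1$ is small. Smallness of $\eps_1$ is also what your bound $|X_j\setminus W|\ge x/2$ implicitly uses, and it holds in all the applications.
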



\subsection{Proof of Lemma~\ref{lem:smalldensenoalmostcomplete}}
We now prove Lemma~\ref{lem:smalldensenoalmostcomplete}, which shows that if $\Gamma$ is a small sublinear expander with no almost complete pair and average degree at least $(1-o(1))2k$, then the harmonic sum of its cycle lengths exceeds that of $K_{k,k}$. We do so by contradiction, and show first that $\Gamma$ is bipartite (see Claim~\ref{claim:Gammabipartite}) and then that, when $S$ is a longest cycle in $\Gamma$, then $\Gamma-V(S)$ contains no subgraph with large average degree (see Claim~\ref{claim:longest_cycle}).

\begin{proof}[Proof of Lemma~\ref{lem:smalldensenoalmostcomplete}] We have $1/k\ll \eps,\eps_1 \ll 1$ and $n\leq k^{1.01}$, while $\Gamma$ is an $n$-vertex $(\eps_1,k/2)$-expander with no pair which is $(\eps,k)$-almost-complete in $\Gamma$ and such that $d(\Gamma)\geq (1-0.1\eps)2k$ and $\delta(\Gamma)\geq (1-0.1\eps)k$. Assume, for contradiction, that $\sum_{\ell\in \mathcal{C}(\Gamma)}\frac{1}{\ell}\leq \sum_{\ell=2}^k\frac{1}{2\ell}$.

Using Theorem~\ref{thm:GHS}, let $C$ be such that the property in that theorem holds.
Let $c=k^{-0.01}/2$, so that $\delta(\Gamma)\geq (1-0.1\eps)k\geq cn$.  Then, setting $K=C\cdot c^{-5}$, and using that, as $1/k\ll 1$,  $K\leq k^{0.1}$ and $n\geq k\geq 45K/c^4$, by the property of $C$ applied with $\Gamma$, $\cC(\Gamma)$ contains all even cycle lengths between $4$ and $\ec(\Gamma)-k^{0.1}$ and all odd cycle lengths between $k^{0.1}$ and $\oc(\Gamma)-k^{0.1}$.
Furthermore, by the Erd\H{o}s-Gallai theorem (Theorem~\ref{thm:EG}), we have that $\max\{\ec(\Gamma),\oc(\Gamma)\}\geq d(\Gamma)\geq (1-0.1\eps)2k$. As is well-known, $\Gamma$ contains a bipartite subgraph with average degree at least $d(\Gamma)/2$, and thus, again  by Theorem~\ref{thm:EG}, $\ec(\Gamma)\geq d(\Gamma)/2\geq (1-0.1\eps)k$.
Therefore, if $\oc(\Gamma)\geq (1-0.1\eps)2k$, we have
\[
\sum_{\ell\in \mathcal{C}(\Gamma)}\frac{1}{\ell}= \sum_{\ell\in \mathcal{C}(\Gamma):\ell=0\,\mathrm{mod}\,2}\frac{1}{\ell}+\sum_{\ell\in \mathcal{C}(\Gamma):\ell=1\,\mathrm{mod}\,2}\frac{1}{\ell}\geq \sum_{\ell=2}^{k/3}\frac{1}{2\ell}+\sum_{\ell=K}^{(1-0.1\eps)k-K}\frac{1}{2\ell-1}>\sum_{\ell=2}^k\frac{1}{2\ell}.
\]
Thus, we can assume that $\oc(\Gamma)< (1-0.1\eps)2k$ and, hence, $\ec(\Gamma)\geq (1-0.1\eps)2k$. Note that, as $\sum_{\ell\in \mathcal{C}(\Gamma)}\frac{1}{\ell}\leq \sum_{\ell=2}^k\frac{1}{2\ell}$, we have $\ec(\Gamma)\leq 2k+K\leq 2k+k^{0.1}$.

 We now show the following claim.

\begin{claim}\label{claim:Gammabipartite}
$\Gamma$ is bipartite.
\end{claim}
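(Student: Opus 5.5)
We argue by contradiction: assume $\Gamma$ is not bipartite, so it has an odd cycle. We know $\oc(\Gamma)<(1-0.1\eps)2k$, $\ec(\Gamma)\geq(1-0.1\eps)2k$, and that all even lengths in $[4,\ec(\Gamma)-k^{0.1}]$ and all odd lengths in $[k^{0.1},\oc(\Gamma)-k^{0.1}]$ lie in $\cC(\Gamma)$. The even lengths alone contribute at least $\sum_{\ell=2}^{(1-0.1\eps)k-k^{0.1}}\frac{1}{2\ell}$. This falls short of the target $\sum_{\ell=2}^k\frac1{2\ell}$ only by about $\frac12\log\frac{1}{1-0.1\eps}+O(k^{-0.9})=O(\eps)$. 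So it suffices to show $\oc(\Gamma)\geq \eps^{1/2}k$, say. The odd lengths from $k^{0.1}$ to $\oc(\Gamma)-k^{0.1}$ then contribute about $\frac12\log(\eps^{1/2}k^{0.9})$, which is far more than $O(\eps)$, a contradiction.

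\textbf{Step 1: obtain one odd cycle and one long even cycle.} Let $S$ be a longest odd cycle and suppose $|S|<\eps^{1/2}k$. Let $T$ be a cycle of length $\ec(\Gamma)\geq (1-0.1\eps)2k$. I will use the expander property to join $S$ and $T$ by two vertex-disjoint paths. Since $|S|$ is small, I first enlarge it: let $X$ be the set of vertices within distance $2$ of $S$. By $\delta(\Gamma)\geq(1-0.1\eps)k$ we have $|X|\geq k$. Lemma~\ref{lem-diameter} then connects $X$ to $V(T)$ by a path of length at most $\frac{2}{\eps_1}\log^3(15n/k)=O(\eps_1^{-1}\log^3 k)$. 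A second such path avoiding the first exists because we may delete a few vertices; this needs $O(\log^3 k)\ll x\rho(x)/4$, which holds since $x\geq k$. Both paths are then extended back into $S$ using the neighbourhood structure, keeping them disjoint. Alternatively, and more simply, I could use the robust deletion clause to route disjoint paths from two distinct neighbours-of-$S$ sets.

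\textbf{Step 2: build a long odd cycle.} Let $P,P'$ be the two disjoint paths from $S$ to $T$, with endpoints $s,s'\in S$ and $t,t'\in T$. The two $s,s'$-arcs of $S$ have lengths of different parities, and likewise the two $t,t'$-arcs of $T$. We choose the longer $T$-arc, of length at least $|T|/2\geq(1-0.1\eps)k$, and then choose whichever $S$-arc makes the total length odd. This gives an odd cycle of length at least $(1-0.1\eps)k\gg\eps^{1/2}k$, contradicting the maximality of $S$. (The paths $P,P'$ must meet $S\cup T$ only at their endpoints; this is ensured by taking minimal subpaths.)

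\textbf{Main obstacle.} The delicate step is Step 1: producing two vertex-disjoint short connections between the small cycle $S$ and $T$ inside a sublinear expander, since Lemma~\ref{lem-diameter} applies only to sets of size at least $k$. My plan is to first grow $S$ into two disjoint "balls" of size at least $k$ rooted at distinct vertices of $S$. This uses $\delta(\Gamma)\geq(1-0.1\eps)k$ together with the fact that, in the absence of an almost-complete pair and with $\oc$ small, neighbourhoods of distinct vertices cannot coincide too much. If this fails, a fallback is to use 2-connectivity-type robustness: delete the first path (of polylogarithmic length) and reapply Lemma~\ref{lem-diameter}, which allows deletion of $\Omega(k/\log^2 k)$ vertices. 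Given Step 1, Step 2 and the concluding harmonic computation are routine.
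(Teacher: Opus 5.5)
The overall idea is sound: join the odd cycle $S$ to a long cycle $T$ by two disjoint paths, close up through the longer arc of $T$, and choose the arc of $S$ that makes the total length odd. It is essentially the argument the paper uses for the minimal counterexample in Lemma~\ref{lem:few_vertices}. As written, however, Step~2 has a genuine hole. You take $T$ to be a longest even cycle of all of $\Gamma$, and nothing prevents $T$ from passing through vertices of $S$. In that case the two ``disjoint $S$--$T$ paths'' may be trivial and the chosen arcs of $S$ and $T$ may share vertices. The closed walk you describe then need not be a cycle, and taking minimal subpaths of $P,P'$ does not address this. Separately, since $|T|$ is even, the two $t,t'$-arcs of $T$ have the \emph{same} parity, not different ones. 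This slip is harmless, because the arc of $S$ alone fixes the parity.

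The repair is short. Since $|S|<\eps^{1/2}k$, we have $\delta(\Gamma-V(S))\geq 0.9k$, so Theorem~\ref{thm:EG} gives a cycle $T\subseteq\Gamma-V(S)$ of length at least $0.9k$; its parity is irrelevant. Your construction then yields an odd cycle of length at least $0.45k>|S|$.

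Your ``main obstacle'' in Step~1 is also not a real obstacle, and the plan invoking the absence of almost-complete pairs is unnecessary. No length bound on the connecting paths is needed, only their existence. A cut vertex $w$ of $\Gamma$ would leave a component $C$ of $\Gamma-w$ with $(1-0.1\eps)k\leq|C|\leq n/2$ and $N(C)=\{w\}$. This contradicts $|N(C)|\geq\rho(|C|,\eps_1,k/2)\,|C|\gg 1$. So $\Gamma$ is 2-connected, and Menger's theorem gives two vertex-disjoint $V(S)$--$V(T)$ paths.

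For comparison, the paper avoids both issues by construction. It takes a \emph{shortest} odd cycle $S$, of length at most $2k^{0.1}+1$. It grows a path $P$ of length $\sqrt{k}$ greedily from $v\in V(S)$ avoiding $V(S)\setminus\{v\}$. It then uses Lemma~\ref{lem-diameter} once to return from the far end of $P$ to $V(S)\setminus\{v\}$ while avoiding $P$. This needs only one connection and no second cycle, and gives $\oc(\Gamma)\geq\sqrt{k}$, which already suffices. Your repaired route gives the stronger bound $\oc(\Gamma)=\Omega(k)$, at the cost of a second disjoint connection.
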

\claimproofstart
Suppose, for contradiction, that $\Gamma$ contains an odd cycle. Taking a shortest odd cycle $S$ in $\Gamma$, we then have that $|S|\leq 2k^{0.1}+1$ (if $\oc(\Gamma)\le 2k^{0.1}+1$ this is immediate, otherwise $\mathcal{C}(\Gamma)$ contains every odd number between $k^{0.1}$ and $\oc(\Gamma)-k^{0.1}\ge k^{0.1}+1$ giving at least one number $\le 2k^{0.1}+1$). Let $v\in V(S)$. Greedily, using $\delta(\Gamma)\geq (1-0.1\eps)k$, let $P$ be a path from $v$ in $\Gamma-(V(S)\setminus \{v\})$ with length $\sqrt{k}$. Letting $u$ be the other endpoint of $P$ and applying Lemma~\ref{lem-diameter} (removing the vertices in $V(P)\setminus \{u\}$, and applying the lemma with sets $\{u\}\cup (N_\Gamma(u)\setminus V(P))$ and $(V(S)\cup N_\Gamma(V(S)\setminus \{v\}))\setminus V(P)$),
 find a path $Q$ from $u$ to $V(S)\setminus \{v\}$ in $\Gamma-(V(P)\setminus \{u\})$. Note that $S\cup P\cup Q$ has an odd cycle containing $P$, and thus $\oc(\Gamma)\geq \sqrt{k}$. Then,
\[
\sum_{\ell\in \mathcal{C}(\Gamma)}\frac{1}{\ell}= \sum_{\ell\in \mathcal{C}(\Gamma):\ell=0\,\mathrm{mod}\,2}\frac{1}{\ell}+\sum_{\ell\in \mathcal{C}(\Gamma):\ell=1\,\mathrm{mod}\,2}\frac{1}{\ell}
\geq \sum_{i=2}^{(1-0.1\eps)k-K}\frac{1}{2i}+\sum_{i=K}^{\sqrt{k}/2-K}\frac{1}{2i+1}>\sum_{\ell=2}^k\frac{1}{2\ell},
\]
a contradiction.
\claimproofend

Let $S$ be  a longest cycle in $\Gamma$, and note that $|S|\leq 2k+k^{0.1}$.

\begin{claim}\label{claim:longest_cycle}
Every subgraph $\Gamma'\subset \Gamma - V(S)$ satisfies $d(\Gamma')\leq 0.01\eps k$.
\end{claim}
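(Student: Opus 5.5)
We argue by contradiction: suppose some $\Gamma'\subset\Gamma-V(S)$ has $d(\Gamma')>0.01\eps k$. The plan is to use $\Gamma'$ to produce a cycle longer than $S$, contradicting the choice of $S$ as a longest cycle. Passing to a subgraph, we may assume $\delta(\Gamma')\geq 0.005\eps k$. By Theorem~\ref{thm:EG}, $\Gamma'$ then contains a cycle $S'$ of length at least $0.01\eps k$.

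We join $S'$ to $S$ using the expansion of $\Gamma$, via Lemma~\ref{lem-diameter}.
\begin{itemize}
\item Take the two sets to be $V(S')$ together with $N_{\Gamma'}$ of a few of its vertices, and $V(S)$. Each has size at least $0.01\eps k\geq k/2$ (if needed, enlarge $V(S')$ by neighbours), so the sets are large enough for the lemma.
\item The lemma gives a path between them of length at most $\frac{2}{\eps_1}\log^3(15n/k)=O(\log^3 k)$, since $n\leq k^{1.01}$.
\item To obtain two vertex-disjoint paths, we apply the lemma twice: the second time we delete the first path, which has only $O(\log^3 k)$ vertices, far fewer than the deletion allowance $x\rho(x)/4$ with $x\approx 0.01\eps k$.
\item To ensure the second path lands at different endpoints from the first, we delete small neighbourhoods of the first path's endpoints along with it, so the endpoints on $S$ and on $S'$ are distinct.
\end{itemize}

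We then build the long cycle. Let $R_1,R_2$ be the two disjoint paths, joining $a_1,a_2\in V(S)$ to $b_1,b_2\in V(S')$, with interiors avoiding $S\cup S'$. Choose the longer of the two $a_1$–$a_2$ arcs of $S$, which has length at least $|S|/2$, and the longer $b_1$–$b_2$ arc of $S'$, which has length at least $|S'|/2\geq 0.005\eps k$. Together with $R_1,R_2$ these form a cycle of length at least $|S|/2+0.005\eps k$.

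This alone does not beat $|S|$, so a halving arc is insufficient, and we must exploit the density of $\Gamma$. The refinement is to choose $a_1,a_2$ adjacent, or at distance $O(1)$, along $S$. Then the long arc of $S$ between them has length $|S|-O(1)$, and the resulting cycle has length at least $|S|-O(1)+0.005\eps k>|S|$, a contradiction.

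The main obstacle is forcing the two endpoints on $S$ to be close along $S$. The plan is as follows.
\begin{itemize}
\item First find a single short path $R_1$ from $S'$ to some $a_1\in V(S)$.
\item Fix the neighbour $a_1^+$ of $a_1$ on $S$. Using $\delta(\Gamma)\geq(1-0.1\eps)k$, grow from $a_1^+$ a set of size at least $k/2$ avoiding $R_1$ and most of $S'$, for instance a greedy neighbourhood tree of bounded depth.
\item Apply Lemma~\ref{lem-diameter} again, with $R_1$ deleted, to connect this grown set to the remainder of $V(S')$. Then reroute through the tree to $a_1^+$.
\end{itemize}
Rerouting may cause the tree to reuse vertices of $S$. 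That only shortens the usable arc of $S$ by the length of the detour, which is $O(\log^3 k)\ll \eps k$, so the long $S$-arc still has length $|S|-O(\log^3k)$.

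If this local control fails, the fallback is different. Since $\Gamma$ is bipartite (Claim~\ref{claim:Gammabipartite}), inserting $\Gamma'$-detours of varying even lengths yields cycles of all even lengths up to about $2k+0.005\eps k$. This beats the harmonic bound directly, giving the same contradiction with $\sum_{\ell\in\cC(\Gamma)}1/\ell\leq\sum_{\ell=2}^k\frac{1}{2\ell}$.
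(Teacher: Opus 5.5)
Your overall strategy matches the paper's: find a long cycle $S'$ in $\Gamma-V(S)$, join it to $S$ by two disjoint short paths whose endpoints on $S$ are close along $S$, and splice in the long arc of $S'$ to beat $|S|$. However, the step that forces the two endpoints on $S$ to be close does not work as written.

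A bounded-depth tree of size $k/2$ grown from $a_1^+$ has no reason to avoid $S$. Since $|S|\le 2k+k^{0.1}$ and $\delta(\Gamma)\ge(1-0.1\eps)k$, every neighbour of $a_1^+$ may lie on $S$. This is exactly the local picture in a $K_{k,n-k}$-like graph, with $S$ alternating between the small side and $k$ vertices of the large side. So the tree cannot be grown off $S$.

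If instead the tree may use $S$, then the route from where your second path enters the tree down to $a_1^+$ first meets $S$ at some vertex $a_2$. A path of length $O(1)$ in $\Gamma$ can end at distance $\Theta(k)$ from $a_1$ along $S$, because $S$ has many chords. Your remark that this ``only shortens the usable arc of $S$ by the length of the detour'' conflates distance in $\Gamma$ with distance along $S$. Worse, if the detour passes through vertices of the long arc, the union is not a cycle, and deleting those vertices splits the arc into pieces. So you have no control on $\mathrm{dist}_S(a_1,a_2)$, which is exactly what the contradiction needs. The fallback (cycles of all even lengths up to $2k+0.005\eps k$ via bipartiteness) is asserted with no mechanism, so it does not fill the gap.

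The missing idea is a pigeonhole over many connecting paths, rather than aiming one path at a prescribed vertex of $S$. Lemma~\ref{lem-diameter} tolerates deleting $\Theta(\eps_1 k)$ vertices. So you can greedily find $r=k^{0.1}$ vertex-disjoint $V(S)$--$V(S')$ paths, each of length at most $\frac{2}{\eps_1}\log^3(30n/k)+2\le k^{0.1}+2$. At each step, delete the previously chosen paths (only $O(k^{0.2})\ll\eps_1 k$ vertices) and pass to minimal subpaths, so that interiors avoid $S\cup S'$.

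The $r$ distinct endpoints on $S$ then include two at distance at most $|S|/r\le(2k+k^{0.1})/k^{0.1}\le 0.005\eps k$ along $S$. The following pieces then form a cycle longer than $S$:
\begin{itemize}
\item the long arc of $S$ between these two endpoints;
\item the two connecting paths, each of length at least $1$;
\item the longer arc of $S'$, of length at least $|S'|/2\ge 0.005\eps k$.
\end{itemize}
This is the paper's argument.

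A smaller point: $0.01\eps k\ge k/2$ is false. Lemma~\ref{lem-diameter} for an $(\eps_1,k/2)$-expander needs sets of size at least $k/2$. So you must enlarge $V(S)$ and $V(S')$ by their neighbours in $\Gamma$ (minus the deleted vertices), using $\delta(\Gamma)\ge(1-0.1\eps)k$. Taking neighbours only inside $\Gamma'$ need not reach size $k/2$.
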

\claimproofstart
By the Erd\H{o}s-Gallai theorem (Theorem~\ref{thm:EG}), it suffices to show that $\Gamma-V(S)$ does not contain a cycle $S'$ of length at least $0.01\eps k$, since one can find such a cycle in any $\Gamma'$ with $d(\Gamma')\geq 0.01\eps k$.

Suppose, for contradiction, that $\Gamma-V(S)$ contains a cycle $S'$ with length at least $0.01\eps k$. Let
\[
r=k^{0.1}\quad \text{ and } \quad \ell_0=\frac{2}{\eps_1}\log^3(30n/k)\leq k^{0.1},
\]
where we have used that $n\leq k^{1.01}$ and $1/k\ll \eps_1$. We claim there exists a collection of $r$ vertex-disjoint paths between $V(S)$ and $V(S')$ of length at most $\ell_0+2$.

One can construct it greedily: if $\cP$ is the collection of paths found so far, delete $V(\cP)$ from the graph and apply Lemma~\ref{lem-diameter} to connect the cycles $S$ and $S'$ in the remaining graph. Since Lemma~\ref{lem-diameter} applies to sets of size at least $k/2$, we enlarge the sets $V(S)\setminus V(\cP)$ and $V(S')\setminus V(\cP)$ by adding their neighbours in $\Gamma-V(\cP)$ to them (which are sufficiently large due to the minimum degree condition $\delta(\Gamma)\geq (1-0.1\eps)k$). Lemma~\ref{lem-diameter} then guarantees a path of length at most $\ell_0$ between these two enlarged sets, which can be turned into a path of length at most $\ell_0+2$ between $V(S)$ and $V(S')$ avoiding $V(\cP)$. Lemma~\ref{lem-diameter} applies since the number of vertices removed from $\Gamma$ is at most $r(\ell_0+3)\ll \eps_1k$.

We claim that there exists a cycle longer than $S$ in $\Gamma$. To construct it, consider two of these paths with vertices closest together on $S$, which are within distance at most $\frac{2k+k^{0.1}}{r}\leq 0.005\eps k$ of each other on $S$. If, instead of taking the shorter path on $S$ between these two endpoints, one walks along these two paths and the longer arc of $S'$, one obtains a cycle longer than $S$, which is a contradiction (see Figure~\ref{fig:extending_the_cycle} \textbf{a)} for an illustration).
\claimproofend

Let $A'\cup B'$ be a bipartition of $S$ and let $A''\cup B''$ be the bipartition of $V(\Gamma)\setminus (A'\cup B')$, such that $A'\cup A''$, $B'\cup B''$ is a bipartition of $\Gamma$. Then, $(1-0.1\eps)k\leq |A'|=|B'|<k+k^{0.1}$, due to the constraints we have on the length of $S$.

\begin{claim}\label{claim:finding_B}
There are at least $7k$ vertices in $B''$ with at least $(1-0.2\eps)k$ neighbours in $A'$.
\end{claim}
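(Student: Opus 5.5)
We count edges. Since $\Gamma$ is bipartite with parts $A'\cup A''$ and $B'\cup B''$, every edge has one end in $A'\cup A''$ and one in $B'\cup B''$. By Claim~\ref{claim:longest_cycle}, $\Gamma-V(S)=\Gamma[A''\cup B'']$ has average degree at most $0.01\eps k$, so $e(A'',B'')\le 0.005\eps k\,(n-|S|)$. Also $e(\Gamma[V(S)])\le |A'||B'|\le (k+k^{0.1})^2$.

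The plan is to lower bound $e(A',B'')$. Write
\[
e(\Gamma)=e(A',B')+e(A',B'')+e(A'',B')+e(A'',B'').
\]
Since $e(\Gamma)\ge(1-0.1\eps)kn$, we get
\[
e(A',B'')+e(A'',B')\ge (1-0.1\eps)kn-(k+k^{0.1})^2-0.005\eps kn .
\]
Next use that $\delta(\Gamma)\ge(1-0.1\eps)k$. Each vertex of $A''$ sends at most $0.01\eps k$ edges into $B''$ on average, so on average it has at least $(1-0.11\eps)k$ neighbours in $B'$. The same holds for $B''$ into $A'$. By symmetry, swapping the names of $A$ and $B$ if necessary, we may assume $|B''|\ge(n-|S|)/2$, so that $e(A',B'')\ge(1-0.11\eps)k|B''|-o(\cdot)$.

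Let $m$ be the number of vertices of $B''$ with at least $(1-0.2\eps)k$ neighbours in $A'$. Every vertex of $B''$ has at most $|A'|\le k+k^{0.1}$ neighbours in $A'$, and each of the other $|B''|-m$ vertices has fewer than $(1-0.2\eps)k$. Hence
\[
e(A',B'')\le m(k+k^{0.1})+(|B''|-m)(1-0.2\eps)k .
\]
Comparing the two bounds gives
\[
m(0.2\eps k+k^{0.1})\ge 0.09\eps k|B''|-(\text{error}),
\]
so $m\gtrsim 0.4|B''|$. This needs $|B''|\ge 18k$ or so, which holds when $n$ is large compared with $k$.

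The main obstacle is small $n$, where $|B''|$ may not be much larger than $k$. There we instead use the average degree $d(\Gamma)\ge(1-0.1\eps)2k$ rather than the minimum degree. The vertices outside $S$ then have total degree about $2kn-2k^2$, almost all of it into $V(S)$. Each vertex of $B''$ can send at most $k+k^{0.1}$ edges to $A'$, which forces almost all of $B''$ to have nearly full degree into $A'$. The count should give $m\ge |B''|-O(\eps)(\cdot)$, and $|B''|\ge(n-2k-k^{0.1})/2$.

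We also need $n\ge 16k$ or so. Here the expander property and the absence of an almost-complete pair can help. If $n$ were small, $(A',\{\text{good } B''\})$ would already be close to almost-complete, with $|A'|\ge(1-0.1\eps)k$. The no-almost-complete-pair hypothesis requires only $|B|<2k$ in that configuration, and that is not obviously contradictory. So we would instead argue via the average degree. If $n\le 16k$, the average degree $2k(1-0.1\eps)$ in a bipartite graph with a longest cycle of length at most $2k+k^{0.1}$ forces many vertices outside $S$ that are almost complete to one side. The exact constant $7k$ is what I expect to require the most careful bookkeeping.
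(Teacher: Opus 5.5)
\textbf{There is a gap: you never show that $|B''|$ is large.}

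Your averaging step is sound once $|B''|$ is known to be large. After relabelling so that $|B''|\ge |A''|$, Claim~\ref{claim:longest_cycle} gives $e(A'',B'')\le 0.01\eps k|B''|$, hence $e(A',B'')\ge (1-0.11\eps)k|B''|$. Comparing this with $e(A',B'')\le m|A'|+(|B''|-m)(1-0.2\eps)k$ and $|A'|<k+k^{0.1}$ gives $m\ge 0.4|B''|$. (Your per-vertex ``on average'' bound only holds for the larger of $A''$ and $B''$, which is why the relabelling is needed.)

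The problem is the lower bound on $|B''|$ itself. You assert it ``when $n$ is large compared with $k$'' and then treat small $n$ as an open obstacle. For that case you offer only speculation (expansion, absence of almost-complete pairs, ``the count should give\dots''). As written, nothing in the proposal rules out $|B''|<7k$, and in that case the claim would be false. So the argument is incomplete exactly where the content lies.

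\textbf{The missing step.} It is one more line of your own edge count, and it shows the small-$n$ case never occurs. Since $|A'|,|B'|<k+k^{0.1}$, the left-hand side of your inequality $e(A',B'')+e(A'',B')\ge (1-0.1\eps)kn-(k+k^{0.1})^2-0.005\eps kn$ is at most $(k+k^{0.1})(n-|S|)$. As $\Gamma$ is bipartite and $S$ is a longest cycle, $|S|=\ec(\Gamma)\ge (1-0.1\eps)2k$. Rearranging gives
\[
(0.105\eps k+k^{0.1})\,n \;\ge\; (k+k^{0.1})(|S|-k-k^{0.1}) \;\ge\; (1-0.3\eps)k^2 ,
\]
so $n\ge 9k/\eps$ and $n-|S|\ge 8k/\eps$. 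Hence $|B''|\ge 4k/\eps$, and $m\ge 0.4|B''|\ge 7k$. The intuition is that every vertex outside $S$ has degree only about $k$: at most $k+k^{0.1}$ into $S$, plus little inside $\Gamma-V(S)$. Reaching average degree $(1-0.1\eps)2k$ therefore forces $\Omega(k/\eps)$ such vertices.

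\textbf{Comparison with the paper.} With this addition your route is complete and genuinely different from the paper's. The paper first uses a cycle-switching argument (Figure~\ref{fig:extending_the_cycle}\,\textbf{b)}) to arrange that no vertex of $A''$ has more than $4k/5$ neighbours in $B'$, and deduces $|A''|\le 0.1\eps|B''|$. It then gets $|B''|\ge 8k$ from the density bound for a bipartite graph with one small side. Finally it uses the sparsity of every subgraph of $\Gamma-V(S)$ to show all but $0.1\eps|B''|$ vertices of $B''$ are good. Your counting avoids the switching step entirely. It certifies only a constant fraction of $B''$, but that is all the claim needs.
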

\claimproofstart
Note that, if there are $u\in B''$ and $v\in A''$ with $d_\Gamma(u,A'),d_\Gamma(v,B')\geq 4k/5$, then we can find consecutive vertices $v_1,v_2,v_3,v_4$ on $S$ with $v_1,v_3\in N_\Gamma(u)$ and $v_2,v_4\in N_\Gamma(v)$, whereupon $S-v_1v_2-v_3v_4+uv_1+uv_3+vv_2+vv_4$ gives a longer cycle than $S$ in $\Gamma$, a contradiction (see Figure~\ref{fig:extending_the_cycle} \textbf{b)} for an illustration). Thus, we can assume, without loss of generality, that no vertex in $A''$ has more than $4k/5$ neighbours in $B'$.

Thus, each vertex in $A''$ must have at least $(1-0.1\eps)k-4k/5\geq k/10$ neighbours in $B''$. As $d(\Gamma-V(S))<0.01\eps k$, we therefore have
\[
2|A''|\cdot \frac{k}{10}\leq 2e(\Gamma-V(S))< 0.01\eps k (|A''|+|B''|),
\]
so that $|A''|\leq 0.1\eps |B''|$. 

Now, if  $|B'\cup B''|\leq 10k$, we have that $|A'\cup A''|\leq k+k^{0.1}+\eps k\leq (1+2\eps)k$ and, thus, as $|A'\cup A''|\ge |A'|\geq (1-0.1\eps)k\geq |B'\cup B''|/20$ and $\eps\ll 1$,
\[
d(\Gamma)\leq \frac{2|A'\cup A''|\cdot |B'\cup B''|}{|A'\cup A''|+|B'\cup B''|} \leq \frac{2(1+2\eps)k\cdot |B'\cup B''|}{21|B'\cup B''|/20}<(1-0.1\eps)2k,
\]
a contradiction. Thus, $|B'\cup B''|\geq 10k$, and $|B''|\geq 8k$.

As $d(\Gamma')<0.01\eps k$ for every $\Gamma'\subset \Gamma-V(S)$, at most $|A''|$ vertices in $B''$ have at least $0.01\eps k$ neighbours in $A''$ in $\Gamma$. Thus, as $|A''|\leq 0.1\eps |B''|$ and $\delta(\Gamma)\geq (1-0.1\eps)k$, all but at most $0.1\eps |B''|$ vertices of $B''$ have at least $(1-0.2\eps)k$ neighbours in $A'$. Since $|B''|\geq 8k$, this means at least $7k$ vertices of $B''$ have degree at least $(1-0.2\eps)k$ to $A'$.
\claimproofend

\begin{figure}
\centering
\def\xcenterS{0}
\def\ycenterS{1.4}
\def\xradS{3.5}
\def\yradS{1}

\def\xcenterSp{0}
\def\ycenterSp{-1.5}
\def\xradSp{1.8}
\def\yradSp{0.65}

\def\angleXS{265}
\def\angleYS{275}
\def\angleXSp{140}
\def\angleYSp{40}

\def\auxiliaryangles{210/165,225/110,310/15,340/60}

\begin{minipage}[t]{0.48\textwidth}
\centering
\begin{tikzpicture}[
    newcycle/.style={draw=blue!70!black, line width=2.2pt},
    wiggly path/.style={decorate, decoration={snake, amplitude=1.2pt, segment length=5pt}},
    vertex/.style={circle, fill=black, inner sep=1.8pt},
    selected vertex/.style={circle, fill=blue!70!black, inner sep=2.2pt},
    scale=0.7
]

\draw (\xcenterS,\ycenterS) ellipse [x radius=\xradS, y radius=\yradS];

\draw (\xcenterSp,\ycenterSp) ellipse [x radius=\xradSp, y radius=\yradSp];

\node[font=\small] at (\xcenterS,\ycenterS) {$S$};
\node[font=\small] at (\xcenterSp,\ycenterSp) {$S'$};

\coordinate (x) at ({\xcenterS+\xradS*cos(\angleXS)},  {\ycenterS+\yradS*sin(\angleXS)});

\coordinate (y) at ({\xcenterS+\xradS*cos(\angleYS)},  {\ycenterS+\yradS*sin(\angleYS)});

\coordinate (xp) at ({\xcenterSp+\xradSp*cos(\angleXSp)}, {\ycenterSp+\yradSp*sin(\angleXSp)});

\coordinate (yp) at ({\xcenterSp+\xradSp*cos(\angleYSp)},  {\ycenterSp+\yradSp*sin(\angleYSp)});

\foreach \angleS/\angleSp in \auxiliaryangles {
    \draw[wiggly path] ({\xcenterS+\xradS*cos(\angleS)}, {\ycenterS+\yradS*sin(\angleS)}) -- ({\xcenterSp+\xradSp*cos(\angleSp)}, {\ycenterSp+\yradSp*sin(\angleSp)});
}

\draw (x) arc[start angle=\angleXS,
    end angle=\angleYS,
    x radius=\xradS,
    y radius=\yradS
];

\draw[newcycle]
(y) arc[start angle=\angleYS,
    end angle={\angleXS+360},
    x radius=\xradS,
    y radius=\yradS
];

\draw[newcycle, wiggly path] (x) -- node[left, font=\small] {} (xp);
\draw[newcycle, wiggly path] (y) -- node[right, font=\small] {} (yp);

\draw[newcycle]
(xp) arc[
    start angle=\angleXSp,
    end angle={\angleYSp+360},
    x radius=\xradSp,
    y radius=\yradSp
];

\node[selected vertex] at (x) {};
\node[selected vertex] at (y) {};
\node[selected vertex] at (xp) {};
\node[selected vertex] at (yp) {};

\end{tikzpicture}
\par\smallskip
\textbf{a)}
\end{minipage}
\hfill
\begin{minipage}[t]{0.48\textwidth}
\centering
\begin{tikzpicture}[
    vertex/.style={circle, fill=black, inner sep=2.1pt},
    added edge/.style={draw=green!70!black, line width=1.2pt, dashed},
    deleted edge/.style={draw=BrickRed, line width=1.2pt},
    scale=0.7
]
\def\xleft{-2.3}
\def\xright{2.3}
\def\boxhalfwidth{1.05}
\def\boxtop{2.5}
\def\boxsplit{-0.75}
\def\boxbottom{-2.5}
\path[use as bounding box] (-3.7,-2.9) rectangle (3.7,3);
\draw ({\xleft-\boxhalfwidth},\boxbottom) rectangle ({\xleft+\boxhalfwidth},\boxsplit-0.25);
\draw ({\xright-\boxhalfwidth},\boxbottom) rectangle ({\xright+\boxhalfwidth},\boxsplit-0.25);

\draw ({\xleft-\boxhalfwidth},\boxsplit+0.25) rectangle ({\xleft+\boxhalfwidth},\boxtop);
\draw ({\xright-\boxhalfwidth},\boxsplit+0.25) rectangle ({\xright+\boxhalfwidth},\boxtop);

\node[font=\small] at (\xleft,2.75) {$A'$}; \node[font=\small] at (\xright,2.75) {$B'$};
\node[font=\small] at (\xleft,-2) {$A''$}; \node[font=\small] at (\xright,-2) {$B''$};
\coordinate (s0) at (\xright,2.25); \coordinate (v1) at (\xleft,1.85); \coordinate (v2) at (\xright,1.3);
\coordinate (v3) at (\xleft,0.65); \coordinate (v4) at (\xright,0.05); \coordinate (s5) at (\xleft,-0.25);

\node[vertex] at (s0) {}; \node[vertex,label=left:$v_1$] at (v1) {}; 
\node[vertex,label=right:$v_2$] at (v2) {};
\node[vertex,label=left:$v_3$] at (v3) {}; 
\node[vertex,label=right:$v_4$] at (v4) {}; 
\node[vertex] at (s5) {};
\node[font=\small] at (0,2.35) {$S$};
\coordinate (v) at (\xleft,-1.5); \coordinate (u) at (\xright,-1.5);
\draw[added edge] (u) to[bend right=8] (v1); \draw[added edge] (u)--(v3);
\draw[added edge] (v) to[bend left=8] (v2); \draw[added edge] (v)--(v4);
\node[vertex,label=left:$v$] at (v) {}; \node[vertex,label=right:$u$] at (u) {};

\draw[line width=1.2pt] (s0)--(v1); 
\draw[deleted edge] (v1)--node[pos=.5,sloped,font=\small] {}(v2); 
\draw[line width=1.2pt] (v2)--(v3);
\draw[deleted edge] (v3)--node[pos=.5,sloped,font=\small] {}(v4); 
\draw[line width=1.2pt] (v4)--(s5);

\draw[added edge] (-3.2,-2.85)--(-2.45,-2.85); \node[anchor=west,font=\scriptsize] at (-2.35,-2.85) {added};
\draw[deleted edge] (0.45,-2.85)--(1.2,-2.85); \node[anchor=west,font=\scriptsize] at (1.3,-2.85) {deleted};
\end{tikzpicture}
\par\smallskip
\textbf{b)}
\end{minipage}
\caption{\textbf{a)} Proof of Claim~\ref{claim:longest_cycle}, the paths between $S$ and $S'$ are represented by wiggly lines, and the final cycle used to obtain the contradiction is highlighted in blue. \textbf{b)} Proof of Claim~\ref{claim:finding_B}, the red edges of $S$ are deleted and the green dashed edges are added to obtain a longer cycle.}
\label{fig:extending_the_cycle}
\end{figure}

Let $B$ be the set of $7k$ vertices with at least $(1-0.2\eps)k$ neighbours in $A'$ produced by the previous claim, and let $A$ be the set of vertices in $A'$ with at least $k$ neighbours in $B$. By counting the nonedges between $A'\backslash A$ and $B$, we find that every vertex in $A'\backslash A$ has at least $6k$ nonedges, and so
\[
|A'\setminus A|\cdot 6k \leq |A'||B|-e(\Gamma[A',B])\leq 7k\cdot (0.2\eps k+k^{0.1}).
\]
The last inequality follows as any vertex of $B$ has at most $|A'|-(1-0.2\eps)k\leq0.2\eps k+k^{0.1}$ nonneighbours in $A'$. We conclude that $|A'\setminus A|\leq 0.5\eps k$. Thus, each vertex in $B$ has at least $(1-0.2\eps)k-0.5\eps k\geq (1-\eps)k+k^{0.1}$ neighbours in $A$. Removing up to $k^{0.1}$ vertices from $A$ to guarantee that $|A|< k$, we then get that $(A,B)$ is an $(\eps,k)$-almost-complete pair, a contradiction.
\end{proof}


\subsection{Proof of Lemma~\ref{lemma:dense patches in expanders}}\label{sec:largeexpandercase}

To prove Lemma~\ref{lemma:dense patches in expanders}, we will need a result from~\cite{liu2023solution} (Theorem~\ref{mainthm-old} below) in a modified form, Theorem~\ref{mainthm-new}. We will make this modification carefully in Appendix~\ref{appendix}, but here we will briefly discuss the changes needed to the proof. To state the result that we alter, we need the following definition.

\begin{definition} For any connected bipartite graph $H$ and $u,v\in V(H)$, let\label{pidefn}
$$
\pi(u,v,H)=\left\{\begin{array}{ll}
0 & \text{ if }u=v, \\
1 & \text{ if $u$ and $v$ are in different vertex classes in the (unique) bipartition of $H$},\\
2 & \text{ if $u$ and $v$ are in the same vertex class and $u\neq v$}.
\end{array}
\right.
$$
\end{definition}

\begin{theorem}[Theorem 2.7 in \cite{liu2023solution}]\label{mainthm-old} There exists $\eps_1>0$ such that, for each $0<\eps_2<1/5$, there exists $d_0=d_0(\eps_1,\eps_2)$ such that the following holds for each $n\geq d\geq d_0$. Suppose that $H$ is a bipartite $n$-vertex $(\eps_1,\eps_2 d)$-expander with $\de(H)\ge d$. Then, one of the following holds.
\stepcounter{propcounter}
  \begin{enumerate}[label = \textup{\textbf{\Alph{propcounter}\arabic{enumi}}}]
  \item $H$ contains a subdivision of the complete $\lfloor d/2\rfloor$-vertex graph in which each edge has been subdivided once (so that it becomes a path of length 2).
  \label{prop:oldEHthm3}
\item For each distinct $x,y\in V(H)$, and each $\ell\in [\log^{7}n,n/\log^{12}n]$ with $\pi(x,y,H)\equiv \ell\pmod 2$, $H$ contains an $x,y$-path with length $\ell$.
\label{prop:oldEHthm2}
  \end{enumerate}
\end{theorem}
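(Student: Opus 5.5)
Since this is Theorem~2.7 of~\cite{liu2023solution}, I would follow the sublinear-expander strategy developed there. Suppose \ref{prop:oldEHthm3} fails, that is, $H$ contains no $1$-subdivision of $K_{\lfloor d/2\rfloor}$; the goal is then to show \ref{prop:oldEHthm2}. Bipartiteness handles parity: every $x,y$-path has length congruent to $\pi(x,y,H)$ modulo $2$. So it is enough to realise every admissible length in $[\log^7 n, n/\log^{12}n]$. The plan is to build a long path from $x$ to $y$ whose length can be adjusted in steps of exactly $2$ over a wide range, and to use Lemma~\ref{lem-diameter} to connect pieces while avoiding previously used vertices.

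\textbf{Step 1 (robust connectivity).} First I would show that a small set of used vertices cannot block connections. Lemma~\ref{lem-diameter} already gives short paths after deleting about $x\rho(x)$ vertices, but we will need to avoid more vertices than that. I would split according to vertex degrees.
\begin{itemize}
\item If many vertices have very large degree, then greedily joining them in pairs by internally disjoint paths of length $2$ (through common neighbourhoods), or by short expander paths, would produce a $1$-subdivision of $K_{\lfloor d/2\rfloor}$. This contradicts the assumption.
\item Otherwise the degrees are fairly balanced. Then small balls $B^r(v)$ grow at the expander rate even after a bounded fraction of their vertices is deleted. This lets us connect large ``units'' (small trees of radius about $\log^{O(1)}n$ with many leaves) through any forbidden set of size $o(n/\log^{O(1)}n)$.
\end{itemize}

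\textbf{Step 2 (adjusters).} Next I would build many vertex-disjoint \emph{simple adjusters}. An adjuster is a small structure with two marked ends joined by two paths whose lengths differ by exactly $2$; for example, an even cycle of length about $\log^3 n$ with pendant units attached at suitable antipodal points. Such cycles come from Lemma~\ref{lem-diameter}, by joining two branches of a small tree through the expander. Using Step~1, I would find $m \approx \log^{5}n$ (or more) disjoint adjusters and link them consecutively into a chain. Choosing independently which route to take through each adjuster gives a path between the chain's ends whose length takes every value of the right parity in an interval of width $2m$.

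\textbf{Step 3 (assembly).} Finally, I would join $x$ to one end of the chain and the other end back to $y$, inserting a long path of prescribed approximate length. Such a path exists because expanders contain long paths that can be extended through fresh vertices, and the coarse error in its length is at most $\log^{O(1)}n$. The adjuster chain then corrects the length exactly, with the lower bound $\log^7 n$ on $\ell$ absorbing the lengths of the connections and adjusters. The upper bound $n/\log^{12}n$ guarantees that enough unused vertices remain for all the connections.

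I expect the main obstacle to be Step~1 combined with Step~2: finding polylogarithmically many disjoint adjusters and connecting them while avoiding a used set that is much larger than Lemma~\ref{lem-diameter} allows. The place where the absence of the $1$-subdivision must be used is the dichotomy between dense spots and robust expansion. My first attempt would be to show that large-degree vertices either yield the subdivision directly or can serve as ``hubs'' that make connecting units cheap.
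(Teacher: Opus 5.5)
The overall architecture is right: you chain simple adjusters and combine them with a connection of roughly prescribed length, as in \cite{liu2023solution}, whose argument the appendix reruns for Theorem~\ref{mainthm-new}. The genuine gap is the step where you use the failure of \ref{prop:oldEHthm3}, and as a result the hard step is missing.

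\textbf{The dichotomy is wrong.} Many vertices of very large degree do not give a $1$-subdivision of $K_{\lfloor d/2\rfloor}$. Joining them by short expander paths gives only a topological clique whose edges become long paths. Joining them by paths of length $2$ needs a distinct common neighbour for every pair, which need not exist. For example, take a random bipartite graph with all degrees $d$ in one class and $dt$ in the other, with $t$ large and $n\gg (dt)^2$. There are many vertices of huge degree, typical pairs of them have no common neighbour, and \ref{prop:oldEHthm3} fails. Your first case applies, but your plan gives nothing there.

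The correct obstruction, in \cite{liu2023solution} and in the appendix, is the analogue of property~\ref{newpropP} with threshold $d/2$: a \emph{dense spot}. This is a set $U$ with $|U|\le\log^{20}n$ and at least $\log^{80}n$ vertices each having at least $d/2$ neighbours in $U$. A dense spot gives \ref{prop:oldEHthm3} by counting. Since $d\le 2|U|$, some $w$ in the large set has every pair of its $U$-neighbours with at least $\binom{\lfloor d/2\rfloor}{2}$ common neighbours in the large set, and one then subdivides greedily. High-degree vertices are not a contradiction at all. They are deleted to control ball growth (the graph $G'=G-L$ in Lemma~\ref{lem-robust-adj-new}). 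They are also used as hubs to build adjuster ends of size $D$ cheaply (Case~II of Claim~\ref{ping}, and Claim~\ref{ping2}).

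\textbf{The hypothesis is needed elsewhere, and that step is missing.} Connecting is robust for free once the pieces are expansions of polylogarithmic size. The sets to avoid are polylogarithmically smaller than these units, which Lemma~\ref{new-connect} (or Lemma~\ref{lem-diameter}) tolerates. Your Step~3 is Lemma~\ref{longconnect4-new}, which uses only expansion (via Lemma~\ref{cl-egg-new}) and no dense-spot assumption. What genuinely needs the absence of dense spots is Lemma~\ref{lem-robust-adj-new}: finding a \emph{fresh simple adjuster in $G-U$}, for an arbitrary small used set $U$, whose ends still grow to size $D$. Without dense spots, $G-U$ keeps $\Omega(nd)$ edges and so contains a new expander. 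Also, sets that stop expanding after deleting $U$ are few (Lemmas~\ref{lem-newbit} and~\ref{lem-newbit-other-new}). This is the core of the proof, and your sketch does not address it.

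\textbf{Two smaller pieces are also missing.}
\begin{itemize}
\item For the two routes through an even cycle $C$ to differ by exactly $2$, you must attach at vertices at distance $|C|/2-1$ along $C$, by paths otherwise disjoint from $C$. \cite{liu2023solution} takes $C$ to be a \emph{shortest} cycle precisely so that expansions around its vertices can avoid it (Lemmas~\ref{lem-expansion-new} and~\ref{lem-twin-path}). A cycle built by closing two tree branches via Lemma~\ref{lem-diameter} gives no such control by itself.
\item $x$ and $y$ must first be grown into disjoint $(D,m)$-expansions (Lemma~\ref{lem-expansion-new}) before they can be linked to the chain robustly.
\end{itemize}
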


In our Lemma~\ref{lemma:dense patches in expanders}, we have an $n$-vertex $(\eps_1,k/2)$-expander with no $(\eps,k)$-almost-complete pair in $\Gamma$ and such that $d(\Gamma)\geq (1-0.1\eps)2k$ and $\delta(\Gamma)\geq (1-0.1\eps)k$.
We wish to show that $\sum_{\ell\in \cC(\Gamma)} \frac{1}{\ell}> \sum_{\ell=2}^k\frac{1}{2\ell}$.
As we will have $n>k^{1.01}$, if \ref{prop:oldEHthm2} holds, then applying this with any edge $xy\in E(H)$ will give enough different cycle lengths to ensure that $\sum_{\ell\in \cC(\Gamma)} \frac{1}{\ell}> \sum_{\ell=2}^k\frac{1}{2\ell}$.
However, the subdivision in \ref{prop:oldEHthm3} does not guarantee enough cycle lengths for this unless $\lfloor d/2\rfloor\geq k+1$. However, our graph $\Gamma$ is not necessarily bipartite and we have only $\delta(\Gamma)\geq (1-0.1\eps)k$.

Working with bipartite graphs in \cite{liu2023solution} is essentially only a convenience, and we will be able to remove this condition without much modification. As the function $\pi(x,y,H)$ used in \ref{prop:oldEHthm2} requires $H$ to be bipartite, this requires a little change (see \ref{prop:newEHthm2}), but, applied to any edge $xy\in E(\Gamma)$, will still produce enough different cycles. Still, even when the bounds in the proof in \cite{liu2023solution} are tightened, when applied to $\Gamma$, this would only allow us to find a subdivision of the complete graph with up to $(1-0.1\eps)k$ vertices in which each edge has been subdivided once. Such a subgraph only contains even cycles of length up to $(1-0.1\eps)2k$, which is not enough to show  $\sum_{\ell\in \cC(\Gamma)} \frac{1}{\ell}> \sum_{\ell=2}^k\frac{1}{2\ell}$. Therefore, we will record the intermediate structure used in \cite{liu2023solution} to find the subdivision for \ref{prop:oldEHthm3} (see~\ref{prop:newEHthm3}), before using this further in our proof in conjunction with our assumption that $\Gamma$ contains no  $(\eps,k)$-almost-complete pair.

Our modified Theorem~\ref{mainthm-old} is the following, as proved in Appendix~\ref{appendix}.

\begin{restatable}{theorem}{mainthm}
\label{mainthm-new} For every sufficiently small $\eps_1>0$ and each $0<\eps_2<1/5$, there exists $d_0=d_0(\eps_1,\eps_2)$ such that the following holds for each $n\geq d\geq d_0$. If $\Gamma$ is an $n$-vertex $(\eps_1,\eps_2 d)$-expander with $d(\Gamma)\ge 2d$ and $\delta(\Gamma)\geq d$, then one of the following holds.

\stepcounter{propcounter}
  \begin{enumerate}[label = \textup{\textbf{C\arabic{enumi}}}]
  \item There are disjoint sets $U,V\subset V(\Gamma)$ with $|U|\leq \log^{20}n$, $|V|\geq \log^{80}n$, and $d_{\Gamma}(v,U)\geq (1-2\eps_2)d$ for each $v\in V$.\label{prop:newEHthm3}
\item For each distinct $x,y\in V(\Gamma)$, and each $\ell\in [\log^{7}n,n/(4\log^{12}n)]$, $\Gamma$ contains an $x,y$-path which has length $2\ell$ or $2\ell+1$.
\label{prop:newEHthm2}
  \end{enumerate}
\end{restatable}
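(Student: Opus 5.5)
The plan is to follow the proof of Theorem~\ref{mainthm-old} in \cite{liu2023solution} step by step. Bipartiteness is removed throughout. Alternative~\ref{prop:oldEHthm3} is replaced by the intermediate dense structure that the original argument builds just before it embeds the subdivision. Recall the dichotomy in that proof. Either the expander contains many vertices of degree much larger than $d$ that are ``well spread'', or it contains a dense bipartite piece. In the first case one builds \emph{adjusters}, i.e.\ structures whose path lengths can be shifted by $2$ in a controlled way, and then connects them through the expander using Lemma~\ref{lem-diameter} to realise every length in a long interval. In the second case one finds a small set $U$ that is joined to many vertices $V$ with almost full degree, and uses it to embed the $1$-subdivided clique. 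I would stop the second case at the point where $U$ and $V$ are produced. The sizes $|U|\le \log^{20}n$ and $|V|\ge\log^{80}n$ come from the original parameter choices. The degree condition $d_\Gamma(v,U)\ge(1-2\eps_2)d$ comes from redoing the counting with $\delta(\Gamma)\ge d$, sharpening the constants where the original settled for $d/2$-type losses. This yields~\ref{prop:newEHthm3}.

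The first case must give~\ref{prop:newEHthm2} without a bipartition. In \cite{liu2023solution} parity is tracked by $\pi(x,y,H)$, and each adjuster only realises lengths of a fixed parity, with gaps of $2$. Without bipartiteness, a connecting path from $x$ to $y$ through the adjusters has some length $\ell'$ of uncontrolled parity. The adjusters can still lengthen it by any even amount within a range of size $\Omega(n/\log^{12}n)$. So for every target $\ell$ in the range we get a path of length $2\ell$ or $2\ell+1$, with the parity fixed by the base path. I would make the base path (built from Lemma~\ref{lem-diameter} connections of length $O(\log^3 n)\le\log^7 n$) first, then attach the adjusters. The factor $4$ in $n/(4\log^{12}n)$ absorbs the halving and the slack from the odd case.

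Since the paper treats this as a modification, I would organise the write-up as follows:
\begin{enumerate}
\item State the needed auxiliary lemmas from \cite{liu2023solution} (adjusters, the construction of many vertex-disjoint short paths, and robust connection in expanders) in forms that do not assume bipartiteness. Note that bipartiteness was only used for parity bookkeeping.
\item Run the case split on the number of high-degree vertices.
\item In the dense case, output $(U,V)$.
\item In the sparse case, assemble the path.
\end{enumerate}

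The main obstacle is the adjuster construction in the non-bipartite setting. The original relies on short even cycles formed inside the adjusters, each of which shifts lengths by exactly $2$. In a general graph a short cycle might be odd, which breaks uniform step sizes. I would handle this first by noting that an odd cycle combined with an even shift still gives steps of $1$ or $2$, which only helps: we only need lengths in $\{2\ell,2\ell+1\}$. Failing that, I would pass to a bipartite subgraph with half the degrees inside each local structure while keeping the global connections in $\Gamma$. The price is at most a constant factor in the degree, which is harmless for the adjuster part because the $(1-2\eps_2)d$ bound is only needed in the dense case.
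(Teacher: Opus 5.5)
Your plan has two genuine gaps, both at the core of the modification.

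\textbf{Wrong dichotomy, and the threshold $(1-2\eps_2)d$ is handled in the wrong place.} In \cite{liu2023solution}, and in the paper, the case split is simply whether \ref{prop:newEHthm3} holds. Its negation \textbf{H} is the \emph{hypothesis} under which simple adjusters are built robustly (Lemma~\ref{lem-robust-adj-new}). The high-degree set $L$ is only a device inside that lemma.

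Whether \ref{prop:newEHthm3} holds is not governed by the number or distribution of high-degree vertices. A $2d$-regular expander has no high-degree vertices, yet \ref{prop:newEHthm3} fails there, since it would force $(1-2\eps_2)d\log^{80}n\le e(U,V)\le 2d\log^{20}n$. So adjusters must be built with no high-degree vertices at all, and your first case does not cover this.

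Nor can the bound $(1-2\eps_2)d$ be produced by redoing the counting in the dense case. In the original, the dense structure is just the failure of the analogous property with threshold $d/2$, which only gives $d/2$ neighbours into $U_0$, and nothing upgrades that. The threshold must instead be raised in the hypothesis. This makes \textbf{H} weaker, and the real work is re-verifying the adjuster side under it. Your claim that the $(1-2\eps_2)d$ bound is \emph{only needed in the dense case} is therefore backwards.

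Here is where the weaker hypothesis has to be checked. Under \textbf{H}, after deleting any $U$ with $|U|\le 10D$, all but fewer than $\log^{80}n$ vertices of $G-U$ keep at least $2\eps_2 d$ neighbours outside $U$. Hence $e(G-U)\ge \eps_2 nd/2$, and this is what lets Corollary~\ref{cor-expander} produce the bipartite subexpander of minimum degree $\eps_2 d/16$ in which Lemma~\ref{lem-twin-path} is applied.

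That local passage to a bipartite subexpander (your fallback) is indeed the paper's route. Your first idea, that odd cycles give steps of $1$ or $2$, does not help: bipartiteness is used inside the proof of Lemma~\ref{lem-twin-path}, not merely for parity bookkeeping.

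The relaxation must also be checked in the ball-growth Lemmas~\ref{lem-newbit} and~\ref{lem-newbit-other-new}. There a vertex may now have up to $(1-2\eps_2)d$ neighbours in $U$. One needs $\delta(G)-|B_i|-4-(1-2\eps_2)d\ge \eps_2 d$, so that small sets still grow to the scale $\eps_2 d$ at which the $(\eps_1,\eps_2 d)$-expansion takes over.

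\textbf{How the long range of lengths is reached.} Adjusters only give fine control over a short window. The robust adjuster lemma avoids only sets of polylogarithmic size, so chaining gives a window of polylogarithmic length; the paper uses a $(D,m,24m)$-adjuster (Lemma~\ref{lem-adj-to-adj-path-new}). Lemma~\ref{lem-diameter} only produces short connections. So you cannot lengthen an $O(\log^3 n)$ base path by every even amount up to $n/\log^{12}n$.

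The missing ingredient is coarse length control, Lemma~\ref{longconnect4-new} (Corollary~3.15 of \cite{liu2023solution}). You first take vertex-disjoint $(D,m)$-expansions $F_x,F_y$ of $x,y$ (Lemma~\ref{lem-expansion-new}) together with the two ends of the adjuster. Lemma~\ref{longconnect4-new} then gives two disjoint linking paths of total length in $[\bar\ell,\bar\ell+22m]$, for any $\bar\ell\le n/\log^{12}n$. The adjuster absorbs the remaining slack in steps of $2$ (Lemma~\ref{lem-finalconnect-new}).

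Consequently the parity is determined separately for each target, by $\ell(P)+\ell(Q)+\ell(\cA)$. This is why \ref{prop:newEHthm2} only promises length $2\ell$ or $2\ell+1$, rather than a single parity fixed once by a base path.
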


Using Theorem~\ref{mainthm-new}, we can now prove Lemma~\ref{lemma:dense patches in expanders}.

\begin{proof}[Proof of Lemma~\ref{lemma:dense patches in expanders}]  We have $1/k\ll \eps,\eps_1 \ll 1$ and $n> k^{1.01}$,
while $\Gamma$ is an $n$-vertex $(\eps_1,k/2)$-expander with no pair which is $(\eps,k)$-almost-complete in $\Gamma$ and such that $d(\Gamma)\geq (1-0.1\eps)2k$ and $\delta(\Gamma)\geq (1-0.1\eps)k$. Assume, for contradiction, that $\sum_{\ell\in \mathcal{C}(\Gamma)}\frac{1}{\ell}\leq \sum_{\ell=2}^k\frac{1}{2\ell}$.

Let $d=(1-0.1\eps)k$ and $\eps_2=\eps/60$, so that $\eps_2d\leq k/2$. As $\delta(\Gamma)\geq (1-0.1\eps)k$, we have that $\Gamma$ is an $(\eps_1,\eps_2d)$-expander. By Theorem~\ref{mainthm-new} applied to $\Gamma$, we have that either \ref{prop:newEHthm3} or \ref{prop:newEHthm2} holds. 

If \ref{prop:newEHthm2} holds, then, let $xy$ be any adjacent pair in $\Gamma$. Setting $\ell_0=\log^{7}n+1$ and $\ell_1=n/(4\log^{12}n)+1$, we have that $\Gamma$ contains a cycle of length $2\ell-1$ or $2\ell$ for each $\ell_0\leq \ell \leq \ell_1$. As $n> k^{1.01}$ and $1/k\ll 1$ implies $\ell_1/\ell_0\geq k^{1.005}$, we thus have
\[
\sum_{\ell\in \cC(\Gamma)} \frac{1}{\ell}\geq \sum_{\ell=\ell_0}^{\ell_1}\frac{1}{2\ell}
\ge \frac{1}{2} \log\left(\frac{\ell_1}{\ell_0}\right)\ge \frac{1}{2} \log k^{1.005}> \sum_{\ell=2}^k\frac{1}{2\ell},
\]
a contradiction.

Thus, we must have that \ref{prop:newEHthm3} holds.  Take $U,V$ as in \ref{prop:newEHthm3}. Note we have $d_{\Gamma}(u,U)\ge (1-0.1\eps)d\ge (1-0.2\eps)k$ for all $u\in V$. Pick a collection of disjoint sets $V_{xy}$ in $V$, one for each pair $\{x, y\}\in U^{(2)}$, such that $|V_{xy}|\leq 1$ and $V_{xy}\subset N_\Gamma(x)\cap N_{\Gamma}(y)$, which maximise $\sum_{\{x, y\}\in U^{(2)}}|V_{xy}|$. In other words, for each pair $\{x, y\}\in U^{(2)}$ we try to pick a different common neighbour, so that we get as many common neighbours as we can. As $|\bigcup_{\{x, y\}\in U^{(2)}}V_{xy}|\leq \binom{|U|}{2}<|V|/2$, setting $V'=V\setminus(\bigcup_{\{x, y\}\in U^{(2)}}V_{xy})$ we have $|V'|\geq |V|/2$. 

\begin{claim}\label{claim:prettydisjointorallin} For every pair of distinct vertices $v,v'\in V'$ with $|N_{\Gamma}(v,U)\cap N_{\Gamma}(v',U)|\geq 2$, we have $|N_{\Gamma}(v,U)\cup N_{\Gamma}(v',U)|\leq k$.
\end{claim}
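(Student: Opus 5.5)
The plan is to argue by contradiction and produce a $1$-subdivided complete graph on each of $N_\Gamma(v,U)$ and $N_\Gamma(v',U)$. The two subdivisions share two branch vertices and have jointly more than $k$ branch vertices. Gluing paths through them gives every even cycle length from $4$ to $2k+2$. The contradiction is then with the standing assumption $\sum_{\ell\in\cC(\Gamma)}\frac1\ell\leq\sum_{\ell=2}^k\frac1{2\ell}$, exactly as in the proof of Lemma~\ref{lem:cluster properties}\,\ref{prop:cluster:vxdisjoint}.

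\emph{Step 1 (the subdivided cliques).} Let $v\in V'$ and let $x,y\in N_\Gamma(v,U)$ be distinct. I claim $V_{xy}\neq\emptyset$. Otherwise $v$ is a common neighbour of $x$ and $y$ lying in $V$, and $v$ belongs to no $V_{x'y'}$ since $v\in V'$. Setting $V_{xy}=\{v\}$ would then keep the sets disjoint and strictly increase $\sum|V_{xy}|$, contradicting maximality. So every pair $\{x,y\}\subseteq N_\Gamma(v,U)$ has a private common neighbour $w_{xy}$, where $V_{xy}=\{w_{xy}\}$. These $w_{xy}$ are distinct for distinct pairs and lie in $V$, hence outside $U$. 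The same holds for $v'$. Since there is one global assignment $\{x,y\}\mapsto w_{xy}$, the subdivided cliques on $N_\Gamma(v,U)$ and on $N_\Gamma(v',U)$ use the same subdivision vertex for any pair contained in both. Paths in the two cliques whose branch-vertex pairs are distinct are therefore internally disjoint outside $U$.

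\emph{Step 2 (gluing).} Suppose, for contradiction, that $a_1,a_2\in N_\Gamma(v,U)\cap N_\Gamma(v',U)$ are distinct and $|N_\Gamma(v,U)\cup N_\Gamma(v',U)|\geq k+1$. Let $\ell_1,\ell_2\geq 0$ satisfy $\ell_1+\ell_2\leq k-1$, and choose disjoint sets
\[
S_1\subseteq N_\Gamma(v,U)\setminus\{a_1,a_2\},\qquad S_2\subseteq N_\Gamma(v',U)\setminus(S_1\cup\{a_1,a_2\}),
\]
with $|S_1|=\ell_1$ and $|S_2|=\ell_2$. This is possible because the union minus $\{a_1,a_2\}$ has at least $k-1$ elements, so I fill $S_1$ from $N_\Gamma(v,U)$ first and take the rest from the union.

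Now order $S_1$ and build the path $a_1,s_1,\dots,s_{\ell_1},a_2$, replacing each consecutive pair $\{x,y\}$ by $x\,w_{xy}\,y$; this is an $a_1,a_2$-path of length $2(\ell_1+1)$. Do the same with $S_2$ in the opposite direction. When $\ell_1,\ell_2\geq1$, the consecutive pairs used by the two paths are all distinct, because $S_1\cap S_2=\emptyset$. Hence their subdivision vertices are distinct and the union is a cycle of length $2(\ell_1+\ell_2+2)$. If instead one of the $\ell_i$ is $0$, I route that side through $v$ or $v'$ (the path $a_1\,v\,a_2$ of length $2$), avoiding any reuse of $w_{a_1a_2}$.

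Taking $\ell_1+\ell_2$ over all values up to $k-1$ gives every even length from $6$ up to $2k+2$. The remaining length $4$ comes from the cycle $v\,a_1\,v'\,a_2$.

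\emph{Step 3 (contradiction).} We now have $\{4,6,\dots,2k+2\}\subseteq\cC(\Gamma)$, so
\[
\sum_{\ell\in\cC(\Gamma)}\frac1\ell\;\geq\;\sum_{\ell=2}^{k+1}\frac1{2\ell}\;>\;\sum_{\ell=2}^k\frac1{2\ell},
\]
a contradiction.

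The only delicate points are the maximality argument in Step 1 and the bookkeeping that the two half-paths share no subdivision vertex. The latter needs care only for pairs involving $a_1$ or $a_2$, and that is handled by $S_1\cap S_2=\emptyset$ together with using $v$ or $v'$ when one side is empty.
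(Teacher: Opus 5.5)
Your proof is correct and is essentially the paper's argument. Maximality of $\sum|V_{xy}|$ gives every pair inside $N_\Gamma(v,U)$ or $N_\Gamma(v',U)$ a private common neighbour, and you then close a cyclic sequence of branch vertices in which consecutive pairs share a neighbour among $v,v'$; your two arcs through $a_1,a_2$ are exactly the paper's sequence $b_1,\dots,b_\ell$, and this gives every even length from $4$ to $2k+2$. One small wording point: the split $(\ell_1,\ell_2)$ cannot be arbitrary (e.g.\ $\ell_1=k-1$ may exceed $|N_\Gamma(v,U)|-2$), but your greedy choice of filling $S_1$ first works for every total $\ell_1+\ell_2\le k-1$, which is all you need.
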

\claimproofstart[Proof of Claim~\ref{claim:prettydisjointorallin}] 

Suppose otherwise, and let $v,v'\in V'$ be distinct and such that $|N_{\Gamma}(v,U)\cap N_{\Gamma}(v',U)|\geq 2$ and $|N_{\Gamma}(v,U)\cup N_{\Gamma}(v',U)|\geq k+1$.

We will show that $\Gamma$ contains a cycle of length $2\ell$, for every $2\leq \ell\leq k+1$. As $|N_{\Gamma}(v,U)\cap N_{\Gamma}(v',U)|\geq 2$, it is clear that a cycle of length $4$ exists in $\Gamma$. If $\ell\geq 3$, take distinct vertices $b_1,\ldots,b_\ell$ such that, setting $b_{\ell+1}=b_1$, for each $i\in [\ell]$, we have $b_i,b_{i+1}\in N_\Gamma(v)$ or $b_i,b_{i+1}\in N_\Gamma(v')$. This is possible since $|N_{\Gamma}(v,U)\cap N_{\Gamma}(v',U)|\geq 2$ and $|N_{\Gamma}(v,U)\cup N_{\Gamma}(v',U)|\geq k+1$.

By the maximality of $\sum_{\{x, y\}\in U^{(2)}}|V_{xy}|$, for each $i\in[\ell]$ we have that $V_{b_ib_{i+1}}\neq \varnothing$ (for otherwise we could add $v$ or $v'$ to $V_{b_ib_{i+1}}$). Thus, we can choose distinct vertices $a_i\in V_{b_ib_{i+1}}$, $i\in [\ell]$. Using that $a_i\in N_\Gamma(b_i)\cap N_{\Gamma}(b_{i+1})$ for each $i\in [\ell]$, we have that $b_1a_1\dots b_\ell a_\ell b_1$ is a cycle of length $2\ell$ in $\Gamma$. See Figure~\ref{fig:prettydisjointorallin} for an illustration.

Therefore, $\sum_{\ell\in \cC(\Gamma)} \frac{1}{\ell}\geq \sum_{\ell=2}^{k+1}\frac{1}{2\ell}> \sum_{\ell=2}^k\frac{1}{2\ell}$, a contradiction.
\claimproofend

\begin{figure}
  \centering
  \begin{tikzpicture}[
    x=0.9cm,
    y=0.8cm,
    point/.style={
        circle,
        draw=black,
        fill=white,
        inner sep=0,
        minimum size=4.5pt
    },
    every edge/.style={
        draw=black,
        line width=0.55pt
    }
]
    \draw[rounded corners=4pt,line width=0.6pt]
        (-3.75,1.05) rectangle (3.75,2.45);
    \node[anchor=north west] at (-4.45,2.38) {$V'$};

    \draw[rounded corners=4pt,line width=0.6pt]
        (-2.75,-1.35) rectangle (2.75,0.55);
    \node[anchor=north west] at (-3.35,0.48) {$U$};

    \node[point,label=above:$v'$] (vp) at (-1.25,1.65) {};
    \node[point,label=above:$v$]  (v)  at ( 1.25,1.65) {};

    \node[point, label={[label distance=8pt]below:$b_1$}] (u1) at (-2.25,0) {};
    \node[point, label={[label distance=8pt]below:$b_2$}] (u2) at (-1.45,0) {};
    \node[point, label={[label distance=8pt]below:$b_3$}] (u3) at (-0.55,0) {};
    \node[point, label={[label distance=8pt]below:$b_6$}] (u4) at ( 0.55,0) {};
    \node[point, label={[label distance=8pt]below:$b_5$}] (u5) at ( 1.45,0) {};
    \node[point, label={[label distance=8pt]below:$b_4$}] (u6) at ( 2.25,0) {};

    \path
        (vp) edge (u1)
             edge (u2)
             edge (u3)
             edge (u4);

    \path
        (v) edge (u3)
            edge (u4)
            edge (u5)
            edge (u6);

    \draw[dashed,line width=0.5pt] (u1) -- (u2) -- (u3);
    \draw[dashed,line width=0.5pt] (u4) -- (u5) -- (u6);

    \draw[dashed,line width=0.5pt]
        (u1)
        .. controls (-1.75,-0.72) and (-0.20,-0.72) ..
        (u4);

    \draw[dashed,line width=0.5pt]
        (u3)
        .. controls (0.20,-0.61) and (1.75,-0.61) ..
        (u6);
\end{tikzpicture}
\caption{Proof of Claim~\ref{claim:prettydisjointorallin}. Each pair connected with a dashed line has a different common neighbour, which allows for constructing a cycle through all of them.}
\label{fig:prettydisjointorallin}
\end{figure}

By the property of $U,V$ from \ref{prop:newEHthm3}, we certainly have that, for each $v\in V'$ there are distinct vertices $u,u'\in N_\Gamma(v,U)$. Therefore, we can take distinct vertices $u,u'\in U$ for which there is a set $B\subset V'$ with $|B|\geq 2|V'|/|U|^2$ and $u,u'\in N_\Gamma(v)$ for each $v\in B$. Note that the property from \ref{prop:newEHthm3} implies that  $(1-2\eps_2)\cdot (1-0.1\eps)k\leq |U|\leq \log^{20}n$, and, hence, $k/2\leq \log^{20}n$. Thus $|B|\geq 2|V'|/|U|^2\geq \log^{40}n\geq (k/2)^2\geq 7k$.

For any $b\in B$, we have $d_\Gamma(b, U)\geq (1-2\eps_2)\cdot (1-0.1\eps)k\geq (1-0.2\eps)k$. So, if we fix some $b\in B$ and set $A'=N_\Gamma(b,U)$, by Claim~\ref{claim:prettydisjointorallin}, we have $(1-0.2\eps)k\leq |A'|\leq k$. For any other vertex $b'\in B$, we have 
\[|N_{\Gamma}(b,U)\cap N_{\Gamma}(b',U)|\geq |N_{\Gamma}(b',U)|+|N_\Gamma(b,U)|-|N_{\Gamma}(b',U)\cup N_\Gamma(b,U)|\geq 2(1-0.2\eps)k-k\geq (1-0.4\eps)k,\]
and so $d_\Gamma(b', A')\geq (1-0.4\eps)k$. Let $A$ be the set of vertices in $A'$ with at least $k$ neighbours in $B$. Then, by counting the nonedges between $A'$ and $B$, similarly as in the proof of Lemma~\ref{lem:smalldensenoalmostcomplete}
\[
|A'\setminus A|\cdot (|B|-k) \leq |A'||B|-e(\Gamma[A',B])\leq |B|\cdot 0.4\eps k,
\]
so that $|A'\setminus A|\leq \left(1+\frac{k}{|B|-k}\right)\cdot 0.4\eps k\leq 0.5\eps k$, where we have used that $|B|\geq 7k$. Thus, each vertex in $B$ has at least $(1-\eps)k+1$ neighbours in $A$. Removing at most one vertex from $A$ to guarantee that $|A|< k$, we then get that $(A,B)$ is an $(\eps,k)$-almost-complete pair, a contradiction.
\end{proof}


\subsection{Proof of Lemma~\ref{lemma:many edges outside clusters}}
We now prove Lemma~\ref{lemma:many edges outside clusters}, which shows that if our maximal collection of edge-disjoint clusters leaves many vertices in $G$ uncovered, then there is a dense subgraph of $G$ which is edge-disjoint from all the clusters.

\begin{proof}[Proof of Lemma~\ref{lemma:many edges outside clusters}] If $\cH=\emptyset$, then we may take $\Gamma=G$ as Lemma~\ref{lem:few_vertices} implies $n\geq k^{1.02}$, so that, as $1/k\ll \eps$,
\[
d(\Gamma)\geq \frac{2}{n}\cdot (k-1)(n-k+1)= \left(1-\frac{1}{k}\right)\cdot \left(1-\frac{k-1}{n}\right)\cdot 2k\geq \left(1-0.005\eps\right)^22k\geq (1-0.01\eps)2k.
\]

Thus, we may assume $\cH\neq\emptyset$. Without loss of generality, we can assume that no union of two $(\eps,k)$-clusters in $\cH$ is itself a $(\eps,k)$-cluster (for otherwise uniting them will give a family of clusters with the same $\sum_{H\in \cH}e(H)$ and smaller $|\mathcal H|$). 
Let $\mathcal{P}$ be a maximal collection of vertex-disjoint paths in $G$ which each have length at most 2 and such that, for each $P\in \cP$ with endvertices $x_P$ and $y_P$, there are distinct clusters $H_{P,1},H_{P,2}\in \cH$ with $x_P\in V(H_{P,1})$, $y_P\in V(H_{P,2})$, and $V(P)\setminus \{x_P,y_P\}\subset V(G)\setminus \bigcup_{H\in \cH}V(H)$. 

\begin{claim}\label{claim:few connecting paths}
$|\cP|<|\cH|$.
\end{claim}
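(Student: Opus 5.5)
We argue by contradiction, showing that $|\cP|\geq|\cH|$ forces a cluster cycle, which Lemma~\ref{lem:cluster properties}\,\ref{prop:cluster:nocycle} forbids. Recall that we have already arranged that no union of two clusters in $\cH$ is a cluster.

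\textbf{Auxiliary multigraph.} Define a multigraph $M$ with vertex set $\cH$. For each $P\in\cP$ we put an edge between $H_{P,1}$ and $H_{P,2}$, choosing the pair $H_{P,1}\neq H_{P,2}$ given by the definition of $\cP$. Since $M$ has $|\cH|$ vertices, if $|\cP|\geq|\cH|$ then $M$ has at least as many edges as vertices. Hence $M$ contains a cycle, which may be a $2$-cycle formed by two parallel edges. Write this cycle as $H_1,P_1,H_2,P_2,\dots,H_s,P_s$ with $s\geq 2$, where the clusters $H_i$ are distinct and $P_i$ joins $H_i$ to $H_{i+1}$.

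\textbf{The cycle in $M$ is a cluster cycle.} We check Definition~\ref{def:cluster cycle}. The clusters in $\cH$ are edge-disjoint by assumption. The paths in $\cP$ are vertex-disjoint, and each has length at most $2\leq 4$. Each $P_i$ starts in $V(H_i)$ and ends in $V(H_{i+1})$. This gives a cluster cycle of edge-disjoint $(\eps,k)$-clusters in which no union of two clusters is a cluster, contradicting Lemma~\ref{lem:cluster properties}\,\ref{prop:cluster:nocycle}.

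\textbf{Degenerate paths.} A path $P$ of length $0$ is a single vertex lying in two distinct clusters. Such paths are still allowed as connectors, since the proof of Lemma~\ref{lem:cluster properties}\,\ref{prop:cluster:nocycle} itself uses single-vertex connecting paths. The only real care needed is that the cycle in $M$ uses distinct clusters, which holds because a cycle in a multigraph has distinct vertices, and that the chosen endpoints lie in the required clusters, which holds by construction.

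I expect no substantial obstacle. The one point to verify is that repeated vertices shared among several clusters cause no trouble. Because we select only one edge of $M$ per path, and the paths are vertex-disjoint, the cycle's connecting paths are automatically vertex-disjoint.
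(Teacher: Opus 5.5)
Your proof is correct and is essentially the paper's argument. The paper builds the same auxiliary multigraph on $\cH$ with one edge $H_{P,1}H_{P,2}$ per path $P\in\cP$, notes that any cycle in it (including a $2$-cycle from parallel edges) would give a cluster cycle forbidden by Lemma~\ref{lem:cluster properties}\,\ref{prop:cluster:nocycle}, and so concludes that it is a forest, giving $|\cP|=e(L)<|L|=|\cH|$.
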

\claimproofstart[Proof of Claim~\ref{claim:few connecting paths}] Let $L$ be the auxiliary multigraph with vertex set $\cH$ and an edge $H_{P,1}H_{P,2}$ for each $P\in \cP$. By Lemma~\ref{lem:cluster properties}\,\ref{prop:cluster:nocycle}, as $\sum_{\ell\in \cC(G)}\frac{1}{\ell}\leq \sum_{\ell=2}^k\frac{1}{2\ell}$, $L$ has no cycles, for otherwise $G$ would have a corresponding cluster cycle. Thus, $|\cP|=e(L)<|L|=|\cH|$, as required.
\end{proof}

Let $X=\bigcup_{H\in \cH} (V(H)\setminus V(\cP))$, $Y=V(G)\backslash X$, and $\Gamma=G[Y]-\bigcup_{H\in \cH}E(H)$. 
Each vertex $y\in Y\backslash V(\cP)\subseteq V(G)\setminus \bigcup_{H\in \cH}V(H)$ has at most $1$ neighbour in each cluster $H\in\cH$ (otherwise, $G[V(H)\cup \{y\}]$ is a cluster which contradicts the maximality of $\cH$) and, furthermore, it has a neighbour in $V(H)\setminus V(\mathcal{P})$
for at most one cluster $H\in \cH$  (by the maximality of $\cP$). Thus, each vertex of $Y\backslash V(\cP)$ has at most one neighbour in $X$ in $G$. Hence,
\begin{equation}\label{eq:eGXYnoP}
e(G[X,Y\backslash V(\cP)])\leq |Y\backslash V(\cP)|.
\end{equation}

By \eqref{eq:doesnt mostly cover V(G)} and Claim~\ref{claim:few connecting paths}, we have that $|Y|> 2000|\cH|/\eps$ and thus $|X\cup V(\cP)|\leq n-|Y|+|V(\cP)|\leq n+3|\cH|-|Y|<n$.
If there is some $H\in \mathcal{H}$, then $|X\cup V(\cP)|\geq |H|\geq (3-\eps)k\geq 2k$.
Furthermore, $\sum_{\ell\in \cC(G[X\cup V(\cP)])}\frac{1}{\ell}\leq \sum_{\ell\in \cC(G)}\frac{1}{\ell}$. Thus, if \textbf{a)} holds in the statement of the lemma (i.e., if $\sum_{\ell\in \cC(G)}\frac{1}{\ell}<\sum_{\ell=2}^{k}\frac{1}{2\ell}$), then, by the minimality of $n$,  $e(G[X\cup V(\cP)])\leq (k-1)(|X\cup V(\cP)|-k+1)$ and, hence,
 \begin{align}\label{eq:fromminimal:1}
e(\Gamma)&\geq e(G)-e(G[X\cup V(\cP)])-e(G[X,Y\backslash V(\cP)])\nonumber \\
&\geq (k-1)(n-k+1) - (k-1) (|X\cup V(\cP)|-k+1) - e(G[X,Y\backslash V(\cP)])\nonumber\\
&= (k-1) (n-|X\cup V(\cP)|) - e(G[X,Y\backslash V(\cP)]).
 \end{align}
On the other hand, if \textbf{b)} holds in the statement of the lemma, then we have $e(G)\geq k(n-k)$ and, by the minimality of $n$, that $e(G[X\cup V(\cP)])\leq k(|X\cup V(\cP)|-k)$, so that
\begin{align}\label{eq:fromminimal:2}
e(\Gamma)&\geq e(G)-e(G[X\cup V(\cP)])-e(G[X,Y\backslash V(\cP)])\nonumber\\
&\geq k(n-k) - k(|X\cup V(\cP)|-k) - e(G[X,Y\backslash V(\cP)])\nonumber\\
&= k(n-|X\cup V(\cP)|) - e(G[X,Y\backslash V(\cP)]).
\end{align}
Therefore, whether \textbf{a)} or \textbf{b)} holds in the statement of the lemma,
\begin{align}
e(\Gamma)&\overset{\eqref{eq:fromminimal:1},\eqref{eq:fromminimal:2}}{\geq}(k-1) (n-|X\cup V(\cP)|) - e(G[X,Y\backslash V(\cP)])\\
&\;\;\overset{\eqref{eq:eGXYnoP}}{\geq} (k-1) (n-|X\cup V(\cP)|) -  |Y\backslash V(\cP)|\nonumber\\
&\;\;= (k-1) |Y\backslash V(\cP)|-|Y\backslash V(\cP)|= (k-2)|Y\backslash V(\cP)|\geq (k-2)(|Y|-3|\cH|),\label{eq:Egammabound}
\end{align}
where we have used that $|V(P)\cap Y|\leq 3$ for each $P\in\cP$ and hence $|V(\cP)\cap Y|\leq 3|\cH|$. Recalling that $|Y|\geq 2000|\cH|/\eps$, we have
\[
d(\Gamma)\overset{\eqref{eq:Egammabound}}> \frac{2(k-2)(|Y|-3|\cH|)}{|\Gamma|}= \left(1-\frac{2}{k}\right)\cdot \left(1-\frac{3|\cH|}{|Y|}\right)\cdot 2k\geq \left(1-0.005\eps\right)^22k\geq (1-0.01\eps)2k,\]
where we have also used that $1/k\ll \eps$. Thus, $\Gamma$ satisfies our requirements.
\end{proof}


\section{Proof of Theorem~\ref{thm:main}}\label{sec:main_thm_proof}

In this section, we prove our main theorem, Theorem~\ref{thm:main}. We divide this proof into two parts, depending on how many edge-disjoint clusters are in our maximal collection. We prove the case where there is a single cluster as Lemma~\ref{lemma:t=1} in Section~\ref{sec:one_cluster}. We prove the case where there are at least two clusters in our maximal collection as Lemma~\ref{lem:not_many_clusters} in Section~\ref{sec:case:t>1}. We put this together to prove Theorem~\ref{thm:main} in Section~\ref{sec:lastproofbit}.


\subsection{The case of a single cluster}\label{sec:one_cluster}
We will now rule out the case where our maximal edge-disjoint collection of clusters in a minimal counterexample $G$ contains exactly one cluster. If there were exactly one cluster, then the cluster contains an almost-complete bipartite graph on vertex classes $A$ and $B$, and within $G[A,B]$ we can find most of the canonical short even cycles for comparison to $\mathcal{C}(K_{k,n-k})$. The restriction is the size of $A$ -- it may not be large enough to contain even cycles with length up to $2k$. When this happens, vertices outside of $A\cup B$ with two neighbours in $B$ could be used to create longer paths. More generally, if we can find many disjoint cherries resembling this (see Figure~\ref{fig:path_through_cherries}) then we will be able to use them to find longer cycles (see Claim~\ref{claim:cycle through cherries}). Where there are not enough such cherries, then we will be able to show there are enough edges in $G$ away from $A$, $B$, and a maximal disjoint collection of such cherries, to find a cycle with a suitable length (see Claim~\ref{clm:goodcycle}) that we can connect into $G[A,B]$ to find an interval of even, or odd, cycle lengths at a new scale, to then contradict that we have a minimal counterexample.

\begin{lemma}\label{lemma:t=1}
Let $1/k_0\ll \eps \ll 1$. Let $n$ be the least integer for which there is an integer $k_0\leq k\leq n/2$ and an $n$-vertex graph $G$ with more than $(k-1)(n-k+1)$ edges for which either \textup{\textbf{a)}} $\sum_{\ell\in \cC(G)}\frac{1}{\ell}<\sum_{\ell=2}^k\frac{1}{2\ell}$ or \textup{\textbf{b)}} $\sum_{\ell\in \cC(G)}\frac{1}{\ell}= \sum_{\ell=2}^k\frac{1}{2\ell}$, $e(G)\geq k(n-k)$ and $G$ is not a copy of $K_{k,n-k}$.
Let $G$ be such a graph and let $\mathcal{H}$ be a collection of edge-disjoint $(\eps,k)$-clusters in $G$ maximising $\sum_{H\in \cH}e(H)$. Then, $|\cH|\neq 1$.
\end{lemma}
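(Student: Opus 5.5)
Suppose for contradiction that $\cH=\{H\}$, and write $A=A(H)$, $B=B(H)$ with $a:=|A|$, so $(1-\eps)k\le a<k$. By Corollary~\ref{cor:cycles in almost complete pairs}, $\cC(G)$ already contains every even length from $4$ to $2a$. Since $s(G)\le\sum_{\ell=2}^k 1/(2\ell)$, no further lengths with harmonic mass of order $\eps$ or more can exist. The aim is to find either every even length in $[2a+2,2k+2]$, or $\Omega(k)$ lengths (even or odd) at a fixed scale $\Theta(k)$ beyond $2a$, or a positive density of odd lengths at some scale. Each of these adds at least the missing mass $\sum_{\ell=a+1}^{k}1/(2\ell)\approx\eps/2$, giving a contradiction.

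\textbf{Step 1 (cherries).} Call a \emph{cherry} a vertex $w\notin A$ together with two neighbours $b,b'\in B$. Take a maximal family $\mathcal{W}$ of vertex-disjoint cherries whose centres lie outside $A\cup B$. Each cherry behaves like an extra vertex of $A$: it joins two vertices of $B$ by a path of length $2$. Suppose $|\mathcal{W}|\ge k-a+1$. Mirroring Lemma~\ref{lemma:path lengths in almost complete pairs}, build a cycle alternating between $A$ and $B$ that replaces some $A$-steps by cherries. Varying the number of cherries used, this gives every even length up to $2k+2$ (Claim~\ref{claim:cycle through cherries}), and hence $s(G)\ge\sum_{\ell=2}^{k+1}1/(2\ell)$, a contradiction. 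So assume $|\mathcal{W}|\le k-a\le\eps k$.

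\textbf{Step 2 (a dense part away from the core).} Set $Z:=A\cup B\cup V(\mathcal{W})$. By maximality of $\mathcal{W}$, every vertex outside $Z$ has at most one neighbour in $B\setminus V(\mathcal{W})$. Its neighbours in $A$ and in $V(\mathcal{W})$ are also limited: if it had two neighbours in $A\cup B$ it would join the cluster, contradicting maximality of $\sum e(H)$ unless those edges already lie in $H$. Now count as in Lemma~\ref{lemma:many edges outside clusters}. By minimality of $n$, $G[Z\cup V(H)]$ has at most roughly $(k-1)(|Z\cup V(H)|-k+1)$ edges. Also, $H$ has few vertices outside $A\cup B$ relative to the edges it absorbs, using the depth bound of Lemma~\ref{lem:cluster properties}\,\ref{prop:cluster:depth}. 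Hence the graph $G'$ obtained from $G$ by deleting $E(H)$ and the vertices of $Z$ has average degree nearly $2k$ on the remaining vertices, or else $V(G)\setminus V(H)$ is tiny. In the latter case, Lemma~\ref{lem:few uncovered vertices} together with the edge count forces $G$ to be essentially $H$ with $|A|<k$, so $e(G)\le(a+O(\eps k))n$. This contradicts $e(G)>(k-1)(n-k+1)$ unless $a$ is very close to $k$, which is handled by Step 1. In the former case, Theorem~\ref{thm:EG} gives a long cycle $C$ in $G'$. By taking a subgraph of minimum degree about $k$ and applying Theorem~\ref{thm:EG} or Theorem~\ref{thm:AP} there, we can arrange that $C$ has length between $\eps^{1/2}k$ and $O(k)$, with controlled flexibility (Claim~\ref{clm:goodcycle}).

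\textbf{Step 3 (connecting, the main obstacle).} Use 2-connectivity and property \textbf{iii)} of Lemma~\ref{lemma:minimum degree} (no separator of size below $k$) to find two disjoint paths from $C$ to $A$. These paths must be short, of length $O(\eps k)$; the aim is to reuse the depth argument of Lemma~\ref{lem:cluster properties}\,\ref{prop:cluster:depth}, together with the fact that vertices outside the cluster have at most one neighbour into it. Going round either arc of $C$ and closing through $G[A,B]$ with Lemma~\ref{lemma:path lengths in almost complete pairs} (avoiding the $O(1)$ used vertices) gives cycles of length $t+2\ell$ for every $2\le\ell\le a-1$, where $t\in[\eps^{1/2}k,O(k)]$ is fixed. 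These lengths cover, with one fixed parity, a full interval $[t,t+2a]$. This gives harmonic mass at least $\tfrac12\log\bigl((t+2a)/\max(t,2a)\bigr)$ of new lengths, which is $\Omega(1)\gg\eps$ whenever $t$ is either odd or pushes beyond $2a$. The case $t$ even and small is excluded by choosing $t\ge\eps^{1/2}k$. The hardest part is guaranteeing that the connecting paths are short and disjoint from the used part of $A\cup B$. If this fails, I would instead apply Lemma~\ref{lem:cluster properties}\,\ref{prop:cluster:vxdisjoint}/\ref{prop:cluster:nocycle}-style arguments to produce a second cluster edge-disjoint from $H$ inside the dense part, contradicting $|\cH|=1$ and the maximality of $\cH$.
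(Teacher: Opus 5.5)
Your Step~2 has a genuine gap, and it sits exactly where the lemma's difficulty lies. Because $\cH=\{H\}$ maximises $e(H)$, every edge of $G[V(H)]$ already lies in $H$. Also, every vertex outside $V(H)$ has at most one neighbour in $V(H)$, so by $\delta(G)\geq k$ an uncovered vertex has at least $k-1$ uncovered neighbours. Lemma~\ref{lem:few uncovered vertices} leaves at most $2000/\eps<k$ uncovered vertices, so in fact $V(H)=V(G)$ and $H=G$. Consequences:
\begin{itemize}
\item Your graph $G'$ obtained by deleting $E(H)$ is empty, so your ``former case'' never occurs.
\item The appeal to minimality of $n$ for $G[Z\cup V(H)]=G$ gives nothing.
\item Everything therefore rests on your ``latter case'' inference, that $G$ being essentially $H$ with $|A|<k$ forces $e(G)\leq (a+O(\eps k))n$. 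That inference is unjustified. Definition~\ref{def:cluster} puts no restriction on edges among the outer layers; for example, $X^{(1)}\setminus X^{(0)}$ may span an arbitrarily dense graph. Lemma~\ref{lem:cluster properties}\,\ref{prop:cluster:depth} bounds only depth, not the number of vertices or edges away from $A\cup B$.
\item Your degree bounds fail for the same reason: a vertex of $V(H)\setminus Z$ may have many neighbours in $A$ via edges of $H$, and maximality says nothing about it.
\item Your Step~3 fallback also fails: any cluster found in the dense part shares edges with $H$, so it contradicts nothing.
\end{itemize}

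The missing idea is to locate the dense part \emph{inside} $H$ using a bipartite edge count rather than minimality. The paper does this as follows.
\begin{itemize}
\item Let $B'$ be all vertices outside $A$ with at least $3\eps k$ neighbours in $A$. Take a maximal family of disjoint cherries with leaves in $B'$ (not just $B$) and centres $Y$ outside $A\cup B'$. Set $A'=A\cup Y$ and $\Gamma=G-(A'\cup B')$.
\item By the definition of $B'$, $|Y|\leq \eps k$, and maximality of the cherries, each vertex of $\Gamma$ has at most $8\eps k$ neighbours in $A'\cup B'$.
\item $G[A'\cup B']$ has diameter at most $8$. If it were non-bipartite it would contain an odd cycle of length at most $17$, adding $1/17\gg\eps$ to $s(G)$. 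Hence it is bipartite with classes $A',B'$, where $|A'|\leq k$.
\item This gives $e(G[A'\cup B'])\leq (k-1)(|A'\cup B'|-k+1)$, or $e(G[A'\cup B'])\leq k(|A'\cup B'|-k)$ when $|A'|=k$. You must treat that case explicitly: all even lengths up to $2k$ are then present, so \textbf{b)} holds and $e(G)\geq k(n-k)$. Together these yield $d(\Gamma)\geq (1-9\eps)2k$.
\item Theorems~\ref{thm:EG} and~\ref{thm:AP} give no upper bound on cycle length, so they cannot produce a cycle of length $O(k)$. Instead, applying Theorem~\ref{thm-expander} with Lemmas~\ref{lem:smalldensenoalmostcomplete} and~\ref{lemma:dense patches in expanders} to $\Gamma$ yields an almost-complete pair, and hence a cycle $S$ with length in $[1.5k,2k]$.
\end{itemize}
Your Step~3 obstacle then disappears. $S$ has at least two vertices in $V(H)$, so Lemma~\ref{lem:cluster properties}\,\ref{prop:cluster:depth} directly supplies two disjoint connecting paths to $A\cup B$ of total length at most $6\eps k$. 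No separator argument is needed.
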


\begin{proof} Suppose for contradiction that $\mathcal{H}$ has only one cluster, denoted by $H$, and let $(A, B)$ be the $(\eps, k)$-almost-complete pair in $H$. Let $B'=\{v\in V(G)\setminus A: |N(v)\cap A|\ge 3\eps k\}$, noting $B\subseteq B'$. Let $x_1y_1z_1, \dots, x_ty_tz_t$ be a maximal collection of disjoint cherries with leaves $x_i, z_i$ in $B'$ and with centers $y_i$ outside of $A\cup B'$ (see Figure~\ref{fig:path_through_cherries}). Let $Y=\{y_1, \dots, y_t\}$ and $A'=A\cup \{y_1, \dots, y_t\}$. Let $\Gamma=G-(A'\cup B')$.
\begin{claim}\label{claim:cycle through cherries}
    For every  $q =0, \dots, \min(1.1\eps k,t)$, there is a cycle of length $2(|A|+q)$.
\end{claim}
\noindent
\begin{minipage}[t]{0.56\textwidth}
\vspace{0pt}
\claimproofstart
The case $q=0$ follows from Corollary~\ref{cor:cycles in almost complete pairs}, so suppose $q\geq 1$. Since $2q\leq 2.2\eps k+2<3\eps k$, we may choose distinct vertices $a_i,b_i\in A$, $i\in[q]$, such that $a_ix_i,z_ib_i\in E(G)$. Using Lemma~\ref{lemma:path lengths in almost complete pairs} greedily, join $b_i$ to $a_{i+1}$, for each $i\in[q-1]$, by pairwise internally vertex-disjoint paths of length four in $G[A,B]$, avoiding all the other vertices already chosen. This is possible since throughout fewer than $k/10$ vertices are forbidden. Together with the cherries, these paths form an $a_1,b_q$-path $R_q$ of length $8q-4$. 
Let $F=(V(R_q)\cap(A\cup B))\setminus\{a_1,b_q\}$.
Then $|F|\leq 7q\leq k/10$ and $|A\setminus F|=|A|-3q+3$. Applying Lemma~\ref{lemma:path lengths in almost complete pairs} once more gives a path of length $2(|A|-3q+2)$  between $a_1$ and $b_q$ disjoint from $F$.  Joining this to $R_q$ gives a cycle of length $8q-4+2(|A|-3q+2)=2(|A|+q)$ as required. 
\claimproofend
\end{minipage}\hfill
\begin{minipage}[t]{0.41\textwidth}
\vspace{0pt}
\centering
\resizebox{\linewidth}{!}{
\begin{tikzpicture}[
    x=0.9cm,
    y=0.8cm,
    point/.style={
        circle,
        draw=black,
        fill=white,
        inner sep=0,
        minimum size=4.5pt
    },
    edge/.style={
        draw=black,
        line width=0.8pt
    },
    setbox/.style={
        rounded corners=4pt,
        line width=0.6pt
    },
    scale=1.2
]
    \draw[setbox] (-3.35,0.65) rectangle (1.35,1.75);
    \node[anchor=east] at (-3.45,1.2) {$B'$};

\def\shiftup{0.35cm}

\begin{scope}[shift={(0,\shiftup)}]
    \draw[setbox] (-3.05,-1.35) rectangle (1.05,-0.255);
    \node[anchor=east] at (-3.15,-0.6) {$A$};
\end{scope}

    \node[point,label=above:$y_1$] (y1) at (-2.5,2.25) {};
    \node[point,label=above:$y_2$] (y2) at (-1,2.25) {};
    \node[point,label=above:$y_3$] (y3) at (0.5,2.25) {};

    \node[point,label={[label distance=-3pt]left:$x_1$}] (x1) at (-2.75,1.2) {};
    \node[point,label={[label distance=-3pt]right:$z_1$}] (z1) at (-2.25,1.2) {};

    \node[point,label={[label distance=-3pt]left:$x_2$}] (x2) at (-1.25,1.2) {};
    \node[point,label={[label distance=-3pt]right:$z_2$}] (z2) at (-0.75,1.2) {};

    \node[point,label={[label distance=-3pt]left:$x_3$}] (x3) at (0.25,1.2) {};
    \node[point,label={[label distance=-3pt]right:$z_3$}] (z3) at (0.75,1.2) {};

\begin{scope}[shift={(0,\shiftup)}]
    \node[point,label={[label distance=2pt]below:$a_1$}] (a1) at (-2.75,-0.6) {};
    \node[point,label={below:$b_1$}] (b1) at (-2.25,-0.6) {};

    \node[point,label={[label distance=2pt]below:$a_2$}] (a2) at (-1.25,-0.6) {};
    \node[point,label={ below:$b_2$}] (b2) at (-0.75,-0.6) {};

    \node[point,label={[label distance=2pt]below:$a_3$}] (a3) at (0.25,-0.6) {};
    \node[point,label={below:$b_3$}] (b3) at (0.75,-0.6) {};
\end{scope}

    \coordinate (p11) at (-2,0.9) {};

\begin{scope}[shift={(0,\shiftup)}]
        \coordinate (p12) at (-1.75,-0.5) {};
    \end{scope}
    \coordinate (p13) at (-1.5,0.9) {};

    \coordinate (p21) at (-0.5,0.9) {};
\begin{scope}[shift={(0,\shiftup)}]
 
    \coordinate (p22) at (-0.25,-0.5) {};
\end{scope}
    \coordinate (p23) at (0,0.9) {};

    \draw[edge] (x1)--(y1)--(z1);
    \draw[edge] (x2)--(y2)--(z2);
    \draw[edge] (x3)--(y3)--(z3);

    \draw[edge] (x1)--(a1);
    \draw[edge] (z1)--(b1);
    \draw[edge] (x2)--(a2);
    \draw[edge] (z2)--(b2);
    \draw[edge] (x3)--(a3);
    \draw[edge] (z3)--(b3);

    \draw (b1)--(p11)--(p12)--(p13)--(a2);
    \draw (b2)--(p21)--(p22)--(p23)--(a3);
\end{tikzpicture}
}
\captionof{figure}{Constructing the path $R_q$ in the proof of Claim~\ref{claim:cycle through cherries}.}
\label{fig:path_through_cherries}
\end{minipage}

Now, if $|A'|\geq k+1$, then by Claim~\ref{claim:cycle through cherries} we have all the even cycle lengths in $[4,2k+2]$, contradicting $\sum_{\ell\in \cC(G)}\frac{1}{\ell}\le \sum_{\ell=2}^k\frac{1}{2\ell}$. If $|A'|=k$, then by Claim~\ref{claim:cycle through cherries} we have all the even cycle lengths in $[4,2k]$, showing that \textbf{a)} does not hold. Therefore, we can assume that $|A'|\le k$, and that if $|A'|= k$, then $e(G) \ge k(n-k)$. Since $|A'|\le k$, we have $|Y|=t\le \eps k$. 

We will now show that this implies $\Gamma$ has a cycle with a useful length, as follows.

\begin{claim} $\Gamma$ contains a cycle whose length is between $1.5k$ and $2k$.\label{clm:goodcycle}
\end{claim}
\claimproofstart[Proof of Claim~\ref{clm:goodcycle}.]
Note that vertices $v\in V(\Gamma)$ can have at most $3\eps k$ neighbours in $A$ (as $v\notin B'$), at most $|Y|\le \eps k$ neighbours in $Y$, and at most $4\eps k$ neighbours in $B'$ (otherwise two of these together with $v$ would give a new cherry disjoint from the others). Thus, $e_G(V(\Gamma), A'\cup B')\le 8\eps k |\Gamma|$ and $\delta(\Gamma)\geq (1-8\eps)k$.

Note that $G[A'\cup B']$ has diameter $\le 8$ --- vertices in $A$ have paths of length $4$ to each other by Lemma~\ref{lemma:path lengths in almost complete pairs}, while vertices in $B'$ and $A'$ have neighbours in $A$ and $B'$ respectively showing that all vertices are within distance $2$ of $A$.  
Now, note that if $G[A'\cup B']$ is non-bipartite, then  the shortest odd cycle $C$ must have length $\le 2\cdot \textrm{Diam}(G[A'\cup B'])+1\le 17$. When this occurs,
\begin{equation}\label{eq:usefultwice}
\sum_{\ell\in \cC(G)}\frac{1}{\ell}\geq \sum_{\ell=2}^{(1-\eps)k} \frac{1}{2\ell}+\frac{1}{17}
\geq \sum_{\ell=2}^{k} \frac{1}{2\ell}+\frac{1}{17}-(\eps k+1)\cdot \frac{1}{2(1-\eps )k}> \sum_{\ell=2}^k \frac{1}{2\ell}
\end{equation}
gives a contradiction. Thus, we have that $G[A'\cup B']$ is bipartite. We have that $|A'|\leq k$, and so, if $\Gamma=\emptyset$, then, as $e(G)>(k-1)(n-k+1)$, we have $|A'|=k$, and, as we deduced $e(G)\geq k(n-k)$ in this case we have that $G$ must be a copy of $K_{k,n-k}$, a contradiction. Thus, we can assume $\Gamma$ is non-empty.

We have that the parts of the bipartition of $G[A'\cup B']$ are $A', B'$ (vertices in $A$ are in the same part since they have length $4$ paths to each other, while vertices in $B'$ are in the opposite part since they have edges to $A$, and finally vertices in $A'$ are in the same part as $A$ since they have edges to $B'$).  Now, if  $|A'|\leq k-1$ then since $|B'|\geq |B|\geq (2-\eps)k$, we have $e(G[A'\cup B'])\leq (k-1)(|A'\cup B'|-k+1)$ and so 
\begin{align*}
e(\Gamma)&\ge e(G)-e(G[A'\cup B'])-e_G(V(\Gamma), A'\cup B')\\
&\ge (k-1)(|\Gamma|+|B'|+|A'|-k+1)-(k-1)(|A'|+|B'|-k+1)-8\eps k|\Gamma|\\
&= (k-1-8\eps k)|\Gamma|\geq (1-9\eps)k|\Gamma|,
\end{align*}
whereas if $|A'|=k$ then we have \textbf{b)} and so 
\begin{align*}
e(\Gamma)&\ge e(G)-e(G[A'\cup B'])-e_G(V(\Gamma), A'\cup B')\\
&\ge k(|\Gamma|+|B'|+|A'|-k)-k|B'|-8\eps k |\Gamma|\geq (1-9\eps)k|\Gamma|.
\end{align*}
Thus, in either case $d(\Gamma)\geq (1-9\eps)2k$.

Using Theorem~\ref{thm-expander}, Lemma~\ref{lem:smalldensenoalmostcomplete} and Lemma~\ref{lemma:dense patches in expanders}, $\Gamma$ contains an $(100\eps,k)$-almost-complete pair, and hence a cycle whose length is between $1.5k$ and $2k$.
\claimproofend

Using Claim~\ref{clm:goodcycle}, let $S$ be a cycle in $\Gamma$ with length between $1.5k$ and $2k$. 
By Lemma~\ref{lem:few uncovered vertices}, we have that $|V(G)\setminus V(H)|\le 2000/\eps<1.5k-2$ and hence $S$ has at least two vertices in $H$. Using Lemma~\ref{lem:cluster properties} \textbf{b)}, we can find vertex-disjoint paths $P_1, P_2$ of total length $\le 6\eps k$ from $S$ to $A\cup B$. By considering such paths with minimum total length, we additionally get $|P_i\cap (A\cup B)|= |P_i\cap S|=1$ for each $i$. We extend $P_1, P_2$ by at most one edge to obtain paths whose one endpoint is in $A$, and we concatenate them with the longer arc of $S$ between their endpoints. In this way we obtain a path, $P$ say, whose length $\ell$ is between $3k/4$ and $3k$ with internal vertices in $V(G)\setminus A$, such that $P$ starts and ends in $A$ and contains at most $2$ vertices of $B$. 

Finally, using Lemma~\ref{lemma:path lengths in almost complete pairs} with $F=V(P)\cap B$, we can close $P$ into a cycle with paths of any even length between $5k/4$ and  $2|A\backslash F|-2$. Beyond the cycle lengths contained in the almost-complete pair $(A, B)$, this gives us at least $k/4$ new cycle lengths somewhere in the interval $[2k, 5k]$. Thus, by a similar calculation as  \eqref{eq:usefultwice}, we have 
\[
\sum_{\ell\in \cC(G)}\frac{1}{\ell}\geq \sum_{\ell=2}^{(1-\eps)k} \frac{1}{2\ell}+\frac{k}{4}\cdot \frac{1}{5k}= \sum_{\ell=2}^{(1-\eps)k} \frac{1}{2\ell}+\frac{1}{20}> \sum_{\ell=2}^k \frac{1}{2\ell},
\]
a contradiction. Thus, $|\mathcal{H}|\neq 1$.
\end{proof}

\subsection{The case of many clusters}\label{sec:many_clusters}\label{sec:case:t>1}

In this section, we will show that the minimal counterexample $G$ to Theorem~\ref{thm:main} cannot have more than one cluster in the collection $\cH$. We begin by giving a high-level overview of the proof. From Lemma~\ref{lemma:minimum degree} \textbf{iii)}, we know that it is impossible to separate a single cluster from the rest of the graph by deleting fewer than $k$ vertices. Hence, by K\H{o}nig's theorem, for each $H\in \cH$, there is a matching $M_H$ of size $k$ between $H$ and the remaining vertices. 

The main idea of the proof will be to construct an auxiliary digraph $D$, whose vertices will be the clusters of $\cH$ and the remaining vertices of $G$ not included in any of the clusters, that is $V_0=V(G)\backslash \bigcup_{H\in \cH} V(H)$. From each edge in the matching $M_H$, we add an edge to $D$, directed from $H$ to either another cluster $H'$ where this edge ends, or to the vertex in $V_0$ where it ends. The resulting digraph has large average degree, and hence it contains an antidirected cycle. By examining this antidirected cycle closely, we will be able to turn it into a cluster cycle, in the sense of the Definition~\ref{def:cluster cycle}, which is impossible due to Lemma~\ref{lem:cluster properties} \textbf{c)}. However, we must do this carefully, in order to avoid unwanted vertex overlaps, and thus we perform a cleaning procedure on the matchings $M_H$ before defining $D$. Let us now give the details.

\begin{lemma}\label{lem:not_many_clusters}
Let $1/k_0\ll \eps \ll 1$. Let $n$ be the least integer for which there is an integer $k_0\leq k\leq n/2$ and an $n$-vertex graph $G$ with more than $(k-1)(n-k+1)$ edges for which either \textup{\textbf{a)}} $\sum_{\ell\in \cC(G)}\frac{1}{\ell}<\sum_{\ell=2}^k\frac{1}{2\ell}$ or \textup{\textbf{b)}} $\sum_{\ell\in \cC(G)}\frac{1}{\ell}= \sum_{\ell=2}^k\frac{1}{2\ell}$, $e(G)\geq k(n-k)$ and $G$ is not a copy of $K_{k,n-k}$. Let $G$ be such a graph and let $\mathcal{H}$ be a collection of edge-disjoint $(\eps,k)$-clusters in $G$ maximising $\sum_{H\in \cH}e(H)$ and, subject to that, minimising $|\mathcal H|$. Then, $|\cH|\leq 1$.
\end{lemma}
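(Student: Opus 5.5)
We argue by contradiction: suppose $|\cH|=t\geq 2$, and derive a cluster cycle, contradicting Lemma~\ref{lem:cluster properties}\,\ref{prop:cluster:nocycle}. By the minimality of $|\cH|$, no union of two clusters of $\cH$ is a cluster. Hence, by Lemma~\ref{lem:cluster properties}\,\ref{prop:cluster:vxdisjoint}, any two clusters of $\cH$ share at most $3$ vertices of their cores. We also know that every vertex of $V_0:=V(G)\setminus\bigcup_{H\in\cH}V(H)$ has at most one neighbour in each cluster, by maximality of $\sum e(H)$. By Lemma~\ref{lem:few uncovered vertices}, $|V_0|\le 2000t/\eps$.

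\textbf{Matchings out of each cluster.} Fix $H\in\cH$. Each $H$ has at least $(3-\eps)k\ge 2k+2$ vertices, and since $t\ge2$ and $|V_0|$ is small, $V(G)\setminus V(H)$ together with a separator is also large. So Lemma~\ref{lemma:minimum degree}\,\textbf{iii)}, combined with K\H{o}nig/Menger, gives a matching $M_H$ of $k$ edges from $V(H)$ to $V(G)\setminus V(H)$. A separator of size at most $k-1$ between $V(H)$ and the rest would otherwise give the forbidden partition; the one small case where the other side has fewer than $2k+2$ vertices is excluded because another cluster exists.

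We clean each $M_H$ as follows:
\begin{itemize}
\item Discard edges whose outer endpoint lies in another cluster $H'$ at a vertex already shared between clusters. There are few such vertices per pair, but we must bound them over all clusters. I would use Lemma~\ref{lem:cluster properties}\,\ref{prop:cluster:nocycle} applied to pairs: two clusters sharing two vertices already form a cluster cycle of length $2$ via single-vertex paths. Hence distinct clusters share at most one vertex, and the shared vertices form a forest structure, so the total loss is about $O(1)$ per cluster on average.
\item Also discard edges whose inner endpoint lies in another cluster.
\end{itemize}
After cleaning we keep at least $(1-o(1))k$ edges per cluster on average.

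\textbf{Auxiliary digraph and antidirected cycle.} Build the digraph $D$ on vertex set $\cH\cup V_0$. For each cleaned matching edge $xy$ of $M_H$, add an arc from $H$ to the cluster containing $y$, or to $y$ itself if $y\in V_0$. Every arc starts at a cluster, and each $v\in V_0$ receives at most one arc from each cluster. We then have $e(D)\ge(1-o(1))kt$ while $|D|\le t+2000t/\eps$, so the average degree is at least $\Omega(\eps k)$, which is large. A graph of this average degree contains a cycle in which every vertex of $V_0$ has degree $2$ between two different clusters. I would take a shortest cycle in the underlying multigraph, after deleting arcs so that each $V_0$ vertex is joined to distinct clusters. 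Parallel arcs between two clusters come from distinct matching edges, so they still give two vertex-disjoint connecting edges.

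Translating this cycle back gives clusters $H_1,\dots,H_s$ with $s\ge2$, joined by paths of length $1$ or $2$: either a direct edge, or through a vertex of $V_0$. The paths are vertex-disjoint because each comes from a matching edge of a different cluster, or passes through a distinct $V_0$ vertex. The endpoints of one $M_H$ are distinct within $H$, but an endpoint used by $M_H$ at $H'$ might coincide with one used by $M_{H'}$. Handling such collisions is exactly what the cleaning is for, and a collision can alternatively be shortcut to a shorter cycle. This produces a cluster cycle, contradicting Lemma~\ref{lem:cluster properties}\,\ref{prop:cluster:nocycle}.

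\textbf{Main obstacle.} The hard step is ensuring vertex-disjointness of the connecting paths. Matching edges from different clusters may share endpoints, and a $V_0$ vertex may be hit by matchings from several clusters. The plan is to pick the cycle in $D$ minimal and argue that any overlap yields a shorter cycle or a length-$2$ cluster cycle. If that is too delicate, I would instead orient each arc from its source cluster and look for an antidirected cycle, using that each cluster's out-arcs come from a single matching and so have distinct endpoints in $V(H)$.
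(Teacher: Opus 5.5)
There is a genuine gap, and it is the step you flag yourself: vertex-disjointness of the connecting paths. Up to the digraph your plan matches the paper's: the matchings $M_H$ from Lemma~\ref{lemma:minimum degree}\,\textbf{iii)} and K\H{o}nig, discarding edges whose inner endpoint lies in $V_{\geq 2}$, the digraph on $\cH\cup V_0$, and the contradiction with Lemma~\ref{lem:cluster properties}\,\ref{prop:cluster:nocycle}. Your primary plan for disjointness fails as stated, for two reasons.

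\textbf{Degenerate $2$-cycles.} Parallel arcs need not come from distinct edges of $G$. A single edge $uy$ with $u\in V(H)$ and $y\in V(H')$ can lie in both $M_H$ and $M_{H'}$. This gives opposite arcs $H\to H'$ and $H'\to H$, so a shortest cycle of the underlying multigraph may be a $2$-cycle that is not a cluster cycle.

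\textbf{Inner/outer collisions.} Suppose the cycle uses one in-arc and one out-arc at a cluster $H$. The outer endpoint of the in-arc can equal the inner endpoint $x$ of the out-arc; your cleaning does not exclude this, since $x\notin V_{\geq 2}$. Skipping $H$ by routing through $x$ does not produce a shorter cycle of $D$, so minimality in $D$ gives you nothing. Worse, a run of such collisions at consecutive clusters chains into one connecting path whose length need not be at most $4$.

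\textbf{Outer-endpoint discard.} Your loss bound for discarding edges whose outer endpoint lies in $V_{\geq 2}$ does not follow from the forest of shared vertices. A single vertex of $V_{\geq 2}$ can receive matching edges from many clusters, so you would need a further forest argument that includes these incidences. The paper avoids this discard altogether: it fixes one cluster $H(v)$ for each covered vertex $v$ and sends the arc there.

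\textbf{Your fallback.} Looking for an antidirected cycle is the paper's route, but both ingredients that make it work are missing.

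For existence, split $\cH=\cH_L\cup\cH_R$ at random and keep only arcs from $\cH_L$ to $\cH_R\cup V_0$. At least $e(D)/4\geq k|\cH|/8$ arcs survive on at most $2001|\cH|/\eps$ vertices, so the underlying graph has a cycle. Every such cycle is antidirected, with sources in $\cH_L$ and sinks in $\cH_R\cup V_0$. Since $D$ is simple (two arcs from $H$ to the same cluster would give a cluster cycle of length $2$), the cycle contains at least two source clusters.

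For disjointness, three facts are needed:
\begin{itemize}
\item Inner endpoints are distinct: at one source they come from a single matching, and at different sources they differ because they avoid $V_{\geq 2}$.
\item An inner endpoint $x$ of an arc leaving a source $L$ cannot be the outer endpoint of an arc entering a sink $R$. Indeed $x\notin V_{\geq 2}$ forces $R=H(x)=L\in\cH_L$, contradicting $R\in\cH_R$.
\item If the two arcs entering a sink share their outer endpoint $v$ (automatic when the sink is some $v\in V_0$), merge them into one path $u^-vu^+$ of length $2$ and drop that sink. Otherwise keep the sink cluster, joined by two paths of length $1$.
\end{itemize}
Your stated justification for the fallback, that out-arcs of one cluster have distinct endpoints in $V(H)$, covers only the first fact. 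It does not address the inner/outer collisions, which are exactly what the source/sink separation rules out.
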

\begin{proof} Suppose, for contradiction, that $|\cH|\geq 2$. Without loss of generality, we can assume that no union of two $(\eps,k)$-clusters in $\cH$ is itself a $(\eps,k)$-cluster (otherwise uniting them will give a family of clusters with the same $\sum_{H\in \cH}e(H)$ and smaller $|\mathcal H|$). We start with the following claim.

\begin{claim}\label{clm:matchings} 
For each $H\in \cH$, there is a matching $M_H$ of size $k$ in $G$ between $V(H)$ and $V(G)\backslash V(H)$.
\end{claim}
\claimproofstart[Proof of Claim~\ref{clm:matchings}.] Let $H\in \cH$. Suppose, for contradiction, that no such matching $M_H$ exists. Let $G'$ be the bipartite subgraph of $G$ formed by the edges of $G$ between $V(H)$ and $V(G)\backslash V(H)$.
By K\"onig's theorem, the size of a largest matching in $G'$ is equal to the size of a smallest vertex cover, and therefore there is a vertex cover $S$ of $G'$ with size at most $k-1$.

Now, as $H$ is a cluster, and so contains an $(\eps,k)$-almost-complete pair, we have $|V(H)|\geq |A(H)|+|B(H)|\geq (1-\eps)k+2k\geq 2k+2$. As $|\cH|\geq 2$, we can choose a cluster $H'\in \cH\setminus \{H\}$. If $|V(H)\cap V(H')|\geq 2$, then, picking two shared vertices of $H$ and $H'$ as paths of length 0, with $H$ and $H'$ we have a cluster cycle, contradicting Lemma~\ref{lem:cluster properties}\,\ref{prop:cluster:nocycle}. Therefore, we have $|V(H)\cap V(H')|\leq 1$, so that $|V(G)\setminus V(H)|\geq |A(H')|+|B(H')|-1\geq (1-\eps)k+2k-1\geq 2k+2$. Letting $A=V(H)\setminus S$ and $B=V(G)\setminus (V(H)\cup S)$, we thus have a partition $V(G)=A\cup S\cup B$ such that $|A\cup S|,|B\cup S|\geq 2k+2$, $|S|\leq k-1$ and $S$ separates $A$ and $B$ in $G$. This contradicts Lemma~\ref{lemma:minimum degree} \textbf{iii)}, completing the proof of the claim.
\claimproofend

Let $V_0=V(G)\backslash\big(\bigcup_{H\in \cH} V(H)\big)$ and let $V_{\ge 2}$ be the set of vertices that are in at least two different clusters of $\mathcal H$. For each $v\in V(G)\setminus V_0$, fix some cluster $H(v)$ containing $v$. Note that for any cluster $H\in \mathcal H$ and $v\in V(H)\setminus V_{\ge 2}$, we have $H(v)=H$ (since otherwise $H$ and $H(v)$ would be distinct clusters containing $v$, contradicting $v\notin V_{\ge 2}$). 

Further, for each $H\in \cH$, let $M_H$ be a matching of size $k$ between $V(H)$ and $V(G)\setminus V(H)$, which exists due to Claim~\ref{clm:matchings}. We think of the edges of $M_H$ as being directed from $H$ to $V(G)\setminus V(H)$ (so if we have some edge $uv\in E(G)$ with $u\in H$, $v\in H'$ which is in both the matchings $M_H$ and $M_{H'}$, then it is directed $uv$ in $M_H$ and directed $vu$ in $M_{H'}$). For each $H$, let $M_H'$ be the submatching of $M_H$ consisting of edges $uv$ with $u\not\in V_{\ge 2}$.

Finally, let us define the auxiliary digraph $D$ on the vertex set $V(D)=\cH \cup V_0$. For every edge $uv\in \bigcup_{H\in \mathcal H} M_H'$  we define an edge $f_{uv}$ in $D$ as follows:
\begin{itemize}
    \item If $uv\in M_H'$ and $v\in V_0$, then $f_{uv}$ goes from $H$ to $v$.
    \item If $uv\in M_H'$ and $v\not\in V_0$, then $f_{uv}$ goes from $H$ to $H(v)$.
\end{itemize}
Note that, since each such directed $uv$ is in only one of the matchings $M_H'$, $H\in \cH$, the edge $f_{uv}$ is well defined. We now discuss the properties of $D$.

\begin{claim}\label{claim:proper_digraph}
The digraph $D$ is simple, i.e. it has no multi-edges.
\end{claim}
\claimproofstart
Suppose to the contrary that there are two different edges $f_{u_1v_1}$ and $f_{u_2v_2}$ in $D$ with the same start and endvertices. By definition there are no edges starting in $V_0$, so the start of $f_{u_1v_1}$ and $f_{u_2v_2}$ must be some $H\in \mathcal H$. We must have $u_1, u_2\in V(H)$ and $u_1v_1, u_2v_2\in M_H'$. Since $M_H'$ is a matching, we have that $u_1, u_2, v_1, v_2$ are distinct. 
If $v_1$ is in $V_0$, then $f_{u_1v_1}$ is an edge directed to $v_1$, and $f_{u_2v_2}$ cannot be directed to $v_1$ as $v_2\neq v_1$. This, and the symmetric argument, means that $v_1,v_2\notin V_0$, and so we must have $H(v_1)=H(v_2)$ as the endvertex of $f_{u_1v_1}$ and $f_{u_2v_2}$. Let $H=H(u_1)=H(u_2)$ and $H'=H(v_1)=H(v_2)$. Then, $H$ and $H'$ with the paths of length 1 given by $u_1v_1$ and $u_2v_2$ form a cluster cycle of length two, which is a contradiction by Lemma~\ref{lem:cluster properties}\,\ref{prop:cluster:nocycle}. Thus, there are no two edges $f_{u_1v_1}$ and $f_{u_2v_2}$ in $D$ with the same start and endvertices, i.e.\ $D$ is a simple digraph.
\claimproofend

\begin{claim}\label{claim:edge_count_D}
We have $e(D)\ge (k-2)|\mathcal H|$.
\end{claim}
\claimproofstart
Since $D$ is a simple digraph whose edges are in one-to-one correspondence with the edges of $\bigcup_{H\in \mathcal H} M_H'$, it suffices to show that $\sum_{H\in \mathcal H} e(M_H')\ge (k-2)|\mathcal H|$.

As $M_H'$ consists of directed edges $uv$ with $u\not\in V_{\ge 2}$, in the directed graph $\bigcup_{H\in \mathcal H} M_{H}'$, every vertex $u$ has outdegree $d^+(u)\le 1$ (if $uv$ and $uw$ are edges from $u$ in $\bigcup_{H\in \mathcal H} M_{H}'$ then these must be in different matchings $M_H'$ since each one is a matching --- say $uv \in M_{H_1}'$, $uw \in M_{H_2}'$. Since, for each $i\in[2]$, the matching $M_{H_i}$ is directed from $H_i$, this tells us that $u\in V(H_1)\cap V(H_2)$. Hence, $u\in V_{\ge 2}$, contradicting the choice of $M_{H_1}'$ or $M_{H_2}'$). For the same reason each directed edge $uv$ is in at most one matching $M_H'$, $H\in \mathcal H$. Hence, for each vertex of $V_\geq 2$ in $H$, at most one edge is lost in $M_H'$.

Let us now bound the number of pairs $(v, H)$ where $v\in V_{\geq 2}\cap H$. Let $F$ be an auxiliary bipartite graph with parts $\mathcal H$ and $V_{\ge 2}$ with $Hv$ an edge whenever $v\in V(H)$. Note that $e(F)=\sum_{H\in \cH}|V(H)\cap V_{\geq 2}|$. We claim that $F$ is a forest. Indeed, if $F$ had a cycle $H_1v_1H_2v_2\dots H_{\ell} v_{\ell} H_1$ then we have a cluster cycle with clusters $H_1, \dots, H_{\ell}$ and length $0$ paths $P_1=\{v_1\}, \dots, P_{\ell}=\{v_{\ell}\}$, contradicting Lemma~\ref{lem:cluster properties}\,\ref{prop:cluster:nocycle}. Thus $e(F)\le |V_{\ge 2}|+|\cH|-1$. 
Since all vertices in $V_{\ge 2}$ have degree $\ge 2$ in $F$, we get $2|V_{\ge 2}|\le e(F)$. Combining the two inequalities gives $e(F)\le  2|\cH|-2$.
So, we have 
\begin{equation*}
\sum_{H\in \mathcal H} e(M_H')\ge \sum_{H\in \mathcal H} (e(M_H)-|V(H)\cap V_{\ge 2}|)=\Big(\sum_{H\in \mathcal H} e(M_H)\Big)- e(F)\ge k|\mathcal H|- 2|\mathcal H|=(k-2)|\mathcal H|.\qedhere
\end{equation*}  
\claimproofend
 
\begin{claim}\label{claim:antidirected_cycle}
The digraph $D$ contains an antidirected cycle.  
\end{claim}
\claimproofstart
We find a partition $\cH=\cH_L\cup \cH_R$ and consider the spanning subdigraph $D'$ of $D$ with edges only those directed from $\cH_L$ to $\cH_R\cup V_0$. To do this, form $\cH_L\subset \cH$ by including each cluster in $\cH$ independently at random with probability $1/2$, and let $\cH_R=\cH\setminus \cH_L$. Let $D'\subset D$ have vertex set $V(D)=\cH\cup V_0$ and have all of the edges of $E(D)$ directed from $\cH_L$ to $V_0\cup \cH_R$. For each $e\in E(D)$, we have $\mathbb{P}(e\in E(D'))\geq (1/2)^2$, so we can take the partition $\cH=\cH_L\cup \cH_R$ so that $e(D')\geq e(D)/4\geq (k-2)|\cH|/4\ge k|\cH|/8$.

By Lemma~\ref{lem:few uncovered vertices} and $\eps\le 1$, we have
\begin{equation}\label{eq:Dsizebound}
|D'|=|D|=|\cH|+\bigg|V(G)\backslash\bigcup_{H\in \cH} V(H)\bigg|\le |\cH|+\frac{2000|\cH|}{\eps}\leq \frac{2001 |\cH|}{\eps}.
\end{equation}
Thus, $D'$ has average out-degree at least $(k|\cH|/8)\cdot (2001|\cH|/\eps)^{-1}\geq \eps k/10^5$, which is large as $1/k\ll \eps$. Therefore, the underlying undirected graph for $D'$ contains a cycle, and hence $D'$ contains an antidirected cycle. 
\claimproofend

Take the cycle from Claim~\ref{claim:antidirected_cycle} and let its vertices be $L_1R_1L_2R_2\dots L_{\ell} R_{\ell} L_1$ (with $L_1, \dots, L_{\ell}\in \cH_L$ and $R_1, \dots, R_{\ell}\in \cH_R\cup V_0$). 
For each $i\in [\ell]$, we will now define short paths, which are either $P_i$, or $P_i^-$ and $P_i^+$. Essentially, when there is a short path in $G$ from $L_i$ to $L_{i+1}$ through $R_i$ then this path will be $P_i$ and, where there is not, $P_i^-$ will be a short path in $G$ from  $L_i$ to $R_i$ and  $P_i^+$ will be a short path in $G$ from $R_i$ to $L_{i+1}$. 
More precisely, for each $i\in [\ell]$, let $u_i^-, v_i^-, v_i^+$ and $u_i^+$ be such that $L_iR_i=f_{u_i^-, v_i^-}\in E(D')$ and $L_{i+1}R_i=f_{u_i^+, v_i^+}\in E(D')$ and do the following.
\begin{enumerate}[label = (\roman{enumi})]
    \item If $R_i\in V_0$, then note that $v_i^-=v_i^+=R_i$ and let
    $P_i=u_i^-R_iu_i^+$.\label{(i)}

    \item If $R_i\in\mathcal H_R$ and $v_i^-=v_i^+$, then let
    $P_i=u_i^-v_i^-u_i^+$.\label{(ii)}

    \item If $R_i\in\mathcal H_R$ and $v_i^-\neq v_i^+$, then let
    $P_i^-=u_i^-v_i^-$ and $P_i^+=v_i^+u_i^+$.\label{(iii)}
\end{enumerate}
Now we turn $L_1R_1L_2R_2\dots L_{\ell} R_{\ell} L_1$ into a cluster cycle: for each $i\in [\ell]$, if \ref{(i)} or \ref{(ii)} occurs then replace $R_i$ in the sequence by $P_i$. If \ref{(iii)} occurs, then replace $R_i$ in the sequence by $P_i^- R_i P_i^+$. Labelling appropriately, the result is a sequence $H_1Q_1H_2Q_2\dots H_s Q_sH_1$ alternating between clusters and paths of length $\le 2$. To show that it is a cluster cycle we need to show that its paths are vertex disjoint.  To do this, for each $i,j\in [\ell]$ and $\sigma,\tau\in \{-,+\}$, we now prove three statements that certain pairs of vertices are distinct.
\begin{itemize}
    \item The vertices $u_i^\sigma$ and $u_j^\tau$ are distinct if $(i,\sigma)\neq (j,\tau)$. Indeed, if $u_i^\sigma v_i^\sigma\in M'_H$ and $u_j^\tau v_j^\tau\in M_{H'}'$ for some distinct $H,H'\in \cH$, then, using that $u_i^\sigma\in V(H)$, $u_j^\tau\in V(H')$ and $u_i^\sigma,u_j^\tau\notin V_{\ge 2}$, we have that $u_i^\sigma\neq u_j^\tau$.
    On the other hand, if $u_i^\sigma v_i^\sigma,u_j^\tau v_j^\tau\in M'_H$ for some $H\in \cH$, then $u_i^\sigma v_i^\sigma,u_j^\tau v_j^\tau$ correspond to edges in $D'$ directed from $H$ (which must be in $\cH_L$). Relabelling if necessary, we thus have $\sigma=-$, $j=i-1$ and $\tau=+$ (working $\mathrm{mod}\,\ell$ in the indices), and $H=L_i$. As $M'_{L_i}$ is a matching, we thus have
    $u_i^-\neq u_{i-1}^+$ in this case as well.

    \item  If $v_i^\sigma$ and $v_j^\tau$ occur in different paths then they are distinct. Indeed, suppose then that  $v_i^\sigma$ and $v_j^\tau$ occur in different paths and that $v_i^\sigma=v_j^\tau=v$ for some vertex $v$.     If $v\in V_0$, then, as $v$ is the in-vertex of at most two edges in the antidirected cycle $L_1R_1L_2R_2\dots L_{\ell} R_{\ell} L_1$, we must have, relabelling if necessary, that $j=i$, $\sigma=+$ and $\tau=-$.
    If $v\notin V_0$, then as $H(v)$ is the in-vertex of at most two edges in the same cycle, we reach the same conclusion. However, as $v_i^-=v_j^+=v_i^+$, the two corresponding edges
    are combined into the single path $P_i$ at \ref{(i)} or \ref{(ii)}, contradicting that  $v_i^\sigma$ and $v_j^\tau$ occur in different paths.

\item The vertices $u_i^\sigma$ and $v_j^\tau$ are distinct.  
    Suppose, for contradiction, that there is some vertex $x$ with 
    $u_j^\sigma=v_i^\tau=x$.
 Let $L$ be
    the cluster whose matching $M'_L$ contains the edge directed from 
    $u_j^\sigma$; thus $L\in\cH_L$ and $x\in V(L)$. Since this edge lies
    in $M'_L$, we have $x\notin V_{\geq 2}$, so $L$ is the unique cluster
    of $\cH$ containing $x$. As $x\in V(L)$, we have $x\notin V_0$.
Therefore, the definition of the auxiliary edge
 containing $v_i^\tau=x$ gives
    $R_i=H(x)=L.$
  But $R_i\in\cH_R$, whereas $L\in\cH_L$, contradicting $\cH_L\cap\cH_R=\emptyset$.
  \end{itemize}
Each path constructed in \ref{(i)}--\ref{(iii)} has vertex set contained in the 
vertices  $\{u_i^-, v_i^-,u_i^+,v_i^+\}$. The three observations above therefore
show that distinct paths are vertex disjoint. Hence the resulting cyclic sequence is a cluster
cycle, contradicting Lemma~\ref{lem:cluster properties}.
\end{proof}

\subsection{Proof of Theorem~\ref{thm:main}}\label{sec:lastproofbit}
Finally, we can put all our work together to prove Theorem~\ref{thm:main}.
\begin{proof}[Proof of Theorem~\ref{thm:main}.]
Fix $1/k_0\ll \eps\ll 1$ for which Lemmas~\ref{lem:few uncovered vertices}, ~\ref{lemma:t=1}, and~\ref{lem:not_many_clusters}  hold. We will prove the theorem for all  $k\ge k_0$. Supposing for contradiction that the theorem is false, choose $n$ minimum such that there exists an integer $k\ge k_0$ with $n\ge 2k$ and an $n$-vertex graph $G$ with more than $(k-1)(n-k+1)$ edges, such that either \textbf{a)} $\sum_{\ell\in \cC(G)} \frac{1}{\ell}< \sum_{\ell=2}^k \frac{1}{2\ell}$ or \textbf{b)} $e(G)\geq k(n-k)$,  $G$ is not a complete bipartite graph with vertex classes of size $k$ and $n-k$ and $\sum_{\ell\in \cC(G)} \frac{1}{\ell}= \sum_{\ell=2}^k \frac{1}{2\ell}$.

 Let $\mathcal H$ be a collection of edge-disjoint $(\eps, k)$-clusters in $G$ which maximises $\sum_{H\in \cH}e(H)$ and, subject to that, minimises $|\mathcal H|$. Lemma~\ref{lem:few uncovered vertices} applies which gives us $|V(G)\setminus V(\bigcup_{H\in \cH} H)|\leq \frac{2000|\mathcal{H}|}{\eps}$. In particular, this tells us that $|\mathcal{H}|\ne 0$ since otherwise the left hand side would be   $|V(G)|=n\ge 2k_0>0$ and the right hand side would be $0$.
Lemmas~\ref{lemma:t=1},~\ref{lem:not_many_clusters} apply which give us $|\mathcal H|\ne 1$ and $|\mathcal H|\le  1$ respectively. We have a contradiction since there are no more possible values for  $|\mathcal H|$.
\end{proof}

\subsubsection*{Acknowledgement}
The authors would like to thank the Forschungsinstitut für Mathematik and the London School of Economics for their generous  support during the months of May and June 2025 when this research took place.


\appendix

\section{Proof of Theorem~\ref{mainthm-new}}\label{appendix}
In this appendix, we confirm the modifications to the work of Liu and Montgomery~\cite{liu2023solution} required to prove Theorem~\ref{mainthm-new}. Readers new to the techniques may wish to read the detailed proof sketch which can be found in~\cite[Section~2.4]{liu2023solution} before reading this.

In Section~\ref{quotedresults}, we will record several results from~\cite{liu2023solution}, noting where we can additionally remove the bipartiteness condition without changing the proof. In Section~\ref{alteredproof}, we then prove Theorem~\ref{mainthm-new}, which we repeat below for convenience.

\mainthm*

The basic tool used in~\cite{liu2023solution} to adjust the length of a path is an \textit{adjuster}, defined as follows.

\begin{definition}\label{defn-adj}
A \emph{$(D,m,k)$-adjuster} $\cA=(v_1,F_1,v_2,F_2,A)$ in a graph $G$ consists of vertices $v_1,v_2\in V(G)$, graphs $F_1,F_2\subseteq G$ and a vertex set $A\subseteq V(G)$ such that the following hold for some $\ell\in\N$.
  \stepcounter{propcounter}
  \begin{enumerate}[label = {\bfseries \Alph{propcounter}\arabic{enumi}}]
  \item $A$,  $V(F_1)$ and $V(F_2)$ are pairwise disjoint.\label{d-a-1}
  \item $F_1, F_2$ have $D$ vertices each, and each $v\in V(F_i)$ has distance at most $m$ from $v_i$ in $F_i$, for $i=1, 2$. \label{d-a-2}
  \item $|A|\leq 10mk$.\label{d-a-3}
  \item For each $i\in \{0,1,\ldots,k\}$, there is a $v_1,v_2$-path in $G[A\cup \{v_1,v_2\}]$ with length $\ell+2i$.\label{d-a-4}
\end{enumerate}
We call the smallest such $\ell$ for which these properties hold the \emph{length of the adjuster} and denote it $\ell(\cA)$. Note that it immediately follows that $\ell(\cA)\leq |A|+1\leq 10mk+1$. We call a $(D,m,1)$-adjuster a \emph{{simple adjuster}}. We refer to the subgraphs $F_1$ and $F_2$ of an adjuster $\cA=(v_1,F_1,v_2,F_2,A)$ as the \emph{ends} of the adjuster, and let $V(\cA)=V(F_1)\cup V(F_2)\cup A$.
\end{definition}

\noindent
\begin{minipage}[t]{0.55\textwidth}
\vspace{0pt}
\setlength{\parindent}{17pt}
\indent
In simple terms, an adjuster is a gadget in which we can find paths of many different lengths between the two fixed end-vertices $v_1$ and $v_2$ (see Figure~\ref{fig:adjuster}). The purpose of subgraphs $F_1$ and $F_2$ is to make it easier to chain several adjusters together: the strategy will be to connect the subgraphs $F_i$ using sublinear expansion (for example, using a statement like Lemma~\ref{lem-diameter}) and then route the paths to the vertices $v_i$.
\end{minipage}\hfill
\begin{minipage}[t]{0.37\textwidth}
\vspace{0pt}
\centering

\begin{tikzpicture}[
    adjuster vertex/.style={circle, fill=black, inner sep=0, minimum size=4.5pt},
    adjuster edge/.style={draw=black, line width=0.7pt},
    x=1.15cm,
    y=1.15cm
]
\coordinate (v1) at (-1.35,0);
\coordinate (v2) at (1.35,0);
\draw[rounded corners=7pt, fill=black!3, line width=0.6pt]
    (-1.08,-0.9) rectangle (1.08,1.42);

\coordinate (p01) at (-0.38,-0.62);
\coordinate (p02) at (0.38,-0.62);

\coordinate (s) at (-0.9,0.0);
\coordinate (p11) at (-0.48,0.0);
\coordinate (p12) at (0,0.0);
\coordinate (p13) at (0.48,0);
\coordinate (p14) at (0.9,0);

\draw[adjuster edge] (v1)--(p01)--(p02)--(p14)--(v2);

\draw[adjuster edge] (v1)--(s);
\draw[adjuster edge] (s)--(p11)--(p12)--(p13)--(p14)--(v2);

\coordinate (p21) at (-0.72,0.56);
\coordinate (p22) at (-0.38,0.88);
\coordinate (p23) at (0,0.96);
\coordinate (p24) at (0.38,0.88);
\coordinate (p25) at (0.72,0.70);
\coordinate (p26) at (0.98,0.4);
\draw[adjuster edge] (s)--(p21)--(p22)--(p23)--(p24)--(p25)--(p26)--(v2);

\foreach \v in {v1,v2,s,p01,p02,p11,p12,p13,p14,p21,p22,p23,p24,p25,p26} {
    \node[adjuster vertex] at (\v) {};
}

\coordinate (left upper) at (-2,0.62);
\coordinate (left lower) at (-2,-0.62);
\coordinate (right upper) at (2,0.62);
\coordinate (right lower) at (2,-0.62);
\draw[adjuster edge] (v1)--(left upper)--(left lower)--cycle;
\draw[adjuster edge] (v2)--(right upper)--(right lower)--cycle;

\node at (-1.75,0) {$F_1$};
\node at (1.75,0) {$F_2$};
\node at (0.8,-0.7) {$A$};
\node[above] at (p23) {$\ell+4$};
\node[above=3pt] at (0, 0) {$\ell+2$};
\node[above=-2pt] at (0, -0.6) {$\ell$};
\node[below=2pt] at (v1) {$v_1$};
\node[below=2pt] at (v2) {$v_2$};
\end{tikzpicture}
\vspace{-0.3cm}
\captionof{figure}{An adjuster.}
\label{fig:adjuster}
\end{minipage}
\vspace{0.1cm}

Having understood why adjusters are useful, the natural question is: how does one build adjusters? Assuming that the property \textbf{C1} does not hold, the first step is to build simple adjusters, which support precisely two distinct path lengths (we show how to do this in Lemma~\ref{lem-robust-adj-new}). Once this is done, if we chain $k$ disjoint $(D, m, 1)$-adjusters, then we will be able to change the length of the resulting path by any $x\in \{2, 4, \dots, 2k\}$, by choosing how many simple adjusters we alter (this is done in Lemma~\ref{lem-adj-to-adj-path-new}).


\subsection{Quoted results}\label{quotedresults}

We will use the following simple and well-known result.

\begin{proposition}\label{bipsub} Within any graph $G$ there is a bipartite subgraph $H$ with $d(H)\geq d(G)/2$.
\end{proposition}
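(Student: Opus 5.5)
The plan is the standard max-cut argument, run either randomly or greedily; I will use the greedy local-improvement version. Let $G$ be any graph; we may assume $e(G)>0$, since otherwise the empty bipartite subgraph on $V(G)$ trivially works. I take $H$ to be a spanning subgraph, so $|H|=|G|$ and the inequality $d(H)\geq d(G)/2$ reduces to $e(H)\geq e(G)/2$.

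First step: choose a partition $V(G)=X\cup Y$ which maximises the number $e_G(X,Y)$ of edges of $G$ between $X$ and $Y$. Such a partition exists because there are only finitely many partitions. Let $H$ be the spanning subgraph of $G$ with edge set $E_G(X,Y)$; it is bipartite with parts $X$ and $Y$.

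Second step: show that every vertex keeps at least half of its edges. Suppose some $v\in X$ had $d_G(v,Y)<d_G(v,X)$. Moving $v$ to $Y$ would change $e_G(X,Y)$ by $d_G(v,X)-d_G(v,Y)>0$, contradicting maximality. Hence $d_H(v)=d_G(v,Y)\geq d_G(v)/2$ for every $v\in X$, and the symmetric argument gives the same bound for every $v\in Y$.

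Third step: sum over all vertices, which gives $2e(H)=\sum_v d_H(v)\geq \sum_v d_G(v)/2=e(G)$. Dividing by $|H|=|G|$ yields $d(H)\geq d(G)/2$.

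There is no real obstacle here. The only point needing care is to keep $H$ spanning, so that the averages are taken over the same vertex set. An alternative would be a uniformly random bipartition, in which each edge crosses with probability $1/2$, and then fix an outcome achieving at least the expectation $e(G)/2$.
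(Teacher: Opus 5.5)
Your max-cut argument is correct and complete, including the care to keep $H$ spanning so that both averages are taken over the same vertex set. The paper gives no proof of this proposition and simply quotes it as a simple, well-known fact; your local-improvement argument, or the random-bipartition variant you mention, is the standard justification behind it.
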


Combining this with Theorem~\ref{thm-expander}, we get the following corollary, which we use in the proof of Theorem~\ref{mainthm-new}.

\begin{corollary}\label{cor-expander}
For every sufficiently small $\eps_1>0$, the following holds for every $\eps_2>0$ and $d\in \N$. Every graph $G$ with $d(G)\geq 8d$ has a bipartite $(\eps_1,\eps_2d)$-expander subgraph $H$ with $\delta(H)\geq d$.\hfill\qed
\end{corollary}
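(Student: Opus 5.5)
The plan is to chain Proposition~\ref{bipsub} with Theorem~\ref{thm-expander}, each applied once. Since the paper marks the corollary with an immediate $\qed$, the only real work is the choice of parameters.

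First, let $G$ be a graph with $d(G)\geq 8d$. By Proposition~\ref{bipsub}, we take a bipartite subgraph $G_1\subseteq G$ with $d(G_1)\geq d(G)/2\geq 4d$.

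Next, we apply Theorem~\ref{thm-expander} to $G_1$, with some fixed $C>60$ (say $C=61$) and $\eps_1\leq 1/(20C)$; this is where "sufficiently small" enters. For the theorem's second parameter we use $\eps_2':=\min\{\eps_2 d/d(G_1),\,1/3\}<1/2$. The theorem then gives an $(\eps_1,\eps_2' d(G_1))$-expander $H\subseteq G_1$ with
\[
d(H)\geq (1-C\eps_1)d(G_1)\geq \tfrac{3}{4}\cdot 4d=3d
\qquad\text{and}\qquad
\delta(H)\geq d(H)/2\geq 3d/2\geq d.
\]
The graph $H$ is bipartite because it is a subgraph of $G_1$.

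The only subtle step is matching the expansion parameter. Write $k':=\eps_2' d(G_1)$; by the choice of $\eps_2'$ we have $k'\leq \eps_2 d$, but $k'$ may fall strictly below $\eps_2 d$ (when the minimum is $1/3$), so the required parameter could be larger than the one guaranteed. I would handle this with the same argument the paper uses in the proof of Lemma~\ref{lem:few uncovered vertices}, taking a set $X$ with $\eps_2 d/2\leq |X|\leq |H|/2$ and splitting into two cases:

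\textbf{Small sets} ($|X|\lesssim d/2$). The minimum degree $\delta(H)\geq 3d/2$ gives $|N(X)|\geq \delta(H)-|X|\geq |X|\geq \rho(|X|,\eps_1,\eps_2 d)|X|$, since $\rho\leq \eps_1<1$.

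\textbf{Large sets}. Here $|X|\geq d/2\geq k'/2$ (using $k'\leq d(G_1)/3$ and choosing constants compatibly), so $(\eps_1,k')$-expansion applies. Moreover $\rho(x,\eps_1,k)=\eps_1/\log^2(15x/k)$ is increasing in $k$ once $x\geq k/5$, so the larger parameter $\eps_2 d$ only makes $\rho$ larger. The monotonicity therefore points the wrong way for a direct comparison, and this is the step that needs care.

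The cleanest fix is to avoid the mismatch altogether. If $\eps_2 d\leq d(G_1)/3$, set $\eps_2'=\eps_2 d/d(G_1)$ exactly, so that $H$ is directly an $(\eps_1,\eps_2 d)$-expander. If instead $\eps_2 d> d(G_1)/3$, first pass to a subgraph $G_1'\subseteq G_1$ with $4d\leq d(G_1')\leq 3\eps_2 d$, which is possible because deleting edges one at a time lowers the average degree gradually; when $\eps_2$ is too small for such a range to exist, we are in the first case anyway. Then apply Theorem~\ref{thm-expander} to $G_1'$ with $\eps_2'=\eps_2 d/d(G_1')\in(0,1/2)$. This adjustment of the average degree, so that the expansion parameter matches $\eps_2 d$ exactly while keeping $\delta(H)\geq d$, is the only real obstacle.
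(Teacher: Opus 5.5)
Your overall route is the combination the paper has in mind: pass to a bipartite $G_1\subseteq G$ with $d(G_1)\ge 4d$ via Proposition~\ref{bipsub}, then apply Theorem~\ref{thm-expander} to $G_1$ with second parameter $\eps_2 d/d(G_1)$, so that the expansion scale is exactly $\eps_2 d$. Your first case ($\eps_2 d\le d(G_1)/3$) is correct: $H$ is bipartite, is an $(\eps_1,\eps_2 d)$-expander, and has $\delta(H)\ge (1-C\eps_1)d(G_1)/2\ge d$.

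\textbf{The second case is backwards.} There you already have $d(G_1)<3\eps_2 d$, so $G_1$ itself lies in your range $[4d,3\eps_2 d]$, and deleting edges achieves nothing. More importantly, any $G_1'$ with $d(G_1')\le 3\eps_2 d$ gives $\eps_2 d/d(G_1')\ge 1/3$, whereas the hypothesis $\eps_2'<1/2$ of Theorem~\ref{thm-expander} needs $d(G_1')>2\eps_2 d$. Deleting edges only lowers $d(G_1')$, which raises $\eps_2'$. So:
\begin{itemize}
\item when $d(G_1)\le 2\eps_2 d$, your claim $\eps_2 d/d(G_1')\in(0,1/2)$ is false;
\item when $2\eps_2 d<d(G_1)<3\eps_2 d$, no modification was needed at all.
\end{itemize}

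\textbf{The difficulty comes from capping $\eps_2'$ at $1/3$.} Theorem~\ref{thm-expander} allows any $\eps_2'<1/2$. Since $d(G_1)\ge 4d$, we have $\eps_2 d/d(G_1)\le \eps_2/4<1/2$ whenever $\eps_2<2$. So the direct application, which is your first case without the cap, proves the corollary for every $\eps_2<2$. This covers the only way the paper uses it: $\eps_2<1/5$, in the proof of Lemma~\ref{lem-robust-adj-new}. The small-sets/large-sets detour is then unnecessary.

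For $\eps_2\ge 2$ with $d(G_1)\le 2\eps_2 d$, no edge deletion can bring the parameter below $1/2$. The one-line combination the paper indicates also only covers the regime $\eps_2 d<d(G_1)/2$. So the literal ``every $\eps_2>0$'' wording should be read as restricted to bounded $\eps_2$, rather than as something your second case establishes.
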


The following lemma gives an upper bound on the diameter of the sublinear expander, and in that sense is very similar to Lemma~\ref{lem-diameter}. The main difference is that it assumes a minimum degree condition, and therefore can allow the sets which need to be connected to be very small. It is quoted verbatim from~\cite[Lemma~3.4]{liu2023solution}.

\begin{lemma}\label{new-connect} For each $0<\eps_1,\eps_2<1$, there exists $d_0=d_0(\eps_1,\eps_2)$ such that the following holds for each $n\geq d\geq d_0$ and $x\geq 1$. Let $G$ be an $n$-vertex $(\ep_1,\ep_2d)$-expander with $\delta(G)\geq d-1$.

Let $A,B\subseteq V(G)$ with $|A|,|B|\geq x$, and  let $W\subseteq V(G)\setminus(A\cup B)$ satisfy $|W|\log^3n\leq 10x$. Then, there is a path from $A$ to $B$ in $G-W$ with length at most $\frac{40}{\eps_1}\log^3n$.
\end{lemma}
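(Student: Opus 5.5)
The statement to prove is Lemma~\ref{new-connect}, quoted from \cite[Lemma~3.4]{liu2023solution}. I would prove it by the standard ball-growing argument for sublinear expanders, using the minimum degree to handle the initial stage where the sets are small.

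\textbf{Setting up the balls.} Set $G'=G-W$. Let $B_0(A)=A$, and let $B_{i+1}(A)$ be $B_i(A)$ together with its neighbourhood in $G'$. Define the balls $B_i(B)$ around $B$ in the same way.

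\textbf{Stage 1: reaching size $\eps_2 d$.} If $x<\eps_2 d$, I first grow $A$ by one step. Any vertex of $A$ has at least $d-1-|W|$ neighbours outside $W$. Since $|W|\le 10x/\log^3 n$, this quantity is at least $d/2$, provided $|W|$ is small compared to $d$.

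If $|W|$ is not small compared to $d$, then $x\gtrsim d\log^3 n/10\geq \eps_2 d$. In that case we are already in the large regime and Stage 1 is unnecessary.

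So after at most one step, the ball has size at least $\min\{x,\eps_2 d/2\}$, and in particular size at least $\eps_2 d/2$ once $x\ge 1$. This is where the minimum degree condition is used.

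\textbf{Stage 2: reaching size $n/2$.} Now suppose $|B_i|=y\ge \eps_2 d/2$. The expansion property gives
\[
|N_G(B_i)|\ge \rho(y)\,y,\qquad \rho(y)=\frac{\eps_1}{\log^2(15y/(\eps_2 d))}\geq \frac{\eps_1}{\log^2 n}\ \text{(roughly)}.
\]
To guarantee that $W$ cannot block this growth, I need
\[
|W|\le \tfrac12 \rho(y)\,y .
\]
This holds because $y\ge x$ and $|W|\le 10x/\log^3 n$, while $\rho(y)\,y\ge \eps_1 y/\log^2 n$. The comparison works once $\log n\ge 20/\eps_1$, which holds since $n\geq d\geq d_0$. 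When $y$ is much larger than $x$, the requirement is even easier.

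Hence each step multiplies the size of the ball by at least $1+\rho(y)/2\geq 1+\eps_1/(2\log^2 n)$. After
\[
O\!\left(\frac{\log^2 n}{\eps_1}\cdot \log n\right)\leq \frac{20}{\eps_1}\log^3 n
\]
steps, the ball around $A$ exceeds $n/2$. The same holds for the ball around $B$.

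\textbf{Conclusion.} Once both balls have more than half the vertices of $G'$, they intersect. This yields an $A$–$B$ path in $G-W$ of length at most $\frac{40}{\eps_1}\log^3 n$.

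\textbf{Main obstacle.} The delicate step is the bookkeeping in Stage 2. One must check that the deletion of $W$ never absorbs more than half of the guaranteed expansion, uniformly over all sizes from about $\eps_2 d$ up to $n/2$. This includes the small-$x$ regime, where $|W|<1$ forces $W=\emptyset$.
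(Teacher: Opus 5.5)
Your proposal is correct, and it is the standard Komlós--Szemerédi ball-growing argument by which the cited source \cite[Lemma~3.4]{liu2023solution} proves this (the paper itself only quotes the lemma): one step via $\delta(G)\ge d-1$ when $x<\eps_2 d$, then growth by a factor at least $1+\eps_1/(2\log^2 n)$ per step because $|W|\le 10x/\log^3 n\le \tfrac12\rho(y)y$ whenever $y\ge x$ and $\log n\ge 20/\eps_1$, and finally two balls in $G-W$ each of size more than $n/2$ must meet. The only slip is in the wording of Stage 1: the ball has size at least $\eps_2 d/2$ because either $|A|\ge x\ge \eps_2 d$ already holds, or one step gives at least $d-|W|\ge d/2$ vertices; it does not follow from the bound $\min\{x,\eps_2 d/2\}$ that you wrote.
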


We need the following definition, where we use the notation $B_\Gamma^{i}(X)=\{v\in V(\Gamma):\mathrm{dist}_\Gamma(v,X)\leq i\}$ for the ball of radius $i$ around the vertex set $X$ in the graph $\Gamma$.

\begin{definition}\label{defn-limit-contact}
	A vertex set $A$ has \emph{$k$-limited contact} with a vertex set $X$ in a graph $H$ if, for each $i\in \N$,
	$$
	|N_H(B_{H-X}^{i-1}(A))\cap X|\leq k i.
	$$
\end{definition}


The following lemma is not used directly here, but we include it to state clearly the slight modification in its statement (and correspondingly its proof) that we need. It is \cite[Lemma~3.5]{liu2023solution} with the bipartiteness condition removed and where \ref{z3} is a slightly weaker condition compared to that used in~\cite{liu2023solution} (where `$d_G(v,U)\leq d/2$' was used). However, it can be seen that the bipartiteness condition is never used in the proof of \cite[Lemma~3.5]{liu2023solution}, while, in both places that the corresponding version of \ref{z3} is used, the bound actually invoked is that from \ref{z3}. (Moreover, the bound used from \ref{z3} is far from optimised in the proof and thus the version we need holds comfortably.)

\begin{lemma}\label{lem-newbit}
  For any $0<\eps_1,\eps_2<1$, there exists $d_0=d_0(\eps_1,\eps_2)$ such that the following holds for each $n\geq d\geq d_0$. Suppose that $G$ is an  $n$-vertex   $(\eps_1,\eps_2 d)$-expander with $\de(G)\ge d$.

  Let $U\subseteq V(G)$ satisfy $|U|\leq \exp((\log\log n)^2)$, and let $K=G-U$. Let $I$ be any set and $V_i\subseteq V(K)$, $i\in I$, be pairwise disjoint sets such that, for each $i\in I$,
  \stepcounter{propcounter}
  \begin{enumerate}[label = \textup{\bfseries \Alph{propcounter}\arabic{enumi}}]
  	\item $\eps_2 d\le |V_i|\le \exp((\log\log n)^2)$,\label{z1}
  	  	\item $|N_{K}(V_i)|\leq \frac{5|V_i|}{\log^{10} |V_i|}$, and\label{z2}
  	  	  	\item $d_G(v,U)\leq (1-2\eps_2) d$ for each $v\in V_i$.\label{z3}
  \end{enumerate}

Then, $|\cup_{i\in I}V_i|<n^{1/8}$.
\end{lemma}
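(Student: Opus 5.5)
The plan is to argue by contradiction, following the Koml\'os--Szemer\'edi ``small sets with small boundary'' framework used for \cite[Lemma~3.5]{liu2023solution}. Suppose $W:=\bigcup_{i\in I}V_i$ has $|W|\ge n^{1/8}$. By passing to a subfamily of the $V_i$, I will assume $n^{1/8}\le |W|\le 2n^{1/8}$, which is possible since each $|V_i|\le \exp((\log\log n)^2)\ll n^{1/8}$. Set $X:=W\cup \bigcup_{i}N_K(V_i)\cup U$. By \ref{z2}, \ref{z1} and $|U|\le\exp((\log\log n)^2)$, this gives $|X|\le (1+o(1))|W|\le 3n^{1/8}\le n/2$, so the expansion property applies to $X$.

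The key step is to estimate $N_G(X)$, which I will split as $N_G(X)\subseteq N_G(U)\cup \bigcup_i N_K(V_i)$, since every vertex of $V_i$ has all of its neighbours in $U\cup V_i\cup N_K(V_i)$. Here the point of \ref{z3} enters. Each $v\in V_i$ has at least $d-d_G(v,U)\ge 2\eps_2 d$ neighbours in $V_i\cup N_K(V_i)$. Combined with \ref{z2} and $|V_i|\ge \eps_2 d$, this gives no contradiction by itself; instead I will use it to show that the contribution of $U$ to $N(X)$ is negligible relative to $|X|$. The reason is that $N_G(U)\cap X$ does not count, and $|U|$ is tiny. The cleaner route, as in the original proof, is to apply the expander property not to $X$ but to $Y:=W\cup\bigcup_i N_K(V_i)$, viewed inside $G$. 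Then $N_G(Y)\subseteq U\cup N_K(N_K(V_i))$-type boundary.

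To control the second neighbourhood, I will use the limited-contact style argument: grow balls $B^{j}_{K}(V_i)$ and note that, by \ref{z2}, the ratio $|N_K(V_i)|/|V_i|\le 5/\log^{10}|V_i|$ is much smaller than the expansion rate $\rho(|V_i|)=\eps_1/\log^2(15|V_i|/\eps_2 d)$. Hence the disjoint union $W$ has
\[
|N_G(W)|\le |U|+\sum_i \frac{5|V_i|}{\log^{10}|V_i|}\le |U|+\frac{5|W|}{\log^{10}(\eps_2 d)}.
\]
By contrast, expansion demands $|N_G(W)|\ge \eps_1|W|/\log^2(15|W|/\eps_2 d)\ge \eps_1 |W|/\log^{2}n$. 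I then need $\log^{10}(\eps_2 d)\gg \log^2 n$, which fails when $d$ is small compared to $n$. This is exactly where \ref{z3} and the minimum degree are used in \cite{liu2023solution}: the sets $V_i$ must themselves be dense, since each vertex has at least $2\eps_2 d$ neighbours inside $V_i\cup N_K(V_i)$. This forces $|V_i|$ to be large, and the bound is used only in the form ``at least a constant fraction of $d$ neighbours outside $U$''. So the weakened hypothesis \ref{z3} still suffices.

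The main obstacle is therefore the honest reconciliation of the boundary estimate with sublinear expansion when the $V_i$ have size only polylogarithmic-ish. I would follow the original proof line by line, checking three things: first, that bipartiteness is never invoked; second, that each use of the $U$-degree bound only needs $d_G(v,U)\le(1-2\eps_2)d$, leaving $2\eps_2 d$ edges into $K$; and third, that the resulting inequality still contradicts expansion for $n\ge d\ge d_0(\eps_1,\eps_2)$. This completes the proof.
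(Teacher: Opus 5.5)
Your final paragraph defers to the proof of \cite[Lemma~3.5]{liu2023solution}, and that is also all the paper does: it notes that bipartiteness is never used there, and that the two invocations of the $U$-degree hypothesis only need the bound in \ref{z3}. However, the argument you actually sketch is not a proof. More importantly, your account of how \ref{z3} enters is wrong, so the verification you propose would check the wrong thing.

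You correctly see that expanding the whole union $W$ only gives $|N_G(W)|\gtrsim \eps_1|W|/\log^2 n$. This is no contradiction when $\log^{10}(\eps_2 d)\lesssim \log^2 n$. Your remedy is that each $v\in V_i$ has $2\eps_2 d$ neighbours in $V_i\cup N_K(V_i)$, so ``$|V_i|$ must be large''. But this only gives $|V_i|\ge (2\eps_2-o(1))d$, a constant factor beyond \ref{z1}. With $d$ a large constant and $n\to\infty$, the ratio $5/\log^{10}|V_i|$ stays constant and still swamps $\eps_1/\log^2 n$. Applying expansion to your sets $X$ or $Y$, which contain $U$ or $N_K(V_i)$, is no better: their boundaries involve uncontrolled second neighbourhoods.

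In fact no argument that uses \ref{z3} only as ``at least $2\eps_2 d$ neighbours outside $U$'' can succeed. Take about $n^{1/8}/d$ disjoint $d$-cliques. Join each completely to a set $U$ of constant size $M\gg d$. Give each clique about $5d/\log^{10}d$ private neighbours, on distinct clique vertices, in a random $d$-regular graph on the remaining vertices. For $M$ large in terms of $d$, one can check this is an $(\eps_1,\eps_2 d)$-expander: unions of few cliques expand into $U$, and unions of many cliques expand through their private neighbours. It satisfies \ref{z1}, \ref{z2} and your weakened condition, yet the conclusion fails.

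The missing idea is to use \ref{z3} as an \emph{upper} bound, $|N_G(V_i)\cap U|\le (1-2\eps_2)d|V_i|\le |V_i|^2/\eps_2$. You should then apply expansion not to $W$ but to the union $Y$ of a well-chosen subfamily, at an intermediate scale where $\rho(|Y|)$ beats $5/\log^{10}|V_i|$. There are two cases.
\begin{itemize}
\item If much of $W$ lies in sets with $\log^{10}|V_i|\ge C(\log\log n)^4$, take $|Y|\approx e^{(\log\log n)^2}\log^3 n$. Then $|U|$ is negligible, and $\rho(|Y|)\gtrsim \eps_1/(\log\log n)^4$ exceeds the $K$-boundary ratio.
\item Otherwise, some dyadic class $|V_i|\in[s,2s)$ of small sets contains $n^{1/8-o(1)}$ sets. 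There are only $|U|^{O(s^2)}=n^{o(1)}$ possible $U$-neighbourhoods, so some $s^3$ of these sets share one $U$-neighbourhood $S$ with $|S|=O(s^2)$. Their union $Y$ has size about $s^4$ and satisfies $|N_G(Y)|\le |S|+O(s^4/\log^{10}s)<\eps_1 s^4/(25\log^2 s)\le \rho(|Y|)|Y|$, a contradiction.
\end{itemize}
An argument of this kind needs only $d_G(v,U)\le d$. That is consistent with the paper's remark that the bound from \ref{z3} is far from optimised. The check you propose, that only $2\eps_2 d$ edges into $K$ are needed, is not the right one.
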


The following lemma is~\cite[Lemma~3.7]{liu2023solution}, with the bipartiteness condition removed and one of the conditions relaxed (\cite[\textbf{C4}]{liu2023solution}, to get \ref{mouse3}) so that certain vertices can have at most $(1-2\eps_2)d$ neighbours in a relevant set $U$, rather than at most $d/2$. This is possible as, firstly, the bipartiteness condition is used only in invoking~\cite[Lemma~3.5]{liu2023solution}, in whose proof the condition is not used. Secondly, the condition \cite[\textbf{C4}]{liu2023solution} is invoked twice in the proof of \cite[Lemma~3.7]{liu2023solution}. The first time it is used is for an inequality that can easily be checked to hold with the weaker \ref{mouse3}\footnote{More precisely, towards the end of the proof of \cite[Lemma~3.7]{liu2023solution}, it is reasoned `if $|A_i|<\eps_2d$, then \dots $|B_{G-U-B_i-C_i}(A_i)|\ge \delta(G)-|B_i|-4-d/2\ge\eps_2d$', where $\delta(G)\ge d$ and $|B_i|\leq |A_i|/\log^{10}|A_i|$. If \ref{mouse3} is used instead, we get  `if $|A_i|<\eps_2d$, then \dots $|B_{G-U-B_i-C_i}(A_i)|\ge \delta(G)-|B_i|-4-(1-2\eps_2)d\ge\eps_2d$' as $d\ge d_0(\eps_1,\eps_2)$.
}, while the second time it is used is to apply \cite[Lemma~3.5]{liu2023solution} which can be replaced by Lemma~\ref{lem-newbit}.

\begin{lemma}\label{lem-newbit-other-new}
  For each $0<\eps_1<1$, $0<\eps_2<1/5$ and $k\in \N$ there exists $d_0=d_0(\eps_1,\eps_2,k)$ such that the following holds for each $n\geq d\geq d_0$.
   Suppose that $G$ is an $n$-vertex $(\eps_1,\eps_2 d)$-expander with $\de(G)\ge d$.
  Let $U\subseteq V(G)$ satisfy $|U|\leq \exp((\log\log n)^2)$. Let $r=n^{1/8}$ and $\ell_0=(\log\log n)^{20}$. Suppose $(A_i,B_i,C_i)$, $i\in [r]$, are such that the following hold for each $i\in [r]$.
\stepcounter{propcounter}
\begin{enumerate}[label = \textup{\bfseries \Alph{propcounter}\arabic{enumi}}]
\item $|A_i|\geq d_0$.\label{mouse0}
\item $B_i\cup C_i$ and $A_i$ are disjoint sets in $V(G)\setminus U$, with $|B_i|\leq |A_i|/\log^{10}|A_i|$.\label{mouse1}
\item $A_i$ has $4$-limited contact with $C_i$ in $G-U-B_i$.\label{mouse2}
\item Each vertex in $B_{G-U-B_i-C_i}^{\ell_0}(A_i)$ has at most $(1-2\eps_2)d$ neighbours in $U$.\label{mouse3}
\item For each $j\in [r]\setminus\{i\}$, $A_i$ and $A_j$ are at least a distance $2\ell_0$ apart in $G-U-B_i-C_i-B_j-C_j$.\label{mouse4}
\end{enumerate}

Then, for some $i\in [r]$, $|B^{\ell_0}_{G-U-B_i-C_i}(A_i)|\geq \log^kn$.
\end{lemma}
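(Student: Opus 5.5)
The plan is to follow the proof of \cite[Lemma~3.7]{liu2023solution} essentially verbatim, tracking exactly two changes: the removal of bipartiteness and the relaxation of \cite[\textbf{C4}]{liu2023solution} to \ref{mouse3}. We argue by contradiction. Suppose that $|B^{\ell_0}_{G-U-B_i-C_i}(A_i)|<\log^kn$ for every $i\in[r]$. For each $i$ we grow balls $A_i^{(j)}=B^{j}_{G-U-B_i-C_i}(A_i)$ for $j\le\ell_0$. Since the final ball has size less than $\log^k n$ while $|A_i|\ge d_0$, pigeonhole over the $\ell_0=(\log\log n)^{20}$ steps gives some radius $j_i\le \ell_0$ at which the ball expands by a factor of at most $1+1/\log^{10}$ of its size. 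That is, $V_i:=A_i^{(j_i)}$ satisfies $|N_{G-U-B_i-C_i}(V_i)|\le |V_i|/\log^{10}|V_i|$.

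Next we control the neighbourhood of $V_i$ in $K=G-U$, rather than just in $G-U-B_i-C_i$. The contribution from $B_i$ is at most $|B_i|\le |A_i|/\log^{10}|A_i|\le |V_i|/\log^{10}|V_i|$, using \ref{mouse1}. The contribution from $C_i$ is at most $4(j_i+1)$ by the $4$-limited contact condition \ref{mouse2}, which is negligible provided $|V_i|$ is large. Together these give $|N_K(V_i)|\le 5|V_i|/\log^{10}|V_i|$, which is condition \ref{z2}.

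For the lower bound $|V_i|\ge\eps_2 d$ in \ref{z1}, we use the footnoted computation. If $|A_i|<\eps_2 d$, then any vertex of $A_i$ has at least $\delta(G)-|B_i|-4-(1-2\eps_2)d\ge \eps_2 d$ neighbours in $G-U-B_i-C_i$. Here \ref{mouse3} bounds the neighbours in $U$, and \ref{mouse2} with $i=1$ bounds the neighbours in $C_i$ by $4$. Hence already the first ball has size at least $\eps_2 d$, and otherwise this is immediate. The upper bound $|V_i|\le\log^k n\le\exp((\log\log n)^2)$ holds by assumption. Condition \ref{z3} for $v\in V_i$ is exactly \ref{mouse3}, since $V_i\subseteq B^{\ell_0}_{G-U-B_i-C_i}(A_i)$.

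The sets $V_i$ are pairwise disjoint by \ref{mouse4}, since the balls have radius at most $\ell_0$ while the $A_i$ are at distance at least $2\ell_0$. Some care is needed here because the distance in \ref{mouse4} is measured in $G-U-B_i-C_i-B_j-C_j$. I would handle this as in the original, possibly shrinking radii by one or passing to a subfamily.

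We can now apply Lemma~\ref{lem-newbit}, the non-bipartite version with the weakened \ref{z3}, to the family $V_i$, $i\in[r]$. It gives $|\bigcup_i V_i|<n^{1/8}=r$. But the $V_i$ are disjoint and each has size at least $d_0\ge1$, so the union has size at least $r$, a contradiction.

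The main obstacle is verifying that nothing in the original argument used bipartiteness except through \cite[Lemma~3.5]{liu2023solution}, and that the uses of \textbf{C4} are only the two noted above. I expect that step to be a line-by-line audit rather than a new idea.
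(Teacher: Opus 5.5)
This is the paper's route. The paper also just audits the proof of \cite[Lemma~3.7]{liu2023solution}: bipartiteness enters only through \cite[Lemma~3.5]{liu2023solution}, which is replaced by Lemma~\ref{lem-newbit}, and the relaxed \ref{mouse3} is used only in the footnoted inequality you reproduce and in that application.

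Two steps of your reconstruction need the original's care rather than the reasons you give. First, $j_i$ must be the \emph{first} slow-growth radius, so that $\log^{11}|V_i|\gtrsim j_i$; only then does $|V_i|/\log^{10}|V_i|$ absorb the contact term $4(j_i+1)$, since $|V_i|\geq\max\{d_0,\eps_2 d\}$ alone does not suffice when $d\ll\ell_0$. Second, disjointness of the $V_i$ does not follow from \ref{mouse4} by concatenating paths, because a path in $G-U-B_i-C_i$ may run through $B_j\cup C_j$. In the paper's applications the $\ell_0$-balls themselves are verified to be pairwise disjoint, so disjointness is available there.
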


We need the following definition.

\begin{definition}
Given a vertex $v$ in a graph $F$, $F$ is a \emph{$(D,m)$-expansion of $v$} if $|F|=D$ and $v$ is at distance at most $m$ in $F$ from any other vertex of $F$.
\end{definition}

The following result says that a $(D, m)$-expansion can always be trimmed to become a $(D', m)$-expansion, for any $D'\leq D$. This result is proved in \cite[Proposition~3.10]{liu2023solution} by successively removing the leaves from the spanning tree of depth at most $m$ until $D'$ vertices remain.

\begin{proposition}\label{prop-trimming} Let $D,m\in \N$ and $1\leq D'\leq D$. Then, any graph $F$ which is a $(D,m)$-expansion of $v$ contains a subgraph which is a $(D',m)$-expansion of $v$.
\end{proposition}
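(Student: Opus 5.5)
The statement is Proposition~\ref{prop-trimming}: a $(D,m)$-expansion $F$ of $v$ contains a $(D',m)$-expansion of $v$ for every $1\le D'\le D$. I would prove it by passing to a breadth-first-search tree and pruning leaves.

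First, take a BFS spanning tree $T$ of $F$ rooted at $v$. This is possible because every vertex of $F$ is at distance at most $m$ from $v$, so $F$ is connected. In $T$, each vertex $u$ satisfies $\mathrm{dist}_T(v,u)=\mathrm{dist}_F(v,u)\le m$, so $T$ is itself a $(D,m)$-expansion of $v$.

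Next, remove leaves of $T$ different from $v$ one at a time until exactly $D'$ vertices remain. Whenever the current tree $T'$ has at least two vertices, it has at least two leaves, so a leaf other than $v$ always exists. Deleting a leaf $w\neq v$ does not change the $v$–$u$ path in the tree for any remaining vertex $u$, since that path cannot pass through a leaf other than its endpoints. Hence all distances from $v$ are preserved and stay at most $m$. After $D-D'$ deletions, the remaining tree is a subgraph of $F$ with $D'$ vertices, each within distance $m$ of $v$ in it.

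The only point needing any care is this last one: removing a leaf never disconnects the tree and never lengthens distances from the root. It is immediate from the fact that internal vertices of the $v$–$u$ tree path have degree at least $2$, so there is no real obstacle.
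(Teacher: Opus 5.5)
Your proof is correct and is the same argument the paper cites from Liu and Montgomery (their Proposition~3.10): take a spanning tree of $F$ rooted at $v$ with depth at most $m$, and delete leaves other than $v$ until $D'$ vertices remain. Your BFS tree is such a tree, and your check that deleting a leaf preserves all distances from $v$ covers the only point that needs verifying.
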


We will use the following result, which is \cite[Lemma~3.11]{liu2023solution} with the bipartiteness condition removed. This removal is possible as the condition is not used in its proof, which relies on \cite[Lemma~3.2]{liu2023solution}, \cite[Lemma~3.4]{liu2023solution}, and \cite[Proposition~3.10]{liu2023solution}, none of which use a bipartiteness condition.

\begin{lemma}\label{lem-expansion-new}
	For each $k\in \N$ and any $0<\eps_1, \eps_2<1$, there exists $d_0=d_0(\ep_1,\ep_2,k)$ such that the following holds for each $n\geq d\geq d_0$.

  Suppose that $G$ is an $n$-vertex $(\eps_1,\eps_2 d)$-expander with $\delta(G)\geq d-1$.
	Let $m=\frac{40}{\ep_1}\log^3 n$. Let $C$ be a shortest cycle in $G$, and let $x_1,\ldots,x_k$ be distinct vertices in $G$. For each $i,j\in [k]$, let $D_{i,j}\in [1,\log^{5k}n]$.

    Then, there are graphs $F_{i,j}\subseteq G$, $i,j\in [k]$, such that the following hold.
  \begin{itemize}
    \item For each $i,j\in [k]$, $F_{i,j}$ is a $(D_{i,j},5m)$-expansion around $x_i$ which contains no vertices other than $x_i$ in $V(C)\cup \{x_1,\ldots,x_k\}$.
    \item The sets  $V(F_{i,j})\setminus\{x_i\}$, $i,j\in [k]$, are pairwise disjoint.
\end{itemize}
\end{lemma}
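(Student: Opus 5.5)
The statement to prove is Lemma~\ref{lem-expansion-new}: disjoint expansions $F_{i,j}$ of prescribed sizes $D_{i,j}$ around the $x_i$, each avoiding $V(C)\cup\{x_1,\dots,x_k\}$ apart from its centre. The approach follows \cite[Lemma~3.11]{liu2023solution}. The idea is to grow the expansions one at a time by breadth-first exploration in $G$ minus a forbidden set. Expansion of the graph forces each exploration to reach the required size within distance $5m$. Proposition~\ref{prop-trimming} then trims each expansion to exactly $D_{i,j}$ vertices.

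\textbf{Setup.} Set $D^*=\log^{5k+1}n$. The total set we must eventually avoid is
\[
W=V(C)\cup\{x_1,\dots,x_k\}\cup\bigcup V(F_{i',j'}),
\]
where the union runs over the expansions already built. We have $|W|\le |C|+k+k^2D^*$. The shortest cycle $C$ has length $O(\log n)$, because an expander with $\delta\ge d-1$ has logarithmic girth (balls grow quickly). Hence $|W|\le \log^{5k+2}n$. This is polylogarithmic, and so tiny compared with the scales at which sublinear expansion acts.

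\textbf{Growing one expansion.} Fix the next pair $(i,j)$ and let $W'=W\setminus\{x_i\}$. The key point is that $x_i$ has few neighbours in $W'$. Each $F_{i',j'}$ is a tree-like expansion around its own centre, and $C$ is a shortest cycle. So if $x_i$ had many neighbours in $V(C)$, together with the short paths along $C$ this would create a shorter cycle. A similar argument should give at most about two neighbours of $x_i$ in each earlier structure.

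Here I expect the main subtlety. The bound $|W'|$ is polylogarithmic, whereas $\delta(G)\ge d-1$ with $d$ possibly only $d_0$, so $x_i$ might not have $d/2$ neighbours outside $W'$ for free. I would first try to handle this by ordering the construction. All expansions at a given $x_i$ are built jointly from disjoint portions of a single large ball around $x_i$ in $G-(V(C)\cup\{x_1,\dots,x_k\}\setminus\{x_i\})$. This ball is obtained as follows.
\begin{enumerate}
\item The initial growth up to size about $\eps_2 d$ comes from the minimum degree, after losing the $O(1)$ neighbours in $C$ guaranteed by the girth argument.
\item Beyond that, Lemma~\ref{new-connect}-type expansion (equivalently, $|N(X)|\ge\rho|X|$ with $\rho\ge \eps_1/\log^2 n$) gives multiplicative growth of the ball by a factor $1+\rho/2$ per step. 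This holds even after deleting $W$, since $|W|\ll \rho|X|$ once $|X|\ge \log^{5k+5}n$, and it takes $O(\log^3 n/\eps_1)\le m$ steps to reach size $k^2D^*$.
\end{enumerate}

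\textbf{Splitting and trimming.} With this ball in hand, split a BFS tree of depth at most $5m$ into $k$ subtrees of size at least $D^*$ each. Each subtree remains connected to $x_i$ through its own root, so after trimming via Proposition~\ref{prop-trimming} we obtain the $F_{i,j}$, $j\in[k]$. The centres $x_{i'}$ for other $i'$ are then added to the forbidden set before processing the next centre. Since only $k^2D^*$ vertices are ever removed, the expansion estimate at each later stage is unaffected.

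The main obstacle is the early small-scale growth near $x_i$ when previously built sets are close by. I would resolve it by the girth-and-tree argument bounding how many neighbours of $x_i$ lie in $W'$, as in \cite{liu2023solution}, noting that no part of this argument uses bipartiteness.
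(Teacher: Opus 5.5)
There is a genuine gap, at exactly the point you flag as the main subtlety.

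\textbf{Earlier expansions can block later centres.} Your ``key point'' that $x_i$ has few neighbours in $W'$ holds only for the part of $W'$ coming from $C$. A shortest cycle is isometric: $d_C(u,v)=d_G(u,v)$, since otherwise a shortest $u,v$-path and the shorter arc of $C$ would close a shorter cycle. So $C$ meets every ball of radius $r$ in $O(r)$ vertices, and every vertex has at most three neighbours on $C$. The previously built $F_{i',j'}$ have no such structure. They are arbitrary connected subgraphs of radius up to $5m$ with up to $\log^{5k}n$ vertices, while $d$ may be a constant and $n$ arbitrarily large. A ball grown around $x_{i'}$ may therefore contain all of $N(x_i)\setminus(V(C)\cup\{x_1,\dots,x_k\})$ when $x_i$ is close to $x_{i'}$. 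After that, $x_i$ has no expansion with two or more vertices left, and no girth or tree argument rules this out.

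\textbf{The growth step has an uncovered range.} Since $\rho<1$, the estimate $|W|\ll\rho(|X|)|X|$ needs $|X|\ge|W|$. Your two steps cover $|X|\lesssim\eps_2 d$ and $|X|\ge\log^{5k+5}n$ only. Throughout $\eps_2 d\le|X|\le\log^{5k+5}n$, a polylogarithmic forbidden set can swallow the entire boundary. Even $V(C)$ alone may have size of order $\log n\gg d$, so there you need the $O(r)$ isometry bound rather than $|W|\ll\rho|X|$. Polylogarithmic is not negligible here, because the growth has to start at scale $d$.

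\textbf{The splitting step does not follow.} A BFS tree of depth at most $5m$ rooted at $x_i$ can carry essentially all of its mass in a single root-branch. Producing $k$ disjoint branches at $x_i$ that each reach size $D^*$, without using up the room the other centres need, is precisely what the lemma asserts.

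What is missing is a mechanism controlling how already-built pieces meet small balls around the remaining centres. One option is to keep every piece thin near the centres, so that, like $C$, it meets radius-$r$ balls around the other centres in only $O(r)$ vertices. Its bulk would then sit far away, where Lemma~\ref{new-connect}-type robust expansion absorbs polylogarithmic deletions. Another option is a genuinely simultaneous construction.

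For context, the paper does not reprove this lemma. It imports \cite[Lemma~3.11]{liu2023solution}, whose proof relies on Lemmas~3.2 and~3.4 and Proposition~3.10 there, and only checks that bipartiteness is never used. Your closing remark agrees with that observation, but the argument you give in place of the cited proof does not establish the statement.
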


We will also use the following result, which is \cite[Lemma~3.12]{liu2023solution} with the bipartiteness condition removed. This removal is possible as the condition is not used in its proof, which relies on no other results in \cite{liu2023solution}.

\begin{lemma}\label{cl-egg-new} For any $0<\eps_1, \eps_2<1$, there exists $d_0=d_0(\eps_1,\eps_2)$ such that the following holds for each $n\geq d\geq d_0$. Suppose that $G$ is an $n$-vertex $(\eps_1,\eps_2 d)$-expander with $\de(G)\ge d$ and let $m=\frac{50}{\eps_1}\log^3n$.

For any set $W\subseteq V(G)$ with $|W|\leq \eps_1 n/(100\log^{2}n)$, there is a set $B\subseteq V(G)\setminus W$ with size at least $n/25$ and diameter at most $2m$, and such that $G[B]$ is a $(D,m)$-expansion around some vertex $v\in B$ for $D=|B|$.
\end{lemma}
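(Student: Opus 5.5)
This is Lemma~\ref{cl-egg-new}, a restatement of \cite[Lemma~3.12]{liu2023solution} without bipartiteness. My plan is to reprove it by growing a ball inside $G-W$ and using the expansion of $G$ (only minimum degree and sublinear expansion are used, never bipartiteness).

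\textbf{Step 1 (pick a large component).} Delete $W$. Any set $X$ with $\eps_2 d/2\le |X|\le n/2$ has $|N_G(X)|\ge \eps_1|X|/\log^2(15|X|/(\eps_2 d))\ge \eps_1|X|/\log^2 n$ (for $n\ge d\ge d_0$). Let $X$ be the union of some components of $G-W$ with $n/25\le |X|\le n/2$. Then $N_G(X)\subseteq W$, so $|W|\ge \eps_1 |X|/\log^2 n\ge \eps_1 n/(25\log^2 n)$, contradicting $|W|\le \eps_1 n/(100\log^2 n)$. Hence either $G-W$ has a component of size more than $n/2$, or all components have size less than $n/25$. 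In the latter case I greedily group components until the total size lies in $[n/25,n/2]$; this is possible because $|G-W|\ge n/2$ and each component has size below $n/25$, giving the same contradiction. So there is a component $K$ of $G-W$ with $|K|>n/2$.

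\textbf{Step 2 (grow a ball).} Fix $v\in K$ and let $B_i=B^i_{G-W}(\{v\})$. While $|B_i|\le n/2$, we have $N_{G-W}(B_i)\supseteq N_G(B_i)\setminus W$. For small $i$, the minimum degree $\delta(G)\ge d$ together with $|W|$ being small relative to $n$ is not immediately enough, because $W$ could swallow the neighbourhood of $v$. So I choose $v$ well: take $v\in K$ with few neighbours in $W$. Since $e(W,V\setminus W)\le$ something is not controlled, I instead argue at the level of sets. Once $|B_i|\ge x_0:=\eps_2 d$, expansion gives $|N_G(B_i)|\ge \eps_1|B_i|/\log^2 n\ge 4|W|\cdot |B_i|/(n/25)$, which exceeds $2|W|$ whenever $|B_i|\ge n/50$. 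This is not good for middle sizes.

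I will therefore use Lemma~\ref{new-connect}-type reasoning instead: any two sets of size at least $x$ in $G-W'$ are joined within distance $m$ when $|W'|\log^3n\le 10x$. Apply this with $x=n/50$: $|W|\log^3 n\le \eps_1 n\log n/100\le 10\cdot n/50$ fails by a $\log n$ factor. The fix is to use the standard expander-diameter argument directly. Balls of sets $X$ of size at least $n/50$ (here $\ge$ the threshold) grow by a factor $1+\eps_1/(2\log^2 n)$ per step while avoiding $W$, since $|W|\le \tfrac12\eps_1|X|/\log^2 n$ for $|X|\ge n/50$.

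\textbf{Step 3 (assemble).} Take $X\subseteq K$ to be a connected set of size at least $n/25$ and at most $n/2$ (obtained by trimming a spanning tree of $K$, or a BFS tree). Take $v\in X$ and grow $B^i_{G[X]\cup\ldots}$. More cleanly, let $B$ be the ball $B_{G-W}^{m}(v)$ trimmed via Proposition~\ref{prop-trimming}. The main obstacle is showing that this ball reaches size $n/25$ within radius $m$. I expect to handle it by a two-sided growth argument, as in the diameter lemma. Suppose $|B^{m}_{G-W}(v)|<n/25$. Starting from the set $K$, take its $(m/2)$-ball around the complement to derive a contradiction with connectivity inside $K$, using the fact that sets of size at least $\eps_2 d$ in $G-W$ expand by $\eps_1/(2\log^2)$. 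For the initial phase, when the ball is smaller than $\eps_2 d$, I choose $v$ to maximise ball growth, or pick $v$ as the centre of a BFS tree inside $K$.

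Finally, the diameter of $G[B]$ is at most $2m$ because every vertex lies within distance $m$ of $v$ in $G[B]$. Applying Proposition~\ref{prop-trimming} yields exactly $D=|B|\ge n/25$.
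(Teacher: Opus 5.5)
\textbf{There is a genuine gap.} Your Step 1 is correct: no union of components of $G-W$ can have size in $[n/25,n/2]$, so $G-W$ has a component of size more than $n/2$. Your last step is also fine: once some $v\notin W$ has $|B^m_{G-W}(v)|\ge n/25$, the set $B=B^m_{G-W}(v)$ works directly, since shortest paths from $v$ stay inside $B$, so no trimming is needed.

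But the entire content of the lemma is the step in between, namely producing a vertex $v$ whose radius-$m$ ball in $G-W$ has at least $n/25$ vertices. That step is never proved.

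\begin{itemize}
\item A giant component gives no control on distances.
\item An arbitrary $v$ genuinely fails. It may have all its neighbours in $W$, or sit in the middle of a long path of $G-W$ whose vertices send almost all their edges into $W$.
\item Step 3 relies on a false claim, that sets of size at least $\eps_2 d$ expand robustly in $G-W$. Since $|W|$ can be as large as $\eps_1 n/(100\log^2 n)$, it can swallow $N_G(X)$ entirely for such $X$. Your own Step 2 computation shows that robust expansion past $W$ is only guaranteed once $|X|\gtrsim n/50$.
\item ``Choose $v$ to maximise ball growth'' or ``take the centre of a BFS tree'' is not an argument. A two-sided growth argument from a fixed $v$ cannot work, for the reasons above.
\end{itemize}

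\textbf{The missing idea} is to choose $v$ by contradiction, via iterative deletion. Call a set $X$ \emph{bad} in a graph $F$ if $|N_F(X)|<\frac{\eps_1}{2\log^2 n}|X|$.

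\begin{itemize}
\item If none of the balls $B^0_F(v),\dots,B^{m-1}_F(v)$ is bad, then $|B^m_F(v)|\ge(1+\frac{\eps_1}{2\log^2 n})^m>n$, which is impossible. So whenever $|B^m_F(v)|<n/25$, some ball $X=B^r_F(v)$ with $|X|<n/25$ is bad in $F$.
\item Now assume every $v\notin W$ has $|B^m_{G-W}(v)|<n/25$. Start with $Y=\emptyset$. While $|Y|<n/25$, pick $v\notin W\cup Y$ and find such a bad ball $X$ in $F=G-W-Y$; this applies because balls in $G-W-Y$ lie inside balls in $G-W$. Replace $Y$ by $Y\cup X$.
\item Since $N_{G-W}(Y\cup X)\subseteq N_{G-W}(Y)\cup N_{G-W-Y}(X)$, the set $Y$ stays bad in $G-W$. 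The process ends with $n/25\le |Y|<2n/25$.
\item Then $|N_G(Y)|\le |W|+|N_{G-W}(Y)|<\frac{\eps_1|Y|}{4\log^2 n}+\frac{\eps_1|Y|}{2\log^2 n}<\rho(|Y|,\eps_1,\eps_2 d)\,|Y|$ for $d$ large. This contradicts the expansion of $G$.
\item In the degenerate case $n/25<\eps_2 d/2$, the bound $\delta(G)\ge d>2n/25$ together with $|W|=o(n)$ already makes $B^1_{G-W}(v)$ large enough for any $v\notin W$.
\end{itemize}

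This is exactly where the bound $|W|\le\eps_1 n/(100\log^2 n)$ and the choice $m=\frac{50}{\eps_1}\log^3 n$ are used, and your proposal never uses them in an essential way.

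For comparison, the paper does not reprove the lemma at all. It cites \cite[Lemma~3.12]{liu2023solution} and observes that bipartiteness is not used in that proof. A from-scratch proof is a fine alternative, but only once the argument above (or an equivalent one) is supplied.
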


Next, we will use the following result, which is \cite[Corollary~3.15]{liu2023solution} with the bipartiteness condition removed. This is possible, as follows. Its proof does not use the bipartiteness condition directly, but invokes~\cite[Lemma~3.4]{liu2023solution}, \cite[Lemma~3.13]{liu2023solution} and \cite[Lemma~3.14]{liu2023solution}, the latter two of which require a bipartiteness condition. However, both \cite[Lemma~3.13]{liu2023solution} and \cite[Lemma~3.14]{liu2023solution} can have their bipartiteness condition removed as their proofs do not use it directly and only invoke \cite[Lemma~3.4]{liu2023solution}, \cite[Proposition~3.10]{liu2023solution} and \cite[Lemma~3.12]{liu2023solution}, only the last of which has a bipartiteness condition, which we have already noted is unneeded while recording it as Lemma~\ref{cl-egg-new}.

\begin{lemma}\label{longconnect4-new}
For any $0<\eps_1, \eps_2<1$, there exists $d_0=d_0(\eps_1,\eps_2)$ such that the following holds for each $n\geq d\geq d_0$. Suppose that $G$ is an $n$-vertex $(\eps_1,\eps_2 d)$-expander with $\de(G)\ge d$.

Let $\log^{10}n\leq D\leq n/\log^{10}n$, $\frac{100}{\ep_1}\log^3n\le m\le \log^4n$ and $\ell\leq n/\log^{12}n$. Let $A\subseteq V(G)$ satisfy $|A|\leq D/\log^3n$. Let $F_1,\ldots,F_4\subseteq G-A$ be vertex-disjoint subgraphs and $v_1,\ldots,v_4$ be vertices such that, for each $i\in [4]$, $F_i$ is a $(D,m)$-expansion of $v_i$.

Then, $G-A$ contains vertex-disjoint paths $P$ and $Q$ with $\ell \leq \ell(P)+\ell(Q)\leq \ell+22m$ such that both $P$ and $Q$ connect $\{v_1,v_2\}$ to $\{v_3,v_4\}$.
\end{lemma}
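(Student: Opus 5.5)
The plan is to follow the proof of \cite[Corollary~3.15]{liu2023solution} line by line and check that bipartiteness of $G$ is never used. All the quantitative ingredients we rely on are already recorded here without a bipartiteness hypothesis: Lemma~\ref{new-connect} for short connections avoiding a small set, Proposition~\ref{prop-trimming} for trimming expansions, and Lemma~\ref{cl-egg-new} for a large ball of small diameter avoiding a forbidden set. In \cite{liu2023solution} bipartiteness only serves to control the parity of the resulting lengths, and the present statement asks for no parity information. So the proof should go through verbatim once every invocation of a bipartite lemma is replaced by its non-bipartite analogue.

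Concretely, I would argue in three steps.

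\textbf{Step 1 (short connection).} Use Lemma~\ref{new-connect}, with $x=D$ and forbidden set $W=A$ together with a few trimmed pieces, to join $F_2$ to $F_4$. This is allowed since $|A|\log^3 n\le D$. Routing inside the expansions then gives a $v_2,v_4$-path $Q$ of length at most $2m+\frac{40}{\eps_1}\log^3n\le 3m$. We trim $F_1,F_3$ (Proposition~\ref{prop-trimming}) so that they still have size about $D/2$ and avoid $Q$; this is possible because $Q$ meets each of them in few vertices.

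\textbf{Step 2 (long connection).} Build a $v_1,v_3$-path $P$ in $G-A-V(Q)$ whose length lies in $[\ell-\ell(Q),\,\ell-\ell(Q)+19m]$. Following \cite[Lemmas~3.13 and~3.14]{liu2023solution}, repeatedly apply Lemma~\ref{cl-egg-new} with $W$ equal to the set of vertices used so far. Each application gives a fresh $(D',m)$-expansion of size at least $n/25$ in the remaining graph, which is legitimate while $|W|\le \eps_1 n/(100\log^2 n)$; this holds since $\ell\le n/\log^{12}n$. Lemma~\ref{new-connect} then links consecutive expansions, and each link traverses up to $2m$ edges inside the expansions. In this way we grow a path from $v_1$ whose length increases in increments of at most $O(m)$, and we stop as soon as the length first reaches $\ell-\ell(Q)-O(m)$. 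Finally we close the path up to $F_3$ and then to $v_3$ with one more application of Lemma~\ref{new-connect}, which costs at most $3m$. The overshoot bookkeeping (at most about $19m$) is exactly as in \cite{liu2023solution}, giving $\ell\le \ell(P)+\ell(Q)\le \ell+22m$.

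\textbf{Step 3 (verification).} Check each cited lemma's hypotheses in non-bipartite form. Lemmas~\ref{new-connect} and~\ref{cl-egg-new} and Proposition~\ref{prop-trimming} are stated without bipartiteness, so it remains to re-read \cite[Lemmas~3.13 and~3.14]{liu2023solution} and confirm that they call only these three results and use no parity or bipartition argument.

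The main obstacle I expect is Step 2. There the set of used vertices grows to size about $\ell$, which can be far larger than $D$, so one cannot connect back to $F_3$ by applying Lemma~\ref{new-connect} with the small sets $F_i$. The remedy, as in \cite{liu2023solution}, is to do the final connection from a freshly produced large ball, of size at least $n/25$, to a trimmed copy of $F_3$ that was reserved in advance. For this we take $x$ to be the size of the reserved expansion, which is at least $D/2$, and keep $W$ restricted to the at most $D/\log^3n$ vertices that actually lie near $F_3$. I would first check that the original argument indeed reserves $F_3$ in this way. If it does not, I would instead grow the long path from both $v_1$ and $v_3$ towards a common large ball, so that every connection is made between sets of size at least $n/\log^{10}n$.
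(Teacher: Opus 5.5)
Your plan is the paper's own argument. The paper proves this lemma only by observing that the proof of \cite[Corollary~3.15]{liu2023solution} never uses bipartiteness and calls only \cite[Lemmas~3.4, 3.13 and~3.14]{liu2023solution}, whose proofs in turn rely only on Lemma~\ref{new-connect}, Proposition~\ref{prop-trimming} and Lemma~\ref{cl-egg-new}; this is exactly the check in your Step~3.

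Do not rely on your reconstructed Steps~1--2 as written, though. First, Proposition~\ref{prop-trimming} only deletes leaves, so it cannot make $F_1,F_3$ avoid an arbitrary short path $Q$, which might even pass through $v_1$. Instead, take a shortest path in $G-A$ from $V(F_1)\cup V(F_2)$ to $V(F_3)\cup V(F_4)$, whose interior avoids all four expansions, and relabel accordingly. Second, in the final connection Lemma~\ref{new-connect} does not let you ignore used vertices that lie far from $F_3$. You therefore need the variant you mention, in which a large expansion on the $v_3$ side is reserved before the long path is grown.
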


Within a bipartite expander, we will be able to find a simple adjuster, using the following result, which is \cite[Lemma 4.2]{liu2023solution}. The bipartiteness condition is used in its proof. Though it would not be difficult to prove a very similar result without this condition, in order to minimise our alterations we will use it directly.

\begin{lemma}\label{lem-twin-path}
	For any $0<\eps_1<1$, $0<\eps_2<1/5$ and $k\in \N$, there exists $d_0=d_0(\eps_1,\eps_2,k)$ such that the following is true for each $n\geq d\geq d_0$. Suppose that $G$ is an $n$-vertex bipartite $(\eps_1,\eps_2 d)$-expander with $\de(G)\ge d-1$.

  Let $C$ be a shortest cycle in $G$ and let $x_1,x_2$ be distinct vertices in $V(G)\setminus V(C)$. Let $m=\frac{200}{\eps_1}\log^3n$ and $D\leq \log^{5k}n$.

  Then, $G$ contains a $(D,m,1)$-adjuster $(v_1,F_1,v_2,F_2,A)$ with $v_1=x_1$, $v_2=x_2$ and $V(C)\subseteq A$.
\end{lemma}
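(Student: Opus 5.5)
The plan is to build the adjuster around the shortest cycle $C$ itself. The two arcs of $C$ between suitable vertices give two $v_1,v_2$-path lengths differing by exactly $2$, and we attach $x_1,x_2$ to those vertices by short disjoint paths.

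\textbf{Step 1: the cycle.} First, $C$ is short. Lemma~\ref{new-connect} gives $G$ diameter $O(\log^3 n)$, and a shortest cycle has length at most $2\,\mathrm{diam}(G)+1$. So $|C|=O(\log^3 n)$; in fact a BFS argument using $\delta(G)\ge d-1$ gives $O(\log n)$, but either bound suffices. As $G$ is bipartite, $|C|=2g$ for some $g\geq 2$. Choose $a,b\in V(C)$ at distance $g-1$ along $C$. Then the two arcs of $C$ between $a$ and $b$ have lengths $g-1$ and $g+1$, so they give two $a,b$-paths whose lengths differ by exactly $2$. If needed we first move $a,b$ slightly so that they avoid $x_1,x_2$, which is possible since $x_1,x_2\notin V(C)$.

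\textbf{Step 2: expansions.} Apply Lemma~\ref{lem-expansion-new} with centres $x_1,x_2,a,b$ and the shortest cycle $C$. This gives graphs $F_{i,j}$ whose sets $V(F_{i,j})$ minus their centre are pairwise disjoint and meet $V(C)\cup\{x_1,x_2,a,b\}$ only in the centre. We use them as follows.
\begin{itemize}
\item $F_1:=F_{x_1,1}$ and $F_2:=F_{x_2,1}$ have $D$ vertices and radius $5m'$, where $m'=\frac{40}{\eps_1}\log^3 n$.
\item Connector expansions $F'_{x_1},F'_{x_2},F'_a,F'_b$ have a much larger size $D'$, say $D'=\log^{5k}n$ after taking the parameter $k$ in Lemma~\ref{lem-expansion-new} large relative to the $k$ here, so that $D'\ge D\log^{4}n$.
\end{itemize}
Since $5m'\le m$, each of $F_1,F_2$ is a $(D,m)$-expansion, which handles~\ref{d-a-2}.

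\textbf{Step 3: connecting paths.} Use Lemma~\ref{new-connect} twice.
\begin{itemize}
\item Connect $V(F'_{x_1})\setminus\{x_1\}$ to $V(F'_a)\setminus\{a\}$ in $G-W$, where $W$ collects $V(C)$, $V(F_1)\cup V(F_2)$ and the other two connector expansions.
\item Connect $F'_{x_2}$ to $F'_b$ while additionally avoiding the first path.
\end{itemize}
Each connection has length at most $\frac{40}{\eps_1}\log^3n$. Extending inside the spanning trees of the connector expansions, each of radius at most $5m'$, gives vertex-disjoint paths $P_1$ from $x_1$ to $a$ and $P_2$ from $x_2$ to $b$. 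Their interiors avoid $V(C)$ and $V(F_1)\cup V(F_2)$, and each has length at most $\frac{440}{\eps_1}\log^3 n$.

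Set $A:=(V(P_1)\cup V(P_2)\cup V(C))\setminus\{x_1,x_2\}$. Then $P_1\cup(\text{arc})\cup P_2$ gives $x_1,x_2$-paths of lengths $\ell$ and $\ell+2$ inside $G[A\cup\{x_1,x_2\}]$, which is~\ref{d-a-4} with $k=1$. Also $|A|\le \frac{880}{\eps_1}\log^3n+|C|\le 10m$ for $m=\frac{200}{\eps_1}\log^3n$, which is~\ref{d-a-3}. Disjointness~\ref{d-a-1} follows from the disjointness of the expansions away from their centres and the choice of $W$. We also keep $x_1\in V(F_1)\setminus A$ and $x_2\in V(F_2)\setminus A$, as required by $v_i=x_i$. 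Finally $V(C)\subseteq A$ holds by construction.

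\textbf{Main obstacle.} The delicate point is the size bookkeeping in Lemma~\ref{new-connect}: the forbidden set $W$ must satisfy $|W|\log^3 n\le 10x$, where $x$ is the size of the sets being joined. Here $W$ contains the adjuster ends $F_1,F_2$ of size $D$, which could be as large as $\log^{5k}n$. This is why the connector expansions must be far larger than $D$. For the second connection, $W$ also contains the first path and the two connector expansions already used, so I would take the connector sizes to grow geometrically (e.g.\ $D'_{b}\ge D'_a\log^4 n$). I would also check that the hypotheses of Lemma~\ref{lem-expansion-new} allow these $D_{i,j}$ up to $\log^{5K}n$ for a suitably large constant $K$.
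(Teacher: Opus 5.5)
The paper does not prove this lemma. It imports it from \cite[Lemma~4.2]{liu2023solution}, noting only that bipartiteness is used there. Your outline is the natural argument and is sound in substance. Bipartiteness gives $|C|=2g$, so vertices $a,b$ at distance $g-1$ on $C$ are joined by arcs of lengths $g-1$ and $g+1$; an odd shortest cycle could only give an odd difference. Lemma~\ref{lem-expansion-new} supplies the ends $F_1,F_2$ (since $5\cdot\frac{40}{\eps_1}\log^3n=m$) and connector expansions meeting $V(C)\cup\{x_1,x_2,a,b\}$ only in their centres. Your checks of \ref{d-a-1}--\ref{d-a-4}, $|A|\le 10m$ and $V(C)\subseteq A$ are correct. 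Moving $a,b$ is unnecessary, since $x_1,x_2\notin V(C)$ already.

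The one real error is the size bookkeeping for the two applications of Lemma~\ref{new-connect}: as you state it, it cannot be satisfied.
\begin{itemize}
\item In the first connection ($x_1$ to $a$) the forbidden set contains $V(F'_{x_2})\cup V(F'_b)$. This needs $(|F'_{x_2}|+|F'_b|)\log^3n\le 10\min(|F'_{x_1}|-1,|F'_a|-1)$.
\item In the second connection you also forbid $V(F'_{x_1})\cup V(F'_a)$, which needs the reverse inequality.
\item Your proposed $D'_b\ge D'_a\log^4n$ violates the first requirement.
\end{itemize}

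The fix is that the second connection only has to avoid $W_2=V(P_1)\cup V(C)\cup V(F_1)\cup V(F_2)$. The remaining vertices of $F'_{x_1}\cup F'_a$ play no role in the adjuster and may be reused. Also, $P_1$ is automatically disjoint from $F'_{x_2}\cup F'_b$, because the connecting path avoided them and the tree extensions lie inside $F'_{x_1}\cup F'_a$. So the pair connected first must be the larger one.

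For example, apply Lemma~\ref{lem-expansion-new} with parameter $k+3$, padding with dummy centres. Take $|F'_{x_2}|=|F'_b|=\log^{5k+5}n$ and $|F'_{x_1}|=|F'_a|=\log^{5k+15}n$. Then:
\begin{itemize}
\item the first forbidden set satisfies $|W_1|\log^3n=O(\log^{5k+8}n)$, far below $10(\log^{5k+15}n-1)$;
\item the second satisfies $|W_2|\log^3n=O(\log^{5k+3}n+\log^{6}n)$, far below $10(\log^{5k+5}n-1)$, using $k\ge 1$.
\end{itemize}
Both applications of Lemma~\ref{new-connect} are then valid, and the rest of your argument goes through unchanged.
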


\subsection{Proof of Theorem~\ref{mainthm-new}}\label{alteredproof}

We can now prove Theorem~\ref{mainthm-new}. We do so while making minimal adjustments to the proofs in \cite{liu2023solution}. In essence, the proof proceeds by assuming that \textbf{C1} does not hold, and showing that simple adjusters can be built robustly, and then combined to get many distinct path lengths between any two vertices $v_1, v_2\in V(G)$. In what follows, we will denote the negation of the assumption \textbf{C1} by \textbf{H}.

Aside from removing some bipartiteness conditions in Section~\ref{quotedresults}, as described in Section~\ref{sec:largeexpandercase}, instead of  condition \ref{prop:oldEHthm3} in Theorem~\ref{mainthm-old}, we will use \ref{prop:newEHthm3}. Essentially, aside from using the minimum degree bound more closely here, \ref{prop:newEHthm3} was used in \cite{liu2023solution} to imply \ref{prop:oldEHthm3}. Thus, the impact of the alteration is minimal, but requires a small change throughout the proof.
As noted above, the proof of Theorem~\ref{mainthm-new} is discussed without these modifications in Section~2.4 of~\cite{liu2023solution}. The following proof follows closely the proof in \cite[Sections~4.2, 4.3 and 4.5]{liu2023solution}.

In Section~\ref{sec:robadj-new}, we use Lemma~\ref{lem-twin-path} to find such an adjuster despite the removal of any medium-sized vertex set from the expander, giving Lemma~\ref{lem-robust-adj-new}.
In Section~\ref{sec:adjpath-new}, we chain simple adjusters together for Lemma~\ref{lem-adj-to-adj-path-new}, before using this to join vertex expansions by paths with precise lengths for Lemma~\ref{lem-finalconnect-new}. Finally, we prove Theorem~\ref{mainthm-new} in Section~\ref{sec:mainthmfinal-new}.

\subsubsection{Finding simple adjusters robustly}\label{sec:robadj-new}
In this section, we prove Lemma~\ref{lem-robust-adj-new}, a key component of our proof. This finds a simple adjuster robustly in an expander $G$ -- that is, given any subset $U\subseteq V(G)$ with moderate size, we construct an adjuster in $G-U$. Note that the property \textbf{H} assumed by this lemma is precisely the negation of the property~\ref{prop:newEHthm3} required by Theorem~\ref{mainthm-new}.

\begin{lemma}\label{lem-robust-adj-new}
 For every sufficiently small $\eps_1>0$ and every $0<\eps_2<1/5$, there exists $d_0=d_0(\eps_1,\eps_2)$ such that the following is true for each $n\geq d\geq d_0$. Suppose that $G$ is an $n$-vertex $(\eps_1,\eps_2 d)$-expander with $\de(G)\ge d$ for which the following property holds.
\stepcounter{propcounter}
\begin{enumerate}[label = \textup{\textbf{\Alph{propcounter}}}]
\item\label{newpropP} There are no disjoint sets $U_0,W_0\subset V(G)$ such that $|U_0|\leq \log^{20}n$, $|W_0|\geq \log^{80}n$ and every vertex in $W_0$ has at least $(1-2\eps_2)d$ neighbours in $U_0$.
\end{enumerate}

Let $m=\frac{400}{\eps_1}\log^3n$ and $D=\log^{14} n$. For every subset $U\subseteq V(G)$ with $|U|\leq 10D$, $G-U$ contains a $(D,2m,1)$-adjuster.
\end{lemma}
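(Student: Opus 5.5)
The plan is to reduce to Lemma~\ref{lem-twin-path}, which needs a \emph{bipartite} expander of minimum degree at least $d'-1$ for some large $d'$, and to use property~\ref{newpropP} to guarantee that such a subgraph survives in $G-U$. This mirrors the proof of the corresponding lemma in \cite[Section~4.2]{liu2023solution}.

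\textbf{Step 1: pass to a large-degree remainder.} Let $U'\supseteq U$ be obtained by repeatedly removing vertices with fewer than $2\eps_2 d$ neighbours in the remaining graph, stopping if this process removes more than about $\log^{20}n$ further vertices. Every removed vertex $v$ has at least $(1-2\eps_2)d$ neighbours in the set removed before it. I would then show, using \ref{newpropP} together with the expansion of $G$ (via Lemma~\ref{lem-newbit} applied to the components of small vertex sets), that the removal process stops after only polylogarithmically many steps. If instead many vertices were deleted, the first $\log^{80}n$ of them, together with the first $\log^{20}n$ vertices of $U'$, would (after a pigeonhole and trimming argument) give sets $U_0,W_0$ contradicting \ref{newpropP}. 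The same reasoning shows that any set $W_0$ of $\log^{80}n$ vertices cannot all have $(1-2\eps_2)d$ neighbours in $U'$.

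\textbf{Step 2: find a good bipartite expander.} Consequently $G-U'$ has average degree at least roughly $2\eps_2 d$. Applying Corollary~\ref{cor-expander} then gives a bipartite $(\eps_1,\eps_2 d'')$-expander $H\subseteq G-U'$ with $\delta(H)\geq d''$, where $d''=\eps_2 d/4$. Since $H$ may have far fewer vertices than $n$, I need to check that its logarithmic parameters still make sense. Lemma~\ref{lem-twin-path} is applied with $n_H=|H|$, and $m_H=\frac{200}{\eps_1}\log^3 n_H\leq m$, while $D\leq\log^{5k}n_H$ requires $|H|\geq \exp(\log^{14/5k}n)$. This is where \ref{newpropP} is used again. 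If $H$ were tiny, say $|H|\leq\log^{20}n$, then I would show that a small dense piece arises near $U'$ whose neighbourhood structure yields the forbidden $(U_0,W_0)$ configuration. Concretely, the vertices outside with $(1-2\eps_2)d$ neighbours into a small set are exactly what \ref{newpropP} forbids, so small sets cannot absorb most of the degree, and Lemma~\ref{lem-newbit-other-new} gives $\log^k n$-size balls and hence a large expander piece.

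\textbf{Step 3: build the adjuster and lift it to $G-U$.} Take a shortest cycle $C$ in $H$ and distinct $x_1,x_2\notin V(C)$, and apply Lemma~\ref{lem-twin-path} in $H$ with $D=\log^{14}n$ and $k=3$. This gives a $(D,m_H,1)$-adjuster inside $H\subseteq G-U$. Because $m_H\leq 2m$, this is also a $(D,2m,1)$-adjuster, since the ends have radius at most $m_H$ and $|A|\leq 10m_H\leq 20m$.

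I expect the main obstacle to be Step 2: guaranteeing that the bipartite expander found after deleting $U$ is large enough, namely polylogarithmic in $n$ with a large exponent, for Lemma~\ref{lem-twin-path}'s expansion sizes $D=\log^{14}n$. I would handle this by following \cite[Lemma~4.3]{liu2023solution} closely, replacing the $d/2$ threshold there with $(1-2\eps_2)d$ and using Lemma~\ref{lem-newbit-other-new} in place of its bipartite analogue.
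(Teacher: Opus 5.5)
Your proposal has a genuine gap in Step~2, which is where the real content of the lemma lies. Corollary~\ref{cor-expander} gives no lower bound on $|H|$ beyond $|H|>\delta(H)\geq \eps_2 d/4$. The lemma must hold for every $n\geq d\geq d_0(\eps_1,\eps_2)$, so $d$, and with it $|H|$, may be tiny compared with $\log^{14}n$; in the application, $d\approx k$ while $n$ can be exponential in $k$. In that case Lemma~\ref{lem-twin-path} cannot produce ends of order $D=\log^{14}n$ inside $H$ at all.

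Your proposed remedy does not work. Property~\ref{newpropP} only constrains sets $W_0$ of at least $\log^{80}n$ vertices whose neighbourhoods concentrate in at most $\log^{20}n$ vertices. So it does not rule out small dense pieces (e.g.\ copies of $K_{d,d}$ with $d$ small), and nothing prevents every expander you extract from being such a piece, wherever it sits relative to $U'$. Nor does Lemma~\ref{lem-newbit-other-new} give `a large expander piece'. It needs $n^{1/8}$ triples $(A_i,B_i,C_i)$ that are pairwise far apart, and it only concludes that for \emph{some} $i$ the ball of radius $(\log\log n)^{20}$ around $A_i$ in $G-U-B_i-C_i$ is large.

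Your Step~1 is also unsupported: once the deleted set exceeds $\log^{20}n$ vertices, property~\ref{newpropP} says nothing about it, so the cascade bound does not follow. It is not needed, though. Since fewer than $\log^{80}n$ vertices have $(1-2\eps_2)d$ neighbours in $U$, one gets $e(G-U)\geq \eps_2 nd/2$ directly.

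The missing idea is to accept small ends and enlarge them afterwards in $G$, exploiting that there are many candidate adjusters. The paper argues by contradiction.
\begin{itemize}
\item Let $L$ be the set of vertices of degree at least $200mD$, so $G'=G-L$ has bounded maximum degree. Take a maximal family $\mathbf{A}_0$ of $(m_{\cA}^2,m_{\cA},1)$-adjusters in $G-U$ whose ends lie in $G'$ and are at distance at least $10\ell_0$ in $G'$ from each other and from $U\cup Z_0$.
\item Deleting a small ball around such a family keeps average degree $\Omega(\eps_2 d)$. So Corollary~\ref{cor-expander} and Lemma~\ref{lem-twin-path} give another member, unless two vertices of $L$ lie off the shortest cycle of the expander found; in that case their huge neighbourhoods directly serve as ends of order $D$. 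Hence $|\mathbf{A}_0|\geq n^{1/4}$.
\item Lemma~\ref{lem-newbit-other-new} is then applied to $n^{1/8}$ of these adjusters; property~\ref{newpropP} enters only through \ref{mouse3}, via $Z_0$. This shows some end has a ball of order $\log^{100}n\geq D$, which Proposition~\ref{prop-trimming} trims to a $(D,2m)$-expansion.
\item To make \emph{both} ends large and disjoint from the core, the paper shows few adjusters have short paths to $L\setminus U$, since otherwise the high-degree endpoint would supply one end. It also uses a set $Z$ of order $10m^2D$ with small diameter from Lemma~\ref{cl-egg-new}, together with Lemma~\ref{new-connect}.
\end{itemize}
None of this is in your plan, and without it the requirement that the ends have order $D=\log^{14}n$ cannot be met.
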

\begin{proof} Let $0<\eps_1<1$ be small enough that the property in Corollary~\ref{cor-expander} holds. Suppose, for contradiction, that $G-U$ contains no $(D,2m,1)$-adjuster. Let $\Delta=200mD$, $L=\{v\in V(G):d_G(v)\geq \Delta\}$ and $G'=G-L$, so that $\Delta(G')\leq \Delta$.

Set $\ell_0=(\log\log n)^{20}$. Let $Z_0=\{v\in V(G)\setminus U:d_G(v,U)\geq (1-2\eps_2)d\}$. Then, by \ref{newpropP}, as $|U|\leq 10D\le \log^{20}n$, we have $|Z_0|< \log^{80}n$. Hence, as $\delta(G)\geq d$ and $n\geq d_0(\eps_1,\eps_2)$ is large, $G-U$ contains at least $(n-|U|-|Z_0|)\cdot 2\eps_2 d/2\geq \eps_2 nd/2$ edges. Let $U_1=U\cup Z_0$, so that $|U_1|\le 2\log^{80}n$.

Take a maximal collection $\mathbf{A}_0$ of adjusters in $G-U$, such that the following hold.
\stepcounter{propcounter}
\begin{enumerate}[label = {\bfseries \Alph{propcounter}\arabic{enumi}}]
\item The sets $V(F_{1}\cup F_2)$, $(v_1,F_1,v_2,F_2,A)\in \mathbf{A}_0$, are subsets of $V(G')$ and are all at least a distance $10\ell_0$ apart from each other and from $U_1\setminus L$ in $G'$.\label{hot1}
\item For each $\cA\in \mathbf{A}_0$, for some $m_{\cA}$ with $\log^3d_0\leq m_{\cA}\leq m$, $\cA$ is an $(m_\cA^{2},m_\cA,1)$-adjuster.\label{hot2}
\end{enumerate}

  \begin{claim}\label{ping}
$|\mathbf{A}_0|\geq n^{1/4}$.
  \end{claim}
  \claimproofstart Suppose, for contradiction, that $|\mathbf{A}_0|< n^{1/4}$. Let $W=(U_1\cup (\bigcup_{\cA\in \mathbf{A}_0}V(\cA)))\setminus L$. For each $\cA=(v_1,F_1,v_2,F_2,A)\in \mathbf{A}_0$, $|V(\cA)|=|F_1|+|F_2|+|A|\leq 2m_\cA^2+10m_\cA\leq 3m^2$, and therefore $|W|\leq n^{1/4}\cdot 3m^3+2\log^{80}n\leq n^{1/3}$. Let $W'=B_{G'}^{10\ell_0}(W)$, so, as $\Delta(G')\le \Delta$, we have that $|W'|\leq 2|W|\cdot\Delta^{10\ell_0}\leq n^{1/2}$.

Now, there are at most $|W'|\Delta\leq \Delta n^{1/2}\leq \eps_2 nd/4$ edges in $G$ with some vertex in $W'$. Let $\bar{d}=\eps_2 d/16$. As  $G-U$ contains at least $\eps_2 nd/2$ edges, $G-U-W'$ contains at least $\eps_2 nd/4$ edges, so that $d(G-U-W')\geq \eps_2 d/2=8\bar{d}$. Then, by Corollary~\ref{cor-expander}, $G-U-W'$ contains an $(\eps_1,\eps_2\bar{d})$-expander $H$ with $\delta(H)\geq \bar{d}$.
Let $C$ be a shortest cycle in $H$. We will consider two cases, depending on how many vertices of $L$ there are in $V(H)\setminus V(C)$.

\medskip

\noindent\textbf{Case I: $|(V(H)\setminus V(C))\cap L|\leq 1$.} Let $H'=H-(V(H)\setminus V(C))\cap L$, so that $\delta(H')\geq \bar{d}-1$. Note that, for each $X\subseteq V(H')$ with $\eps_2\bar{d}/2\leq |X|\leq |H'|/2\le |H|/2$, we have
\begin{align*}
|N_{H'}(X)|&\geq |N_H(X)|-1\geq |X|\cdot \rho(|X|,\eps_1,\eps_2\bar{d})-1 \\
&\geq \frac{1}{2}|X|\cdot \rho(|X|,\eps_1,\eps_2\bar{d})+\frac{\eps_2\bar{d}}{4}\cdot \rho(\eps_2\bar{d}/2,\eps_1,\eps_2\bar{d})-1
\\
&\geq |X|\cdot \rho(|X|,\eps_1/2,\eps_2\bar{d})+\frac{\eps_2\bar{d}}{4}\cdot \frac{\eps_1}{\log^2(15/2)}-1\geq |X|\cdot \rho(|X|,\eps_1/2,\eps_2\bar{d}),
\end{align*}
where the last inequality follows as $\bar{d}\geq 2\eps_2 \cdot d_0(\eps_1,\eps_2)/32$ is large. Therefore, $H'$ is a $(\eps_1/2,\eps_2\bar{d})$-expander with $\delta(H')\geq \bar{d}-1$. Note that $C$ is a shortest cycle in $H'$.

Let $m_{H'}=400\log^3|H'|/\eps_1\le m$, and note that, as $|H'|\geq \delta(H')+1\geq \bar{d}\geq \eps_2 d_0/16$, and $d_0=d_0(\ep_1,\ep_2)$ is large, $m_{H'}\geq \log^3d_0$. Since $C$ is induced and $\delta(H')\ge \bar d-1>3$, there are at least two vertices outside $C$. Picking arbitrary distinct vertices $x_1,x_2\in V(H')\setminus V(C)$ and noting that $\bar{d}\geq 2\eps_2 \cdot d_0(\eps_1,\eps_2)/32$ is large, by Lemma~\ref{lem-twin-path} with $(k,D)_{\ref{lem-twin-path}}=(10,m_{H'}^2)$,
 $H'$ contains an $(m_{H'}^{2},m_{H'},1)$-adjuster  $(v_1,F_1,v_2,F_2,A)$ with $V(C)\subseteq A$. As $A$ is disjoint from $V(F_1\cup F_2)$, $V(C)\subseteq A$ and $(V(H')\setminus V(C))\cap L=\varnothing$, we have that $V(F_1\cup F_2)$ is disjoint from $L$, and hence lies in $V(G')$.
 Together with $V(F_1\cup F_2)\subseteq V(H')$ being disjoint from $W'$ and so $10\ell_0$-far in $G'$ from the ends of the adjusters in $\mathbf{A}_0$ and from $U_1\setminus L$, this violates the maximality of $\mathbf{A}_0$, a contradiction.

\medskip

\noindent\textbf{Case II: $|(V(H)\setminus V(C))\cap L|\geq 2$.} Let $x_1,x_2\in (V(H)\setminus V(C))\cap L$ be distinct and let $m_{H}=200\log^3|H|/\eps_1\le m$. By Lemma~\ref{lem-twin-path} with $(k,D)_{\ref{lem-twin-path}}=(1,1)$, $H$ contains a $(1,m_{H},1)$-adjuster $(v_1,F_1,v_2,F_2,A)$ with $v_1=x_1$ and $v_2=x_2$. Using that $|A|\leq 10m_{H}\leq 10m$, $|U|\leq 10D$, and $d_G(x_1),d_G(x_2)\geq \Delta=200mD$,
pick disjoint sets $X_1\subseteq N_G(x_1)\setminus (U\cup A\cup \{x_2\})$ and $X_2\subseteq N_G(x_2)\setminus (U\cup A\cup \{x_1\})$ with $|X_1|=|X_2|=D-1$. Letting $F_i'=G[\{x_i\}\cup X_i]$ for each $i\in [2]$, and noting $|A|\leq 20m$, we have that $(x_1,F'_1,x_2,F'_2,A)$ is a $(D,2m,1)$-adjuster in $G-U$, a contradiction.
  \claimproofend

Now, let $\mathbf{A}_1\subseteq \mathbf{A}_0$ be the set of adjusters $(v_1,F_1,v_2,F_2,A)\in \mathbf{A}_0$ for which there is no path with length at most $\ell_0$ from $V(F_1)\cup V(F_2)$ to $L\setminus U$ in $G-U-A$.

  \begin{claim}\label{ping2}
$|\mathbf{A}_1|\geq n^{1/4}/2$.
  \end{claim}
  \claimproofstart Let $r= n^{1/8}$.
Suppose, for contradiction, that we can label distinct $\cA_1,\ldots,\cA_r\in \mathbf{A}_0\setminus \mathbf{A}_1$. Say, for each $i\in [r]$, that $\cA_i=(v_{i,1},F_{i,1},v_{i,2},F_{i,2},\bar{A}_i)$ and let $P_i'$ be a shortest path with length at most $\ell_0$ from $V(F_{i,1})\cup V(F_{i,2})$ to $L\setminus U$ in $G-U-\bar{A}_i$. Relabelling, if necessary, for each $i\in [r]$ suppose the endvertex of $P_i'$ in $V(F_{i,1}\cup F_{i,2})$ is in $V(F_{i,1})$, and let $Q_i$ be a path from this endvertex of $P_i'$ to $v_{i,1}$ in $F_{i,1}$ with length at most $m_{\cA_i}$.

For each $i\in [r]$, let $x_i$ be the endpoint of $P_i'$ in $L\setminus U$, and let $P_i=P_i'-x_i$. We shall apply Lemma~\ref{lem-newbit-other-new} by setting, for each $i\in [r]$,  $A_i=V(F_{i,2})$, $B_i=\bar{A}_i\cup V(Q_i)\cup \{x_i\}$ and $C_i=V(P_i)$. Firstly, as $|A_i|=m_{\cA_i}^2\geq \log^6d_0$ by \ref{hot2} and $d_0=d_0(\ep_1,\ep_2)$ is large, we have that $|A_i|\geq d_0^{\ref{lem-newbit-other-new}}$, where $d_0^{\ref{lem-newbit-other-new}}=d_0^{\ref{lem-newbit-other-new}}(\eps_1,\eps_2,100)$ is the function in Lemma~\ref{lem-newbit-other-new}, so that \ref{mouse0} holds.

 As $V(F_{i,2})\subseteq V(G')=V(G)\setminus L$ by \ref{hot1}, and $V(F_{i,2})$ is disjoint from $V(F_{i,1})$ and $\bar{A}_i$ by \ref{d-a-1}, we have that $A_i$ and $B_i\cup C_i$ are disjoint. Furthermore, $|B_i|\leq |\bar{A}_i|+|Q_i|+1\leq 20m_{\cA_i}\leq m_{\cA_i}^2/\log^{10}(m_{\cA_i}^2)$ as $m_{\cA_i}\geq \log^3 d_0$ is large, and thus \ref{mouse1} holds.

Now, as $P_i'$ is a shortest path from $V(F_{i,1})\cup V(F_{i,2})$ to $L\setminus U$ in $G-U-\bar{A}_i$, which has an endvertex in $V(F_{i,1})$, and $A_i=V(F_{i,2})$, we have, for each $\ell\in \N$, that $B^\ell_{G-U-\bar{A_i}}(A_i)$
has at most $\ell+1$ vertices in $P_i'$, and hence $P_i$. Therefore, $A_i$ has 4-limited contact with $C_i$ in $G-U-\bar{A_i}$, and hence in $G-U-B_i$, and thus \ref{mouse2} holds.

Suppose there is a path, $R_i$ say, with length at most $10\ell_0$ from $A_i$ to $L\setminus (U\cup\{x_i\})$ in $G-U-B_i-C_i$. Then, there is a path $R'_i\subseteq R_i\cup F_{i,2}$ from $v_{i,2}$ to some vertex $y_i\in L\setminus (U\cup\{x_i\})$ with length at most $10\ell_0+m_{\cA_i}\leq 2m-1$, and the path $Q_i\cup P'_i$ is a path from $v_{i,1}$ to $x_i$ with length at most $m_{\cA_i}+\ell_0\leq 2m-1$ in $G-U-\bar{A}_i$ with vertices in $B_i\cup C_i$.
Then, as $|U\cup \bar A_i\cup V(R_i')\cup V(Q_i\cup P_i')|\leq 10D+10m_{\cA_i}+4m\leq 10D+15m$, as $x_i,y_i\in L$ both have degree at least $\Delta=200mD$, we can choose $X_i\subseteq N_G(x_i)$ and $Y_i\subseteq N_G(y_i)$ which are disjoint from each other and from $U\cup \bar A_i\cup V(R_i')\cup V(Q_i\cup P_i')$ and have size $D-|P_i'\cup Q_i|$ and $D-|R'_i|$ respectively. Then, $(v_{i,1}, G[X_i\cup V(P_i')\cup V(Q_i)],v_{i,2},G[Y_i\cup V(R_i')],\bar A_i)$
is a $(D,2m,1)$-adjuster in $G-U$, a contradiction.
Therefore, there is no such path $R_i$. Consequently, recalling that $A_i=V(F_{i,2})$, we have
\[
B_{G-U-B_i-C_i}^{\ell_0}(A_i)=B_{G'-U-B_i-C_i}^{\ell_0}(A_i),
\]
which, by~\ref{hot1}, is disjoint from $U_1$. By the choice of $Z_0\subseteq U_1$, we have that \ref{mouse3} holds.

Now, similarly, for any $j\in [r]\setminus\{i\}$, we have that $B_{G-U-B_j-C_j}^{\ell_0}(A_j)=B_{G'-U-B_j-C_j}^{\ell_0}(A_j)$, so that, by \ref{hot1}, $B_{G-U-B_j-C_j}^{\ell_0}(A_j)$ and $B_{G-U-B_i-C_i}^{\ell_0}(A_i)$ are disjoint. In particular, $A_i$ and $A_j$ are a distance at least $2\ell_0$ apart in $G-U-B_i-C_i-B_j-C_j$, and therefore $\ref{mouse4}$ holds.

Thus, by Lemma~\ref{lem-newbit-other-new}, there is some $j\in [r]$ for which $|B^{\ell_0}_{G-U-B_j-C_j}(A_j)|\geq \log^{100}n\geq D$. As $F_{j,2}$ is an $(m_{\cA_j}^{2},m_{\cA_j})$-expansion of $v_{j,2}$  in $G'-U-B_j-C_j$, $m_{\cA_j}\leq m$ and $A_j=V(F_{j,2})$, we have that $|B^{2m}_{G-U-B_j-C_j}(v_{j,2})|\geq D$ as $\ell_0\ll m$. Therefore, by Proposition~\ref{prop-trimming}, we can pick a $(D,2m)$-expansion, $F'_{j,2}$ say, of $v_{j,2}$ in $G-U-B_j-C_j$.

As $x_j\in L$, we can then pick a set $U'$ of neighbours of $x_{j}$ disjoint from $U\cup V(F'_{j,2})\cup \bar{A}_{j}\cup  V(Q_j)\cup V(P_j')$ with $|U'|=D-|V(P_j'\cup Q_j)|$. Let $F'_{j,1}=G[U'\cup V(P_j')\cup V(Q_j)]$. Note that $F'_{j,1}$ is then a $(D,2m)$-expansion of $v_{j,1}$ as $Q_j\cup P'_j$ is a $v_{j,1},x_j$-path with length at most $m_{\cA_j}+\ell_0\leq 2m-1$. Finally, note that $(v_{j,1},F'_{j,1},v_{j,2},F'_{j,2},\bar{A}_j)$ is a $(D,2m,1)$-adjuster in $G-U$, a contradiction. Therefore, $ |\mathbf{A}_0 \setminus \mathbf{A}_1| <  r=n^{1/8}$, and so by Claim~\ref{ping}, we have $|\mathbf{A}_1|> n^{1/4}-r\geq n^{1/4}/2$.
\claimproofend

Let $\mathbf{A}_1'\subseteq \mathbf{A}_1$ satisfy $|\mathbf{A}_1'|=n^{1/4}/2$. Then, $|\cup_{\cA\in \mathbf{A}_1'}V(\cA)|\leq n^{1/4}\cdot 3m^2\leq n^{1/3}$ by \ref{hot2}. 
Set $W=U\cup B_{G'}^{\ell_0}(\cup_{\cA\in \mathbf{A}_1'}(V(\cA)\setminus L))\cup \bigcup_{\mathcal A\in \mathbf A_1'}V(\mathcal A)$, noting that $|W|\le  10D+n^{1/3}\cdot 2\Delta^{\ell_0}+n^{1/3}\leq n^{1/2}.$ 
Thus, by Lemma~\ref{cl-egg-new} and Proposition~\ref{prop-trimming}, there is a set $Z\subseteq V(G)\setminus W$ with $|Z|=10m^2D$ so that $G[Z]$ has diameter at most $m/2$ and $Z$ is a distance greater than $\ell_0$ in $G'$ from $V(\cA)\setminus L$ for each $\cA\in \mathbf{A}_1'$. Note that since $Z\subseteq V(G)\setminus W$, we have $Z\cap V(\mathcal A)=\emptyset$ for all $\mathcal A\in \mathbf A_1'$.

Let $\mathbf{A}_2\subseteq \mathbf{A}_1'$ be the set of adjusters $(v_1,F_1,v_2,F_2,A)\in \mathbf{A}_1'$ for which there is no path with length at most $m/2$ from $V(F_1)\cup V(F_2)$ to $Z$ in $G-U-A$.

  \begin{claim}\label{ping3}
$|\mathbf{A}_2|\geq n^{1/4}/4$.
  \end{claim}
  \claimproofstart Let $r=n^{1/8}$.
Suppose, for contradiction, we can label distinct $\cA_1,\ldots,\cA_r\in \mathbf{A}_1'\setminus \mathbf{A}_2$. Say, for each $i\in [r]$, that $\cA_i=(v_{i,1},F_{i,1},v_{i,2},F_{i,2},\bar{A}_i)$ and let $P_i$ be a shortest path with length at most $m/2$ from $V(F_{i,1})\cup V(F_{i,2})$ to $Z$ in $G-U-\bar{A}_i$. Relabelling, if necessary, for each $i\in [r]$ suppose the endvertex of $P_i$ in $V(F_{i,1}\cup F_{i,2})$ is in $V(F_{i,1})$, and let $Q_i$ be a path from this endvertex of $P_i$ to $v_{i,1}$ in $F_{i,1}$ with length at most $m_{\cA_i}$.

We will apply Lemma~\ref{lem-newbit-other-new} to $A_i=V(F_{i,2})$, $B_i=\bar{A}_i\cup V(Q_i)$ and $C_i=V(P_i)$, for each $i\in [r]$. For each $i\in [r]$, similarly to the proof of Claim~\ref{ping2}, we have that \ref{mouse0}--\ref{mouse2} hold.
By the choice of $\mathbf{A}_1$, for each $i\in[r]$, there is no path of length at most $\ell_0$ from $A_i$ to $L\setminus U$ in $G-U-B_i-C_i$.
Therefore, the sets $B_{G-U-B_i-C_i}^{\ell_0}(A_i)$ and $B_{G'-U-B_i-C_i}^{\ell_0}(A_i)$ are the same set, and thus, by \ref{hot1}, this set is disjoint from $U_1$. Thus, \ref{mouse3} holds by the definition of $Z_0$.
It similarly follows that $B_{G-U-B_i-C_i}^{\ell_0}(A_i)$ and $B_{G-U-B_j-C_j}^{\ell_0}(A_j)$ are vertex-disjoint for each $j\in [r]\setminus \{i\}$, and thus \ref{mouse4} holds.

Thus, by Lemma~\ref{lem-newbit-other-new}, there is some $j\in [r]$ for which $|B^{\ell_0}_{G'-U-B_j-C_j}(A_j)|=|B^{\ell_0}_{G-U-B_j-C_j}(A_j)|\geq D$. Thus, as $F_{j,2}$ is an $(m_{\cA_j}^2,m_{\cA_j})$-expansion of $v_{j,2}$ in $G'-U-B_j-C_j$ by \ref{hot1} and \ref{hot2}, and $A_j=V(F_{j,2})$, by Proposition~\ref{prop-trimming}, there is a $(D,2m)$-expansion, $F'_{j,2}$ say, of $v_{j,2}$ in $B^{\ell_0}_{G'-U-B_j-C_j}(V(F_{j,2}))$. As $Z$ was chosen to have a distance greater than $\ell_0$ in $G'$ from $V(\cA_j)\setminus L$, we have that $V(F'_{j,2})$ is disjoint from $Z$.

Now, as $Z$ has diameter at most $m/2$ in $G$, $Q_j\cup P_j\cup G[Z]$ is an expansion of $v_{j,1}$ with radius at most $\ell(Q_j)+\ell(P_j)+m/2\leq 2m$ and size at least $D$. Therefore, by Proposition~\ref{prop-trimming}, we can find within $Q_j\cup P_j\cup G[Z]$ a $(D,2m)$-expansion, $F'_{j,1}$ say, of $v_{j,1}$,
which then must be vertex-disjoint from $\bar{A}_j$ and from $V(F'_{j,2})\subseteq B^{\ell_0}_{G'-U-B_j-C_j}(V(F_{j,2}))$.
Thus,  we have that $(v_{j,1},F'_{j,1},v_{j,2},F'_{j,2},\bar{A}_j)$
is a $(D,2m,1)$-adjuster in $G-U$, a contradiction. Thus, $|\mathbf{A}_2|\geq |\mathbf{A}_1'|-r\geq n^{1/4}/4$, by Claim~\ref{ping2}.
\claimproofend

Let $r=n^{1/8}$. Using Claim~\ref{ping3}, label distinct $\cA_1,\ldots,\cA_r\in \mathbf{A}_2$, and say, for each $i\in [r]$, that $\cA_i=(v_{i,1},F_{i,1},v_{i,2},F_{i,2},\bar{A}_i)$.
We shall apply Lemma~\ref{lem-newbit-other-new}
to $A_i=V(F_{i,1}\cup F_{i,2})$, $B_i=\bar{A}_i$ and $C_i=\varnothing$.
Similarly to the proof of Claim~\ref{ping3}, the only difference being that \ref{mouse2} holds trivially as $C_i=\varnothing$ and $A_i$ is slightly larger, we have that~\ref{mouse0}--\ref{mouse4} hold.

Thus, applying  Lemma~\ref{lem-newbit-other-new} with $k=100$, there is some $j\in [r]$ with $|B^{\ell_0}_{G-U-B_j-C_j}(A_j)|=|B^{\ell_0}_{G-U-B_j}(A_j)|\geq 10m^2D\ge 10\log^3n |U\cup B_j|$. Therefore, by Lemma~\ref{new-connect}, there is a path in $G-U-B_j$
 from $B^{\ell_0}_{G-U-B_j}(A_j)$ to $Z$ with length at most $m/4$. Then, as $A_j=V(F_{j,1}\cup F_{j,2})$ and $B_j=\bar{A}_j$, there is a path in $G-U-\bar{A}_j$ from $V(F_{j,1}\cup F_{j,2})$ to $Z$ with length at most $m/2$, contradicting $\cA_j\in \mathbf{A}_2$, and completing the proof.
\end{proof}


\subsubsection{Connecting simple adjusters for paths with specific lengths}\label{sec:adjpath-new}
Using Lemma~\ref{lem-robust-adj-new}, we can find many vertex-disjoint simple adjusters. We now connect them together into a larger adjuster, for Lemma~\ref{lem-adj-to-adj-path-new}, before using these to construct paths with specific lengths for Lemma~\ref{lem-finalconnect-new}.

\begin{lemma}\label{lem-adj-to-adj-path-new}
	 For every sufficiently small $\eps_1>0$ and any $0<\eps_2<1/5$, there exists $d_0=d_0(\eps_1,\eps_2)$ such that the following holds for each $n\geq d\geq d_0$. Suppose that $G$ is an $n$-vertex $(\eps_1,\eps_2 d)$-expander with $\de(G)\ge d$ and the following property.

\stepcounter{propcounter}
\begin{enumerate}[label = \emph{\textbf{H}}]
  \item There are no disjoint sets $U_0,W_0\subset V(G)$ such that $|U_0|\leq \log^{20}n$, $|W_0|\geq \log^{80}n$ and every vertex in $W_0$ has at least $(1-2\eps_2)d$ neighbours in $U_0$.\label{newpropP-1}
  \end{enumerate}

\noindent	Let $m=\frac{1600}{\ep_1}\log^3n$ and $D=\log^{10} n$.
	Suppose $1\le r\le 30m$ and $U\subseteq V(G)$ with $|U|\leq 2D$.

Then, there is a $(D,m,r)$-adjuster in $G-U$.
\end{lemma}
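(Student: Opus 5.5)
The plan is to chain $r$ vertex-disjoint simple adjusters, following the proof of the corresponding lemma in \cite{liu2023solution} (their Lemma~4.3). I first apply Lemma~\ref{lem-robust-adj-new} repeatedly, with $m'=\frac{400}{\eps_1}\log^3 n=m/4$ and $D'=\log^{14}n$. At step $j$ I take the forbidden set to be $U$ together with the vertex sets of all previously found adjusters. This gives simple $(D',2m',1)$-adjusters $\cA_j=(v_{j,1},F_{j,1},v_{j,2},F_{j,2},A_j)$, $j\in[r]$, in $G-U$, pairwise vertex-disjoint. The forbidden set grows by at most $2D'+20m'$ per step, so it is only a problem if it exceeds the $10D'$ allowance. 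Since $r\le 30m$, the total is about $60mD'$, which exceeds $10D'$. To fix this I would first trim each end using Proposition~\ref{prop-trimming}: from $F_{j,i}$ keep a $(D'',2m')$-expansion of $v_{j,i}$ with $D''$ small (e.g.\ $\log^{10}n$). Then only these trimmed ends, together with the $A_j$ and $U$, are forbidden. Their total size is at most $r(2\log^{10}n+20m')+2D\le 10D'$, so Lemma~\ref{lem-robust-adj-new} still applies at each step.

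Next I connect the adjusters in series: the end at $v_{j,2}$ to the end at $v_{j+1,1}$ for $j\in[r-1]$. I use Lemma~\ref{new-connect} inside $G-U-W$, where $W$ is the union of all the $A_j$, all previously built connecting paths, and the interior of the other ends. Each connection is a path of length at most $\frac{40}{\eps_1}\log^3 n$ between the two expansions (each of size $D''$). The removed set has size $O(rm+r m')=O(m^2)$, which is small against $D''/\log^3n$ once $D''$ is a large enough polylog. I would then route inside $F_{j,2}$ and $F_{j+1,1}$ to $v_{j,2}$ and $v_{j+1,1}$, using radius $2m'$. Choosing a subpath of the expansion tree keeps everything a path, except that the connecting path may hit the expansions in several vertices; to handle this, cut it at the first vertices it meets. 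This yields a $v_{1,1},v_{r,2}$ structure in which the connectors have total length at most $r(4m'+\frac{40}{\eps_1}\log^3n)$. Choosing which adjusters use the long option gives $v_{1,1},v_{r,2}$-paths of length $\ell+2i$ for all $0\le i\le r$, as required by \ref{d-a-4}.

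Finally I need to supply the ends and check the size bound. The new set is $A=\bigcup_j A_j\cup$ connectors $\cup$ the routing vertices in the used ends, minus $\{v_{1,1},v_{r,2}\}$. For the ends I take $(D,m)$-expansions of $v_{1,1}$ and $v_{r,2}$ disjoint from $A$. These should come from the untrimmed $F_{1,1},F_{r,2}$, which have size $D'\ge D$ and radius $2m'\le m$, trimmed via Proposition~\ref{prop-trimming} after deleting the few vertices in $A$.

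The main obstacle is exactly this: deleting vertices could disconnect the tree. So I would reserve these two ends at the start, keeping $F_{1,1}$ and $F_{r,2}$ forbidden throughout the construction, and prevent the connectors from entering them except at their designated vertices. The bound $|A|\le 10mr$ follows since each of the $r$ pieces contributes $O(m')+O(\log^3 n/\eps_1)\le 10m$, using $m=4m'$ and the choice of constants.
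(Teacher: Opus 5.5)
There is a genuine gap in your connection step.

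\textbf{The other ends must be avoided.} When you join $F_{j,2}$ to $F_{j+1,1}$, the connector has to avoid the ends $F_{i,s}$ of every adjuster not yet attached, and also your reserved outer ends $F_{1,1},F_{r,2}$. A connector running through such an expansion can separate, inside its radius-$2m'$ tree, the vertex where the chain later enters it from $v_{i,s}$. Your ``cut at the first vertices it meets'' device only handles the two expansions currently being joined.

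\textbf{These ends are too large to delete.} You list ``the interior of the other ends'' in $W$, but your bound $|W|=O(m^2)$ silently drops them. They have total size $\Omega(rD'')$, plus $2D'$ for the reserved ends. Lemma~\ref{new-connect} between two sets of size $x=D''$ only allows $|W|\le 10D''/\log^3 n$, which is less than a single end. So the connectors cannot be found as planned.

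\textbf{The parameters clash even without the ends.} $W$ must contain $U$, which may have $2D=2\log^{10}n$ vertices. That forces $D''\ge \log^{13}n/5$. Your first step, however, needs $r(2D''+20m')+D'+2D\le 10D'$ with $r$ up to $30m$, which forces $D''=O(\eps_1\log^{11}n)$. Keeping all $r$ simple adjusters and their ends alive at the same time is what breaks both budgets.

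\textbf{How the paper avoids this.} It builds the chain inductively, so that only four ends ever exist at once.
\begin{enumerate}
\item Start from a $(D',m,r)$-adjuster $(v_1,F_1,v_2,F_2,A_1)$ in $G-U$, with $D'=\log^{14}n$.
\item Find a fresh $(D',m/2,1)$-adjuster $(v_3,F_3,v_4,F_4,A_2)$ in $G-U'$, where $U'=U\cup A_1\cup V(F_1)\cup V(F_2)$. This set has size at most $5D'$, independently of $r$.
\item Apply Lemma~\ref{new-connect} between $V(F_1)\cup V(F_2)$ and $V(F_3)\cup V(F_4)$, deleting only $U\cup A_1\cup A_2$. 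This is at most $2D+300m^2+5m\le 20D'/\log^3 n$ vertices.
\item Take a minimal subpath. Its interior then misses all four ends automatically, so no end ever has to be deleted. After relabelling it runs from $F_1$ to $F_3$, giving a $v_1,v_3$-path $Q$ of length less than $2m$.
\item Then $(v_2,F_2,v_4,F_4,A_1\cup A_2\cup V(Q))$ is a $(D',m,r+1)$-adjuster.
\end{enumerate}
The unused parts of $F_1$ and $F_3$ are released at each step, and the end radii never exceed $m$. The ends are trimmed to size $D$ by Proposition~\ref{prop-trimming} only at the very end.
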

\begin{proof} Let $\eps_1>0$ be sufficiently small that the property in Lemma~\ref{lem-robust-adj-new} holds, and set $D':=\log^{14}n$. By that lemma, as $d\geq d_0(\ep_1,\ep_2)$ is large, for every set $V\subseteq V(G)$ with $|V|\leq 10D'$, the graph $G-V$ contains a $(D',m/2,1)$-adjuster. By Lemma~\ref{new-connect}, for any sets $X$ and $Y$ with size at least $2D'$, and any set $V\subseteq V(G)\setminus(X\cup Y)$ with size at most $20D'/\log^3n$, there is a path from $X$ to $Y$ in $G-V$ with length at most $m/40$.

We prove by induction on $r$ that $G-U$ contains a $(D',m,r)$-adjuster. For $r=1$, this follows from Lemma~\ref{lem-robust-adj-new}, since $|U|\leq 2D\leq 10D'$. Suppose that $1\leq r<30m$ and that $G-U$ contains a $(D',m,r)$-adjuster $(v_1,F_1,v_2,F_2,A_1)$. Let
 $U'=U\cup A_1\cup V(F_1)\cup V(F_2).$ Since $|A_1|\leq 10mr\leq 300m^2$, we have $|U'|\leq 2D+300m^2+2D'\leq 5D'$ for sufficiently large $n$. Hence $G-U'$ contains a $(D',m/2,1)$-adjuster $(v_3,F_3,v_4,F_4,A_2)$.

Moreover, $|U\cup A_1\cup A_2|\leq 2D+300m^2+5m\leq \frac{20D'}{\log^3n}$ for sufficiently large $n$. Applying Lemma~\ref{new-connect}, and then taking a minimal subpath between the two indicated unions, gives a path $P$ of length at most $m/40$ from $V(F_1)\cup V(F_2)$ to $V(F_3)\cup V(F_4)$ in $G-(U\cup A_1\cup A_2)$ whose internal vertices avoid all four end expansions. Relabelling if necessary, assume that $P$ runs from $V(F_1)$ to $V(F_3)$. Since $F_1$ is a $(D',m)$-expansion of $v_1$ and $F_3$ is a
$(D',m/2)$-expansion of $v_3$, there is a $v_1,v_3$-path
$Q\subseteq F_1\cup P\cup F_3$ with $\ell(Q)\leq m+\frac{m}{40}+\frac{m}{2}<2m.$ Note that $|A_1\cup A_2\cup V(Q)|
 \leq 10mr+5m+(2m+1)
 \leq 10(r+1)m$.

Then $(v_2,F_2,v_4,F_4,A_1\cup A_2\cup V(Q))$ is a $(D',m,r+1)$-adjuster. Indeed, \ref{d-a-1} and \ref{d-a-2} follow from the choice of $P$ and $Q$, and $|A_1\cup A_2\cup V(Q)|\leq 10mr+5m+2m\leq 10(r+1)m,$ 
so \ref{d-a-3} holds. Finally, let $\ell_i$ be the length of the $i$th adjuster and put $\ell=\ell_1+\ell_2+\ell(Q)$. For every $i\in\{0,1,\ldots,r+1\}$ choose $i_1\in\{0,1,\ldots,r\}$ and $i_2\in\{0,1\}$ with $i=i_1+i_2$. Concatenating a $v_2,v_1$-path of length $\ell_1+2i_1$ in $A_1\cup\{v_1,v_2\}$, the path $Q$, and a $v_3,v_4$-path of length $\ell_2+2i_2$ in $A_2\cup\{v_3,v_4\}$ proves \ref{d-a-4}.

Thus $G-U$ contains a $(D',m,r)$-adjuster. Applying Proposition~\ref{prop-trimming} to each of its two ends, and shortening them from order $D'$ to order $D$, gives the required $(D,m,r)$-adjuster in $G-U$.
\end{proof}

Combining Lemma~\ref{lem-adj-to-adj-path-new} with Lemma~\ref{longconnect4-new}, we can finally find paths with exactly some desired length, as follows.

\begin{lemma}\label{lem-finalconnect-new}
 For every sufficiently small $\eps_1>0$ and  any $0<\eps_2<1/5$, there exists $d_0=d_0(\eps_1,\eps_2)$ such that the following holds for each $n\geq d\geq d_0$. Suppose that $G$ is an $n$-vertex $(\eps_1,\eps_2 d)$-expander with $\de(G)\ge d$ and the following property.

\stepcounter{propcounter}
\begin{enumerate}[label = \textup{\textbf{H}}]
  \item There are no disjoint sets $U_0,W_0\subset V(G)$ such that $|U_0|\leq \log^{20}n$, $|W_0|\geq \log^{80}n$ and every vertex in $W_0$ has at least $(1-2\eps_2)d$ neighbours in $U_0$.\label{newpropP-2}
  \end{enumerate}

Let $D=\log^{10} n$ and $m=\frac{1600}{\eps_1}\log^3n$. Suppose $F_1,F_2\subseteq G$ are vertex disjoint such that $F_i$ is a $(D,m)$-expansion of $v_i$, for each $i\in[2]$. Then, for each $\ell_0$ with $\log^{7}n\leq \ell_0\leq n/\log^{12}n$, there is some $j\in [2]$ such that, for each $\ell\equiv j\pmod 2$ with
$\ell_0\leq \ell\leq \ell_0+2m$ there is a $v_1,v_2$-path with length $\ell$ in $G$.
\end{lemma}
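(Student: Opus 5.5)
We will follow the proof of the corresponding connection lemma in \cite{liu2023solution}. The idea is to build a large adjuster away from $F_1,F_2$, join its ends to $F_1$ and $F_2$ by two disjoint paths of controlled total length, and then use the adjuster to tune the length in steps of two.

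\textbf{Step 1: a large adjuster.} Apply Lemma~\ref{lem-adj-to-adj-path-new} with $r=30m$ (so $r\leq 30m$ holds) and $U=V(F_1)\cup V(F_2)$, which has size $2D$. This gives a $(D,m,30m)$-adjuster $\cA=(w_1,F_3,w_2,F_4,A)$ in $G-V(F_1\cup F_2)$. By \ref{d-a-3}, $|A|\leq 300m^2$. By \ref{d-a-4}, the adjuster contains $w_1,w_2$-paths of every length $\ell(\cA)+2i$ with $0\leq i\leq 30m$, and $\ell(\cA)\leq 300m^2+1$.

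\textbf{Step 2: two disjoint connecting paths.} Set $\ell'=\ell_0-\ell(\cA)-4m$, and if this is negative replace it by $0$. Since $\ell_0\geq \log^7 n\gg m^2$, we have $\ell'\geq 0$ anyway, and $\ell'\leq n/\log^{12}n$. We now apply Lemma~\ref{longconnect4-new} in $G-A$ with:
\begin{itemize}
\item the four expansions $F_1,F_2,F_3,F_4$ of $v_1,v_2,w_1,w_2$;
\item $D=\log^{10}n$;
\item the lemma's $m$ equal to our $m$, which lies in $[\frac{100}{\eps_1}\log^3n,\log^4n]$;
\item the condition $|A|\leq 300m^2\leq D/\log^3 n$, which holds since $m^2\ll \log^7 n$.
\end{itemize}
This gives disjoint paths $P,Q$ in $G-A$ joining $\{v_1,v_2\}$ to $\{w_1,w_2\}$, with $\ell'\leq \ell(P)+\ell(Q)\leq \ell'+22m$.

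The one case to exclude is $P$ joining $v_1$ to $v_2$ directly. This cannot happen, because each path joins the set $\{v_1,v_2\}$ to the set $\{w_1,w_2\}$ and the two paths are disjoint. So, after relabelling, $P$ is a $v_1,w_1$-path and $Q$ is a $w_2,v_2$-path, possibly with $w_1,w_2$ swapped.

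\textbf{Step 3: tuning the length.} The paths $P$ and $Q$ avoid $A$ and meet the adjuster only at $w_1,w_2$. Combining $P$, an adjuster path of length $\ell(\cA)+2i$, and $Q$ therefore gives a $v_1,v_2$-path of length $L+2i$ for each $0\leq i\leq 30m$, where
\[
L=\ell(P)+\ell(Q)+\ell(\cA)\in[\ell_0-4m,\ \ell_0+18m].
\]
These lengths cover every $\ell\equiv L\pmod 2$ in $[L,L+60m]\supseteq[\ell_0,\ell_0+2m]$. Taking $j\equiv L\pmod 2$ gives the statement.

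\textbf{Main obstacle.} The delicate point is the parameter bookkeeping in Step 2. The adjuster set $A$ must be small relative to $D/\log^3n$, which forces $r$ to be only $O(m)$. On the other hand, the slack of $22m$ in Lemma~\ref{longconnect4-new} must be absorbed by the $2r$ adjustability. With $r=30m$ both hold comfortably, since $60m\geq 22m+2m$ covers the slack plus the target window. One should also check that $F_3$ and $F_4$ avoid $F_1,F_2$ and $A$, which holds by the choice of $U$ and by \ref{d-a-1}.
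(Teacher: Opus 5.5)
Your strategy is the paper's: a long adjuster in $G-V(F_1\cup F_2)$, two disjoint connecting paths from Lemma~\ref{longconnect4-new}, then tuning the length. Steps 1 and 2 check out. Step 3 fails, however, because of the offset you chose.

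An adjuster can only \emph{lengthen}. It guarantees $w_1,w_2$-paths of lengths $\ell(\cA),\ell(\cA)+2,\dots,\ell(\cA)+60m$, and nothing shorter than $\ell(\cA)$. So every $v_1,v_2$-path your construction produces has length at least $L=\ell(P)+\ell(Q)+\ell(\cA)$, and you need $L\le\ell_0$ to reach the bottom of the window. With $\ell'=\ell_0-\ell(\cA)-4m$, Lemma~\ref{longconnect4-new} only gives $L\le\ell_0+18m$. Hence the claimed containment $[L,L+60m]\supseteq[\ell_0,\ell_0+2m]$ is false as soon as $L>\ell_0$. If $L>\ell_0+2m$, which your bounds allow, you obtain no length in the window at all. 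Your count ``$60m\ge 22m+2m$'' is right, but the $22m$ of slack must lie entirely below $\ell_0$, not straddle it.

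The fix is to subtract the full slack in advance. Take $\ell'=\ell_0-\ell(\cA)-22m$, as the paper does with $\bar\ell$; this is still nonnegative since $\ell_0\ge\log^7 n\gg m^2$, and still at most $n/\log^{12}n$. Then $L\in[\ell_0-22m,\ell_0]$. For every $\ell\equiv L\pmod 2$ in $[\ell_0,\ell_0+2m]$, the required $i=(\ell-L)/2$ satisfies $0\le i\le 12m\le 30m$. So your $(D,m,30m)$-adjuster suffices (the paper uses $24m$), and the rest of your argument goes through unchanged.
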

\begin{proof} Let $\eps_1>0$ be sufficiently small that, for every $0<\eps_2<1/5$, we can take $d_0=d_0(\eps_1,\eps_2)$ so that the property in Lemma~\ref{lem-adj-to-adj-path-new} holds. Take, then,  $n\geq d\geq d_0$ and let $G$ be an $n$-vertex $(\eps_1,\eps_2 d)$-expander with $\de(G)\ge d$ and the property \ref{newpropP-2}.
By the property from Lemma~\ref{lem-adj-to-adj-path-new}, there is a $(D,m,24m)$-adjuster, $\cA=(v_3,F_3,v_4,F_4,A)$ say, in $G-V(F_1\cup F_2)$ with length $\ell(\cA)\le |A|+1\leq 500m^2$. Let $\bar{\ell}=\ell_0-22m-\ell(\cA)$,
so that $0\leq \bar{\ell}\le n/\log^{12}n$. As $|A|\leq 500m^2\leq D/\log^3n$, by Lemma~\ref{longconnect4-new}, there are paths $P$ and $Q$ in $G-A$ which are vertex-disjoint, both connect $\{v_1,v_2\}$ to $\{v_3,v_4\}$ and so that $\bar{\ell}\leq \ell(P)+\ell(Q)\leq \bar{\ell}+22m$.
Note that we can assume, without loss of generality, that $P$ is a $v_1,v_3$-path and $Q$ is a $v_2,v_4$-path.

Now, $0\leq \ell_0-\ell(P)-\ell(Q)-\ell(\cA)\leq 22m$. As $\cA$ is a $(D,m,24m)$-adjuster there is a $v_3,v_4$-path in $G[A\cup \{v_3,v_4\}]$ with length $\ell(\cA)$. Let $j\in [2]$ be such that $j\equiv \ell(P)+\ell(Q)+\ell(\cA) \pmod 2$.
Then, for each $\ell\equiv j\pmod 2$ with
$\ell_0\leq \ell\leq \ell_0+2m$, let $i$ be a nonnegative integer such that $2i=\ell-\ell(P)-\ell(Q)-\ell(\cA)$, where $i\leq 13m$. Therefore, by the property of the adjuster, there is a $v_3,v_4$-path, $R$ say, with length $\ell(\cA)+2i=\ell-\ell(P)-\ell(Q)$ in $G[A\cup\{v_3,v_4\}]$. Then, $P\cup R\cup Q$ is a $v_1,v_2$-path with length $\ell$ in~$G$.
\end{proof}


\subsubsection{Proof of Theorem~\ref{mainthm-new}}\label{sec:mainthmfinal-new}
Finally, we combine Lemmas~\ref{lem-expansion-new} and~\ref{lem-finalconnect-new} to prove Theorem~\ref{mainthm-new}.

\begin{proof}[Proof of Theorem~\ref{mainthm-new}]
Let $\eps_1>0$ be such that the property in Lemma~\ref{lem-finalconnect-new} holds.  Let $0<\eps_2<1/5$. Let $d_0=d_0(\ep_1,\ep_2)$ be large and let $n\geq d\geq d_0$. Suppose then that $\Gamma$ is an $n$-vertex $(\eps_1,\eps_2d)$-expander
with $d(\Gamma)\geq 2d$ and $\delta(\Gamma)\geq d$. 
Assuming that \ref{prop:newEHthm3} does not hold, we will now show that \ref{prop:newEHthm2} holds, completing the proof of the theorem.

For \ref{prop:newEHthm2}, let $x,y\in V(\Gamma)$ be distinct and let $\ell\in [\log^{7}n,n/(4\log^{12}n)]$.
Let $m=\frac{1600}{\eps_1}\log^3n$ and $D=\log^{10}n$. Then, by Lemma~\ref{lem-expansion-new} (applied with $C$ taken to be an arbitrary shortest cycle in $\Gamma$), there are vertex-disjoint graphs $F_x,F_y\subseteq \Gamma$ so that $F_x$ is a $(D,m)$-expansion of $x$ and $F_y$ is a $(D,m)$-expansion of $y$. As \ref{prop:newEHthm3} does not hold, we have that \ref{newpropP-2} holds.
By Lemma~\ref{lem-finalconnect-new}, there thus is some $r\in [2]$ such that  (taking $\ell_0=2\ell-1$) there is an $x,y$-path in $\Gamma$ with the unique length in $\{2\ell,2\ell+1\}$ congruent to $r$ modulo $2$, namely $\ell_0+3-r$, as required.
\end{proof}
\end{document}